\DeclarePairedDelimiter{\ceil}{\lceil}{\rceil}
\DeclarePairedDelimiter{\floor}{\lfloor}{\rfloor}
\theoremstyle:=definition,remark,plain\do{%
				\expandafter\g@addto@macro\csname th@\theoremstyle\endcsname{%
						\addtolength\thm@preskip\parskip
						}%
				}
\newcommand{\R}{{\mathbb{R}}}
\newcommand{\Z}{{\mathbb{Z}}}
\newcommand{\sph}{{\mathbb{S}}}
\newcommand{\E}{{\mathbb{E}}}
\newcommand{\F}{{\mathbb{F}}}
\newcommand{\cat}{{\mathcal{C}}}
\newcommand{\Cat}{{\mathscr{C}\mathrm{at}}}
\newcommand{\Spc}{\catname{Spc}}
\newcommand{\AbGrp}{\mathrm{Ab}}
\newcommand{\Alg}{\mathrm{Alg}}
\newcommand{\Mon}{\mathrm{Mon}}
\newcommand{\id}{\mathrm{id}}
\newcommand{\ev}{\mathrm{ev}}
\newcommand{\coev}{\mathrm{coev}}
\newcommand{\tors}{\mathrm{tors}}
\newcommand{\op}{\text{op}}
\newcommand{\loops}{\Omega}
\newcommand{\Tor}{\mathrm{Tor}}
\newcommand{\Ext}{\mathrm{Ext}}
\newcommand{\surj}{\twoheadrightarrow}
\newcommand{\Spectra}{\mathrm{Sp}}
\newcommand{\stable}{\mathrm{st}}
\newcommand{\Fun}{\mathrm{Fun}}
\newcommand{\Perf}{\mathscr{P}\mathrm{erf}}
\renewcommand{\Spc}{\mathrm{Spc}}
\newcommand{\Vect}{\mathrm{Vect}}
\newcommand{\Hom}{\mathrm{Hom}}
\DeclareMathOperator{\Ind}{Ind}
\renewcommand{\P}{\mathbb{P}}
\newcommand{\StMod}{\mathrm{StMod}}
\newcommand{\StExt}{\mathrm{StExt}}
\newcommand{\Coh}{\mathrm{Coh}}
\newcommand{\Sq}{\mathrm{Sq}}
\newcommand{\cofib}{\mathrm{cofib}}
\newcommand{\fib}{\mathrm{fib}}
\newcommand{\Nm}{\mathrm{Nm}}
\newcommand{\gr}{\mathrm{gr}}
\newcommand{\Gr}{\mathrm{Gr}}
\newcommand{\pol}{\mathrm{pol}}
\newcommand{\coker}{\mathrm{coker}}
\newcommand{\coBar}{\mathrm{coBar}}
\newcommand{\twist}{\mathsf{tw}} 
\newcommand{\inj}{\hookrightarrow}
\DeclareMathOperator*{\colim}{colim}
\DeclareMathOperator{\Mod}{Mod}
\DeclareMathOperator{\LMod}{LMod}
\DeclareMathOperator{\ho}{ho}
\numberwithin{equation}{subsection}
\theoremstyle{plain} \newtheorem{theorem}[equation]{Theorem}
\theoremstyle{plain} \newtheorem*{theorem*}{Theorem}
\theoremstyle{definition} \newtheorem{defn}[equation]{Definition}
\theoremstyle{plain} \newtheorem{prop}[equation]{Proposition}
\theoremstyle{plain} \newtheorem*{prop*}{Proposition}
\theoremstyle{plain} \newtheorem{lemma}[equation]{Lemma}
\theoremstyle{plain} \newtheorem{cor}[equation]{Corollary}
\theoremstyle{definition} \newtheorem{ex}[equation]{Example}
\theoremstyle{definition} \newtheorem{exs}[equation]{Examples}
\theoremstyle{definition} 
\theoremstyle{definition} \newtheorem{rmk}[equation]{Remark}
\theoremstyle{remark} 
\theoremstyle{definition} \newtheorem{idea}[equation]{Idea}
\theoremstyle{definition} \newtheorem{obs}[equation]{Observation}
\theoremstyle{plain} 
\theoremstyle{remark} 
\theoremstyle{definition} 
\theoremstyle{definition} 
\theoremstyle{remark} \newtheorem{ntn}[equation]{Notation}
\theoremstyle{plain} \newtheorem{goal}[equation]{Goal}
\theoremstyle{remark} 
\theoremstyle{definition} 
\theoremstyle{definition} \newtheorem{cons}[equation]{Construction}
\theoremstyle{definition} 
\theoremstyle{remark} \newtheorem{warning}[equation]{Warning}
\theoremstyle{definition} \newtheorem{recollection}[equation]{Recollection}
\titleformat{\subsection}[runin]{\bfseries}{\thesubsection}{1em}{}
\newif\iflong
\newcommand{\inLongVersion}[2]{\iflong #1 \else #2 \fi}
\newif\ifinclex
\title{Categorical dynamics on stable module categories}
\author{Lucy Yang}
\date{\today}
\begin{document}
\maketitle

\begin{abstract}
	Let $ A $ be a finite connected graded cocommutative Hopf algebra over a field $ k $.  
	There is an endofunctor $ \twist $ on the stable module category $ \StMod_A $ of $ A $ which twists the grading by $ 1 $. 
	We show the categorical entropy of $ \twist $ is zero.
	We provide a lower bound for the categorical polynomial entropy of $ \twist $ in terms of the Krull dimension of the cohomology of $ A $, and an upper bound in terms of the existence of finite resolutions of $ A $-modules of a particular form. 
	We employ these tools to compute the categorical polynomial entropy of the twist functor for examples of finite graded Hopf algebras over $ \F_2 $.
\end{abstract}

\tableofcontents

\section{Introduction}
\subsection{On growth and complexity.}
Homotopy theory abounds with quantitative estimates. 
\begin{itemize}
	\item James \cite[Corollary 1.22]{MR83124} (resp. Toda \cite[Theorem 8.11]{MR92968}) showed that the $ p $-primary torsion of $ \pi_* S^{2n+1} $ is annihilated by $ 2^{2n} $ for $ p = 2 $ (resp. by $ p^{2n} $ for $ p $ odd). 
	\item Cohen, Moore, and Neisendorfer showed that for an odd prime $ p > 3 $ and $ n \geq 3 $, the homotopy groups of $ \loops^2(S^n/p^r) $ are annihilated by $ p^{2r+1} $ \cite[Proposition 2.5]{MR921471}.  
	\item Mathew showed that the $ p $-primary torsion of the kernel of the Hurewicz homomorphism is annihilated by $ p^{\ceil*{\sfrac{\ell(n)}{2p-2}}} $ where $ \ell $ is a linear function in $ n $ \cite[Theorem 1.3]{MR3493414}, refining work of Arlettaz \cite[Theorem 4.1]{MR1402323}.
\end{itemize}
A central theme among these results is \emph{uniform rates of growth}: James and Toda's (resp. Arlettaz and Mathew's) results identify an exponential dependence of order of $ \pi_* $ on dimension (resp. Postnikov tower stage).  
On the other hand, Cohen-Moore-Neisendorfer's results are constant with respect to dimension. 

The precise study and quantification of growth rates and complexity is no stranger to dynamical systems. 
Given a self-map $ f:X \to X $ of a compact topological space, its \emph{topological entropy} $ h_\mathrm{top}(f) \in \R_{\geq 0} \cup \{\infty \} $ is a measure of its asymptotic expansion. 
Topological entropy detects geometric properties of a system: For instance, let $ SM $ be the unit tangent bundle of a Riemannian manifold $ (M, g) $.
The manifold $ SM $ has a self-map $ \phi $ given by time 1 geodesic flow, and the entropy $ h_\mathrm{top}(\phi) $ is related to the sectional curvature of $ g $ \cite[Theorem 2]{Mann_more}. 
Beyond measuring geometric quantities of a dynamical system, topological entropy admits a wealth of comparison theorems \cites[2.3 Corollaire]{MR2026895}[Remark 1.11 on p.350]{MR1792240}[Theorem 1.1]{MR889979}. 
Such results formalize intuitive notions such as: \emph{a dynamical system must be at least as complex as a quotient system or its linearization}. 

In view of the preceding discussion, asymptotic growth rates are both interesting invariants in their own right in addition to being a useful way of organizing information, and quantitative homotopy theory has much to gain from a dynamical perspective on asymptotic growth rates. 
But how should one reconcile an analytic approach to growth and complexity with the study of stable $ \infty $-categories that comprise the domain of homotopy theory? 

\subsection{Categorical dynamics and homotopy theory.}
The categorical dynamics of Dmitrov--Haiden--Katzarkov--Kontsevich \cite{MR3289326} offers a path forward.
\begin{defn}
	A \emph{categorical dynamical system} consists of a small stable $ \infty $-category $ \cat $ satisfying a suitable finiteness condition\footnote{In topological dynamical systems, one typically asks for the underlying topological space to be compact. 
	The relevant finiteness condition we impose here is that $ \cat $ must have a single compact generator.}  
	and an exact endofunctor $ F: \cat \to \cat $. 
\end{defn}
Given two objects $ X, Y \in \cat $, the \emph{complexity} $ \delta(X,Y) $ (Definition \ref{defn:complexity}) of $ Y $ with respect to $ X $ measures the size of $ Y $ relative to $ X $ by counting the minimal length of a filtration of $ Y $ by shifts of $ X $. 
Given a categorical dynamical system $ (\cat, F: \cat \to \cat) $ and a generator $ G $ of $ \cat $, the categorical entropy $ h_{\mathrm{cat}} (F) \in \R \cup\{\pm \infty\} $ of $ F $ is the exponential growth rate of $ \delta(G, F^n(G)) $ as $ n \to \infty $. 

While the categorical entropy measures exponential growth rates, it is designed so as not to see sub-exponential growth \cites[2.13]{MR3289326}. 
The categorical \emph{polynomial} entropy $ h_\pol(F) $ is sensitive to sub-exponential growth, and is related to the study of polynomial dynamical degrees \cites[Corollary 5.9]{MR4233273}{MR4196396}[\S1.3]{MR4030548}. 
Collectively, these notions of entropy \emph{quantify} fuzzy notions such as complexity and growth.

\begin{goal}
	Use categorical entropy and its polynomial variant to describe complexity and growth in stable homotopy theory. 
\end{goal}
Certain algebraic objects occupy a special place in homotopy theory--their categories of representations are at once good approximations to more topological categories, yet more amenable to algebraic manipulations than their topological counterparts \cite{MR4405748,MR2640996,MR960945,Lurie:2010,MR3356769}. 
Finite-dimensional connected graded cocommutative Hopf algebras (Definition \ref{defn:hopf_alg}) are one such example, arising as algebras of cohomology operations (Example \ref{ex:mod2steenrod}) and as homological manifestations of connected finite-dimensional topological groups (Example \ref{ex:liegp_homology}). 

Given such a Hopf algebra $ A $ over a field $ k $, consider the category of $ A $-representations on perfect $ k $-modules $ \Mod_A(\Perf_k) $. 
We localize away from the perfect $ A $-modules $ \Perf_A \subset \Mod_A(\Perf_k) $ to obtain the \emph{stable module category} $ \StMod_A $ of $ A $ (Definition \ref{defn:stmod}). 
There is a dynamical system on $ \StMod_A $ which twists the grading by 1 (Construction \ref{cons:twist_dynamicalsystem}).  
\begin{theorem} \label{thm:mainthm} 
	Let $ A $ be a finite-dimensional connected graded cocommutative Hopf algebra over a field $ k $. 
	Consider the endofunctor $ \twist $ of the stable module category of $ A $ given by twisting the grading by 1:
	\begin{equation*}
		\twist: \StMod_A \to \StMod_A .
	\end{equation*} 
	\begin{enumerate}
		\item The categorical entropy of the twist functor $ \twist $ satisfies
		\begin{equation*}
			h_{\mathrm{cat}}(\twist) =0 .
		\end{equation*} \vspace{-1em}
		\item The categorical polynomial entropy $ h_\pol $ of the twist functor admits the following bounds
		\begin{enumerate}
			\item \label{thmitem:lowerboundcatpolent}  
				{\center $h_\pol(\twist) \geq \mathrm{Krull\:dim\:} H^*(A;k) - 1.$}
			\item \label{thmitem:upperboundcatpolent} Suppose given a tower under $ A $ (Definition \ref{defn:hopfalg_tower}) with $ \ell+1 $ storeys.  
			\begin{equation*}
				h_\pol(\twist) \leq \ell .
			\end{equation*}
		\end{enumerate}
	\end{enumerate}
\end{theorem}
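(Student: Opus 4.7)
My plan is to take the trivial module $k$ as the compact generator of $\StMod_A$ and to study the asymptotic behavior of the complexity $\delta(k, \twist^n k)$ as $n \to \infty$; all three statements then follow from bounds on this quantity. The twist functor shifts only the internal grading, so the graded morphism groups $\Hom^*_{\StMod_A}(k, \twist^n k)$ record the Tate cohomology $\widehat{H}^*(A;k)$ in internal degree $n$, and all three bounds are ultimately statements about the growth of this Hilbert function.

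For (1), I would bound $\delta(k, \twist^n k)$ from above by a polynomial in $n$. Because $A$ is finite-dimensional, the cohomology $H^*(A;k) = \Ext^*_A(k,k)$ is a finitely generated graded commutative $k$-algebra (Friedlander--Suslin, building on Venkov), so its dimension in internal degree $n$ grows polynomially. A filtration of $\twist^n k$ by shifts of $k$ of length bounded by this polynomial can then be produced by iteratively attaching $k$-cells along minimal generators at each internal degree. Dividing $\log(\mathrm{poly}(n))$ by $n$ yields $h_{\mathrm{cat}}(\twist) = 0$.

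For (2a), since $h_{\mathrm{cat}}(\twist) = 0$, the polynomial entropy reduces to $\limsup_n \log \delta(k, \twist^n k)/\log n$. Here I would use that complexity dominates dimensions of enriched hom spaces: if $\twist^n k$ admits a filtration of length $N$ by shifts of $k$, then $\dim \Hom^*_{\StMod_A}(k, \twist^n k) \leq N \cdot \dim \Hom^*_{\StMod_A}(k, k)$. The left-hand side is the internal-degree-$n$ component of $\widehat{H}^*(A;k)$, which grows as a polynomial of degree $d - 1$ in $n$ by Hilbert--Serre applied to the finitely generated graded $k$-algebra $H^*(A;k)$, where $d = \mathrm{Krull\:dim}\: H^*(A;k)$. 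Solving for $N$ yields the stated lower bound on $h_\pol$.

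For (2b), I would induct on the height of the tower under $A$. Each storey provides a cofiber sequence compatible with $\twist$ that reduces the problem to a Hopf algebra one level simpler, and propagating through each level should introduce at most one polynomial factor to the length of the resulting filtration of $\twist^n k$, so after $\ell$ storeys one obtains a filtration of polynomial length of degree at most $\ell$. The main obstacle will be (2b): one must verify that the filtrations built at each level splice together without collateral growth, and that internal degrees remain controlled through the cofiber sequences at each level. The compatibility of $\twist$ with the tower structure, together with the polynomial-growth bookkeeping across levels, will be the most delicate step.
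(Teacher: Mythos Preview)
Your outlines for (2a) and (2b) are essentially the paper's approach: properness of $\StMod_A$ gives the lower bound via the inequality $\delta(G,G(n)) \cdot \dim \StExt^*(G,G) \geq \dim \StExt^*(G,G(n))$, and the tower is unwound into a sequence of resolutions (the paper calls this a \emph{pyramid}) which propagate a polynomial bound of degree $\ell$ on complexity. The paper also observes, as you do, that the splicing in (2b) is the delicate step, and carries it out via explicit staircases built from the monogenic storeys.

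There are two issues with your plan. First, a minor one: $k=k(0)$ alone need not be a compact generator of $\StMod_A$. The paper shows that $G=\bigoplus_{i=0}^{d-1}k(i)$ works, where $d$ is the Gorenstein parameter, and this is the generator one must test against.

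Second, and more substantively, your independent argument for (1) has a real gap. You propose to bound $\delta(G,\twist^n G)$ from above by the Hilbert function of $H^*(A;k)$, producing ``a filtration of $\twist^n k$ by shifts of $k$ of length bounded by this polynomial \ldots\ by iteratively attaching $k$-cells along minimal generators.'' But an upper bound on complexity in terms of $\sum_\ell \dim_k \Ext^\ell(G,F^n G)$ is exactly what \emph{smoothness} of the category buys you (see the paper's Proposition~\ref{prop:poincarepolybound}), and $\StMod_A$ is not known to be smooth; the paper flags this explicitly. A minimal free resolution of $k$ in $\Mod_A^\heartsuit$ resolves by modules that vanish in $\StMod_A$, so it does not produce a filtration of $k(n)$ by shifts of $k(0)$, and there is no evident mechanism for converting the Hilbert function of cohomology into a cell structure of the stated length. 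The paper sidesteps this entirely: it proves (1) as a consequence of the pyramid construction for (2b), since Wilkerson's result guarantees that \emph{some} tower under $A$ exists, hence $\delta(G,G(n))$ is bounded by some polynomial and $h_{\mathrm{cat}}(\twist)=0$ follows. In other words, you do not need a separate argument for (1); the one you propose does not work, but the one you propose for (2b) already implies it.
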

These are given by Theorems \ref{thm:twist_expentropy_iszero} and \ref{thm:catpol_bounds}.
A method for obtaining sharper upper bounds for $ h_\pol(\twist) $ is suggested by Corollary \ref{prop:upperboundres_length_refined}. 
We will explore applications of Yomdin--style estimates \cite[\S4]{MR4233273} to understanding $ h_\pol(\twist) $ in a sequel. 
\begin{rmk}
	We use the terminology `twist' to refer to changes in the index of a graded object and `shift' or `suspension' for a homological degree--or in the terminology of stable $ \infty $-categories, suspension or desuspension. 
\end{rmk}
We expect the inequality in Theorem \ref{thm:mainthm}(\ref{thmitem:lowerboundcatpolent}) is always an equality, and this is the subject of ongoing work.  
The proof of Theorem \ref{thm:mainthm}(\ref{thmitem:lowerboundcatpolent}) utilizes linear-algebraic bounds on categorical entropy (\S\ref{subsection:linalg_bounds_ent}), which relies on knowing that $ \StMod_A $ is proper. 
While $ \StMod_A $ is proper, it is not known to be smooth, and Theorem \ref{thm:mainthm}(\ref{thmitem:upperboundcatpolent}) is not amenable to a similar reduction. 

To circumvent this issue, we work directly at the level of stable $ \infty $-categories--in particular, we rely on a careful analysis of resolutions in $ \StMod_A $. 
Instead of reducing the computation of entropy to cohomological computations, we produce specific resolutions in $ \StMod_A $ which allow us to bound the complexity via a downward induction. 
In particular, a good choice of resolution leads to better approximations to $ h_\pol(\twist) $ (see Observation \ref{obs:choiceoftower} and the examples of \S\ref{subsection:noncomm_polentropy_computed}).
In proving our result, we exhibit the utility of a homotopical perspective on categorical entropy.  
\begin{rmk}
	While the stable module category and the functor $ \twist $ can be defined for the category of modules over any graded Frobenius algebra over $ k $, our assumption that $ A $ is a finite connected graded cocommutative Hopf algebra over $ k $ guarantees that $ H^*(A; k) $ is both a graded-commutative ring (Proposition \ref{prop:hopf_coh_gradedcomm}) and has finite Krull dimension by a general result of Wilkerson \cite[Theorem A]{MR597872}. 
\end{rmk}

\subsection{The genesis of this paper.}
Theorem \ref{thm:mainthm} is inspired by parallels between $ \StMod_A $ and the bounded derived category of coherent sheaves on projective space $ D^b(\Coh_{\P^n}) $ and computations of Dmitrov-Haiden-Katzarkov-Kontsevich and Fan-Fu-Ouchi. 

The categorical entropy and the categorical polynomial entropy of twists on coherent sheaves on projective space is well-understood; the following are special cases of \cite[Proposition 6.4]{MR4233273} and \cite[Lemma 2.13]{MR3289326}.
\begin{prop}\label{prop:entropy_twistcohproj_inspiration}
	Let $ k $ be a field. 
	Write $ \P^n $ for projective space of dimension $ n $ over $ k $. 
	Consider the autoequivalence $ - \otimes \mathcal{O}(1):D^b\left(\Coh_{\P^n }\right) \to D^b\left(\Coh_{\P^n }\right) $. 
	\begin{enumerate}
		\item The categorical entropy of the autoequivalence $ - \otimes \mathcal{O}(1) $ is given by $ h_{cat}(- \otimes \mathcal{O}(1)) = 0 $.
		\item \label{propitem:inspiration_hpol} The categorical polynomial entropy of the autoequivalence $ - \otimes \mathcal{O}(1) $ is given by
		\begin{equation*}
			h_\pol(- \otimes \mathcal{O}(1)) = n .
		\end{equation*}
	\end{enumerate}
\end{prop}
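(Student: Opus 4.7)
The plan is to fix a convenient compact generator for $D^b(\Coh_{\P^n})$ and directly estimate the asymptotics of $\delta(G, F^m G)$, where $F = - \otimes \mathcal{O}(1)$. The natural choice is the Beilinson tilting object $G = \bigoplus_{i=0}^n \mathcal{O}(i)$, whose summands form a full strong exceptional collection and hence generate the category. Under $F$, we have $F^m G = \bigoplus_{i=0}^n \mathcal{O}(m+i)$, so both $h_{cat}(F)$ and $h_\pol(F)$ are determined by the growth of the single sequence $m \mapsto \delta(G, F^m G)$.

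For the upper bound I would iterate Beilinson's resolution of the diagonal, which writes $\mathcal{O}_\Delta$ on $\P^n \times \P^n$ as a finite complex whose terms are of the form $\Omega^i(i) \boxtimes \mathcal{O}(-i)$ for $i = 0, \ldots, n$. Pushing forward along the second projection after twisting by $\mathcal{O}(m+j)$ on the first factor yields, for each $j \in \{0, \ldots, n\}$ and each $m \gg 0$, a finite resolution of $\mathcal{O}(m+j)$ by shifted copies of the summands of $G$, with multiplicities controlled by $\dim_k H^*(\P^n, \mathcal{O}(m+j) \otimes \Omega^i(i))$. A Riemann--Roch computation shows these multiplicities grow polynomially in $m$ of degree exactly $n$; subadditivity of complexity under extensions then yields $\delta(G, F^m G) \lesssim m^n$, so $h_{cat}(F) \leq 0$ and $h_\pol(F) \leq n$.

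For the matching lower bound I would exploit smoothness and properness of $D^b(\Coh_{\P^n})$: complexity admits a cohomological lower bound
\[
\delta(G, F^m G) \geq \frac{\dim_k \Hom^*(G, F^m G)}{\dim_k \Hom^*(G, G)}
\]
in the spirit of the linear-algebraic estimates of \S\ref{subsection:linalg_bounds_ent}. The numerator equals $\dim_k H^*(\P^n, G^\vee \otimes G \otimes \mathcal{O}(m))$, which for $m$ large is a polynomial in $m$ of degree exactly $n$ with positive leading coefficient (by Riemann--Roch, since $G^\vee \otimes G$ is a nonzero locally free sheaf). This gives $h_\pol(F) \geq n$, and the matching bound $h_{cat}(F) \geq 0$ is automatic from $\delta(G, F^m G) \geq 1$.

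The principal obstacle I foresee is calibrating the cohomological lower bound precisely enough to see the full polynomial degree: the subadditivity of Hom-dimensions under extensions in a stable $\infty$-category only produces a lower bound on $\delta$ after a careful accounting of shifts, and one needs the smoothness of $D^b(\Coh_{\P^n})$ to ensure the Hom-complexes are themselves perfect. This is precisely the divide that motivates the bifurcated proof of Theorem \ref{thm:mainthm}, with a cohomological lower bound in part (\ref{thmitem:lowerboundcatpolent}) and a resolution-theoretic upper bound in part (\ref{thmitem:upperboundcatpolent}), where the absence of smoothness for $\StMod_A$ prevents the two bounds from being handled uniformly.
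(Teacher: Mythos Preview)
The paper does not supply its own proof of this proposition; it is stated as a special case of results of Dmitrov--Haiden--Katzarkov--Kontsevich and Fan--Fu--Ouchi. Later, Example \ref{ex:exteriorhopfalg_hpol_computed} together with the BGG equivalence of Proposition \ref{prop:stmod_Dbcoh} furnishes an independent verification via the paper's main theorem applied to the exterior algebra.

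Your direct approach in $D^b(\Coh_{\P^n})$ is correct, and indeed closely mirrors the argument in the cited references. One simplification: because $D^b(\Coh_{\P^n})$ is smooth as well as proper, Proposition \ref{prop:poincarepolybound} yields \emph{both} inequalities, so the upper bound is already delivered by the same Ext computation you use for the lower bound, with no need to unroll the Beilinson resolution explicitly. Your resolution-theoretic upper bound is still a valid alternative that avoids appealing to smoothness, and in that sense is closer in spirit to the strategy the paper is forced to adopt for $\StMod_A$ in Theorem \ref{thm:mainthm}(\ref{thmitem:upperboundcatpolent}).
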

The stable module category $ \StMod_A $ of a finite connected graded cocommutative Hopf algebra is a homotopical analogue of the bounded derived category $ D^b\left(\Coh_{\P^n }\right) $.  
To motivate this analogy, consider the following result due to Bern\v{s}te\u{\i}n--Gel{\textquotesingle}fand--Gel{\textquotesingle}fand \cite{MR509387}; we follow the presentation of \cite[\S4.3.6]{MR1438306}. 

Let $ k $ be a field and $ V $ an $ (n +1) $-dimensional vector space over $ k $.  
Endow the exterior algebra $ \Lambda:= \bigwedge_k V $ with the canonical graded-commutative ring structure, where $ V $ is placed in grading $ 1 $. 
Any graded $ \Lambda $-module $ E $ and $ v \in V $ is equipped with morphisms $ \cdot v: E_i \to E_{i+1} $ for all $ i $. 
Considering these morphisms for all $ v \in V $ gives morphisms $ E_i \to E_{i+1} \otimes \mathcal{O}(1) $ which we assemble into a functor defined on discrete graded $ \Lambda $-modules  
\begin{equation}\label{eq:stmod_Dbcoh}
\begin{tikzcd}[row sep=0ex,column sep=small]
	\Mod^{\heartsuit}_{\Lambda} \ar[r] & D^b\left(\Coh_{\P(V)}\right) \\
	V = \oplus V_i \ar[r, mapsto] & \colim \left( \cdots \to V_j \otimes \mathcal{O}(j) \to V_{j+1} \otimes \mathcal{O}(j+1) \to \cdots \right).
\end{tikzcd}
\end{equation}
\begin{prop}
	\cites[\S4.3.6]{MR1438306}[]{MR509387} \label{prop:stmod_Dbcoh}
	The functor (\ref{eq:stmod_Dbcoh}) descends to an equivalence of stable $ \infty $-categories
	\begin{align*}
		\StMod_{\Lambda} \to D^b\left(\Coh_{\P(V)}\right) 
	\end{align*}
	where the homotopy category of $ \StMod_{\Lambda} $ has the same objects as $ \Mod^{\heartsuit}_{\Lambda} $ but the morphism spaces are subject to the equivalence relation $ \varphi,\psi: V \to W \in \Mod^{\heartsuit}_{\Lambda} $ are identified if their difference factors through a projective $ {\Lambda} $-module (see Definition \ref{defn:stmod} and following discussion). 
\end{prop}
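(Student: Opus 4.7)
The plan is to prove this via Koszul duality: the exterior algebra $\Lambda = \bigwedge V$ is quadratic Koszul dual to the symmetric algebra $S = \Sym V^*$, and Serre's theorem identifies $D^b(\Coh_{\P(V)})$ with the Verdier quotient of $D^b$ of finitely generated graded $S$-modules by the full subcategory of torsion (finite length) $S$-modules. Matching $\StMod_\Lambda$ with this quotient splits into three standard verifications: well-definedness of (\ref{eq:stmod_Dbcoh}) on the stable module category, full faithfulness, and essential surjectivity.

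First I would check descent of (\ref{eq:stmod_Dbcoh}) to $\StMod_\Lambda$. Since $\Lambda$ is Frobenius and connected, its indecomposable projective-injectives are graded shifts of $\Lambda$ itself, so it suffices to show each such shift maps to an acyclic object. For a shift of $\Lambda$, the colimit $\colim_i \bigwedge^{i-j} V \otimes \mathcal{O}(i)$ computed with the BGG differential, namely multiplication by the tautological section of $V^* \otimes \mathcal{O}(1)$, realizes a twist of the Koszul complex associated to that tautological section; this is a resolution of the zero section in the total space of $\mathcal{O}(-1)$, so it is acyclic on $\P(V)$.

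For full faithfulness, it suffices to check on a compact generator. Taking the residue field $k \in \StMod_\Lambda$ concentrated in degree $0$, whose image under the functor is $\mathcal{O}$, the graded Ext-algebra $\bigoplus_{i,n} \Ext^n_{\StMod_\Lambda}(k, k(i))$ is computed by the minimal free resolution of $k$ and yields $\Sym V^*$ by Koszul duality, while $\bigoplus_{i,n} \Hom_{\P(V)}(\mathcal{O}, \mathcal{O}(i)[n])$ also computes $\Sym V^*$ by the Bott formula, and the isomorphism is compatible with multiplication by $V$ by construction of the functor. Essential surjectivity then follows from Beilinson's theorem, since the line bundles $\mathcal{O}(-i)$ for $0 \leq i \leq n$ generate $D^b(\Coh_{\P(V)})$ and each appears as the image of $k$ concentrated in degree $-i$. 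The main obstacle will be reconciling the grading and sign conventions so that the BGG differential $V_j \otimes \mathcal{O}(j) \to V_{j+1} \otimes \mathcal{O}(j+1)$ is correctly identified with the twisted Koszul differential on $\P(V)$; once this bookkeeping is in place, the comparison of Ext-algebras and the invocation of Beilinson's resolution are formal.
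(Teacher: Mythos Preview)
The paper does not supply its own proof of this proposition; it is quoted from the literature (Bern\v{s}te\u{\i}n--Gel{\textquotesingle}fand--Gel{\textquotesingle}fand, with the exposition in \cite[\S4.3.6]{MR1438306}). Your outline follows the standard route found there---Koszul duality plus Beilinson's generation theorem---so the overall approach matches the intended one.

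There is, however, a genuine error in your full-faithfulness step. You write that $\bigoplus_{i,n} \Ext^n_{\StMod_\Lambda}(k, k(i))$ is ``computed by the minimal free resolution of $k$ and yields $\Sym V^*$.'' But morphism spaces in $\StMod_\Lambda$ are \emph{stable} Ext groups, i.e.\ Tate cohomology (Construction~\ref{cons:tate}), not ordinary Ext; the minimal free resolution computes only the latter. Stable Ext is nonzero in negative homological degrees, so the graded endomorphism ring of $k$ in $\StMod_\Lambda$ is strictly larger than $\Sym V^*$. Symmetrically, your claim that $\bigoplus_{i,n} H^n(\P(V), \mathcal{O}(i))$ equals $\Sym V^*$ ignores the Serre-duality contribution $H^{\dim \P(V)}(\P(V), \mathcal{O}(i))$ for $i \ll 0$. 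The two omissions happen to correspond to one another---that correspondence is precisely the content of the equivalence---but as written you are comparing the wrong pair of rings, and the argument does not establish the isomorphism. To repair it, either compute both sides in full (a complete Tate resolution on the $\Lambda$ side, the full Bott formula including top cohomology on the $\P(V)$ side), or restrict attention to the Beilinson exceptional collection $\mathcal{O}, \mathcal{O}(-1), \ldots, \mathcal{O}(-n)$ and its preimage $k(0), \ldots, k(-n)$, where the relevant Hom-spaces are finite and the comparison can be made directly.
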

We observe that the algebra $ \Lambda = {\bigwedge_k V}$ is an example of a finite-dimensional connected graded cocommutative Hopf algebra over $ k $. 
We extend this correspondence by observing
\begin{itemize}
	\item The twist by 1 functor $ \twist $ which acts on a graded module $ M_* $ via $ M(1)_\ell = M_{\ell-1} $ is sent under the inverse equivalence of Proposition \ref{prop:stmod_Dbcoh} to $ -\otimes \mathcal{O}(1) $. 

	\item We can extract the \emph{dimension} $ \dim_k V -1 $ of our projective space from the left-hand side as one less than the Krull dimension of the cohomology $ H^*(\Lambda; k) $ (Definition \ref{defn:hopf_alg_(co)homology}).
\end{itemize}
\begin{rmk}
	The analogy between stable module categories and categories of sheaves on projective space is taken further in \cite[\S 9.2]{Mat15}.
\end{rmk}
In view of Proposition \ref{prop:stmod_Dbcoh} and Example \ref{ex:exteriorhopfalg_hpol_computed}, Theorem \ref{thm:mainthm} and Proposition \ref{prop:upperboundres_length_refined} partially generalize Proposition \ref{prop:entropy_twistcohproj_inspiration}: While Fan--Fu--Ouchi are able to demonstrate equality in Theorem \ref{prop:entropy_twistcohproj_inspiration}(\ref{propitem:inspiration_hpol}), the upper bound we obtain in Theorem \ref{thm:mainthm}(\ref{thmitem:lowerboundcatpolent}) is weaker. 
The proof of the results \cite[\S6]{MR4233273} which are generalized here uses essentially that the bounded derived category of coherent sheaves $ D^b \left(\Coh_{\P^n}\right) $ on a smooth projective variety is itself smooth and proper as a stable $ \infty $-category (Definition \ref{def:smoothpropcat}). 
While $ \StMod_A $ is proper, it is not known to be smooth. 
The novelty of our work lies in the development and application of tools used to obtain such a statement--in particular, a careful analysis of resolutions in $ \StMod_A $. 

\subsection{Outline.} 
We collect relevant background on categorical polynomial entropy in \S\ref{section:catentropy}. 
Next, we introduce and collect the requisite background and relevant properties of Hopf algebras in \S\ref{section:hopfalgebras}. 
In particular, we define the stable module category of a Hopf algebra in \S\ref{subsection:stablemodcats}. 
We arrive at a proof of Theorem \ref{thm:mainthm} in \S\ref{section:twist_entropy_results}.
Finally, we compute the categorical entropy and the categorical polynomial entropy of the twist functor for many examples of relevant Hopf algebras in \S\ref{section:examples}. 

\subsection{Notation \& Conventions.}
We assume some familiarity with stable $ \infty $-categories and homological algebra. 
The reader who is unfamiliar with stable $ \infty $-categories should feel free to substitute the term ``pre-triangulated $ k $-linear dg category'' or ``stable model category'' for ``stable $ \infty$-category'' in the following by a result of Cohn \cite[Corollary 5.5]{Cohn}. 

We use the conventions of stable homotopy theory--e.g., if $ R $ is a discrete ring, $ \Mod_R $ is used to refer to the derived $ \infty $-category of module spectra over the Eilenberg-Mac Lane ring spectrum $ HR $ associated to $ R $. 
The additive 1-category of discrete modules over $ R $ is denoted by $ \Mod_R^{\heartsuit} $ (with special cases such as projective modules and vector spaces automatically assumed to be discrete). 
Given an $ \infty $-category $ \cat $, we write $ \ho \cat $ for the (triangulated) \emph{homotopy (1-)category of $ \cat $}, with morphism sets given by $ \hom_{\ho \cat}(X, Y) = \pi_0 \hom_\cat(X, Y) $. 

All tensor products and hom are to be taken in the derived sense, unless stated otherwise. 
We use the notation $ \Ext_A^{-s,-t}(k,M) := \pi_{s,t}\hom_{A}(k,M) = H^{-s,-t}(A; M) $ interchangeably. 

We attempt to follow existing conventions whenever possible, though this inevitably leads to potentially ambiguous terminology: A graded $ k $-vector space $ A $ is said to be \emph{connected} if it is concentrated in nonnegative grading and $ A_0 \simeq k $ (as opposed to being connective with respect to a t-structure on $ \Perf_k^\gr $). 

If $ A $ is a graded ring, we eventually write $ \Mod_A $ for \emph{graded} left $ A $-modules. 
All algebras are taken to be associative unless otherwise specified. 
Given an associative algebra $ A $, a \emph{module} over $ A $ will refer to a left module. 
Given an element $ a \in A $ and a left $ A $-ideal $ J \subseteq A $, we will write $ \cdot a : J \to J $ for the map which multiplies by $ a $ on the right. 

\subsection{Acknowledgements.}
The author thanks Michael Hopkins for invaluable guidance and Laura DeMarco for (inadvertently) starting her down this path. 
The author benefited from conversations with Ben Antieau, Laurent Côté, Paul Goerss, and Kevin Lin. 
The author thanks Araminta Amabel, Elden Elmanto, Peter Haine, and Noah Riggenbach for detailed comments on an early draft. 
The author gratefully acknowledges support from the NSF Graduate Fellowship Research program under Grant No. 1745303. 

\section{Categorical entropy}\label{section:catentropy}
All results contained in this section can be found in \cite{MR3289326} and \cite{MR4233273}; we make no claim to originality. 
\subsection{Complexity \& its asymptotic growth rates.}\label{subsection:entropy_defn}
To motivate the definition of categorical entropy, we recall the definition of entropy \cite[\S7.4]{Walters} of a continuous endomorphism $ f :X \to X $ of a (compact) topological space $ X $. 
Given two open covers $ \mathcal{U}, \mathcal{V} $ of $ X $, one can measure whether they are `independent' (or the complexity of $ \mathcal{V} $ with respect to $ \mathcal{U} $) using Shannon information. 
Any self-map $ f:X \to X $ acts on open covers $ \mathcal{U} $ of $ X $ via pullback.
We can measure the complexity of $ f $ by `testing' on a finite open cover $ \mathcal{U} $: the entropy $h(f; \mathcal{U}) $ of $ f $ \emph{with respect to $ \mathcal{U} $} is measured by the independence of the open covers $ \mathcal{U},  f^{-1}\mathcal{U}, \ldots, f^{-n}\mathcal{U} $ as $ n \to \infty $. 
Then one defines the topological entropy of $ f $ to be the supremum of $h(f; \mathcal{U}) $ over all finite open covers $ \mathcal{U} $ of $ X $.

In the setting of a small stable $ \infty $-category $ \cat $, a generator plays the role of a test object.  
\begin{recollection}\label{recollection:thick_subcategory}
	Given a small stable $\infty$-category $ \cat $ and a set $ (X_i)_{i \in I} $ of objects in $ \cat $, the \emph{thick subcategory} $ \langle X_i \rangle_{i \in I} $ generated by the $ X_i $ is the smallest stable subcategory of $ \cat $ containing each of the $ X_i $ and closed under finite (co)limits and retracts. 
	Note that if $ Y \in \langle X_i \rangle $, then $ \langle X_i,Y \rangle = \langle X_i \rangle  $. 
	We say that an object $ X \in \cat $ in a stable $ \infty $-category is a \emph{generator} if $ \langle X \rangle = \cat $. 
\end{recollection}
We begin by introducing a measure of relative complexity between \emph{objects} in a stable $ \infty $-category, then define the entropy of an endofunctor $ F $ in terms of the relative complexity of $ F $ evaluated on our test object. 
\begin{defn}\label{defn:complexity} \cite[Definition 2.1]{MR3289326}
	Let $ \cat $ be a stable $ \infty $-category. 
	Given two objects $ X , Y \in \cat $, we define the \emph{complexity $ \delta(X, Y) $ of $ Y $ with respect to $ X $} to be the infimum
	\begin{equation*}
		\delta(X, Y) = \inf \left\{ \ell \middle|\;
		\begin{matrix} 
			\text{there exist objects equipped with a filtration}\; 0 = Y_0 \to Y_1 \to \cdots \to Y_\ell\\
			 \text{such that } \cofib(Y_{i-1} \to Y_{i}) \simeq X[n_i] \; \text{for all $ i $ and $ Y$ is a retract of $ Y_\ell$} 
		\end{matrix} \right\} 
	\end{equation*}
	if $ Y $ is in the thick subcategory generated by $ X $, or $ \infty $ otherwise. 
\end{defn}
Note that our definition of $ \delta(X, Y) $ agrees with the complexity of Dimitrov--Haiden--Katzarkov--Kontsevich \cite[Definition 2.1]{MR3289326} in the homotopy triangulated categoy $ \ho \cat $. 
\begin{rmk}
	We restrict our focus to the value of the \emph{complexity function} of \cite[Definition 2.1]{MR3289326} at $ t = 0 $. 
	See also Remark \ref{rmk:entropyfunctionorvalue}. 
\end{rmk}
\begin{obs}\label{obs:deltazeroshiftinsensitive}
	The complexity $ \delta(X, Y) $ counts the \emph{number} of copies of (shifts of) $ X $ in a minimal filtration of $ Y $.   
	The reader may choose to think of $ \delta(X, Y) $ as the size of $ Y $ relative to $ X $.
	In particular, $ \delta(X, Y) $ is insensitive to (de)suspension, i.e. $ \delta(X, \Sigma Y) = \delta(X, Y) $.
\end{obs}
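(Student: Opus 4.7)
The observation has two parts: an interpretive remark reading off what $\delta(X,Y)$ measures from the definition, and the concrete claim that $\delta(X,\Sigma Y) = \delta(X,Y)$. The first part is essentially a rephrasing of Definition \ref{defn:complexity}, so the content to prove is the equality.

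The plan is to exploit the fact that the suspension $\Sigma$ is an auto-equivalence of $\cat$, hence preserves cofiber sequences, retracts, and filtrations. Concretely, I would argue both inequalities by transporting filtrations across $\Sigma$ and $\Sigma^{-1}$.

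For $\delta(X,\Sigma Y) \leq \delta(X,Y)$: suppose $\delta(X,Y) = \ell < \infty$ (else there is nothing to prove), and fix a filtration $0 = Y_0 \to Y_1 \to \cdots \to Y_\ell$ with $\cofib(Y_{i-1} \to Y_i) \simeq X[n_i]$ and $Y$ a retract of $Y_\ell$. Applying the exact functor $\Sigma$ produces a filtration
\begin{equation*}
0 = \Sigma Y_0 \to \Sigma Y_1 \to \cdots \to \Sigma Y_\ell
\end{equation*}
whose successive cofibers are $\Sigma(X[n_i]) \simeq X[n_i + 1]$, and $\Sigma Y$ is a retract of $\Sigma Y_\ell$ because $\Sigma$ sends retract diagrams to retract diagrams. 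Hence $\delta(X,\Sigma Y) \leq \ell$.

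For the reverse inequality, I would apply exactly the same argument to $\Sigma^{-1}$ in place of $\Sigma$, starting from a minimal filtration witnessing $\delta(X,\Sigma Y)$ and using $Y \simeq \Sigma^{-1}\Sigma Y$. There is no real obstacle here; the only thing to check is that we are allowed to shift the indexing integers $n_i$ by $\pm 1$ freely, which is immediate from the definition since $\delta(X,Y)$ counts filtration length and is defined as an infimum over all choices of shifts $n_i \in \Z$. The interpretive first sentence then follows by noting that the integers $n_i$ in the definition play no role in $\delta$ beyond indexing; only the \emph{number} $\ell$ of steps is being optimized.
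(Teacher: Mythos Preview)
Your argument is correct. The paper does not supply a proof for this observation; it is stated as a direct reading of Definition~\ref{defn:complexity}, and your verification via transporting filtrations along the auto-equivalences $\Sigma$ and $\Sigma^{-1}$ is exactly the natural way to make the claim precise.
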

\begin{lemma}\label{lemma:complexityprops} \cite[Proposition 2.2]{MR3289326}
	Let $ \cat $ be a stable $ \infty $-category. 
	The complexity $ \delta(-,-) $ satisfies the properties
\begin{enumerate}
	\item \label{lemmaitem:complexity_triangleineq} For any $ X, Y, Z \in \cat $, $\delta(X, Z) \leq \delta(X, Y) \cdot \delta(Y, Z) $
	\item \label{lemmaitem:complexity_subadditive} Given any cofiber sequence $ X \to Y \to Z $ and any object $ W $, $ \delta(W, Y) \leq \delta(W, X) + \delta(W, Z) $
	\item Given any exact functor $ F: \cat \to \mathcal{D} $ and $ X, Y \in \cat $, $ \delta(X, Y) \geq \delta(F(X), F(Y)) $.
\end{enumerate}
\end{lemma}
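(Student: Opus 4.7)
The plan is to prove (3) and (2) first, then deduce (1) from (2). Part (3) is essentially formal: starting from a witness filtration $0 = Y_0 \to Y_1 \to \cdots \to Y_\ell$ for $\delta(X,Y)$, applying the exact functor $F$ termwise preserves cofiber sequences, shifts, and retracts, producing a filtration of $F(Y_\ell)$ by shifts of $F(X)$ with $F(Y)$ as a retract; this yields $\delta(F(X), F(Y)) \leq \ell$.

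The technical heart is (2). I would set $m := \delta(W, X)$, $n := \delta(W, Z)$, and fix witness filtrations $0 = X_0 \to \cdots \to X_m$ and $0 = Z_0 \to \cdots \to Z_n$ with cofibers shifts of $W$. Since retracts split in a stable $\infty$-category, write $X_m \simeq X \oplus X'$ and $Z_n \simeq Z \oplus Z'$. The key construction is to enlarge the original cofiber sequence $X \xrightarrow{i} Y \xrightarrow{p} Z$ by absorbing the retract complements into the middle term: set $Y' := Y \oplus X' \oplus Z'$ and form
\begin{equation*}
	X_m \simeq X \oplus X' \xrightarrow{(i,\, \id,\, 0)} Y \oplus X' \oplus Z' \xrightarrow{(p,\, 0,\, \id)} Z \oplus Z' \simeq Z_n,
\end{equation*}
which is a cofiber sequence in which $Y$ sits as a retract of $Y'$ via the evident projection and inclusion. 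To produce a length-$(m+n)$ filtration of $Y'$ by shifts of $W$, I would splice the given filtrations: take $Y_j := X_j$ for $0 \leq j \leq m$, and for $0 \leq j \leq n$ define $Y_{m+j} := Y' \times_{Z_n} Z_j$. Then $Y_m \simeq \fib(Y' \to Z_n) \simeq X_m$ matches the previous step, $Y_{m+n} = Y'$, and because the vertical fibers of the pullback square relating $Y_{m+j-1} \to Y_{m+j}$ to $Z_{j-1} \to Z_j$ are both $X_m$, a $3 \times 3$-lemma argument in stable $\infty$-categories identifies the horizontal cofibers, so $\cofib(Y_{m+j-1} \to Y_{m+j})$ is a shift of $W$.

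For (1), I would combine (2) with two elementary observations: $\delta(X, -)$ is shift-invariant in its second argument (Observation~\ref{obs:deltazeroshiftinsensitive}), and any filtration witnessing $\delta(X, Z_q)$ also witnesses $\delta(X, Z)$ whenever $Z$ is a retract of $Z_q$. Given a witness filtration $0 = Z_0 \to \cdots \to Z_q$ of length $q = \delta(Y, Z)$ by shifts of $Y$, iterated application of (2) to the cofiber sequences $Z_{j-1} \to Z_j \to Y[m_j]$ gives
\begin{equation*}
	\delta(X, Z) \leq \delta(X, Z_q) \leq \sum_{j=1}^{q} \delta(X, Y[m_j]) = q \cdot \delta(X, Y) = \delta(X,Y) \cdot \delta(Y, Z).
\end{equation*}
The main obstacle throughout is the retract condition in Definition~\ref{defn:complexity}: a naive attempt to extend the cofiber sequence $X \to Y \to Z$ along the sections $X \to X_m$ and $Z \to Z_n$ fails, since in general neither section is compatible with the connecting map, and the ensuing composites $Y \to Y''$ need not be the identity on $Y$. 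The biproduct trick in (2) circumvents this by working with an enlarged cofiber sequence whose outer terms are honestly the total objects $X_m$ and $Z_n$ of the given filtrations.
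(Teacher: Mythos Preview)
The paper does not supply its own proof of this lemma; it simply cites \cite[Proposition 2.2]{MR3289326} and moves on. So there is no in-paper argument to compare against, and your task reduces to correctness.

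Your argument is correct. A few remarks confirming the points that could be delicate:
\begin{itemize}
	\item The splitting $X_m \simeq X \oplus X'$ (and likewise for $Z_n$) is legitimate: in a stable $\infty$-category, a retraction $r \colon X_m \to X$ with section $s$ yields a fiber sequence $\fib(r) \to X_m \to X$ which splits because $s$ is a section, so $X_m \simeq X \oplus \fib(r)$. No idempotent-completeness assumption is needed.
	\item The enlarged sequence $X_m \to Y' \to Z_n$ is indeed a cofiber sequence, being a direct sum of the three cofiber sequences $X \to Y \to Z$, $X' \xrightarrow{\id} X' \to 0$, and $0 \to Z' \xrightarrow{\id} Z'$.
	\item The pullback splice works: for each $j$ the square comparing $Y_{m+j-1} \to Y_{m+j}$ with $Z_{j-1} \to Z_j$ has both vertical fibers equal to $X_m$, so the cofiber of the middle map agrees with that of $Z_{j-1} \to Z_j$, which is a shift of $W$.
	\item In (1), the step $\delta(X,Z) \leq \delta(X,Z_q)$ is justified by transitivity of retracts: any filtration witnessing $\delta(X,Z_q)$ has $Z_q$ as a retract of its top, hence $Z$ as well.
\end{itemize}
The biproduct trick you use to absorb the retract complements is exactly the standard maneuver in the DHKK argument, so your write-up is in line with the cited source.
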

Next we introduce a notion which will allow us to generalize Lemma \ref{lemma:complexityprops}(\ref{lemmaitem:complexity_subadditive}). 
The key point is the
\begin{idea}
	The complexity of an object $ X $ can be controlled by the total complexity of a finite resolution of $ X $.
\end{idea} 
We begin by axiomatizing what we mean by `resolution.'
\begin{defn}\label{defn:resolution}
	Let $ \cat $ be a stable $ \infty $-category and suppose given a finite sequence of composable morphisms 
	\begin{equation}\label{eq:resolution} 
		X_n \xrightarrow{f_n} X_{n-1} \xrightarrow{f_{n-1}} \cdots X_{0} \xrightarrow{f_0} X_{-1} 
	\end{equation} 
	in $ \cat $.
\begin{enumerate}
	\item The sequence (\ref{eq:resolution}) is a \emph{chain complex} if each $ f_{i-1} \circ f_i $ is nullhomotopic. 
	\item Suppose given a sequence (\ref{eq:resolution}) which is a {chain complex}. We define recursively
	\begin{enumerate}
		\item When $ n = 0 $, (\ref{eq:resolution}) is \emph{a length $ 0 $ resolution of $ X_{-1} $} if $ f_0 $ is an equivalence $ X_0 \simeq X_{-1} $.
		\item When $ n> 0 $, (\ref{eq:resolution}) is \emph{a length $ n $ resolution of $ X_{-1} $} if 
		\begin{equation*} 
			\cofib(X_n \to X_{n-1}) \xrightarrow{\overline{f_{n-1}}} X_{n-2} \xrightarrow{f_{n-1}} \cdots X_{0} \xrightarrow{f_0} X_{-1} 
		\end{equation*} 
		is \emph{a length $ n -1 $ resolution of $ X_{-1} $}, where the homotopy class of $ \overline{f_{n-1}} $ is determined by the condition that $ f_{n-1} \circ f_n \simeq 0 $. 
	\end{enumerate}
	
	\item Let a length $ n $ resolution of $ X_{-1} $ (\ref{eq:resolution}) be given. 
	Writing $ Z_0 = X_0 $ and $ Z_n = X_n $ and $ Z_{i} \simeq \cofib(Z_{i+1} \to X_i) $, the data of a resolution is equivalently the data of $ n $ cofiber sequences: 
	\begin{equation}\label{eq:exactsequence_gluingdata}
		 Z_{i+1} \xrightarrow{h_i} X_i \xrightarrow{g_i} Z_i . 
	\end{equation} 
	We define the associated morphism \emph{classifying the resolution (\ref{eq:resolution})} to be the homotopy class of the composite 
	\begin{equation}
		X_0 \to \cofib(f_{0}) \simeq \Sigma Z_{n-2} \to \Sigma^2 Z_{n-3} \cdots \to \Sigma^{n-2}Z_1 \to \Sigma^{n-1} X_0 
	\end{equation}
	where each composite is a shift of the canonical map $ Z_i \to \cofib(g_i) \simeq \Sigma \fib(g_i) \simeq \Sigma Z_{i-1} $. 
\end{enumerate}
\end{defn}
\begin{rmk}
	Though it may seem unnatural to ask for nullhomotopies in an $ \infty $-category as a \emph{property} or to avoid defining resolutions in a suitably functorial way, our main purpose for this definition is for explicit bounds (cf. Lemma \ref{lemma:resolution_subadditive} and its application in Proposition \ref{prop:complexitybootstrap}). 
\end{rmk}
\begin{exs}
\begin{itemize}
	\item A resolution of length $ 1 $ is a cofiber sequence $ X_1 \to X_0 \to X_{-1} $. 
	\item A resolution of length $ 2 $ is a sequence of morphisms $ X_2 \xrightarrow{f_2} X_1 \xrightarrow{f_1} X_0 \xrightarrow{f_0} X_{-1} $ such that each composable pair is nullhomotopic and there exist nullhomotopies inducing an equivalence $ \cofib(f_2) \simeq \fib(f_0) $. 
	\item Let $ \mathcal{A} $ be a nice abelian category. 
	Let $ P_n \to P_{n-1} \to \cdots \to P_0 \to M $ be a projective resolution of $ M \in \mathcal{A} $. 
	Then it is a resolution in the sense of Definition \ref{defn:resolution} in the derived category $ D(\mathcal{A}) $ \cite[Definition 1.3.5.8]{HA}.
\end{itemize}
\end{exs}
We record an observation which will be used in \S\ref{section:twist_entropy_results}. 
\begin{lemma}\label{lemma:resolution_subadditive}
	Let $ \cat $ be a stable $ \infty $-category and let $ Y \in \cat $.
	Suppose given an exact sequence (\ref{eq:resolution}) in $ \cat $. Then
	\begin{equation}\label{eq:resolution_subadditive}
		\delta(Y, X_n) \leq \sum_{i=1}^n \delta\left(Y, \Sigma^{i-1} X_{n-i}\right) .
	\end{equation} 
\end{lemma}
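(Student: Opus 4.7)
The plan is to argue by induction on the length $n$ of the resolution, with the subadditivity of $\delta$ under cofiber sequences (Lemma~\ref{lemma:complexityprops}(\ref{lemmaitem:complexity_subadditive})) and the shift-invariance $\delta(Y,\Sigma Z) = \delta(Y, Z)$ (Observation~\ref{obs:deltazeroshiftinsensitive}) doing essentially all of the work.

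For the inductive step, I would use the cofiber sequence $X_n \xrightarrow{f_n} X_{n-1} \to \cofib(f_n)$ coming from the first map of the resolution. Rotating to $\Omega\,\cofib(f_n) \to X_n \to X_{n-1}$ and applying subadditivity together with shift-invariance yields
\begin{equation*}
\delta(Y, X_n) \leq \delta(Y, \cofib(f_n)) + \delta(Y, X_{n-1}).
\end{equation*}
By Definition~\ref{defn:resolution}, the truncated sequence $\cofib(f_n) \to X_{n-2} \to \cdots \to X_{-1}$ is an exact sequence of length $n - 1$ with $\cofib(f_n)$ as its leading term, so the inductive hypothesis bounds $\delta(Y, \cofib(f_n))$ by $\sum_{i=1}^{n-1} \delta(Y, \Sigma^{i-1} X_{n-1-i})$. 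Combining the two estimates and reindexing the sum produces the claimed inequality. The base case $n = 1$ is subadditivity applied to (a rotation of) the cofiber sequence $X_1 \to X_0 \to X_{-1}$, with the exactness hypothesis controlling the $X_{-1}$ contribution.

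The main ``obstacle'' is really just bookkeeping: the naive combination of the two inequalities above attaches $\Sigma^{i-2}$ (rather than $\Sigma^{i-1}$) to the term $X_{n-i}$ for $i \geq 2$, an off-by-one in suspension compared with the stated bound. Because $\delta$ is insensitive to suspension, this discrepancy is immaterial and disappears upon applying Observation~\ref{obs:deltazeroshiftinsensitive} term by term. I expect no deeper categorical input beyond this.
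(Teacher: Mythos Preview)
Your approach---induction on the length of the resolution, peeling off the first map and applying subadditivity (Lemma~\ref{lemma:complexityprops}(\ref{lemmaitem:complexity_subadditive})) together with shift-invariance---is precisely the paper's method, which is stated there in one line as ``apply subadditivity to the cofiber sequences (\ref{eq:exactsequence_gluingdata}).'' Your suspension bookkeeping is also handled correctly via Observation~\ref{obs:deltazeroshiftinsensitive}.

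The one place your argument does not go through is the base case, and the problem is not with your method but with the statement as written. For $n=1$ the claimed inequality reads $\delta(Y,X_1)\leq\delta(Y,X_0)$; taking $X_0=0$ gives $X_1\simeq\Omega X_{-1}$, so $\delta(Y,X_1)=\delta(Y,X_{-1})$, which need not vanish. No ``exactness hypothesis'' rescues this---a length~$1$ resolution is simply a cofiber sequence, with no constraint on $X_{-1}$. The sum in (\ref{eq:resolution_subadditive}) is off by one term: it should include $\delta(Y,X_{-1})$ (equivalently, run to $i=n+1$). This is also how the lemma is actually used in Proposition~\ref{prop:complexitybootstrap}, where the right-hand side has $m+1$ terms: the $m$ interior $N_{\ell-1}$-terms together with the endpoint $\delta_0(G,N_\ell(n-r))$. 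With the statement corrected in this way, your induction goes through verbatim, and the base case $n=1$ is exactly subadditivity for the rotated triangle $\Omega X_{-1}\to X_1\to X_0$.
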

\begin{proof}
	This follows from consideration of Lemma \ref{lemma:complexityprops}(\ref{lemmaitem:complexity_subadditive}) applied to the exact sequences of (\ref{eq:exactsequence_gluingdata}).
\end{proof}

\begin{defn}\label{defn:catentropy} \cites[Definition 2.4]{MR3289326}[Definition 2.4]{MR4233273}
	Let $ \cat $ be a stable $ \infty $-category with a generator $ G $, and suppose $ F: \cat \to \cat $ is an exact endofunctor. 
	Then the \emph{categorical entropy} of $ F $ is
	\begin{align*}
		h_\mathrm{cat}(F) &:= \lim_{n \to \infty} \frac{\log \delta(G, F^n(G))}{n}
	\end{align*}
	and the \emph{categorical polynomial entropy} of $ F $ is
	\begin{align*}
		h_{\pol}(F) &:= \limsup_{n \to \infty} \frac{\log \delta(G, F^n(G)) - n \cdot h_{cat, t}(F)}{\log n} .
	\end{align*}
\end{defn}
Note that our definition of $ h_{\mathrm{cat}}(F) $ (resp. $h\pol(F)$) agrees with the categorical entropy \cite[Definition 2.4]{MR3289326} (resp. categorical polynomial entropy \cite[Definition 2.4]{MR4233273}) functor $ F $ induces on the triangulated category $ \ho \cat $.   
\begin{rmk}\label{rmk:entropyfunctionorvalue}
	In the language of \cite{MR3289326,MR4233273}, we have defined the values of the categorical entropy function $ h_{\mathrm{cat}, -}(F): \R \to \R \cup \{-\infty\} $ and the categorical polynomial entropy function $ h_{\pol,-}(F) :\R \to \R \cup\{-\infty \} $ when $ t = 0 $. 
	These values might be interpreted as `geometric' (Observation \ref{obs:topent_comparison}). 
\end{rmk}
We collect a few basic properties of categorical entropy below. 
\begin{prop} \cite[Lemma 2.5]{MR3289326}
	Let $ \cat $ be a small stable $ \infty $-category admitting a single generator, and $ F $ an exact endofunctor of $ \cat $. Then
	\begin{itemize}
		\item The categorical entropy $ h_\mathrm{cat}(F)$ and the categorical polynomial entropy $ h_{\pol}(F) $ of $ F $ are well-defined, i.e. their defining limits exist in $ \R \cup\{-\infty\} $. 

		\item The categorical entropy $ h_\mathrm{cat}(F)$ and the categorical polynomial entropy $ h_{\pol}(F) $ of $ F $ are independent of a choice of generator of $ \cat $. 

		\item The categorical entropy and the categorical polynomial entropy of the identity endofunctor are identically zero: $ h_\mathrm{cat}(\id_\cat) = h_{\pol}(\id_\cat) = 0 $.
	\end{itemize}
\end{prop}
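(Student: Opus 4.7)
The plan is to address each of the three bullets in sequence, relying only on the triangle inequality, subadditivity, and functoriality of complexity recorded in Lemma \ref{lemma:complexityprops}.

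First I would establish submultiplicativity of the sequence $\delta(G, F^n(G))$ in $n$. The key observation is that inserting $F^n(G)$ as an intermediate object in the triangle inequality Lemma \ref{lemma:complexityprops}(\ref{lemmaitem:complexity_triangleineq}) and then applying the functoriality estimate Lemma \ref{lemma:complexityprops}(3) to the exact endofunctor $F^n$ yields
\begin{equation*}
\delta(G, F^{n+m}(G)) \;\leq\; \delta(G, F^n(G)) \cdot \delta(F^n(G), F^n(F^m(G))) \;\leq\; \delta(G, F^n(G)) \cdot \delta(G, F^m(G)) .
\end{equation*}
Setting $a_n := \log \delta(G, F^n(G))$, this gives $a_{n+m} \leq a_n + a_m$, and Fekete's lemma immediately produces $\lim_n a_n/n = \inf_n a_n/n \in \R \cup \{-\infty\}$, yielding well-definedness of $h_\mathrm{cat}(F)$. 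For the polynomial variant, the defining quantity is already a $\limsup$, hence exists in the extended reals; one only needs to verify that the subtraction $n \cdot h_\mathrm{cat}(F)$ in the numerator is meaningful, which is automatic when $h_\mathrm{cat}(F) \in \R$ and resolved by convention in the degenerate case $h_\mathrm{cat}(F) = -\infty$.

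Next I would handle independence of the choice of generator. Given two generators $G, G' \in \cat$, both $\delta(G, G')$ and $\delta(G', G)$ are finite by Recollection \ref{recollection:thick_subcategory}. A double application of the triangle inequality combined with functoriality produces
\begin{equation*}
\delta(G', F^n(G')) \;\leq\; \delta(G', G) \cdot \delta(G, F^n(G)) \cdot \delta(G, G') ,
\end{equation*}
and the symmetric bound with the roles of $G$ and $G'$ exchanged. Consequently, $\log \delta(G', F^n(G'))$ and $\log \delta(G, F^n(G))$ differ by a quantity bounded independently of $n$, so dividing by $n$ (respectively by $\log n$) and passing to the limit yields the same values of $h_\mathrm{cat}$ and $h_\pol$.

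Finally, for the identity endofunctor one has $F^n(G) = G$ for every $n$, and $\delta(G, G) = 1$ via the length-one resolution $0 \to G$ with unshifted cofiber $G$. Thus $a_n \equiv 0$, so both $h_\mathrm{cat}(\id_\cat)$ and $h_\pol(\id_\cat)$ vanish. I expect the main bookkeeping obstacle to be the interpretation of $h_\pol(F)$ in the edge case $h_\mathrm{cat}(F) = -\infty$, where the convention from \cite{MR4233273} should be invoked rather than re-derived; every other step reduces to a two-line manipulation of Lemma \ref{lemma:complexityprops}.
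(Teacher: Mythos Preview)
The paper does not supply its own proof of this proposition; it is stated with a citation to \cite[Lemma 2.5]{MR3289326} and no argument is given in the text. Your proposal is correct and is the standard argument one finds in the cited reference: submultiplicativity of $n \mapsto \delta(G, F^n(G))$ via Lemma \ref{lemma:complexityprops}(\ref{lemmaitem:complexity_triangleineq}) and (3), Fekete's lemma for existence of $h_\mathrm{cat}$, and the sandwich bound from two generators for independence. There is nothing to compare against here beyond noting that your write-up matches what the paper is outsourcing.
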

\begin{rmk}
	Categorical entropy is meant to measure the complexity of a functor. 
	It makes sense that the entropy of the identity functor is zero. 
\end{rmk} 
\begin{obs}\label{obs:topent_comparison}
	A special case of categorical entropy can be related to topological entropy. 
	For instance, let $ {\cat = D^b(\Coh_X)} $ be the bounded derived category of coherent sheaves on a smooth projective variety $ X $ over the complex numbers. 
	A self-map $ f:X \to X $ of $ X $ induces a pullback functor $ F = f^*: \cat \to \cat $. 
	Under a certain condition on $ f^* $, the categorical entropy of $ f^* $ is bounded from below by the logarithm of the spectral radius of the induced linear map $ f^* $ on Kähler differentials $ \Omega^n_{X/\mathbb{C}} $ by \cite[Theorem 2.8]{MR3289326} and the Hochschild--Kostant--Rosenberg theorem \cite{MR142598}. 
	When $ f $ is surjective, the formality of Kähler manifolds \cite[Main Theorem]{MR382702} and the Gromov-Yomdin theorem \cites[2.3 Corollaire]{MR2026895}[Theorem 1.1]{MR889979} imply that the latter is equal to the topological entropy of $ f $. 
\end{obs}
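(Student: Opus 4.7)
The observation is a chain of citations rather than a self-contained theorem, so my plan is to articulate each link in the chain and verify the compatibilities.

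First, I would invoke \cite[Theorem 2.8]{MR3289326}, the linear-algebraic lower bound for categorical entropy: for a smooth and proper stable $\infty$-category $\cat$ with compact generator and an exact endofunctor $F$, one has $h_{\mathrm{cat}}(F) \geq \log \rho\bigl(F_* : HH_*(\cat) \to HH_*(\cat)\bigr)$, where $\rho$ denotes the spectral radius. The "certain condition on $f^*$" alluded to in the statement is precisely the condition that $F_*$ acts in a way that this inequality applies (and that $\cat = D^b(\Coh_X)$ is smooth and proper, which holds since $X$ is smooth and projective). Passing to a sub-spectral radius, I would then note that $\rho$ on $HH_*$ is at least $\rho$ on any invariant summand.

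Second, I would invoke the Hochschild--Kostant--Rosenberg theorem \cite{MR142598}: in characteristic zero for $X$ smooth over $\mathbb{C}$, there is a natural isomorphism $HH_*(D^b(\Coh_X)) \simeq \bigoplus_{q-p = *} H^p\bigl(X, \Omega^q_{X/\mathbb{C}}\bigr)$, equivariantly for the action of $f^*$. The summand $H^*(X, \Omega^n_{X/\mathbb{C}})$ with $n = \dim X$ is $f^*$-invariant, so the spectral radius of $f^*$ on the whole $HH_*$ dominates its spectral radius on this top summand. Combined with the previous step, this produces the asserted inequality $h_{\mathrm{cat}}(f^*) \geq \log \rho\bigl(f^* \curvearrowright \Omega^n_{X/\mathbb{C}}\bigr)$.

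Third, to identify the right-hand side with topological entropy when $f$ is surjective, I would use the Gromov--Yomdin theorem \cite{MR2026895,MR889979}: for a holomorphic surjection $f:X\to X$ of a compact Kähler manifold, $h_{\mathrm{top}}(f) = \log \rho\bigl(f^* : H^*(X;\R) \to H^*(X;\R)\bigr)$. Formality of compact Kähler manifolds \cite{MR382702} guarantees that the action on de Rham cohomology is recovered from the Hodge pieces, and the Hodge decomposition lets me relate the spectral radius on total cohomology to the spectral radius on $H^*(X, \Omega^n_{X/\mathbb{C}})$, closing the loop.

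The main obstacle is the last identification: the Gromov--Yomdin spectral radius is attained on some $(p,p)$-piece of Hodge cohomology, and one needs to argue either that the top-form summand $\Omega^n$ suffices to witness this spectral radius (e.g.\ via Serre duality and a comparison of dynamical degrees) or to modify the statement so that the relevant Hodge component is named explicitly. Since the observation is purely motivational here, my proposal is to present it as a chain of inequalities and known facts, leaving the sharp identification of which Hodge piece realizes $h_{\mathrm{top}}(f)$ to the referenced literature rather than re-proving the full theorem of dynamical degrees for Kähler manifolds.
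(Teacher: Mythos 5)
The statement you are analyzing is an \emph{observation}, not a theorem the paper proves; it has no accompanying argument in the text beyond the references it cites. Your reconstruction correctly identifies and orders the three ingredients the observation invokes: the DHKK Hochschild-homology lower bound for categorical entropy (for smooth proper categories with an admissible endofunctor), the Hochschild--Kostant--Rosenberg isomorphism $HH_*(D^b(\Coh_X))\simeq\bigoplus_{q-p=*}H^p(X,\Omega^q_{X/\mathbb{C}})$, and Gromov--Yomdin combined with Kähler formality to identify the resulting spectral radius with $h_{\mathrm{top}}(f)$. This is the same chain of ideas the observation is gesturing at, and since there is no proof in the paper to deviate from, your proposal is essentially a faithful expansion of it.

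Your flagged concern at the end is well taken and is not a gap in \emph{your} argument so much as a slight imprecision in the observation's wording: as written, the phrase ``Kähler differentials $\Omega^n_{X/\mathbb{C}}$'' reads as a single Hodge piece (plausibly the top one), but the Gromov--Yomdin spectral radius is $\max_p\rho\bigl(f^*|_{H^{p,p}}\bigr)$ and need not be witnessed on $H^*(X,\omega_X)$ alone. The intended reading is almost certainly the full Hochschild/Hodge decomposition $\bigoplus_{p,q}H^q(X,\Omega^p)$, on which the categorical lower bound applies and the Gromov--Yomdin equality holds; with that reading the chain closes as you describe, and there is nothing further to re-prove.
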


\subsection{Linear-algebraic bounds.} \label{subsection:linalg_bounds_ent}
A classical theorem of Yomdin \cite{MR889979} provides a lower bound for the entropy of a $ C^\infty $-smooth self-map of a compact real manifold $ M $ in terms of the induced map on cohomology $ H^*(M ; \R) $.  

Homological bounds for topological entropy are obtained under certain niceness conditions on the underlying space and map. 
The relevant niceness condition(s) for stable $ \infty $-categories are smoothness and properness, which together are known as dualizability. 
We characterize these conditions following the exposition of \cite[\S3.2]{MR3190610}.

\begin{recollection} 
	[$ k $-linear stable $ \infty $-categories]
	Let $ k $ be a commutative ring. 
	Let $ \Pr^L $ be the category of presentable stable $ \infty $-categories and left exact functors \cite[Definition 5.5.3.1]{LurHTT}. 
	Then $ \Mod_k $ is an $ \E_\infty $-algebra object in $ \Pr^L $. 
	Write $ \Cat_k = \Mod_{\Mod_k}(\Pr^L) $ for the category of $ k $-linear presentable stable $ \infty $-categories, and $ \Cat^\omega_k $ for the category of compactly-generated $ k $-linear categories and colimit-preserving functors that preserve compact objects. 
	The category $ \Cat_k $ inherits a symmetric monoidal structure from $ \Pr^L $ by \cite[Proposition 4.5.3.1]{HA}, which we denote by $ \otimes_k $. 

	Given $ \cat \in \Cat_k $, its dual is the functor category
	\begin{equation*}
		\cat^\vee = \Fun^L_k(\cat, \Mod_k)
	\end{equation*} 
	in $ \Mod_k $. 
	There is a functorial evaluation map
	\begin{equation*}
		\cat \otimes_k \cat^\vee \to \Mod_k .
	\end{equation*}
\end{recollection}
\begin{defn}\label{def:smoothpropcat} \cite[Definition 3.8]{MR3190610}
	The $ k $-linear category $ \cat \in \Cat_k $ is \emph{dualizable} if there exists a coevaluation map 
	\begin{equation*}
		\coev: \Mod_k \to \cat^\vee \otimes_k \cat
	\end{equation*}
	which classifies $ \cat $ as a $ \cat^\vee \otimes_k \cat $ and such that both composites
	\begin{align*}
		\cat \xrightarrow{\id_{\cat} \otimes \coev} \cat \otimes_k \cat^\vee \otimes_k \cat \xrightarrow{\ev \otimes \id_{\cat}} \\
		\cat^\vee \xrightarrow{\coev \otimes \id_\cat} \cat^\vee \otimes_k \cat \otimes_k\cat^\vee \xrightarrow{\id_\cat \otimes_k \ev} \cat^\vee
	\end{align*}
	are equivalent to the identity. 

	A compactly-generated $ k $-linear category $ \cat \in \Cat^\omega_k $ is \emph{proper} if its evaluation map preserves compact objects; it is \emph{smooth} if it is dualizable and its coevaluation map is in $ \Cat^\omega_k $. 

	Finally, a small stable $ k $-linear category $ \mathcal{D} $ is proper (resp. smooth) if its Ind-completion $ \Ind(\mathcal{D}) $ is.
\end{defn} 
\begin{ex}
	Let $ \cat = \Mod_A $ for some $ k $-algebra $ A $. 
	Then by the discussion following \cite[Definition 3.8]{MR3190610},
	\begin{itemize}
		\item The category $ \cat $ is proper if and only if $ A $ is proper as a $ k $-module. 
		\item The category $ \cat $ is smooth if and only if $ A $ is perfect as an $ A \otimes_k A^\op $-module. 
	\end{itemize} 
\end{ex}
\begin{warning} 
	The literature calls a dg-algebra $ A $ smooth if its category of modules $ \Mod_A $ is.
	On the other hand, suppose $ k $ is a perfect field and consider a discrete commutative ring $ B $ as a dg-algebra $ B^c $ concentrated in degree zero.
	Then smoothness of $ B^c $ corresponds to \emph{regularity} of $ B $ in the sense of ordinary commutative algebra by \cite[Propositions 3.8 \& 3.13]{MR2609187}. 
\end{warning} 
\begin{prop}\label{prop:poincarepolybound} 
	Let $ k $ be a field, and suppose $ \cat $ is a $ k$-linear stable $ \infty $-category and $ G $ is a compact generator of $ \cat $. 
	If $ \cat $ is \emph{proper}, then 
	\begin{align*}
		h_{cat}(F) &\geq \lim_{N \to \infty}\frac{1}{N} \log \sum_\ell \dim_k \Ext^{-\ell}(G, F^N(G))e^{\ell t} \\
		h_{pol}(F) &\geq \limsup_{N \to \infty}\frac{1}{\log N}\left(\log \sum_\ell \dim_k \Ext^{-\ell}(G, F^N(G))e^{\ell t} - N h_{cat}(F) \right).
	\end{align*} 
	If $ \cat $ is smooth, then the reverse inequality holds. 
\end{prop}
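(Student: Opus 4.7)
The plan is to prove both inequalities by comparing the complexity $\delta(G, F^N(G))$ with the Poincaré-polynomial-like quantity $P_N(t) := \sum_\ell \dim_k \Ext^{-\ell}(G, F^N(G)) e^{\ell t}$. The two technical inputs are (i) subadditivity of $\dim_k \Ext$ along cofiber sequences — immediate from the associated long exact sequence — and (ii) for the smooth direction, a bimodule resolution argument.

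For the lower bounds (properness only), set $P_G(t) := \sum_\ell \dim_k \Ext^{-\ell}(G, G) e^{\ell t}$. Properness ensures $\hom_\cat(G, G)$ is a perfect $k$-module, so $P_G(t)$ is finite at any fixed $t$. Given $Y$ in the thick subcategory generated by $G$ and a filtration $0 = Y_0 \to Y_1 \to \cdots \to Y_n$ witnessing $n = \delta(G, Y)$ with $\cofib(Y_{i-1} \to Y_i) \simeq G[n_i]$ and $Y$ a retract of $Y_n$, a straightforward induction using long exact sequences yields
\begin{equation*}
\sum_\ell \dim_k \Ext^{-\ell}(G, Y) e^{\ell t} \leq \Bigl(\sum_{i=1}^n e^{-n_i t}\Bigr) P_G(t).
\end{equation*}
Specializing to $t = 0$ gives $P_N(0) \leq \delta(G, F^N(G)) \cdot P_G(0)$. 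Taking $\tfrac{1}{N} \log$ and passing to the limit yields the lower bound on $h_{cat}(F)$; for $h_{pol}(F)$, first subtract $N h_{cat}(F)$, then divide by $\log N$ and take $\limsup$, and use that $\tfrac{\log P_G(0)}{\log N} \to 0$.

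For the reverse inequality under smoothness, use the Morita-type equivalence $\cat \simeq \Perf_R$ where $R = \End_\cat(G)$. Smoothness is equivalent to $R$ being perfect as an $R \otimes_k R^{\op}$-module, so the diagonal bimodule admits a finite resolution by finite direct sums of shifts of $R \otimes_k R^{\op}$. Tensoring this resolution over $R \otimes_k R^{\op}$ with any $Y \in \cat$ yields a finite resolution of $Y$ whose stages are finite direct sums of shifts of $G$, with the multiplicity at each shift linearly controlled by the dimensions $\dim_k \Ext^{-\ell}(G, Y)$. Applying Lemma \ref{lemma:resolution_subadditive} to this resolution produces a constant $C_G$, depending only on the chosen resolution of the diagonal bimodule, such that
\begin{equation*}
\delta(G, Y) \leq C_G \cdot \sum_\ell \dim_k \Ext^{-\ell}(G, Y).
\end{equation*}
Specializing to $Y = F^N(G)$, taking logs, and passing to the appropriate limit yields the reverse inequalities.

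The main obstacle is the smooth direction: one must carefully track how the bigrading appearing in the resolution of the diagonal bimodule combines with the $\Ext$-grading of $Y$ after the tensor product, so that multiplicities accumulate correctly against $\sum_\ell \dim_k \Ext^{-\ell}(G, Y) e^{\ell t}$ rather than producing an uncontrolled polynomial in the shift lengths. The constant $C_G$ depends on the maximal shift appearing in the chosen bimodule resolution, but any such fixed constant is absorbed by the $\tfrac{1}{N}$ and $\tfrac{1}{\log N}$ rescalings built into the definitions of categorical entropy and polynomial entropy.
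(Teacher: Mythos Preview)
Your proposal is correct and reconstructs precisely the argument the paper defers to: the paper's own ``proof'' is a one-line citation to \cite[Theorem 2.6]{MR3289326} and \cite[Lemma 2.7]{MR4233273}, and what you have written is essentially the content of those proofs --- subadditivity of $\dim_k \Ext$ along cofiber sequences for the proper direction, and a resolution of the diagonal bimodule $R \in \Perf_{R \otimes_k R^{\op}}$ for the smooth direction. One small remark: the paper works exclusively at $t=0$ (see Remarks \ref{rmk:entropyfunctionorvalue} and the one following Definition \ref{defn:complexity}), so the $e^{\ell t}$ in the statement is vestigial from the cited sources, and your specialization to $t=0$ is exactly what is needed here; the general-$t$ inequality you wrote would require the full complexity function $\delta_t$ of \cite{MR3289326}, which the paper does not introduce.
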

\begin{proof}
	This follows from an examination of the proofs of \cite[Theorem 2.6]{MR3289326} and \cite[Lemma 2.7]{MR4233273}.
\end{proof}

\section{Graded Hopf algebras}\label{section:hopfalgebras} 
A cohomology theory $ E:\Spc^\op \to \AbGrp $ is an invariant of spaces.
Any $ E $ admits \emph{cohomology operations}--in particular they may be assembled into a graded Hopf algebra $ E^*E $ \cites[Theorem 6.10]{MR22071}[p.215]{MR54965}. 
Considering $ E $ as valued in modules over its algebra of operations provides a more refined invariant lifting $ E $. 
In particular, two spaces $ X ,Y $ which are not homotopy equivalent may have abstractly isomorphic $ E $-cohomology \emph{groups} but lift to different $ E^*E $-module structures. 
We can do even better--the Adams spectral sequence \cite[Theorem 2.1]{MR96219} has $ E_2 $-page the cohomology of $ E^*E $ and converges to (a localization of) the stable homotopy groups of spheres. 

In this section we provide a terse summary of the requisite background on graded Hopf algebras and their module categories. 
The interested reader should refer to \cite{MR174052,Swe69} for more context and history.
The reader who is not interested in absurd generalities should feel free to consider one of Examples \ref{ex:liegp_homology} or \ref{ex:mod2steenrod} in place of $ A $ throughout. 
After introducing definitions and establishing conventions in \S\ref{subsection:hopf_alg_defn}, we describe the analogue of short exact sequences for Hopf algebras in \S\ref{subsection:hopf_alg_ext} and recall a result of Wilkerson guaranteeing the existence of extensions of a certain form. 
We collect tools for computing Hopf algebra cohomology in \S\ref{subsection:hopf_cohomology} and introduce the stable module category in \S\ref{subsection:stablemodcats}. 
A less computationally-minded reader should feel free to black box Propositions \ref{prop:CE_sseq_coh} and \ref{prop:noncommex_cohkrulldim_computed} on a first read. 

\subsection{Definitions.}\label{subsection:hopf_alg_defn} 
Let $ \Z^\delta $ be the discrete category (i.e., no nonidentity morphisms) with objects given by the integers. 
The category  $ \Z^\delta $ has a symmetric monoidal structure given by addition of integers.
\begin{recollection}\label{rec:graded_vs}
Let $ k $ be a field, and let $ \Vect_k^{\gr} = \Mod_k^{\heartsuit, \mathrm{gr}} = \Fun(\Z^\delta, \Vect_k) $ be the category of $ \mathbb{Z} $-graded $ k $-vector spaces. 
\begin{enumerate}[label=(\alph*)]
	\item Given a graded $ k $-vector space $ A $, we will refer to its image under the forgetful functor 
	$	{u: \Vect_k^{\gr} \to \Vect_k} $, $ u: V \mapsto \bigoplus_{\ell \in \Z} V_\ell $ as its \emph{underlying} $ k $-module. 

	\item A graded $ k $-vector space $ A $ is said to be \emph{finite-dimensional} or \emph{finite} if its underlying $ k $-module is finite-dimensional, and \emph{levelwise finite-dimensional} if $ A_\ell $ is finite-dimensional over $ k $ for all $ \ell $. 

	Given a levelwise finite-dimensional graded $ k $-vector space $ A $, its \emph{Hilbert} or \emph{Poincaré series} is the formal power series $ p_A(t) := \sum_{\ell} (\dim_k A_\ell) t^\ell $. 
	\item \label{recitem:twist_functor_vs} Let $ V $ be an ungraded $ k $-vector space. 
	For any integer $ n $, we write $ V(n) $ for the graded $ k $-vector space which is $ V $ in grading $ n $ and zero otherwise. 

	For every integer $ a \in \Z $, there is an automorphism $ (a) $ on graded $ k $-vector spaces given by precomposing with the automorphism $ \Z \xrightarrow{- a} \Z $, i.e. this is given by $ M(a)_\ell = M_{\ell - a} $. 
	\item Since $ \Vect_k $ has a monoidal structure given by $ \otimes_k $, $ \Vect_k^{\gr} $ inherits a monoidal structure via Day convolution. 
	On objects, the tensor product in $ \Vect_k^\gr $ of $ A, B \in \Vect_k^\gr $ is given by 
	\begin{equation*} 
		(A \otimes B)_\ell = \bigoplus_{m} A_m \otimes_k B_{\ell-m} , 
	\end{equation*}
	with unit given by $ \mathbbm{1}_\ell = k $ if $ \ell = 0 $ and $ 0 $ otherwise. 
	The tensor product on $ \Vect_k^\gr $ is canonically symmetric monoidal via an isomorphism $ \tau: A \otimes B \simeq B \otimes A $ which obeys the \emph{Koszul sign rule}, i.e. which picks up a sign of $( -1)^{pq} $ on the component $ {A_p \otimes B_q} $. 
	\item We will denote the category of associative algebras in $ \Vect_k^\gr $ with respect to this symmetric monoidal structure by $ \Gr\Alg^\heartsuit_k $. 
	Observe that $ \Gr\Alg^\heartsuit_k $ inherits a symmetric monoidal structure $ \otimes $ from $ \Vect^\gr_k $. 

	\item \label{recitem:graded_enrichment}	We may regard $ \Vect_k^\mathrm{gr} $ and $ \Mod_k^\mathrm{gr} $ as being enriched over itself: we can consider 
	\begin{equation*} 
		\Hom_{\Mod_k^{gr}}(M, N)_{-\ell} := \Hom_{\Fun(\Z^\delta, \Vect_k)}(M, N(\ell)) 
	\end{equation*} 
	and likewise for $ \Mod_k^\mathrm{gr} $. 
	In particular, taking $ N = k(0) $, we have a duality functor $ {(-)^\vee: \Vect^{gr, \op}_k \xrightarrow{} \Vect^{gr}_k} $ such that $ {M^\vee_\ell = hom_k(M_{-\ell}, k)} $. 

	Given a graded $ k $-module spectrum $ M $, its homology groups are bigraded: $ \Ext^{s,t}(k,M) = \pi_s\hom_{\Fun(\Z^\delta, \Mod_k)} (k(t),M) $. 
	When a bigraded group arises as the bigraded homology groups of a graded $ k $-module spectrum, we refer to $ s $ as the (co)homological grading and $ t $ as the internal grading or weight.  
\end{enumerate}
\end{recollection} 
\begin{rmk}
	Taking duals restricts to an equivalence between levelwise finite-dimensional commutative (resp. associative) algebras in $ \Vect^\gr_k $ and levelwise finite-dimensional cocommutative (resp. coassociative) coalgebras in $ \Vect^\gr_k $.  
\end{rmk}
We say that a graded $ k $-vector space is \emph{connected} if $ A_\ell = 0 $ if $ \ell < 0 $ and $ A_0 = k $, and a graded $ k $-algebra is connected if its underlying $ k $-vector space is. 
\begin{warning}
	There is a clash of terminology here: A graded $ k $-vector space $ A $ is said to be {connected} if it is concentrated in nonnegative grading and $ A_0 \simeq k $ in the convention of \cite[p.26]{MR174052}.  
	This is to be contrasted with its other possible interpretation of being connective with respect to a t-structure on $ \Perf_k^\gr $. 
\end{warning}
\begin{defn}\label{defn:hopf_alg}
	Let $ k $ be a field and let $ A $ be a graded algebra in $ k $-vector spaces. 
	Suppose we are given a collection of morphisms in $ \Gr\Alg_k^\heartsuit $
	\begin{itemize}
		\item $ \Delta:A \to A \otimes_k A $, 
		\item $ \varepsilon: A \to k $, and 
		\item $ c: A \to A^\op $ where $ A^\op $ is the \emph{opposite algebra to $ A $} 
	\end{itemize}
	such that $ \Delta $, $ \varepsilon $, $ c $ form the comultiplication, counit, and coinverse of a cocommutative cogroup structure on $ A \in \Gr\Alg_k^\heartsuit $. 
	We say the data of $ (A, \Delta, \varepsilon, c) $ is a cocommutative, graded Hopf algebra over $ k $. 
	Often $ c $ is referred to as an \emph{antipode} or \emph{conjugation}. 

	A morphism of finite-dimensional, graded, cocommutative Hopf algebras over $ k $ is a morphism of graded $ k $-algebras $ \phi: A \to B $ which respects the cogroup structures on $ A $ and $ B $. 
	Given such a morphism $ \phi: A \to B $, $ A $ is said to be a \emph{sub-Hopf algebra} (resp. $ B $ is a \emph{quotient Hopf algebra} of $ A $) of $ B $ if the morphism $ \phi $ is injective (resp. surjective). 

	A graded Hopf algebra over $ k $ is said to be \emph{connected (resp. finite-dimensional)} if its underlying graded $ k $-vector space is. 

	We write $ I(A):= \ker(\varepsilon: A \to k) $ and call $ I(A) $ the \emph{augmentation ideal of $ A $}.
\end{defn}
\begin{obs}
	Spelling out the commutativity (resp. cocommutativity) axiom in more detail, this asks for the following diagrams 
	\begin{equation*}
	\begin{tikzcd}[column sep=tiny]
		A \otimes A \ar[rr,"\tau"] \ar[rd, "\mu"'] &&  A \otimes A \ar[ld,"\mu"] \\
		& A & 
	\end{tikzcd} \qquad \mathrm{and} \qquad
	\begin{tikzcd}[column sep=tiny]
		& A \ar[ld, "\Delta"'] \ar[rd,"\Delta"] & \\
		A \otimes A \ar[rr,"\tau"] && A \otimes A
	\end{tikzcd} 
	\end{equation*}
	to commute.
	By definition of $ \tau $, this agrees with what is often referred to as \emph{graded-(co)commutative}. 
\end{obs}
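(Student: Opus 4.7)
The plan is to unpack the two diagrams in the observation by evaluating on elements and using the explicit description of the symmetry $\tau$ from Recollection \ref{rec:graded_vs}. This is essentially a definition-chase, so I expect no real obstacle — the goal is simply to verify that the abstract notion of a commutative (resp. cocommutative) algebra in the symmetric monoidal category $(\Vect_k^\gr, \otimes, \tau)$ recovers the classical notion of graded-(co)commutativity.

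First I would recall that $\tau: A \otimes B \to B \otimes A$ is determined componentwise by $\tau(a \otimes b) = (-1)^{pq}\, b \otimes a$ for homogeneous elements $a \in A_p$ and $b \in B_q$, which is the content of the Koszul sign rule stated in Recollection \ref{rec:graded_vs}. Next I would translate the first diagram: commutativity of $\mu \circ \tau = \mu$ applied to a homogeneous tensor $a \otimes b$ with $a \in A_p, b \in A_q$ yields the identity $ab = (-1)^{pq} ba$ in $A_{p+q}$, which is the standard graded-commutativity relation. Then I would dualize: the second diagram asserts $\tau \circ \Delta = \Delta$, and writing $\Delta(x) = \sum x_{(1)} \otimes x_{(2)}$ in Sweedler notation (with $x_{(1)} \in A_p$, $x_{(2)} \in A_q$, $p+q = |x|$), this becomes $\sum x_{(1)} \otimes x_{(2)} = \sum (-1)^{pq} x_{(2)} \otimes x_{(1)}$, the standard graded-cocommutativity relation.

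Finally I would remark that these conditions are manifestly what is meant by ``graded-(co)commutative'' in the classical literature (e.g.\ \cite{MR174052}), so the observation holds by direct comparison. The only subtlety worth flagging is the sign convention: one must be careful that the symmetry $\tau$ in $\Vect^\gr_k$ is fixed to obey the Koszul rule rather than the naive swap $a \otimes b \mapsto b \otimes a$, since the latter would give a different (and in positive characteristic, strictly weaker) notion. With that convention fixed as in Recollection \ref{rec:graded_vs}\ref{recitem:twist_functor_vs}, the translation is immediate.
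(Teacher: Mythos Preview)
Your proposal is correct and matches the paper's intent: the observation is stated without proof because it is a direct unpacking of definitions, and your elementwise verification via the Koszul sign rule is exactly the intended content. One minor quibble: the Koszul sign convention is recorded in part (d) of Recollection \ref{rec:graded_vs}, not in \ref{recitem:twist_functor_vs}.
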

We will occasionally drop $ \Delta, \varepsilon, c $ and say $ A $ is a graded Hopf algebra over $ k $ if the Hopf algebra structure is understood. 
\begin{rmk}
	Suppose $ A $ is a connected graded bialgebra over a field $ k $ (that is, a graded Hopf algebra minus the assumption of an antipode). 
	Then an antipode for $ A $ exists automatically \cite[p. 185-6]{Margolis83}. 
	Thus we do not specify the antipode in the following examples. 
\end{rmk}
\begin{ex} [Trivial Hopf algebra] \label{ex:trivialHopfalgebra}
	The symmetric monoidal unit $ \mathbbm{1} = k(0) $ is canonically a finite connected graded bicommutative Hopf algebra over $ k $. 
	The object $ \mathbbm{1} $ is both initial and terminal in the category of finite-dimensional connected graded cocommutative Hopf algebras over $ k $. 
\end{ex}
\begin{ex}\label{ex:liegp_homology}
	Let $ G $ a compact, connected Lie group over $ \R $. 
	Then the Pontrjagin product on $ H_*(G; k) $, combined with the K\"unneth isomorphism, makes $ H_*(G;k) $ into an example of a finite connected graded algebra over $ k $. 
	The diagonal map $ \Delta: G \to G \times G $ induces a cocommutative coproduct on $ H_*(G; k) $ making $ H_*(G; k) $ into a connected graded cocommutative Hopf algebra over $ k $.  
\end{ex} 
\begin{obs}\label{obs:quotient_subHopfalgebra_duality}
	Let $ A $ be a Hopf algebra over $ k $. 
	Then taking $ k $-linear duals exhibits a bijection between sub-Hopf algebras of $ A $ and quotient Hopf algebras of the dual $ A^\vee $. 	
\end{obs}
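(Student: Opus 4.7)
The plan is to view the claimed bijection as an instance of a contravariant self-equivalence on finite-dimensional graded Hopf algebras over $k$ induced by $k$-linear duality. The starting point is Recollection \ref{rec:graded_vs}\ref{recitem:graded_enrichment}: for (levelwise) finite-dimensional graded $k$-vector spaces, $(-)^\vee$ gives a symmetric monoidal equivalence with the opposite category, with double-dual naturally isomorphic to the identity.

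First, I would promote this to a statement about Hopf algebras. Under any symmetric monoidal contravariant equivalence, algebra objects correspond to coalgebra objects and vice versa; applied here, the multiplication, unit, comultiplication, counit, and antipode on $A$ dualize to the comultiplication, counit, multiplication, unit, and antipode of a Hopf algebra structure on $A^\vee$, with the diagrammatic axioms (associativity, coassociativity, bialgebra compatibility, antipode relations) carrying over verbatim to their reverses. Thus $(-)^\vee$ extends to a contravariant equivalence between the category of finite-dimensional graded Hopf algebras over $k$ and itself.

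Next, because any contravariant equivalence sends monomorphisms to epimorphisms and vice versa, a sub-Hopf algebra inclusion $B \hookrightarrow A$ dualizes to a Hopf algebra quotient $A^\vee \twoheadrightarrow B^\vee$, and conversely a quotient $A^\vee \twoheadrightarrow C$ dualizes to an inclusion $C^\vee \hookrightarrow A^{\vee\vee} \simeq A$. The canonical double-dual isomorphism makes these two assignments mutually inverse, yielding the desired bijection.

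The only potential obstacle is the strong monoidality of $(-)^\vee$, which requires the canonical map $V^\vee \otimes W^\vee \to (V \otimes W)^\vee$ to be an isomorphism; this holds precisely because of the (levelwise) finite-dimensionality hypothesis on $A$. Everything else is diagrammatic bookkeeping.
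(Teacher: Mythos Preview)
Your argument is correct. Note that the paper states this as an \emph{observation} and does not supply a proof; the only hint is the unnumbered Remark immediately preceding it, which records that taking duals is an equivalence between levelwise finite-dimensional (co)associative (co)algebras in $\Vect_k^\gr$. Your proposal is essentially a careful unpacking of that remark, extended to the full Hopf algebra structure and then applied to sub- and quotient objects via the mono/epi swap under a contravariant equivalence. One small point worth making explicit: the statement as written does not include the finiteness hypothesis on $A$, but you are right that it is needed (and is ambient throughout the paper), so your proof is if anything more honest about the assumptions than the text.
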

\begin{ex}
	[Sub-Hopf algebras of the Steenrod algebra] \label{ex:mod2steenrod} Observation \ref{obs:quotient_subHopfalgebra_duality} has been employed to make the following identification.  
	We write $ \mathcal{A} $ for the mod 2 Steenrod algebra and $ \mathcal{A}^* $ for its dual. 
	The Hopf algebra $ \mathcal{A} $ is the algebra of cohomology operations $ H\F_2^*H\F_2 $ for singular cohomology theory with $ \F_2 $ coefficients. 
	Let 
	\begin{equation*}
	 E :=\left\{ e: \{1, 2, \ldots\} \to \Z_{\geq 0} \cup\{\infty\} \middle| e(i) \geq \min_{j<i}\{e(j), e(i-j)-j \} \text{ for all } i  \right\}.
	\end{equation*}
	Anderson and Davis showed \cite{MR334207} that the assignment $ e \mapsto \mathcal{A}^*/\left(\zeta_i^{e_i}\right) $ defines a bijection from $ E $ to the collection of sub-Hopf algebras of $ \mathcal{A} $. 
	The sub-Hopf algebra associated to an exponent sequence $ e $ is finite over $ \F_2 $ if and only if $ e(i) = 1 $ for all but finitely many $ i $. 

	We write $ \mathcal{A}_n $ for the sub-Hopf algebra of $ \mathcal{A} $ corresponding to the exponent sequence $ \{n+1, n, \ldots, 1,0, 0, \ldots \} $. 
	Then \cite[Chapter II]{MR2623793} has shown
	\begin{enumerate}
		\item The sub-Hopf algebra $ \mathcal{A}_n $ of $ \mathcal{A} $ is exactly the sub-Hopf algebra generated by $ \Sq^1, \ldots, \Sq^{2^n} $.  
		\item The $ \mathcal{A}_n $ exhaust the \emph{nice} sub-Hopf algebras of $ \mathcal{A} $; that is, these are the only sub-Hopf algebras of $ \mathcal{A} $ which admit an $ \mathcal{A} $-action which is compatible with the inclusion $ \mathcal{A}_n \subseteq \mathcal{A} $. 
		Here $ \mathcal{A}_n $ acts on itself by multiplication.  
	\end{enumerate}
While $ \mathcal{A} $ is not bicommutative, it is cocommmutative. 
Multiplication in the Steenrod algebra is governed by the Adem relations \cite[Theorem 1.1]{MR50278}: for all $ s > t $,
\begin{equation}\label{eq:ademrelations}
	\Sq^{2t} \Sq^s = \sum_{j=0}^t {s-t+j-1 \choose 2j}\Sq^{t+s+j}\Sq^{t-j} 
\end{equation} 
where the binomial coefficients are to be interpreted mod $ 2 $ and $ {k \choose j} =0 $ if $ j >  k $. 
\end{ex}

\subsection{Extensions of Hopf algebras.}\label{subsection:hopf_alg_ext}
A short exact sequence of groups $ {\{e\} \to N \mathrel{\unlhd} G \to G/N \to \{e\}} $ exhibits $ G $ as being built or glued from $ N $ and $ G/N $ in a twisted manner.
There is an analogous notion of a Hopf algebra $ C $ being a twisted product or extension of a quotient Hopf algebra by a normal sub-Hopf algebra. 

\begin{defn}\label{defn:hopf_alg_extension}
	Let $ A, B, C $ be connected graded Hopf algebras over $ k $. 
	A pair of morphisms of Hopf algebras $ i: A \to B $, $ p: B \to C $ is said to exhibit $ B $ as \emph{a Hopf algebra extension of $ C$ by $ A $} if there exists a map $ B \to A \otimes_k C $ which is both an isomorphism of left $ A $-modules and right $ C $-comodules. 
\end{defn} 

\begin{rmk}\label{rmk:hopf_alg_quotient_in_extension}
	In particular, given any such extension we have $ k \otimes_A B \simeq C $ by \cite[Proposition 4.9]{MR174052}.
\end{rmk}

We isolate a particularly nice class of extensions generalizing the notion of central extensions of groups. 
\begin{defn} \cite[\S6]{MR143788}
	A map of graded $ k $-algebras $ \varphi: A \to B $ in $ \Gr\Alg_k $ is said to be \emph{central} if the following diagram commutes
	\begin{equation*}
	\begin{tikzcd}[column sep=tiny]
		A \otimes B \ar[rr,"\tau"] \ar[d, "\varphi \otimes \id"'] & &B \otimes A \ar[d,"\id \otimes \varphi"] \\
		B \otimes B \ar[rd,"\mu_B"'] & & B \otimes B \ar[ld,"\mu_B"] \\
		& B &
	\end{tikzcd}.
	\end{equation*}
	There is a dual notion for maps of graded $ k $-coalgebras. 

	A Hopf algebra extension $ A \xrightarrow{i} B \xrightarrow{p} C $ is \emph{central} if $ i $ is a central map of algebras and $ p $ is a central map of coalgebras. 
\end{defn}
\begin{ex}\label{ex:SteenrodA1_extension}
	Let $ \mathcal{A}_1 $ be the sub-Hopf algebra of the Steenrod algebra generated by $ \Sq^1, \Sq^2 $ (see Example \ref{ex:mod2steenrod}). 
	Recall that $ \Sq^1 $ and $ \Sq^2 $ do not commute, and we write $ Q_1 = [\Sq^1, \Sq^2] $ for their commutator. 
	Then there is a central extension of cocommutative, finite-dimensional Hopf algebras 
	\begin{equation*} 
		\F_2 \to \F_2[Q_1]/(Q_1^2) \to \mathcal{A}_1 \to \bigwedge_{\F_2}[\Sq^1, \Sq^2] \to \F_2 
	\end{equation*}
	over $ \F_2 $. 
\end{ex}

Next we characterize a class of relatively simple commutative Hopf algebras, and recall a result of Wilkerson which shows these are `building blocks' for the more complicated Hopf algebras we will eventually consider. 
\begin{defn} 
	Let $ A $ be a Hopf algebra. 
	Given an element $ a \in A $, its \emph{height} is the minimal $ h \in \Z_{>0} $ such that $ a^h = 0 $. 	

	A Hopf algebra $ A $ over a field $ k $ of characteristic $ p $ is \emph{elementary} if all elements in the augmentation ideal have height $ p $, i.e. $ I^p $. 

	A Hopf algebra $ A $ is \emph{monogenic} if $ T $ can be taken to consist of a single element. 

	Given a graded Hopf algebra $ A $, a collection of elements $ T \subseteq A $ is said to \emph{generate} $ A $ if $ T $ generates $ A $ as an algebra. 
\end{defn} 
Recall that an element of a Hopf algebra $ a \in A $ is said to be \emph{primitive} if its image under the comultiplication is given by $ \Delta(a) = 1 \otimes a + a \otimes 1 $. 
\begin{exs}[Monogenic Hopf algebras of minimal height]\label{ex:monogenichopfalgs}
\cite{MR597872} Let $ k $ be a field of any characteristic. 
\begin{enumerate}
	\item There is a monogenic elementary Hopf algebra $ \bigwedge_k [x] $ where $ x $ is primitive, for $ x $ in any degree if char $ k = 2 $ or $ x $ in odd degrees when $ {\mathrm{char}\;  k = p} $ or $ 0 $.

	\item If char $ k $ is odd, then $ k[x]/(x^p) $ with $ x $ primitive in even grading is an elementary Hopf algebra.
\end{enumerate}
\end{exs} 
\begin{recollection}
	Borel's structure theorem \cite[Théorème 6.1]{MR51508} characterizes all bicommutative Hopf algebras over a perfect field as tensor products of monogenic Hopf algebras.  
\end{recollection}
In fact when $ \mathrm{char}\; k = 0 $ we have the following characterization.
\begin{prop}\label{prop:structure_hopfalg_charzero} \cite[Proposition 1.1]{MR597872}
	Let $ A $ be a finite-dimensional connected graded cocommutative Hopf algebra over a field $ k $ characteristic zero. 
	Then $ A $ is isomorphic as a Hopf algebra to a tensor product of exterior algebras. 
\end{prop}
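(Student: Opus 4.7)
The plan is to invoke the Milnor--Moore structure theorem and then show that the hypotheses force the Lie algebra of primitives to be abelian and concentrated in odd degree.

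First I would recall the Milnor--Moore theorem: over a field $k$ of characteristic zero, any connected graded cocommutative Hopf algebra $A$ is canonically isomorphic (as a Hopf algebra) to the universal enveloping algebra $U(P(A))$ of its graded Lie algebra of primitives $P(A) = \{a \in A : \Delta(a) = a \otimes 1 + 1 \otimes a\}$. So everything reduces to analyzing the graded Lie algebra $L := P(A)$.

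Next I would extract strong restrictions on $L$ from the assumption $\dim_k A < \infty$. Since $A$ is connected, $L$ is concentrated in strictly positive degrees. Suppose $x \in L$ has \emph{even} positive degree. Then in characteristic zero the powers $1, x, x^2, x^3, \ldots$ are linearly independent in $U(L) \cong A$ (there is no sign obstruction, as $x$ is even), contradicting finite-dimensionality. Hence $L$ is concentrated in odd positive degrees. Now for any $x, y \in L$, the bracket $[x,y]$ lies in $L$ and has degree $|x| + |y|$, which is even whenever $x, y$ are both of odd degree. Since the even part of $L$ vanishes, all such brackets are zero; thus $L$ is an abelian graded Lie algebra concentrated in odd degrees.

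Finally I would identify $U(L)$. Writing down a homogeneous $k$-basis $\{x_i\}_{i \in I}$ of $L$ (necessarily with each $|x_i|$ odd positive), the defining relations of $U(L)$ reduce, via $xy - (-1)^{|x||y|}yx = [x,y] = 0$, to $x_i x_j + x_j x_i = 0$ for $i \ne j$ and $2x_i^2 = [x_i, x_i] = 0$, i.e.\ $x_i^2 = 0$ (using characteristic zero). Each $x_i$ remains primitive. This exhibits an isomorphism of Hopf algebras
\begin{equation*}
A \;\cong\; U(L) \;\cong\; \bigotimes_{i \in I} \bigwedge\nolimits_k[x_i],
\end{equation*}
which is the desired tensor product of exterior algebras of the form appearing in Example \ref{ex:monogenichopfalgs}(1). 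The main (indeed only) nontrivial input is the Milnor--Moore theorem; once it is invoked, the rest is an elementary degree and characteristic argument, and there is no serious obstacle.
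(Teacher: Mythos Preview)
Your argument is correct: Milnor--Moore gives $A \cong U(P(A))$, finite-dimensionality forces $P(A)$ into odd degrees (else an even primitive would generate a polynomial subalgebra), hence $P(A)$ is abelian, and $U$ of an abelian odd-degree Lie algebra is exterior. The paper does not supply its own proof of this proposition but simply cites Wilkerson \cite[Proposition 1.1]{MR597872}; your route via Milnor--Moore is exactly the standard one (and is essentially how Wilkerson argues as well), so there is nothing to compare beyond noting that you have filled in what the paper left as a reference.
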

The following result of Wilkerson can be thought of as a generalization of Borel's result to associative, cocommutative Hopf algebras. 
\begin{prop}\label{prop:existence_monogen_subhopf}
	\cite[Proposition 1.2]{MR597872} 
	Let $ A $ be a finite-dimensional connected graded cocommutative Hopf algebra over a field $ k $ of arbitrary characteristic. 
	Then there exists a nontrivial monogenic sub-Hopf algebra $ C $ of $ A $ of minimal height. 
	In particular $ C $ can be taken to be a monogenic Hopf algebra of the forms in Examples \ref{ex:monogenichopfalgs}.
\end{prop}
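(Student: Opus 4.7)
The plan is to split by characteristic and, in the harder case, produce a nonzero primitive element of minimal positive degree in $A$ and replace it by an appropriate $p$-th power. In characteristic zero, Proposition \ref{prop:structure_hopfalg_charzero} immediately yields the result: $A$ is isomorphic to a tensor product of exterior algebras $\Lambda[x_1] \otimes_k \cdots \otimes_k \Lambda[x_n]$, and projecting onto any tensor factor produces a monogenic sub-Hopf algebra $\Lambda[x_i]$ of the form in Examples \ref{ex:monogenichopfalgs}.

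In positive characteristic $p$, I would first produce a nonzero primitive element $x \in I(A)$ of minimal positive degree, which exists by finite-dimensionality and connectedness. The reduced comultiplication $\bar{\Delta}(x) := \Delta(x) - x \otimes 1 - 1 \otimes x$ lies in $I(A) \otimes I(A)$ in total degree $|x|$, but any such element decomposes as a sum of tensors $x_i \otimes y_i$ with $0 < |x_i|, |y_i| < |x|$, contradicting minimality; hence $\bar{\Delta}(x) = 0$ and $x$ is primitive. Next, I would form the commutative sub-algebra $C_0 := k\langle x \rangle \subseteq A$. When $|x|$ is odd and $p \neq 2$, graded-commutativity of $C_0$ forces $x^2 = 0$ and $C_0 \cong \Lambda[x]$, matching Examples \ref{ex:monogenichopfalgs}. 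Otherwise $x \otimes 1$ and $1 \otimes x$ commute in $A \otimes A$, so $\Delta$ expands by the binomial theorem, $C_0$ is automatically closed under $\Delta$, and the identity $\Delta(x^h) = 0$ combined with the linear independence of $x^i \otimes x^{h-i}$ for $0 < i < h$ in $C_0 \otimes C_0$ forces $\binom{h}{i} \equiv 0 \pmod{p}$ for all such $i$. This in turn forces $h = p^n$ for some $n \geq 1$, so $C_0 \cong k[x]/(x^{p^n})$.

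To achieve \emph{minimal} height when $n > 1$, I would replace $x$ by $y := x^{p^{n-1}}$. Iterating the identity $(x \otimes 1 + 1 \otimes x)^p = x^p \otimes 1 + 1 \otimes x^p$ (valid because $\binom{p}{i} \equiv 0 \pmod p$ for $0 < i < p$) shows that $y$ is primitive in $A$; moreover $y \neq 0$ by minimality of the height $p^n$ and $y^p = x^{p^n} = 0$, so $k\langle y \rangle \cong k[y]/(y^p)$ is a monogenic sub-Hopf algebra of $A$ of the desired form. The main obstacle is the structural analysis of $C_0$ in the non-exterior case: one must simultaneously verify that $C_0$ is closed under $\Delta$ and that its height is a power of $p$, and both come down to the characteristic-$p$ vanishing of the middle binomial coefficients $\binom{h}{i}$ — this is where the positive-characteristic hypothesis is used decisively, and also why passing to a $p$-th power suffices to drop the height back down to $p$.
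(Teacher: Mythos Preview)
The paper does not give its own proof of this proposition; it simply cites Wilkerson. So there is no argument in the paper to compare against, and I evaluate your proof on its own terms.

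Your characteristic-zero reduction and your binomial-coefficient argument in positive characteristic (when $|x|$ is even or $p=2$) are correct. However, the odd-degree, odd-characteristic branch has a genuine gap. You assert that ``graded-commutativity of $C_0$ forces $x^2=0$'' when $|x|$ is odd and $p\neq 2$, but $C_0=k\langle x\rangle$ is merely \emph{commutative} (being generated by one element); this is not the same as graded-commutative in the Koszul sense, and it does not force $x^2=0$. The ambient $A$ is only assumed cocommutative, not commutative. In fact the assertion is false: take $A=k[x]/(x^{2p})$ with $|x|$ odd and $x$ primitive over a field of odd characteristic $p$. Since $x\otimes 1$ and $1\otimes x$ anticommute in $A\otimes A$, one has $(x\otimes 1+1\otimes x)^2=x^2\otimes 1+1\otimes x^2$, and then $(x^2\otimes 1+1\otimes x^2)^p=x^{2p}\otimes 1+1\otimes x^{2p}=0$; so $\Delta$ is well-defined and $A$ is a connected graded cocommutative Hopf algebra in which $x$ is primitive of odd degree with $x^2\neq 0$.

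The repair is short and keeps your strategy intact. When $|x|$ is odd and $p$ is odd, the same anticommutation computation shows $x^2$ is again primitive. If $x^2=0$ then $C_0\cong\Lambda[x]$ already has the required form. Otherwise $x^2$ is a nonzero primitive element of \emph{even} degree, to which your binomial argument applies verbatim: its height is $p^m$ for some $m\geq 1$, and $y=(x^2)^{p^{m-1}}$ is primitive of height $p$ in even degree, so $k\langle y\rangle\cong k[y]/(y^p)$ matches Examples~\ref{ex:monogenichopfalgs}.
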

In particular, we can write $ A $ as an extension of $ C $ by $ k \otimes_C A =: B $. 

\subsection{Modules and base change.}\label{subsection:hopf_cohomology} 
Let $ A $ be a graded Hopf algebra over $k $. 
In this section we give both a model-independent description of the cohomology of $ A $, and also introduce explicit techniques and resolutions for computing the (bigraded) cohomology groups of $ A $. 
In particular, we show that when $ A $ is cocommutative, its cohomology is a graded-commutative ring, so the Krull dimension of $ H^*(A; k) $ is well-defined. 
The latter result is the only part of this section used in \S\ref{section:twist_entropy_results}; computational tools for understanding Hopf algebra cohomology will not be used until \S\ref{section:examples}. 

Recall our notation $ \Mod^\heartsuit_A $ for the abelian (1-)category of discrete, graded modules over $ A $.   
Since taking Eilenberg--Maclane spectra $ H(-): \Vect_k \to \Mod_{k}(\Spectra) $ is symmetric monoidal, we can equivalently regard $ A $ as a bialgebra in graded $ Hk $-module spectra. 
We will abuse notation by writing $ A $ for the image of $ A $ under the above map, and we write $ \Mod_A $ for the $ \infty $-category of modules in graded spectra over $ HA $. 
Write $\Mod_A\left(\Perf_k\right)$ for the $ \infty $-category of graded left $ A $-module spectra whose underlying spectrum is perfect over $ k $.

From now on, we suppress ``gr''--all modules are understood to be graded unless specified otherwise. 
\begin{rmk}
	[Functoriality]
	Given a map of graded Hopf algebras $ \varphi: A \to B $, we have corresponding restriction 
	\begin{equation*}
	\varphi^*: \Mod_B \to \Mod_A  \qquad \varphi^* : \Mod^\heartsuit_B \to \Mod^\heartsuit_A 
	\end{equation*} 
	and induction functors 
	\begin{equation*}
	 B\otimes_A -:\Mod_A \to \Mod_B \qquad  B \otimes^\heartsuit_A -: \Mod^\heartsuit_A \to \Mod^\heartsuit_B .
	\end{equation*} 
	If $ B \otimes_A k $ is a perfect $ k $-module, then the induction functor further restricts to functor $ \Mod_A\left(\Perf_k\right) \to \Mod_B\left(\Perf_k\right) $. 
	In general the forgetful functor $ \Mod_B \to \Mod_A $ descends to a functor $ \Perf_B \to \Perf_A $ if and only if $ B $ is a perfect $ A $-module. 
\end{rmk}
A special case of the previous is the 
\begin{defn}\label{defn:hopf_alg_(co)homology}
Given a graded Hopf algebra $A $ over a field $k$, restriction along the counit $ \varepsilon: A \to k $ gives a functor $ \Mod_k^\gr \to \Mod^\gr_A $. 
The restriction functor admits both left and right adjoints, given respectively by
\begin{align*}
	& \Mod_A \to \Mod_k \\
	(-)_{A} &: M \mapsto M \otimes_A k \\
	(-)^A & :M \mapsto \hom_A(k, M)
\end{align*}
Again regarding $ k $ as an $ A $-module via the counit, the image of $ k $ under the left, right adjoints are denoted by $ k_A = C_*(A; k) $ the \emph{homology of $ A $} and $ k^A = C^*(A; k) $ the \emph{cohomology of $ A $}. 
\end{defn}
We will use freely the following characterization.
\begin{prop} \label{prop:exttor_modulespectra}
	Let $ k $ be a field and let $ A $ be a $ \Z $-graded associative $ k $-algebra which is levelwise discrete. 
	Then
	\begin{align*}
		\pi_{s, t} k \otimes_A k \simeq \Tor^A_{s,t}(k,k)  \\
		\pi_{-s, -t} \hom_A(k, k) \simeq \Ext^{s, t}_A(k, k)
	\end{align*}
\end{prop}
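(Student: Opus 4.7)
The plan is to reduce the statement to classical homological algebra by exhibiting an equivalence of $\infty$-categories $\Mod_A \simeq D(\Mod^\heartsuit_A)$ under which the derived tensor product and mapping spectrum correspond to their classical derived functor counterparts.

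First I would invoke the Dold--Kan style comparison for module spectra over a discrete ring (a graded analogue of \cite[Proposition 7.1.2.13]{HA}): since $A$ is levelwise discrete and $k$ is a field, the graded Eilenberg--Mac Lane functor $H: \Mod^\heartsuit_A \to \Mod_A$ extends to a symmetric monoidal equivalence between the derived $\infty$-category $D(\Mod^\heartsuit_A)$ of graded discrete left $A$-modules and the $\infty$-category $\Mod_A$ of graded $HA$-module spectra. Under this equivalence, a discrete graded $A$-module corresponds to itself viewed as a chain complex concentrated in homological degree zero, and both the derived tensor product $-\otimes_A -$ and the derived $\hom_A(-,-)$ agree with the classical derived functor constructions.

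Next I would choose a projective resolution $P_\bullet \to k$ of $k$ in the abelian category $\Mod^\heartsuit_A$ of graded left $A$-modules. Under the above equivalence, $P_\bullet$ is a cofibrant representative of $k \in \Mod_A$, so the derived tensor product $k \otimes_A k$ is represented by the bigraded complex $P_\bullet \otimes^\heartsuit_A k$ in $\Mod^\heartsuit_k$, and the mapping spectrum $\hom_A(k,k)$ is represented by $\Hom^\heartsuit_A(P_\bullet, k)$, with internal grading inherited from the self-enrichment of Recollection \ref{rec:graded_vs}\ref{recitem:graded_enrichment}. Taking bigraded homotopy groups then recovers the classical definitions
\begin{equation*}
\Tor^A_{s,t}(k,k) = H_s(P_\bullet \otimes^\heartsuit_A k)_t, \qquad \Ext^{s,t}_A(k,k) = H^s\Hom^\heartsuit_A(P_\bullet, k)_{-t} ,
\end{equation*}
where the sign on $t$ in the Ext case reflects the contravariance of the internal grading shift under $\Hom$.

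The only substantive content is the equivalence $\Mod_A \simeq D(\Mod^\heartsuit_A)$, which is where the hypothesis that $A$ is levelwise discrete is used essentially; the remaining step is purely bookkeeping to match the two gradings (homological $s$ corresponding to $\pi_{-s}$ in the cohomological convention, and internal weight $t$ tracked through the enrichment). The main subtlety, rather than a genuine obstacle, is carefully respecting the sign conventions so that $\pi_{-s,-t}$ produces $\Ext^{s,t}$ while $\pi_{s,t}$ produces $\Tor_{s,t}$.
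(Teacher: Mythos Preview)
Your proposal is correct and follows essentially the same approach as the paper. The paper's proof simply cites \cite[Corollary 7.2.1.22]{HA} for the Tor identification and \cite[Remark 7.1.1.16]{HA} for the Ext identification, noting that these generalize straightforwardly to the graded setting; you unpack slightly more of the argument behind these citations by first passing through the equivalence $\Mod_A \simeq D(\Mod^\heartsuit_A)$ (via \cite[Proposition 7.1.2.13]{HA}) and then computing with a projective resolution, but the substance is the same.
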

\begin{proof}
	The first equivalence is a straightforward generalization of \cite[Corollary 7.2.1.22]{HA} to the graded case. 
	The second equivalence is a straightforward generalization of \cite[Remark 7.1.1.16]{HA} to the graded case. 
\end{proof}
\begin{rmk}\label{rmk:composition_prod_cohomology}
	Note that composition on $ \hom_A(k,k) $ makes the bigraded homotopy groups $ \Ext^{s,t}_A(k,k) $ into an (a priori) graded, associative $ k $-algebra. 
	This is often referred to as the \emph{Yoneda product}. 
\end{rmk}
\begin{warning}\label{warning:homology_groupvsspectrum}
	We abuse notation and use homology (resp. cohomology) of $ A $ to refer to both the graded spectra $ k \otimes_A k $ (resp. $ \hom_A(k, k) $) \emph{and} their (bigraded) \emph{homotopy groups}. 
	For instance, the \emph{Krull dimension} of the cohomology of $ A $ refers to the Krull dimension of the graded ring $ H^*(A; k) $. 
	We make note of the grading convention $ \pi_{s,t} \hom_A(k,k) = \Ext^{-s,-t}_A(k,k) = H^{-s,-t}(A;k) $. 
	We write $ H^s(A; k) $ for the graded abelian group $ H^{s, *}(A; k) $. 

	We warn the reader unfamiliar with spectra and stable homotopy that for a general spectrum $ X $, $ \pi_*X $ contains less information than $ X $. 
\end{warning}
While the aforementioned descriptions of Hopf algebra cohomology are elegant, to compute the Krull dimension of $ H^*(A; k) $ in specific cases we will want to use more explicit descriptions.
\begin{cons}
	[Cobar construction] \cites[Definition A.2.11]{Ravenel:2003}[p.32-33]{MR141119} \label{cons:cbar_aug_filtration}
	Let $ A $ be a graded biassociative Hopf algebra over $ k $, and let $ M $ (resp. $ N $) be a left (resp. right) comodule over $ A $. 
	We write $ \overline{A} := \coker(\eta: k \to A ) $ for the cokernel of the unit, or the dual of the augmentation ideal of $ A^\vee $ (Definition \ref{defn:hopf_alg}).
	Define the \emph{cobar complex} $ \coBar_A(N; M) $ to be the cosimplicial graded abelian group
	\begin{equation*}
		\coBar_A(N; M)_s := N \otimes_k \overline{A}^{\otimes s} \otimes_k M 
	\end{equation*}
	with coboundary $ d^s: \coBar_A(N; M)_s \to \coBar_A(N; M)_{s+1} $ given by
	\begin{multline*}
		d^s (n \otimes \gamma_1 \otimes \cdots \otimes \gamma_s \otimes m) = \varphi(n) \otimes \gamma_1 \cdots \otimes \gamma_s \otimes \psi(m) +
		\sum_{i=1}^s (-1)^i n \otimes \cdots \otimes \Delta(\gamma_i) \otimes \cdots \otimes \gamma_s \otimes m \\
		+ (-1)^{s+1} n \otimes \cdots \otimes \gamma_s \otimes \psi(m) 
	\end{multline*}
	where $ \psi $ and $ \varphi$ are the comodule structure maps. 
	We abbreviate $ n \otimes \gamma_1 \cdots \otimes \gamma_s \otimes m = n\gamma_1|\gamma_2|\cdots|\gamma_sm $ and give it the usual grading, i.e. the grading degree of $ n\gamma_1|\gamma_2|\cdots|\gamma_sm $ is $ \sum_{i=1}^s|\gamma_i| + |n| + |m| $. 

	When $ M = N = k $, we may define a bilinear ``juxtaposition product'' \cite[discussion after Corollary 9.6]{MR820463} 
	\begin{align*}
		&\coBar_A(k;k)_p \otimes \coBar_A(k;k)_q \to \coBar_A(k;k)_{p+q} \\
		&( \gamma_1|\gamma_2|\cdots|\gamma_p) \otimes (\eta_1 | \cdots | \eta_q) \mapsto \gamma_1|\gamma_2|\cdots|\gamma_p | \eta_1 | \cdots | \eta_q .
	\end{align*}
\end{cons} 
The following is \cite[Corollary A.2.12]{Ravenel:2003}. 
\begin{prop}\label{prop:cobar_ext}
	Let $ k $ be a field, and let $ A $ be a graded Hopf algebra over $ k $. 
	Then the cohomology of the cosimplicial graded abelian group $ \coBar_{A^\vee}(k;k) $ computes
	\begin{equation*}
		H^s\coBar_{A^\vee}(k;k)_t = \Ext^{s,t}_A(k, k). 
	\end{equation*}
\end{prop}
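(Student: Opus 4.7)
The plan is to construct an explicit $A$-free resolution of $k$, apply $\hom_A(-,k)$ term-by-term, and identify the resulting cochain complex with $\coBar_{A^\vee}(k;k)$. The identification rests on the levelwise finite-dimensionality of $A$, which lets us freely interchange $A$ and $A^\vee$ via $k$-linear duality.

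First I would build the (normalized) two-sided bar resolution $B_\bullet(A) \to k$ of $k$ as a graded left $A$-module. Explicitly, $B_s(A) = A \otimes_k \overline{A}^{\otimes s}$, with differential assembled in the usual way from the multiplication of $A$ and the augmentation $\varepsilon$:
\begin{equation*}
 d(a \otimes \gamma_1 | \cdots | \gamma_s) = a\gamma_1 \otimes \gamma_2|\cdots|\gamma_s + \sum_{i=1}^{s-1}(-1)^i a \otimes \gamma_1|\cdots|\gamma_i\gamma_{i+1}|\cdots|\gamma_s .
\end{equation*}
Each $B_s(A)$ is manifestly free over $A$, and the standard extra-degeneracy contracting homotopy shows $B_\bullet(A) \to k$ is a quasi-isomorphism, all in a manner compatible with the $\Z$-grading (i.e., in $\Mod_A^{\heartsuit, \gr}$).

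Next I would apply $\hom_A(-, k)$. By tensor-hom adjunction and levelwise finite-dimensionality,
\begin{equation*}
 \hom_A\bigl(A \otimes_k \overline{A}^{\otimes s},\, k\bigr) \simeq \hom_k\bigl(\overline{A}^{\otimes s}, k\bigr) \simeq \bigl(\overline{A}^\vee\bigr)^{\otimes s} \simeq \bigl(\overline{A^\vee}\bigr)^{\otimes s} ,
\end{equation*}
matching $\coBar_{A^\vee}(k;k)_s$ on the nose. Dualizing the bar differential and using that the comultiplication on $A^\vee$ is the $k$-linear dual of the multiplication on $A$, one checks the resulting coboundary coincides with the $d^s$ of Construction \ref{cons:cbar_aug_filtration} specialized to $M = N = k$ with trivial comodule structure. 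Combined with Proposition \ref{prop:exttor_modulespectra}, the cohomology of this complex is $\Ext^{s,t}_A(k,k)$, which is what we wanted.

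The main technical obstacle is bookkeeping: verifying that the signs, the permutation of tensor factors under dualization, and the bigrading (homological versus internal) all align so that the dualized bar differential maps to the cobar coboundary verbatim rather than up to a sign convention or grading shift. Everything else is essentially formal, but the identification is most easily obtained by writing out the cases $s = 1, 2$ explicitly and then arguing by functoriality of the bar construction.
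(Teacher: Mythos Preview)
Your argument is correct and is exactly the standard proof; the paper does not supply its own argument but simply cites \cite[Corollary A.2.12]{Ravenel:2003}, and your sketch is essentially what that reference does. The one hypothesis you invoke that is not explicit in the proposition's statement---levelwise finite-dimensionality of $A$, needed for $\hom_k(\overline{A}^{\otimes s},k)\simeq(\overline{A^\vee})^{\otimes s}$---is implicit in the paper's standing assumptions on $A$.
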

\begin{prop}\label{prop:hopf_coh_gradedcomm}
	\cite[Theorems 9.7 \& 9.8]{MR1793722}
	Let $ A $ be a graded Hopf algebra over $ k $. 
	\begin{enumerate}
		\item The composition product of Remark \ref{rmk:composition_prod_cohomology} and the juxtaposition product of Construction \ref{cons:cbar_aug_filtration} coincide on bigraded homotopy groups. 
		\item If the coproduct on $ A $ is cocommutative, then the multiplication on $ \Ext^{s,t}_{A}(k,k) $ is graded-commutative. 
	\end{enumerate}
\end{prop}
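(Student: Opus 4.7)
The plan is an Eckmann-Hilton argument carried out in the monoidal $\infty$-category of $A$-modules. The coproduct $\Delta: A \to A \otimes A$ is a map of algebras, so the external tensor product over $k$ (with diagonal $A$-action) makes $\Mod_A$ into a monoidal $\infty$-category with unit $k$ via the counit $\varepsilon$. Restricting to endomorphisms of the unit, $\hom_A(k, k) \simeq \End_{\Mod_A}(k)$ inherits two a priori distinct $E_1$-algebra structures: one from composition of endomorphisms (the Yoneda product), and one from the external tensor product $\End(k) \otimes \End(k) \to \End(k \otimes k) \simeq \End(k)$. Bifunctoriality of $\otimes$ with respect to composition supplies the interchange law $(f \circ g) \otimes (h \circ i) \simeq (f \otimes h) \circ (g \otimes i)$; the Eckmann-Hilton principle (equivalently, the Dunn-Lurie additivity theorem) then forces the two $E_1$-structures to agree on homotopy groups and together promotes $\hom_A(k, k)$ to an $E_2$-algebra.

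To finish part (1), I would identify the external tensor product at the cochain level with juxtaposition. Using the bar resolution $B_\bullet(A) = A \otimes_k \overline{A}^{\otimes \bullet} \otimes_k k$ of $k$, one has $\coBar_A(k; k) \simeq \hom_A(B_\bullet(A), k)$, and the coproduct on $A$ induces an Alexander-Whitney-type diagonal $B_\bullet(A) \to B_\bullet(A) \otimes B_\bullet(A)$. Pulling back cocycles along this diagonal recovers precisely the juxtaposition $(\gamma_1|\cdots|\gamma_p) \otimes (\eta_1|\cdots|\eta_q) \mapsto \gamma_1|\cdots|\gamma_p|\eta_1|\cdots|\eta_q$ of Construction \ref{cons:cbar_aug_filtration}, so the external $E_1$-structure agrees with juxtaposition; combined with Eckmann-Hilton, this proves (1).

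For part (2), cocommutativity of $\Delta$ (in the Koszul-signed sense on $\Vect_k^\gr$) is exactly the assertion that the symmetry $\tau: M \otimes N \to N \otimes M$ is $A$-linear, so the monoidal structure on $\Mod_A$ promotes to symmetric monoidal. The external-tensor $E_1$-structure on $\hom_A(k, k)$ therefore upgrades to $E_\infty$, and combined with (1), this exhibits $\Ext_A^{*, *}(k, k)$ as the bigraded homotopy ring of an $E_\infty$-algebra in graded $k$-module spectra, which is graded-commutative: for $\alpha \in \Ext_A^{s, t}(k, k)$ and $\beta \in \Ext_A^{s', t'}(k, k)$, one has $\alpha \beta = (-1)^{s s' + t t'} \beta \alpha$, where $(-1)^{s s'}$ is the cohomological sign from the $E_\infty$-swap and $(-1)^{t t'}$ is the Koszul sign from $\Vect_k^\gr$.

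The main obstacle is sign bookkeeping. Because the cobar complex is bigraded, I have to verify that the Koszul symmetry on $\Vect_k^\gr$ and the Eckmann-Hilton swap combine to produce precisely the sign $(-1)^{s s' + t t'}$, rather than any nearby variant coming from a different choice of Eilenberg-Zilber comparison or cochain-level shuffle map. In practice this comes down to an explicit chain-level computation: writing down a shuffle product on $\coBar_A(k; k)$ built from $\Delta$, checking it agrees with juxtaposition on cohomology, and producing an explicit cochain homotopy between $\alpha\beta$ and $(-1)^{s s' + t t'}\beta\alpha$ built from cocommutativity.
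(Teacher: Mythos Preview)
The paper does not give its own proof of this proposition; it simply cites Theorems~9.7 and~9.8 of the reference (McCleary's \emph{A User's Guide to Spectral Sequences}), where the argument is carried out by explicit chain-level manipulation of the cobar complex. Your Eckmann--Hilton approach is correct and more conceptual: the coproduct makes $\Mod_A$ monoidal with unit $k$ (this is exactly Construction~\ref{cons:tensor_Hopfmod} in the paper), so $\hom_A(k,k)$ carries compatible composition and tensor $\E_1$-structures which Eckmann--Hilton/Dunn--Lurie additivity forces to coincide and promote to $\E_2$; cocommutativity upgrades this to $\E_\infty$. The bridge to the explicit juxtaposition product via the Alexander--Whitney diagonal on the bar resolution is the standard one. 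What you gain is a structured reason for graded-commutativity (an $\E_\infty$-structure, which also explains the Steenrod operations of the next proposition); what the classical argument in the cited reference buys is that the signs fall out of an explicit shuffle homotopy rather than being deferred to a bookkeeping check.

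One small correction: by Proposition~\ref{prop:cobar_ext} the complex computing $\Ext_A(k,k)$ is $\coBar_{A^\vee}(k;k)$, not $\coBar_A(k;k)$, so your identification should read $\coBar_{A^\vee}(k;k) \simeq \hom_A(B_\bullet(A), k)$. The sign verification you flag at the end is real but routine, and is precisely the content of the explicit homotopy in the cited reference.
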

The cobar complex is highly inefficient; a characterization of efficient resolutions is the 
\begin{defn}
	\cite[Definition 9.3]{MR1793722} \label{defn:minres}
	Let $ A $ be a discrete augmented ring over $ k $ and write $ \varepsilon: A \to k $ for the augmentation and $ I = \ker(\varepsilon) $. 
	A homomorphism of discrete left $ A $-modules $ f:M \to N $ is \emph{minimal} if $ f(M) \subseteq I \cdot N $. 
	A projective resolution of a module $ M $ is \emph{minimal} if every homomorphism in the resolution is minimal. 
\end{defn}
\begin{prop}
	\cite[Proposition 9.4]{MR1793722} \label{prop:minres_ext}
	Let $ A $ be a discrete augmented ring over $ k $ and write $ \varepsilon: A \to k $ for the augmentation and $ I = \ker(\varepsilon) $. 
	Let $ M $ be a left $ A $-module and suppose given $ \cdots P_2 \longrightarrow P_1 \longrightarrow P_0 \longrightarrow M $ a minimal resolution of $ M $ by projective left $ A $-modules. 
	Then $ \Ext^s_A(M, k) \simeq \hom_A(P_s, k) $.
\end{prop}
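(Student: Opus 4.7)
The plan is to use the defining property of minimality to force every differential in the complex $\hom_A(P_\bullet,k)$ to vanish, so that cohomology equals each term on the nose. The standard computation of $\Ext$ from a projective resolution says $\Ext^s_A(M,k) \simeq H^s \hom_A(P_\bullet,k)$, where the differential is induced by precomposition with $d_{s+1}: P_{s+1} \to P_s$. The claim will then reduce to showing each such induced map is identically zero.

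The key observation is that any $A$-linear map $f: P_s \to k$ annihilates $I\cdot P_s$: since $k$ is the quotient $A/I$ with trivial $A$-action, for any $i \in I$ and $x \in P_s$ we have $f(ix) = i \cdot f(x) = \varepsilon(i) f(x) = 0$. Hence $f$ factors through the canonical projection $P_s \to P_s/(I\cdot P_s)$. By the minimality hypothesis (Definition \ref{defn:minres}), the image of $d_{s+1}: P_{s+1} \to P_s$ is contained in $I\cdot P_s$, so $f\circ d_{s+1} = 0$. This says the induced differential $\hom_A(P_s,k) \to \hom_A(P_{s+1},k)$ is the zero map.

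With every differential zero, the cohomology in degree $s$ of $\hom_A(P_\bullet,k)$ is simply $\hom_A(P_s,k)$, giving the desired identification $\Ext^s_A(M,k) \simeq \hom_A(P_s,k)$. There is no real obstacle here; the only care needed is the elementary verification that $\hom_A(-,k)$ kills submodules of the form $I\cdot N$, which is immediate from the $A$-linearity together with the trivial action on $k$. The content of the proposition lies entirely in the definition of minimality, which was precisely engineered to make this cancellation occur.
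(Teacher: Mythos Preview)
Your argument is correct and is exactly the standard one: minimality forces $d_{s+1}(P_{s+1})\subseteq I\cdot P_s$, every $A$-linear map into the trivial module $k$ annihilates $I\cdot P_s$, so the induced differentials on $\hom_A(P_\bullet,k)$ vanish and $\Ext^s_A(M,k)=\hom_A(P_s,k)$. The paper does not supply its own proof of this proposition but simply cites \cite[Proposition 9.4]{MR1793722}, whose argument is the same as yours.
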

\begin{ex}\label{ex:exteriorhopf_cohomology_computed}
	Let $ A = \bigwedge_k(e_1, \ldots, e_n ) $ be an exterior algebra where each $ e_i $ is primitive. 
	Then $ \Ext_A^{s,t}(k, k) \simeq k[x_1, \ldots, x_n ] $ where $ |x_i| = (1, |e_i|) $. 

	While this computation may be well-known, we include it for completeness. 
	For each $ A_i = \bigwedge_k(e_i) $, there is a minimal periodic projective resolution of $ k $ by $ A_i $-modules
	\begin{equation*}
		 \cdots A_i(2|e_i|) \xrightarrow{\cdot x_i} A_i(|e_i|) \xrightarrow{\cdot x_i} A_i \to k.
	\end{equation*} 
	Tensoring these resolutions together over $ k $ and taking the diagonal gives a minimal resolution $ k \simeq \lim M^\bullet $ of $ k $ by projective $ A $-modules. 
	The computation follows from Proposition \ref{prop:minres_ext}. 
\end{ex}
Furthermore, the cocommutativity of $ A $ allows us to identify Steenrod operations on Hopf algebra cohomology \cites[Theorem 11.8]{MR0281196}[\S{II}.5]{MR182001}. 
\begin{prop}
	[Steenrod operations on Hopf algebra cohomology]  \label{prop:steenrod_hopf_coh}
	Let $ A $ be a graded, connected, cocommutative Hopf algebra over $ \F_2 $. 
	Then there are operations $ \Sq^i $ on $ \Ext_A^{*,*}(\F_2,\F_2) $ satisfying
	\begin{enumerate}
		\item The operation $ \Sq^i $ acts via $ \Sq^i: \Ext_A^{s,t}(\F_2,\F_2) \to \Ext_A^{s+i,2t}(\F_2,\F_2) $.
		\item (Cartan formula) $ \Sq^i(ab) = \sum_{m+n=i} \Sq^m(a)\Sq^n(b) $.
		\item (Adem relations) $ \Sq^r \Sq^s = \sum_{t=0}^{\floor{\sfrac{s}{2}}} \Sq^{r+s-t}\Sq^t $. 
		\item The operation $ \Sq^0 $ acts on cocycles by $ \Sq^0( \gamma_1|\gamma_2|\cdots|\gamma_n) = \gamma_1^2|\gamma_2^2|\cdots|\gamma_n^2 $.
		\item $ \Sq^s(x) = x^2 $ if $ x \in \Ext_A^{s,t}(\F_2,\F_2) $.
	\end{enumerate}
\end{prop}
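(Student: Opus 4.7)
The plan is to exhibit the cobar model $\coBar_{A^\vee}(\F_2, \F_2)$ for $\Ext_A^{*,*}(\F_2, \F_2)$ (Proposition \ref{prop:cobar_ext}) as an $E_\infty$-algebra over $\F_2$, then apply the standard Steenrod--Epstein construction. The crucial input, cocommutativity of $A$, is exactly what upgrades the natural symmetric monoidal structure on $A$-modules from braided to symmetric; equivalently, it makes the juxtaposition product on the cobar complex (Construction \ref{cons:cbar_aug_filtration}) commutative up to a coherent system of chain homotopies recording the $\Sigma_n$-actions on $n$-fold tensor products.

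Concretely, I would produce cup-$i$ products $\smile_i$ on $\coBar_{A^\vee}(\F_2,\F_2)$ for each $i \geq 0$ via an equivariant acyclic-models argument: $\smile_0$ is the juxtaposition product, and each subsequent $\smile_i$ is chosen so that $d(x \smile_i y) + (dx) \smile_i y + x \smile_i (dy) = x \smile_{i-1} y + y \smile_{i-1} x$ (all signs trivialize in characteristic $2$). The initial chain homotopy witnessing the commutativity of $\smile_0$ is supplied by cocommutativity of $A$, and the inductive step is formal. Equivalently, one produces a chain-level action of an $E_\infty$-operad such as the Barratt--Eccles or surjection operad.

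Given the $\smile_i$, the Steenrod squares are defined on a cocycle $x \in \coBar_{A^\vee}(\F_2,\F_2)_s$ of internal weight $t$ by the classical formula $\Sq^i(x) := x \smile_{s-i} x$. The bidegree claim in (1) is immediate from counting cobar-filtration degrees and internal weights. Properties (2) and (3) follow from the standard Steenrod--Epstein arguments, which depend only on the formal properties of the $\smile_i$ extended to a $\Sigma_4$-equivariant chain map on four-fold tensor products. I expect the Adem relations to be the main technical obstacle, since their verification requires a careful comparison of iterated squaring operations via a $\Sigma_4$-equivariant acyclic-models argument.

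The remaining properties follow directly. For (5), $\Sq^s(x) = x \smile_0 x = x \cdot x$ in the juxtaposition product, which agrees with the Yoneda product on cohomology by Proposition \ref{prop:hopf_coh_gradedcomm} and hence equals $x^2$. For (4), the top operation $\Sq^0(\gamma_1|\cdots|\gamma_n) = (\gamma_1|\cdots|\gamma_n) \smile_n (\gamma_1|\cdots|\gamma_n)$ unwinds, via the characteristic-$2$ Frobenius and the inductive description of the top cup-$i$ product, to the shuffle formula $\gamma_1^2|\gamma_2^2|\cdots|\gamma_n^2$.
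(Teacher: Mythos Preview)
The paper does not prove this proposition; it is stated with a citation to May's general algebraic approach to Steenrod operations and to Liulevicius, and no argument is given in the paper itself. Your proposal is essentially a sketch of May's construction: endow the cobar complex with a $\Sigma_n$-equivariant system of cup-$i$ products (equivalently, an action of an $E_\infty$ operad), define $\Sq^i(x) = x \smile_{s-i} x$, and read off the properties. That is exactly the content of the cited references, so your approach is correct and aligned with what the paper is invoking.

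One small point worth tightening: your derivation of property (4) is the only place where the argument is not purely formal in the cup-$i$ structure. The identification $\Sq^0(\gamma_1|\cdots|\gamma_n) = \gamma_1^2|\cdots|\gamma_n^2$ depends on an explicit chain-level model for the top $\smile_n$ (or equivalently on May's explicit $\theta$-map on the bar construction), not just on the existence of some $E_\infty$ structure. In your write-up you should either point to the specific computation in May or Liulevicius, or actually exhibit the relevant $\Sigma_2$-equivariant chain map on the cobar complex and check the formula directly; ``unwinds via the Frobenius'' is not quite enough on its own.
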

\begin{warning}
	The reader who is used to Steenrod operations on the cohomology of spaces is cautioned that here $ \Sq^0 $ is not the identity. 
\end{warning} 
\begin{rmk}
	[On notation] 
	Wilkerson \cite{MR597872} writes $ \widetilde{\Sq^i} $ for the operations of Proposition \ref{prop:steenrod_hopf_coh} to remove ambiguity, as we occasionally will want to write $ \Sq^i $ for an element in $ A $ (Example \ref{ex:mod2steenrod}). 
	As this issue does not arise for us, we write $ \Sq^i $ to lighten notational burden. 
\end{rmk}
To compute the cohomology of a not necessarily commutative Hopf algebra is nontrivial. 
If our Hopf algebra $ A $ sits in an extension $ B \to A \to C $, we may attempt to understand the cohomology of $ A $ in terms of the cohomologies of $ B $ and of $ C $. 
The following proposition is a special case of the Cartan-Eilenberg spectral sequence.
\begin{prop}[Cartan-Eilenberg change-of-rings spectral sequence] \label{prop:CE_sseq_coh} 
	Let $ k $ be a field, and let $ A, B, C $ be graded Hopf algebras over $ k $. 
	Suppose given a pair of morphisms of cocommutative Hopf algebras $ i: B \to A $, $ p: A \to C $ exhibiting $ A $ as a Hopf algebra extension of $ C $ by $ B $. 
	\begin{enumerate}
		\item \label{propitem:CE_sseq_existence} There is a spectral sequence of trigraded algebras with $ E_2 $-page
		\begin{align*}
			E_2^{p,q} = \Ext_B^{p}(k,k) \otimes \Ext^q_C(k,k) \implies \Ext_A^{p+q}(k,k) .
		\end{align*}
		The differentials $ d_r \colon E_r^{p,q} \to E_r^{p+r, q-r+1} $ preserve internal degree. 
		\item \label{propitem:CE_sseq_leibniz} The differentials $ d_r $ are derivations, i.e. they satisfy the Leibniz rule:
		\begin{equation*}
			d_r(xy) = d_r(x)y + (-1)^{p+q}xd_r(y). 
		\end{equation*}
		\item \label{propitem:CEsseq_kudo} (Kudo transgression theorem) The Steenrod operations on $ E_r^{p,0} \simeq  \Ext_B^{p}(k,k) $ interact with the Steenrod operations on $ E_r^{0, q} \simeq \Ext^q_C(k,k) $ in the following way: If $ x \in E_2^{0,q} $ is such that $ d_i(x) = 0 $ for $ i < r $ and $ d_r(x) = y $, then $ d_{r+i}(\theta x) = \theta y $ where $ \theta $ is a Steenrod operation of homological degree $ i $. 
	\end{enumerate}
\end{prop}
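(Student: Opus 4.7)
The plan is to realize the spectral sequence as the Grothendieck composite functor spectral sequence associated to a decomposition
\[
\hom_A(k, -) \simeq \hom_C\bigl(k, \hom_B(k, -)\bigr)
\]
of functors $\Mod_A \to \Mod_k$. To set this up I would first use the extension structure (Definition \ref{defn:hopf_alg_extension}) and Remark \ref{rmk:hopf_alg_quotient_in_extension} to equip $\hom_B(k, M)$ with a natural $C$-module structure, arising from the right $C$-comodule structure on $A$ and the isomorphism $k \otimes_B A \simeq C$. The displayed equivalence then follows from chasing the restriction/induction adjunctions associated with $i:B \to A$ and $p:A \to C$.

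Applying the standard Grothendieck spectral sequence for the composition of derived functors then yields a convergent spectral sequence
\[
E_2^{p,q} = \Ext^p_C\bigl(k, \Ext^q_B(k,k)\bigr) \implies \Ext^{p+q}_A(k,k),
\]
whose differentials have the bidegree stated in (\ref{propitem:CE_sseq_existence}). To arrive at the simpler $E_2$-page in the statement, I would verify that the induced $C$-action on $\Ext^q_B(k,k)$ is trivial; this is where cocommutativity of $A$ enters, since it forces the adjoint action of $C$ to factor through the trivial one. With this in hand, $\Ext^p_C(k, \Ext^q_B(k,k))$ identifies with $\Ext^p_C(k,k) \otimes_k \Ext^q_B(k,k)$, matching the statement up to indexing conventions, and the claim that differentials preserve internal degree reduces to the fact that the weight grading is preserved by each constituent $\hom$.

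For the Leibniz rule (\ref{propitem:CE_sseq_leibniz}), I would exhibit the spectral sequence as multiplicative by realizing the filtration at the level of the cobar complex $\coBar_{A^\vee}(k;k)$: the juxtaposition product of Construction \ref{cons:cbar_aug_filtration} respects the $B$-versus-$C$ filtration, and so descends to a multiplicative structure on each page with the differentials acting as derivations. The sign $(-1)^{p+q}$ is dictated by the Koszul rule on the bigraded cobar complex.

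The main obstacle is the Kudo transgression theorem (\ref{propitem:CEsseq_kudo}). Here one must first lift the Steenrod operations of Proposition \ref{prop:steenrod_hopf_coh} on $\Ext^*_B(k,k)$ and $\Ext^*_C(k,k)$ to operations on every page of the spectral sequence, compatibly with the multiplicative structure just constructed. The classical argument, which I would adapt, produces these operations at the cochain level via an $\F_2[\Sigma_2]$-equivariant diagonal approximation on the cobar complex, and then tracks a transgressive class $x \in E_r^{0,q}$ and its Steenrod squares through the filtration. The identity $d_{r+i}(\Sq^i x) = \Sq^i d_r(x)$ is delicate because it exchanges a filtration-degree shift with a cohomological-degree shift; establishing it comes down to showing that the cocycle representatives of $\Sq^i x$ have the expected leading filtration behavior, which in turn reduces to a careful analysis of the interaction between the Alexander--Whitney-type diagonal and the juxtaposition product on $\coBar_{A^\vee}(k;k)$.
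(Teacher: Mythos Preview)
The paper's own proof is entirely by citation: Cartan--Eilenberg \cite[XVI.5, Case~4]{MR1731415} for the construction, Ravenel \cite[Theorem~A.1.3.16]{Ravenel:2003} and Adams \cite[Theorem~2.3.1]{MR141119} for the identification as a spectral sequence of algebras, McCleary \cite[Theorem~2.14]{MR1793722} for the derivation property, and May \cite[Theorem~3.4]{MR0281196} for the Kudo transgression theorem. Your sketch is essentially an outline of the content behind those citations---the Grothendieck composite-functor construction, the cobar-complex model for multiplicativity, and the $\Sigma_2$-equivariant diagonal approach to Steenrod operations---so there is no substantive divergence in strategy.

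The one place where your sketch is looser than the references is the passage from $E_2^{p,q} = \Ext^p_C\bigl(k, \Ext^q_B(k,k)\bigr)$ to the tensor product $\Ext^p_C(k,k) \otimes_k \Ext^q_B(k,k)$. You attribute this to cocommutativity of $A$, but cocommutativity is what makes the target ring graded-commutative (Proposition~\ref{prop:hopf_coh_gradedcomm}) and what supports the Steenrod operations of Proposition~\ref{prop:steenrod_hopf_coh}; it does not by itself trivialize the $C$-action on $\Ext^*_B(k,k)$. In the sources the paper cites, this simplification is handled in the dual comodule picture via a conormality condition on the extension together with connectedness. If you were to write this out in full rather than defer to the literature, that is the step a careful reader would ask you to justify more precisely.
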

\begin{proof}
\begin{enumerate}
	\item The additive construction is \cite[XVI.5, Case 4]{MR1731415}. 
	By \cite[Theorem A.1.3.16]{Ravenel:2003}, this spectral sequence coincides with that of \cite[Theorem 2.3.1]{MR141119}, where it is shown to be a spectral sequence \emph{of algebras}. 
	\item This follows from the definition of the spectral sequence and \cite[Theorem 2.14]{MR1793722}. 
	\item This is due to \cite[Theorem 3.4]{MR0281196}. \qedhere
\end{enumerate}
\end{proof}
\begin{rmk}
	The Steenrod algebra can be made to act on the entire spectral sequence and satisfies a more general version of the Kudo transgression theorem \cites[Propositions 1 \& 4]{MR357553}[Theorem 5.2]{MR2245560}. 
\end{rmk}
\begin{obs} [Künneth theorem] \label{obs:Kunneththm}
	Given the hypotheses of Proposition \ref{prop:CE_sseq_coh} and suppose that $ A \simeq B \otimes_k C $ as Hopf algebras. 
	Then the spectral sequence degenerates at $ E_2 $ and we have a Künneth theorem for Hopf algebra cohomology. 
\end{obs}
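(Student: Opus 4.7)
The plan is to prove the Künneth isomorphism for Hopf algebra cohomology directly by constructing an explicit resolution, and then to deduce degeneration of the spectral sequence from a dimension count. Throughout I write $s_B \colon B \hookrightarrow A$ and $s_C \colon C \hookrightarrow A$ for the two factor inclusions afforded by the splitting $A \simeq B \otimes_k C$, and $\pi_B, \pi_C$ for the corresponding projections.

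First, I would take projective resolutions $P_\bullet \to k$ of the trivial $B$-module and $Q_\bullet \to k$ of the trivial $C$-module by graded projectives. Because we work over a field, the external tensor product $P_\bullet \otimes_k Q_\bullet$ is a complex of graded projective $A = B \otimes_k C$-modules (tensor products of graded projectives over $B$ with graded projectives over $C$ are projective over $A$), and the algebraic Künneth formula shows it remains acyclic, hence is a projective resolution of $k \otimes_k k \simeq k$ over $A$. Applying $\Hom_A(-,k)$ and using that any $A$-linear map out of $P_p \otimes_k Q_q$ to $k \simeq k \otimes_k k$ decomposes as the tensor product of a $B$-linear map out of $P_p$ and a $C$-linear map out of $Q_q$, one obtains an isomorphism of bicomplexes
\begin{equation*}
\Hom_A(P_\bullet \otimes_k Q_\bullet,\, k) \simeq \Hom_B(P_\bullet, k) \otimes_k \Hom_C(Q_\bullet, k).
\end{equation*}
Passing to cohomology and invoking the Künneth formula for complexes of $k$-vector spaces then yields the desired isomorphism
\begin{equation*}
\Ext_A^{s,t}(k,k) \simeq \bigoplus_{\substack{s_1+s_2=s\\ t_1+t_2=t}} \Ext_B^{s_1,t_1}(k,k) \otimes_k \Ext_C^{s_2,t_2}(k,k).
\end{equation*}

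To deduce degeneration at $E_2$ of the Cartan--Eilenberg spectral sequence of Proposition~\ref{prop:CE_sseq_coh}, I would compare bigraded Hilbert series. The $E_2$-page has total dimension in bidegree $(s,t)$ equal to $\sum_{s_1+s_2=s,\, t_1+t_2=t}\dim_k \Ext_B^{s_1,t_1}(k,k) \cdot \dim_k \Ext_C^{s_2,t_2}(k,k)$, which coincides with $\dim_k \Ext_A^{s,t}(k,k)$ by the Künneth isomorphism just established. Since $E_\infty^{*,*}$ is a subquotient of $E_2^{*,*}$ and is the associated graded of a filtration on $\Ext_A^{*,*}(k,k)$, the two Hilbert series can agree only if $E_\infty = E_2$, i.e.\ every differential $d_r$ for $r \geq 2$ vanishes. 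The main subtlety is verifying that graded projectives and acyclic complexes behave well under $-\otimes_k-$; over a field this is routine, but it is the step where the hypothesis $A \simeq B \otimes_k C$ is used essentially. As an alternative route to degeneration one could instead argue multiplicatively: the splittings $s_B, s_C$ promote base and fiber classes to permanent cycles that are not hit, and since $E_2$ is generated as an algebra by $E_2^{*,0}$ and $E_2^{0,*}$ and the differentials are derivations by Proposition~\ref{prop:CE_sseq_coh}(\ref{propitem:CE_sseq_leibniz}), all higher differentials must vanish.
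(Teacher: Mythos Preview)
Your argument is correct. The paper states this as an observation without supplying any proof, so there is nothing to compare against directly. Your tensor-product-of-resolutions argument followed by a Hilbert-series count is the standard textbook route to the K\"unneth isomorphism, and the alternative multiplicative argument you sketch at the end (base and fiber classes become permanent cycles via the sections $s_B, s_C$, hence all of $E_2$ consists of permanent cycles by the Leibniz rule) is closer in spirit to what the paper presumably intends by placing the observation immediately after the spectral sequence and its derivation property. One minor point worth tightening: the identification $\Hom_A(P_p \otimes_k Q_q, k) \simeq \Hom_B(P_p,k) \otimes_k \Hom_C(Q_q,k)$ uses that $P_p$ and $Q_q$ are degreewise finite (dualizing a tensor product of infinite-dimensional spaces does not split in general); in the paper's standing hypothesis the Hopf algebras are connected and finite-dimensional, so minimal resolutions are degreewise finite and the point is moot.
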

Using this spectral sequence and Steenrod operations on it, Wilkerson proved the following general result.   
\begin{theorem}
	\cite[Theorem A]{MR597872} \label{thm:ext_fingen}
	If $ A $ is a finite-dimensional graded, connected, cocommutative Hopf algebra over a field $ k $, then $ H^*(A; k) = \pi_{-*}\hom_A(k, k) $ is a finitely-generated $ k $-algebra. 
\end{theorem}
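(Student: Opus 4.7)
The plan is to induct on $\dim_k A$. For the base case, observe that when $A$ is trivial or a monogenic Hopf algebra of minimal height as in Examples \ref{ex:monogenichopfalgs}, its cohomology is computable directly: when $A = \bigwedge_k[x]$, Example \ref{ex:exteriorhopf_cohomology_computed} gives $H^*(A;k) \simeq k[\xi]$, and the analogous minimal periodic resolution for $A = k[x]/(x^p)$ in odd characteristic yields a finitely generated cohomology ring of the form $k[\xi] \otimes \bigwedge_k[\eta]$.

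For the inductive step, apply Proposition \ref{prop:existence_monogen_subhopf} to obtain a nontrivial monogenic sub-Hopf algebra $C \hookrightarrow A$ of minimal height, producing an extension $C \to A \to B$ with $B = k \otimes_C A$ of strictly smaller $k$-dimension. By the inductive hypothesis $H^*(B;k)$ is a finitely generated $k$-algebra, and by the base case so is $H^*(C;k)$. Feed this into the Cartan--Eilenberg spectral sequence (Proposition \ref{prop:CE_sseq_coh}):
\begin{equation*}
E_2^{p,q} = H^p(B;k) \otimes_k H^q(C;k) \implies H^{p+q}(A;k),
\end{equation*}
whose $E_2$-page is visibly finitely generated as a bigraded $k$-algebra. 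The task reduces to showing that finite generation is preserved both in passing to $E_\infty$ and in solving the extension problem up to $H^*(A;k)$.

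The main obstacle is that subquotients of finitely generated graded algebras need not be finitely generated, so one must exert Noetherian control over the polynomial towers arising from $H^*(C;k)$. The key device is the Kudo transgression theorem (Proposition \ref{prop:CE_sseq_coh}(\ref{propitem:CEsseq_kudo})) combined with the Steenrod operations on Hopf algebra cohomology (Proposition \ref{prop:steenrod_hopf_coh}), which imply $\Sq^{|\eta|}(\eta) = \eta^2$ for every polynomial generator $\eta$ of $H^*(C;k)$ in characteristic $2$, with the analogous $p$-th power statement in odd characteristic. Thus, if $\eta$ first transgresses via $d_r(\eta) = \zeta$, Kudo transgression forces every higher $p$-th power $\eta^{p^n}$ to transgress as well, with value determined by iterated Steenrod operations applied to $\zeta \in H^*(B;k)$. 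Consequently, for each polynomial generator one of two things happens: either $\eta$ survives to $E_\infty$—contributing a genuine polynomial generator of $H^*(A;k)$—or the entire tower $\{\eta^{p^n}\}$ is cut down by images living in the finitely generated $H^*(B;k)$.

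Assembling these pieces, one extracts a finite set of permanent polynomial classes together with lifts of the finitely many generators surviving from $H^*(B;k)$, producing a polynomial subalgebra $P \subseteq H^*(A;k)$ over which $H^*(A;k)$ is finitely generated as a module (with module generators chosen to lift a finite set of $E_\infty$-generators). The step I expect to be most delicate is the simultaneous bookkeeping across all transgressive towers—particularly in odd characteristic, where polynomial and exterior factors interact—and verifying that the finite generation of the associated graded indeed lifts to $H^*(A;k)$ once the Steenrod operation bound controls the transgressions uniformly.
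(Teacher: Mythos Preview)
The paper does not supply its own proof of this theorem; it simply cites Wilkerson's result, with the preceding sentence indicating the method (``Using this spectral sequence and Steenrod operations on it, Wilkerson proved the following general result''). Your proposal is a faithful sketch of exactly that argument---induction on $\dim_k A$ via Proposition~\ref{prop:existence_monogen_subhopf}, the Cartan--Eilenberg spectral sequence of Proposition~\ref{prop:CE_sseq_coh}, and the Kudo transgression theorem together with Steenrod operations to control the powers of the fiber generators---so there is nothing to compare: you have recovered Wilkerson's line of reasoning, including an honest acknowledgment of where the bookkeeping is delicate.
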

\begin{obs}\label{obs:homogenous_generators_bidegree}
	In fact, Wilkerson's proof shows that homogeneous polynomial generators $ x_i $ of $ H^*(A;k) = H^{s,t}(A;k) $ can be taken with bidegree $ |x_i| = (s_i, t_i) $ with both $ s_i, t_i $ strictly positive. 
\end{obs}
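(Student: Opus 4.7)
The claim reduces to a bigraded vanishing statement: $H^{s,t}(A;k) = 0$ unless $(s,t) = (0,0)$ or both $s,t$ are strictly positive. Granting this, Theorem \ref{thm:ext_fingen} furnishes a finite set of homogeneous algebra generators, and discarding the unit from $H^{0,0}(A;k) = k$ leaves all generators in the strictly positive quadrant. So my plan is to establish both vanishings separately, corresponding to the two coordinate axes outside the origin.

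On the $s = 0$ axis, identify $H^{0,t}(A;k) \cong \hom_A(k, k(t))$. Since $A$ is connected ($A_0 = k$ with $A_\ell = 0$ for $\ell < 0$), the trivial module $k$ is concentrated in internal grading $0$, and any $A$-linear map must respect internal grading. Thus $\hom_A(k, k(t)) = 0$ for $t \neq 0$, while $H^{0,0}(A;k) = k \cdot 1$. This kills the entire $s = 0$ axis except for the unit.

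For the $t = 0$ axis with $s > 0$, I would invoke the cobar complex (Proposition \ref{prop:cobar_ext}): $H^{s,t}(A;k)$ is the internal-grading-$t$ part of the $s$-th cohomology of $\coBar_{A^\vee}(k;k)$, whose $s$-cochain group is $(\overline{A^\vee})^{\otimes s}$. Connectedness of $A$ forces $\overline{A^\vee}$ to be concentrated strictly away from internal grading $0$, so its $s$-fold tensor power lives in internal gradings of magnitude $\geq s > 0$, giving $H^{s,0}(A;k) = 0$. The main obstacle is careful bookkeeping of sign conventions between $A$ and $A^\vee$ and between the cobar and $\Ext$ bigradings; this can be calibrated against Example \ref{ex:exteriorhopf_cohomology_computed}, where $|x_i| = (1, |e_i|)$ with $|e_i| > 0$ pins down the sign unambiguously. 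One can also bypass cobar conventions entirely by arguing through a minimal projective resolution (Definition \ref{defn:minres}, Proposition \ref{prop:minres_ext}): by minimality and connectedness, each $P_s$ for $s > 0$ must be generated in strictly positive internal grading, directly yielding the same vanishing.
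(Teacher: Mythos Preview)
Your argument is correct. The paper does not actually supply a proof of this observation---it simply records that the claim can be read off from Wilkerson's inductive argument. Your route is more direct and elementary: rather than tracing through Wilkerson's induction on extensions by monogenic sub-Hopf algebras, you establish the bigraded vanishing $H^{s,t}(A;k) = 0$ for $(s,t)$ outside $\{(0,0)\} \cup \Z_{>0}^2$ by inspecting graded homs (for $s=0$) and the cobar complex or a minimal resolution (for $s>0$), then invoke finite generation to conclude.

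One small remark: your stated reduction requires vanishing for all $s>0$ and $t \leq 0$, whereas the cobar paragraph as written only argues that $(\overline{A^\vee})^{\otimes s}$ lives in internal gradings of \emph{magnitude} $\geq s$, which on its face excludes only $t=0$. Connectedness of $A$ actually forces $\overline{A^\vee}$ to be concentrated in gradings of a single definite sign, so the tensor power is as well; your proposed calibration against Example~\ref{ex:exteriorhopf_cohomology_computed} does pin that sign down correctly. The minimal-resolution variant you sketch handles this more transparently: minimality and connectedness force $P_s$ for $s>0$ to be generated in strictly positive internal grading, so $\hom_A(P_s,k)$ is concentrated in strictly positive $t$, giving $H^{s,t}(A;k)=0$ for all $s>0$, $t\leq 0$ at once.
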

\begin{ex} \cite[Corollary G(ii)]{MR597872} \label{ex:SteenrodA1_cohom_Krulldim}
	Let $ \mathcal{A}_1 $ be the sub-Hopf algebra of the mod 2 Steenrod algebra generated by $ \Sq^1 $ and $ \Sq^2 $. 
	Then $ H^*(\mathcal{A}_1; \F_2) = \pi_* \hom_{\mathcal{A}_1}(\F_2, \F_2) $ has Krull dimension 2. 
\end{ex}

\subsection{Monoidal structures and duality.}\label{subsection:frobgor_duality}
While the extra structure on a graded Hopf algebra has not been used to define its category of modules, the Hopf algebra structure on $ A $ endows $ \Mod_A $ and $ \Mod_A^\heartsuit $ with extra structure as reflected below.  

\begin{cons} [The monoidal structure on $ \Mod_A $] \label{cons:tensor_Hopfmod}
The categories $ \Mod^\heartsuit_A, \Mod_A $ admit monoidal structures given by taking the image of $ A $ (considered as a coalgebra) under the functor 
\begin{align*} 
		\E_1\Alg(\Spectra)^\op &\to \Cat_\infty \\
		R &\mapsto \Mod_R  \\
		(\varphi:R \to S) &\mapsto  (\varphi^* : \Mod_S \to \Mod_R).
\end{align*} 
Informally, the underlying graded $ k $-module of $ M \otimes N $ is given by the tensor product $ M \otimes_k N $ in graded $ k $-modules, and the $ A $-module structure on $ M \otimes N $ is given by 
\begin{equation*} 
	A \otimes_k (M \otimes_k N) \xrightarrow{\Delta \otimes \id} A \otimes_k A \otimes_k (M\otimes_k N) \xrightarrow{\id \otimes \tau \otimes \id} (A \otimes_k M) \otimes_k (A \otimes_k N) \to M \otimes_k N .
\end{equation*} 
When $ A $ is cocommutative, this is moreover a symmetric monoidal structure. 
\end{cons}
\begin{warning}
This symmetric monoidal structure should not be mistaken for the standard monoidal structure on $ \Mod_A $ (for $ A $ a commutative algebra) given on objects by $ (M,N) \mapsto M\otimes_A N $.  
\end{warning}
\begin{obs}\label{obs:modules_unit_invertible}
	Endowed with the tensor product $ \otimes_k $, $ \Perf_A $ is not in general unital. 
	However the larger categories $ \Mod_A\left(\Perf_k\right) $ and $ \Mod_A $ are unital with unit given by $ k(0) $ with (co)module structure given by the (co)unit map. 

	Given a graded augmented $ k $-algebra $ A $, $ k(n) $ is always invertible with respect to the aforementioned symmetric monoidal structure with $ \otimes $-inverse given by $ k(-n) $. 
\end{obs}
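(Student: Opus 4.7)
The plan is to verify each of the three assertions of the observation directly from Construction \ref{cons:tensor_Hopfmod}. For the claim that $\Mod_A$ and $\Mod_A(\Perf_k)$ are unital with unit $k(0)$, I would check that for any $M \in \Mod_A$ with action map $a: A \otimes_k M \to M$, the diagonal $A$-action on $k(0) \otimes M$ is the composite
\begin{align*}
A \otimes_k k \otimes_k M &\xrightarrow{\Delta \otimes \id} A \otimes_k A \otimes_k k \otimes_k M \xrightarrow{\id \otimes \tau \otimes \id} (A \otimes_k k) \otimes_k (A \otimes_k M) \\
 &\xrightarrow{\varepsilon \otimes a} k \otimes_k M \simeq M,
\end{align*}
which reduces to $a \circ ((\varepsilon \otimes \id_A) \otimes \id_M) \circ (\Delta \otimes \id_M) = a$ by the counit axiom for $A$. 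A symmetric computation handles $M \otimes k(0)$. Since $k(0)$ is perfect as a $k$-module, the same argument restricts to $\Mod_A(\Perf_k)$.

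To show $\Perf_A$ is not in general unital with respect to $\otimes_k$, it suffices to exhibit a Hopf algebra $A$ for which $k(0) \notin \Perf_A$. Taking $A = \bigwedge_k[x]$ with $x$ primitive in positive degree, the minimal projective resolution
\begin{equation*}
\cdots \to A(2|x|) \xrightarrow{\cdot x} A(|x|) \xrightarrow{\cdot x} A \to k
\end{equation*}
from Example \ref{ex:exteriorhopf_cohomology_computed} is infinite, so $k$ has infinite projective dimension over $A$. Hence $k(0) \notin \Perf_A$, while $k(0)$ still belongs to $\Mod_A(\Perf_k)$ since it is evidently perfect as a $k$-module.

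For the invertibility of $k(n)$, a direct computation of the Day convolution gives
\begin{equation*}
(k(n) \otimes_k k(-n))_\ell = \bigoplus_m k(n)_m \otimes_k k(-n)_{\ell-m},
\end{equation*}
which is nonzero only when $m = n$ and $\ell - m = -n$, forcing $\ell = 0$ and producing the summand $k \otimes_k k \simeq k$. The resulting $A$-module structure, obtained via $\Delta$ followed by the augmentation actions on each factor, factors through $(\varepsilon \otimes \varepsilon) \circ \Delta = \varepsilon$ by the counit axiom, so the product carries the augmentation action of the unit. Hence $k(n) \otimes_k k(-n) \simeq k(0)$ as $A$-modules. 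The only delicate step in the whole argument is carefully tracking how the $A$-module structure transports through each identification; the remaining assertions are direct manipulations of the convolution formula.
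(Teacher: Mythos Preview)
Your argument is correct. The paper states this as an \emph{Observation} and offers no proof at all, so there is nothing to compare against; you have simply supplied the routine verifications the author left implicit. The counit computation for the unit object, the exterior-algebra example for non-perfection of $k(0)$, and the Day-convolution calculation for invertibility are all sound.

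One small point worth tightening: to conclude that $\Perf_A$ is genuinely non-unital (rather than merely failing to contain the ambient unit $k(0)$), you should note that any hypothetical unit $U \in \Perf_A$ would satisfy $U \otimes_k A \simeq A$, and since for a Hopf algebra the diagonal action on $A \otimes_k M$ is isomorphic to the free module on the underlying $k$-module of $M$, this forces the underlying graded $k$-module of $U$ to be $k(0)$; connectedness of $A$ then pins down the action as the augmentation. Hence $U \simeq k(0) \notin \Perf_A$, a contradiction. This closes the only loose end.
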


\begin{defn}
	[{\cite[\S 12.2]{Margolis83}}] \label{defn:gorensteinalg}
	A finite-dimensional graded $ k $-algebra $ A $ is \emph{Gorenstein} or \emph{Poincaré} if there exists an integer $ n \in \Z $ and map of graded $ k $-modules $ e: A \to k(d) $ such that for each $ q \in \Z $, the pairing 
	\begin{equation*} 
		A_{d-q} \otimes A_q \xrightarrow{m} A_d \xrightarrow{e} k 
	\end{equation*} 
	is non-degenerate. 
	A choice of such an $ e $ is called an \emph{orientation}.
\end{defn}
By our assumption that $ A $ is finite-dimensional, if $ d $ exists it is unique, and we refer to it as the \emph{Gorenstein parameter}. 
Moreover, a choice of orientation for $ A $ is unique up to a choice of unit in $ k $. 

\begin{obs}
	Given a graded Hopf algebra $ A $ over $ k $, $ A^\vee $ acquires a canonical left $ A $-module structure on $ A^\vee $. 
	Informally, this is given by $ (a, f:A \to k) \mapsto f(- \cdot c(a)) $. 
\end{obs}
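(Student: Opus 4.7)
My plan is to construct the left $A$-module structure on $A^\vee$ by dualizing the tautological right $A$-module structure of $A$ on itself, and then to interpret the antipode in the informal formula as the standard mechanism converting between left and right $A$-actions.

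The algebra $A$ is canonically a right $A$-module via right multiplication: for each $a \in A$, the $k$-linear map $\rho_a \colon A \to A$ defined by $b \mapsto ba$ satisfies $\rho_a \rho_{a'} = \rho_{a'a}$. Applying the $k$-linear duality functor $(-)^\vee = \hom_k(-,k)$ yields self-maps $\rho_a^\vee \colon A^\vee \to A^\vee$ with $(\rho_a^\vee f)(b) = f(ba)$. Because dualization reverses composition, $\rho_a^\vee \rho_{a'}^\vee = \rho_{aa'}^\vee$, so the assignment $a \mapsto \rho_a^\vee$ is an algebra homomorphism $A \to \End_k(A^\vee)$; equivalently, it is a left $A$-module structure on $A^\vee$.

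The antipode $c\colon A \to A^{\mathrm{op}}$ appears in the informal formula because it is precisely the datum that intertwines left and right $A$-module structures on any $k$-vector space carrying one of them. Pre- or post-composing the construction above with $c$ produces the formula written in the statement, and the only check involved in verifying that the resulting assignment still defines a left action is that $c$ is an algebra map into $A^{\mathrm{op}}$ — i.e.\ an algebra anti-homomorphism on $A$ — which is built into the Hopf algebra axioms of Definition \ref{defn:hopf_alg}.

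No substantive obstacle is anticipated: the observation is essentially a bookkeeping statement about how the algebra, coalgebra, and antipode structure of $A$ combine to endow $A^\vee$ with a canonical left $A$-action. The construction is manifestly functorial in $A$ and compatible with the symmetric monoidal structure of Construction \ref{cons:tensor_Hopfmod}, which is what will be used when invoking this module structure in the Frobenius/Gorenstein duality discussion that follows.
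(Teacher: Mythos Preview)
The paper gives no proof here; the observation is simply asserted. Your first paragraph is correct and already establishes the substantive claim: dualizing the right regular representation of $A$ yields the left $A$-module structure $(a\cdot f)(b)=f(ba)$ on $A^\vee$, and this is the structure needed for the Frobenius condition that immediately follows.

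Your second paragraph, however, contains a genuine error. You assert that precomposing your left action with the antipode ``still defines a left action'' because $c$ is an anti-homomorphism. It is the opposite: precomposing a left action by an anti-homomorphism yields a \emph{right} action. Concretely, with $a\star f:=f(-\cdot c(a))$ one computes $(aa')\star f=f(-\cdot c(a')c(a))$ whereas $a\star(a'\star f)=f(-\cdot c(a)c(a'))$, and these disagree as soon as $A$ is noncommutative (e.g.\ $A=\mathcal{A}_1$). Thus the paper's informal formula, read literally with the paper's stated convention that $\cdot\, c(a)$ denotes right multiplication, is not a left module structure at all; it is presumably a slip for either $(a,f)\mapsto f(c(a)\cdot -)$ (the contragredient of the left regular representation, which \emph{is} a left action) or simply for your formula $(a,f)\mapsto f(-\cdot a)$. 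Your first paragraph already does the job; either drop the second paragraph or replace it with a remark noting this discrepancy rather than attempting to justify the formula as written.
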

\begin{defn} \label{defn:frobeniusalg}
	A graded $ k $-algebra $ A $ is \emph{Frobenius} if there is an isomorphism of graded left $ A $-modules $ A \simeq A^\vee (d) \simeq \hom^\gr_k(A, k(d)) $. 
	A choice of such an isomorphism is an \emph{orientation} for $ A $. 
\end{defn} 
Proposition \ref{prop:frob_algebras_cats} shows that the notions of orientation of Definitions \ref{defn:frobeniusalg} and \ref{defn:gorensteinalg} agree. 
\begin{defn}
	An abelian category $ \mathcal{A} $ is \emph{Frobenius} if the following conditions are satisfied: 
	\begin{enumerate}
		\item The category $ \mathcal{A} $ has enough injectives.
		\item The category $ \mathcal{A} $ has enough projectives.
		\item The injectives and projectives in $ \mathcal{A} $ coincide. 
	\end{enumerate} 
\end{defn} 
\begin{prop}\label{prop:frob_algebras_cats}
	\cite[12.2.5]{Margolis83}
	Given a finite-dimensional connected graded $ k $-algebra $ A $, the following are equivalent:
	\begin{enumerate}
		\item The category of left modules over $ A $ $ \LMod_A^\heartsuit $ is a Frobenius category
		\item The algebra $ A $ is a Frobenius algebra
		\item The algebra $ A $ is a Poincaré algebra.
	\end{enumerate}
\end{prop}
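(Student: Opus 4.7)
The plan is to handle the equivalence (2)~$\Leftrightarrow$~(3) by directly translating between an isomorphism of left $A$-modules and a non-degenerate pairing, and then to concentrate on (1)~$\Leftrightarrow$~(2), which contains the real content. Throughout I will use that $A$ is finite-dimensional, so that no dualization issues arise and both $A$ and $A^\vee$ live in $\LMod^\heartsuit_A$ as finitely generated objects.

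For (2)~$\Leftrightarrow$~(3), given a Frobenius orientation $\varphi \colon A \xrightarrow{\sim} A^\vee(d)$, I would define $e \colon A \to k(d)$ by $e(a) := \varphi(a)(1)$, using the identification $A^\vee(d)_q = \hom_k(A_{d-q}, k)$. Left $A$-linearity of $\varphi$ forces $\varphi(a)(b) = e(ab)$, so $\varphi$ being an isomorphism is equivalent to the bilinear pairing $A_{d-q} \otimes A_q \to k$, $(a,b) \mapsto e(ab)$, being non-degenerate for each $q$. The reverse direction runs the same construction backwards: a Poincar\'e orientation $e$ determines a left $A$-linear map $\varphi(a) := e(a \cdot -)$, which is an isomorphism exactly when the pairing is non-degenerate.

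For (2)~$\Rightarrow$~(1), I would first argue that $A^\vee$ is always injective in $\LMod^\heartsuit_A$, independently of the Frobenius hypothesis. The restriction--coinduction adjunction along $\varepsilon \colon A \to k$ yields a natural isomorphism $\Hom_A(M, A^\vee) \simeq \Hom_k(M, k)$, and the right-hand side is an exact functor of $M$ since $k$ is a field. Shifts of $A$ generate enough projectives and shifts of $A^\vee$ generate enough injectives, so a Frobenius isomorphism $A \simeq A^\vee(d)$ makes each free module simultaneously projective and injective; passage to direct summands identifies the two classes throughout $\LMod^\heartsuit_A$.

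For the converse (1)~$\Rightarrow$~(2), the strategy is to use that $A^\vee$, always injective, is now also projective, and to identify it as a shift of $A$. The key technical input -- and what I expect to be the main obstacle -- is a graded Nakayama argument showing that every finitely generated graded projective $A$-module is free. Connectedness of $A$ (so $A_0 = k$) together with finite-dimensionality forces the only graded idempotents of $A$ to be $0$ and $1$: the degree-zero component of an idempotent must be an idempotent in $k$, and any element of strictly positive degree is nilpotent. Hence $A$ is indecomposable as a left $A$-module and dually so is $A^\vee$. Once free-ness of graded projectives is established via lifting a graded $k$-basis of $P/I(A)P$ to a basis of $P$, the indecomposable projective $A^\vee$ must be a single shifted copy $A(-d)$, and a Hilbert-series comparison fixes the shift, yielding the Frobenius orientation.
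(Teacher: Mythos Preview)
The paper does not supply its own proof of this proposition; it is stated with a bare citation to Margolis, so there is nothing in the paper to compare your argument against. Your approach is the standard one and is essentially correct.

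Two minor points worth tightening. In your (2)~$\Leftrightarrow$~(3), for a plain (non-Hopf) graded algebra the left $A$-module structure on $A^\vee$ is $(a \cdot f)(x) = f(xa)$, so left $A$-linearity of $\varphi$ yields $\varphi(a)(b) = \varphi(1)(ba) = e(ba)$ rather than $e(ab)$; this is harmless for the conclusion, since a bilinear form on a finite-dimensional space is non-degenerate on one side iff it is on the other. In your (2)~$\Rightarrow$~(1), the phrase ``passage to direct summands identifies the two classes'' is doing real work for infinitely generated modules and deserves to be unpacked: you need that arbitrary direct sums of the injective module $A \simeq A^\vee(d)$ remain injective (true because $A$ is Noetherian), and that every injective is a direct sum of indecomposable injectives (Matlis decomposition, again via Noetherianity), each of which is the injective hull of some simple $k(n)$ and hence a shifted summand of $A^\vee$. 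With those two standard inputs named, your identification of projectives with injectives holds in all of $\LMod^\heartsuit_A$, not just on finitely generated objects, and the argument is complete.
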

\begin{prop}\label{prop:finconnhopfimpliesfrobenius}
	\cite[12.2.9]{Margolis83} If $ A $ is a finite, connected, graded Hopf algebra over a field $ k $, then $ A $ is a Frobenius algebra. 
\end{prop}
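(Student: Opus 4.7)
The plan is to prove that $A$ is Frobenius (as defined in Definition \ref{defn:frobeniusalg}) by invoking Proposition \ref{prop:frob_algebras_cats}: it suffices to construct an isomorphism of graded left $A$-modules $A \simeq A^\vee(d)$ for some integer $d$, or equivalently (as in Definition \ref{defn:gorensteinalg}) a linear functional $e: A \to k(d)$ such that each pairing $A_{d-q} \otimes A_q \xrightarrow{m} A_d \xrightarrow{e} k$ is nondegenerate. Throughout, we use that $A$ is finite and connected, so that the augmentation ideal $I = I(A) = \bigoplus_{\ell > 0} A_\ell$ is nilpotent, and there is a largest $d$ with $A_d \neq 0$.

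The route I would take is Larson--Sweedler via the theory of Hopf integrals. Define the space of left integrals in $A^\vee$ to be
\begin{equation*}
  \textstyle\int_{A^\vee}^l = \{\lambda \in A^\vee \mid a\cdot \lambda = \varepsilon(a)\lambda \text{ for all } a \in A\},
\end{equation*}
where $A$ acts on $A^\vee$ via the natural action $(a \cdot \lambda)(b) = \lambda(c(a) b)$ using the antipode $c$ of Definition \ref{defn:hopf_alg}. The key claim is $\dim_k \int^l_{A^\vee} = 1$. I would prove this by endowing $A^\vee$ with the structure of a right $A$-Hopf module (the right $A$-action is as above, and the right $A$-coaction comes from the multiplication of $A$ dualised using finite-dimensionality), and then applying the fundamental theorem of Hopf modules to identify $A^\vee \simeq A \otimes_k (A^\vee)^{co A}$ as Hopf modules. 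Counting dimensions and using $\dim_k A^\vee = \dim_k A$ forces $\dim_k (A^\vee)^{co A} = 1$; identifying coinvariants with integrals in this setting completes the claim. Because everything in sight is graded and $A$ is connected with top degree $d$, the unique (up to scalar) integral $\lambda$ is forced to live in $A^\vee_{-d}$, i.e.\ corresponds to a functional $e: A \to k(d)$.

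Having such a $\lambda$ in hand, the map
\begin{equation*}
  \Phi: A \longrightarrow A^\vee(d), \qquad \Phi(a)(b) = \lambda(c(a) b)\quad (\text{equivalently } a \mapsto a \cdot \lambda)
\end{equation*}
is a morphism of graded left $A$-modules (left $A$-linearity is immediate from the definition of the action; the degree shift is recorded by where $\lambda$ sits). It remains to check $\Phi$ is an isomorphism, and by equality of graded dimensions on both sides it suffices to show $\Phi$ is injective. Any element of $\ker\Phi$ would give a nonzero $a \in A$ with $\lambda(c(a)b) = 0$ for all $b$, forcing $a$ to annihilate the entire left $A$-module $\int^l_{A^\vee} \cdot A = A^\vee$ from the right; since the $A^\vee$-action on $A$ is faithful (again by connectedness / finite-dimensionality, or equivalently since $A$ embeds into the endomorphisms of $A^\vee$), this forces $a = 0$. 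Translating $\Phi$ through Proposition \ref{prop:frob_algebras_cats} yields the Poincaré pairing in degree $d$, and hence $A$ is Frobenius.

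The main obstacle is the 1-dimensionality of the space of integrals, which is the real content of Larson--Sweedler; everything else is bookkeeping with the grading and antipode. An alternative, more hands-on route specific to the connected graded case is to argue directly that the socle of $A$ (as a left module over itself) is the 1-dimensional space $A_d$, using cocommutativity together with the nilpotence of $I$, and then to build $e$ from any linear splitting $A_d \simeq k$; this bypasses the Hopf-module machinery but requires a careful induction on the filtration by powers of $I$, which is where the subtlety is hidden.
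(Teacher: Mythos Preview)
The paper gives no proof of its own here; it simply cites Margolis. Your Larson--Sweedler strategy is a standard and valid route, but there is a genuine error in the execution. You characterise the integral $\lambda$ by the invariance condition $a\cdot\lambda=\varepsilon(a)\lambda$ for the action $(a\cdot\lambda)(b)=\lambda(c(a)b)$, and then define $\Phi(a)=a\cdot\lambda$ using the \emph{same} action. But then $\Phi(a)=\varepsilon(a)\lambda$ identically, so $\Phi$ has one-dimensional image and cannot be injective; your injectivity paragraph collapses because the hypothesis ``$\lambda(c(a)b)=0$ for all $b$'' is satisfied by every $a\in I(A)$. The Larson--Sweedler isomorphism uses a \emph{different} pairing: one sets $\Phi(a)(b)=\lambda(ba)$ (or $\lambda(ab)$), which is left $A$-linear for the action $(a\cdot f)(b)=f(ba)$ dual to right multiplication, not the antipode-twisted action under which $\lambda$ is invariant. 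Injectivity of this corrected $\Phi$ then follows from the integral property via a short separate argument; it is not the tautology your write-up suggests.

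Your alternative sketch---showing directly that the left socle of $A$ is the one-dimensional top piece $A_d$---is closer to Margolis's actual argument and, incidentally, does not need cocommutativity, which is not among the hypotheses of the proposition.
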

Given a Frobenius algebra $ A $ over a ground field $ k $ and an $ A $-module $ M $, we may define its \emph{Tate cohomology} via complete resolutions \cites[5.6.2]{MR4390795}[12.2]{MR1731415}. 
We give a different presentation for the same object in the derived category of $ k $.  
\begin{cons}\label{cons:tate}
	\cite[Construction 2.4.4]{Raksit20}
	Let $ A $ be a finite-dimensional, graded, cocommutative Hopf algebra over $ k $. 
	Then by Proposition \ref{prop:finconnhopfimpliesfrobenius}, there is some $ d > 0 $ and an equivalence of $ A $-modules $ A \xrightarrow{\sim} A^\vee \otimes_k k(d) $. 
	Let $ \varepsilon^\vee: k(0) \to A^\vee $ denote the dual of the counit. 
	Then given any $ A $-module $ M $, we have a morphism
	\begin{align*}
		M \otimes_A k(d) & \xrightarrow{\mathbbm{1} \otimes \varepsilon^\vee} M \otimes_A A^\vee(d) \\
		& \simeq M \otimes_A A \simeq M .
	\end{align*}
	Since the $ A $-module structure on the left hand side factors canonically through the counit $ A \to k $, the above is adjoint to a map $ \Nm_M: M \otimes_A k(d) \to \hom_A(k, M) $. 
	The \emph{Tate construction of $ M $} is the cofiber of the norm map 
	\begin{equation*}
		M^{tA} := \cofib(\Nm_M) .
	\end{equation*} 
	We use \emph{Tate cohomology of $ M $} to refer to the graded homotopy groups of the Tate construction on $ M $, and simply write ``Tate cohomology'' when $ M =k $ is assumed.  
\end{cons}

\subsection{Stable module categories.}\label{subsection:stablemodcats}
Given a finite-dimensional $ k $-algebra $ A $ and a perfect $ A $-module $ M $, its underlying $ k $-module is perfect. 
However, an $ A $-module whose underlying $ k $-module is perfect may not necessarily be perfect over $ A $. 
An example of this is given by $ A = k[x]/ (x^2) $ and the module $ k = A/(x) $; any projective resolution of $ k $ by $ A $ is necessarily unbounded. 
A measure of the discrepancy between these categories is given by the \emph{stable module category} of $ A $, which we construct in this section. 

We will see (Corollary \ref{cor:stmod_graded_modules}) that we could have defined the stable module category of $ A $ at the end of \S\ref{subsection:frobgor_duality}. 
However, the presentation of the stable module category in Definition \ref{defn:stmod} is crucial to our identification of an explicit generator (Proposition \ref{prop:stmod_generators}) and convenient exact sequences in $ \StMod_A $ which will ultimately yield sharper bounds for categorical polynomial entropy. 
Thus while the material here is not new, we include it for completeness, and for the unfamiliar reader. 

The following construction of the stable module category is borrowed from \cite[\S 2.1]{Mat15}, with minor changes. 
A more classical perspective can be found in \cite{MR4390795}.
\begin{cons}
	Let $ A $ be a Frobenius ring (Definition \ref{defn:frobeniusalg}). 
	The category $ \Mod_A^\heartsuit $ of all discrete $ A $-modules admits a combinatorial model structure \cite[\S 2.2]{MR1650134} with
	\begin{enumerate}
		\item fibrations given by surjections,
		\item cofibrations given by injections, and
		\item weak equivalences given by stable equivalences. 
		Given a morphism $ f: V \to W $ in $ \Mod_A^\heartsuit $, $ f $ is a \emph{stable equivalence} if there exists $ g: V \to W $ such that 
		\begin{equation*}
		\id_V - f\circ g: V \to V \qquad \text{ and } \qquad \id_W - g \circ f: W \to W  
		\end{equation*}
		each factor through a projective $ A $-module.
	\end{enumerate}
\end{cons}

\begin{defn}\label{defn:stmod}
	The \emph{big stable module category} $ \StMod_A^\mathrm{big} $ is given by the $ \infty $-categorical localization $ \LMod_A^\heartsuit [\mathcal{W}^{-1}] $ where $ \mathcal{W} \subset \LMod_A $ are the weak equivalences \cite[Definition 1.3.4.15]{HA}. 
	The \emph{small stable module category} $ \StMod_A $ is defined to be compact objects in the big stable module category. 
\end{defn}
\begin{ntn}
	If $ M, N \in \StMod_A $, we write $ \StExt^{s,t}_A(M,N) = \pi_{-s,-t}\hom_{\StMod_A}(M,N) $. 
\end{ntn}
By \cite[Prop. 4.1.7.4]{HA}, $ \StMod_A $ inherits a symmetric monoidal structure from $ \Mod_A $. 
Hovey shows \cite[\S2.2]{MR1650134} that the homotopy category $ \ho \StMod_A^\mathrm{big} $ of $ \StMod_A^\mathrm{big} $ is the classical stable module category:
\begin{equation*}
	\hom_{\ho\StMod_A^\mathrm{big}}(V, W) = \hom_{\Mod^\heartsuit_A}(V, W)/\sim
\end{equation*}
where two morphisms $ \varphi,\psi $ are identified on the right hand side if their difference factors through a projective $ A $-module. 
\begin{prop}
	Let $ A $ be a Frobenius ring. 
	Then $ \StMod_A^{\mathrm{big}} $ is presentable. 
\end{prop}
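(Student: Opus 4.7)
The plan is to reduce the statement to a standard fact about combinatorial model categories. The construction immediately preceding the statement equips $\Mod_A^\heartsuit$ with a combinatorial model structure whose class of weak equivalences is $\mathcal{W}$, and by Definition \ref{defn:stmod} the big stable module category is the $\infty$-categorical localization $\Mod_A^\heartsuit[\mathcal{W}^{-1}]$. It therefore suffices to know that the underlying $\infty$-category of any combinatorial model category is presentable.

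This is a general result of Lurie: see \cite[Proposition A.3.7.6]{LurHTT}, together with \cite[Theorem 1.3.4.20 and Remark 1.3.4.16]{HA} for the identification of the underlying $\infty$-category with the Dwyer--Kan localization at the weak equivalences. Thus the first step is simply to cite this theorem and combine it with the combinatorial model structure already exhibited.

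If one wishes to unpack this rather than black-box it, the essential ingredients are (i) that the cofibrations in the Hovey model structure are generated by a small set, so both the class of cofibrations and the subcategory of cofibrant objects are accessible, and (ii) that mapping spaces in the localization can be modeled by simplicial framings (or by a $\Hom$-enrichment in chain complexes), from which one extracts a small set of compact generators to verify presentability. In the case at hand one can take the generators to arise from cyclic $A$-modules $A/J$ for $J$ ranging over the (set of) left ideals of $A$.

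The only real obstacle is the formal one of citing the correct general result; no computation specific to the Frobenius ring $A$ is needed beyond what the combinatorial model structure already provides.
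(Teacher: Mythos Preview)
Your proposal is correct and follows essentially the same route as the paper: both reduce to the general fact that the underlying $\infty$-category of a combinatorial model category is presentable. The paper simply cites \cite[Proposition 1.3.4.22]{HA} directly, which packages exactly the statement you assemble from \cite[Proposition A.3.7.6]{LurHTT} together with \cite[Theorem 1.3.4.20 and Remark 1.3.4.16]{HA}.
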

\begin{proof}
	Follows from \cite[Proposition 1.3.4.22]{HA}.
\end{proof}
\begin{recollection} \cite[\S 3.2]{MNN} \label{recollection:A-1-localization}
	Let $ \cat $ be a presentable, symmetric monoidal, stable $ \infty $-category, and let $ A $ be an $ \E_1 $-algebra object in $ \cat $. 
	Let $ \cat_{A-\tors} $ denote the smallest stable subcategory of $ \cat $ containing $ A \otimes X $ and closed under colimits, for $ X $ dualizable in $ \cat $\footnote{This is also referred to as the localizing subcategory generated by $ A \otimes X $ for $ X $ dualizable.} \cite[Definition 3.1]{MNN}. 

	We say that an object $ X \in \cat $ is $ A^{-1} $-local if, for all $ Y \in \cat_{A-\tors} $, we have $ \hom_{\cat}(X,Y) \simeq 0 $ \cite[Definition 3.10]{MNN}. 
	We denote the full subcategory of $ A^{-1} $-local objects by $ \cat[A^{-1}] \subseteq \cat $. 
	The inclusion of $ A^{-1} $-local objects admits a right adjoint, which we refer to as $ L_{A^{-1}} $-localization. 
	An explicit formula computing the $ L_{A^{-1}} $-localization is described in \cite[Construction 3.12]{MNN}.
\end{recollection}
The following is a mild generalization of \cite[Theorem 2.4]{Mat15}. 
\begin{prop} \label{prop:stmod_LA1}
	Let $ k $ be a field, and $ A $ a finite-dimensional connected graded cocommutative Hopf algebra over $ k  $. 
	Then there is a symmetric monoidal equivalence of stable $ \infty $-categories 
	\begin{equation*}
		 \StMod_A^\mathrm{big} \simeq L_{A^{-1}}\Ind\left(\Mod_A\left(\Perf^{gr}_k\right)\right) .
	\end{equation*} 
\end{prop}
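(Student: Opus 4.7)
The plan is to mirror the proof of \cite[Theorem 2.4]{Mat15}, which treats the ungraded case, and adapt it to the graded setting. The strategy is to identify both sides as Bousfield localizations of $\Ind(\Mod_A(\Perf_k^{gr}))$ at the same localizing subcategory, namely the one generated by the projective $A$-modules.

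First I would identify the subcategory $\Ind(\Mod_A(\Perf_k^{gr}))_{A-\mathrm{tors}}$ of Recollection \ref{recollection:A-1-localization}. For any dualizable $X \in \Ind(\Mod_A(\Perf_k^{gr}))$, the object $A \otimes X$ formed via the Hopf tensor product of Construction \ref{cons:tensor_Hopfmod} is, as an $A$-module, equivalent to the free module $A \otimes_k u(X)$ on the underlying graded $k$-module $u(X)$. This is the classical absorption identity for Hopf modules: using Sweedler notation, the map $a \otimes x \mapsto \sum a_{(1)} \otimes c(a_{(2)})\cdot x$ is $A$-linear from the diagonal action to the action on the first factor, with inverse $a \otimes x \mapsto \sum a_{(1)} \otimes a_{(2)} \cdot x$. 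Since $u(X)$ is perfect over $k$, the module $A \otimes_k u(X)$ is a compact projective $A$-module. Conversely, since $A$ is Frobenius by Proposition \ref{prop:finconnhopfimpliesfrobenius}, every projective $A$-module is free—a colimit of shifts of $A$ (the case $X = k$). Hence $\Ind(\Mod_A(\Perf_k^{gr}))_{A-\mathrm{tors}}$ coincides with the localizing subcategory generated by projective $A$-modules.

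Next I would match $\StMod_A^{\mathrm{big}}$ with the Bousfield localization of $\Ind(\Mod_A(\Perf_k^{gr}))$ at the same subcategory. By Hovey's construction \cite{MR1650134} underlying Definition \ref{defn:stmod}, stable equivalences in $\Mod_A^{\heartsuit,\mathrm{gr}}$ are precisely the maps whose fiber (equivalently, cofiber) factors through a projective; passing to the $\infty$-categorical localization gives that $\StMod_A^{\mathrm{big}}$ is the Verdier quotient of $\Ind(\Mod_A(\Perf_k^{gr}))$ by the localizing subcategory of projectives. By the first step, this is the same as $L_{A^{-1}}$-localization, yielding the underlying equivalence of stable $\infty$-categories. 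The symmetric monoidal enhancement then follows because the localizing subcategory is a tensor ideal: for any projective $P \simeq A \otimes_k V$ and any $M$, the absorption identity gives $M \otimes P \simeq A \otimes_k (u(M) \otimes_k V)$, which is again projective; hence the symmetric monoidal structure descends to both localizations and the equivalence respects it.

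The main obstacle is the matching in step two, i.e., verifying that Hovey's model-categorical notion of stable equivalence corresponds precisely to the $\infty$-categorical Bousfield localization at the localizing subcategory of projective modules. This requires care with the passage from discrete graded $A$-modules to graded $A$-module spectra and with identifying Hovey's stably trivial modules with the projective modules in the $\infty$-categorical sense; the graded structure introduces bookkeeping but no essential obstruction beyond what appears in \cite{Mat15}.
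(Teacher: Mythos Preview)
Your outline follows essentially the same strategy as the paper, which also adapts \cite[Theorem 2.4]{Mat15} to the graded setting; in particular your identification of the $A$-torsion subcategory with the localizing subcategory generated by projectives via the absorption identity is correct and matches what the paper uses implicitly (through the Frobenius isomorphism $A \simeq A^\vee(d)$). The point you flag as ``the main obstacle'' is exactly where the paper does the real work: rather than asserting that $\StMod_A^{\mathrm{big}}$ \emph{is} the Verdier quotient of $\Ind(\Mod_A(\Perf_k^{gr}))$ by projectives, the paper builds the comparison functor $\varphi$ explicitly from Hovey's model category, checks essential surjectivity from generation by discrete modules, and then verifies full faithfulness by reducing to the unit and computing that both sides give the Tate construction $k^{tA}$ (via the bar resolution on the localized side and classical Tate cohomology on the stable module side). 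Your sentence ``passing to the $\infty$-categorical localization gives that $\StMod_A^{\mathrm{big}}$ is the Verdier quotient\ldots'' hides precisely this computation; without it, or without invoking a graded version of the Buchweitz--Rickard--Krause identification of the stable category with the singularity category, the argument is incomplete.
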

\begin{proof}
	We have an inclusion of finitely-generated discrete $ A $-modules into the $ \infty $-category of perfect graded $ k $-module spectra with an $ A $-module structure:
	\begin{equation*}
		\Mod^{\heartsuit, fg}_A  \subset \Mod_{A}\left(\Perf^{gr}_k\right) .
	\end{equation*}
	Taking Ind-completions \cite[\S5.3]{LurHTT} on both sides, we obtain
	\begin{equation*}
		\Phi: \Ind\left(\Mod^{\heartsuit, fg}_A\right) \simeq \Mod_{A}^\heartsuit \to \Ind \left( \Mod_{A}\left(\Perf^{gr}_k\right) \right)
	\end{equation*}
	which commutes with filtered colimits. 
	Composing $ \Phi $ with $ A^{-1} $-localization of Recollection \ref{recollection:A-1-localization} gives 
	\begin{equation*}
		\Mod_{A^\vee}^{\heartsuit,fg} \xrightarrow{\Phi} \Ind \left( \Mod_{A^\vee} \right) \xrightarrow{L_{A^{-1}}} L_{A^{-1}}\Ind\left(\Mod_A\left(\Perf^{gr}_k\right)\right) .
	\end{equation*}
	Since this composite takes projective $ A $-modules to zero, it respects weak equivalences and descends to a symmetric monoidal functor
	\begin{equation*}
		\varphi: \Mod_{A}^\heartsuit[\mathcal{W}^{-1}] \simeq \StMod_A^\mathrm{big} \to L_{A^{-1}}\Ind\left(\Mod_A\left(\Perf^{gr}_k\right)\right) . 
	\end{equation*}

	We claim that $ \varphi $ commutes with homotopy colimits. 
	This boils down to the fact that $ \Phi $ takes short exact sequences of finitely-generated discrete $ A $-modules to cofiber sequences in $ \Mod_A $, and is defined to be left Kan extended from projective $ A $-modules. 
	Since $ \Mod_A(\Perf_k) $ is generated under colimits by $ \Mod_A(\Perf_k^\heartsuit)$, and $ \varphi $ preserves small colimits, we have shown that $ \varphi $ is essentially surjective.

	It remains to check that $ \varphi $ is fully faithful, which we check at the level of homotopy categories. 

	Since $ \hom(-, M) $ commutes with colimits by definition, it suffices to check that $ \varphi $ induces an equivalence on $ \pi_0 \hom(-, -) $ when the source is a finitely-generated $ A $-module.
	Note that $ \StMod_A $ is stable and the objects represented by finitely-generated $ A $-modules are compact, so we can also reduce to the case that the target is finitely-generated. 
	Finally, we use duality and exactness to reduce to showing that 
	\begin{equation*}
		\hom_{\ho \StMod_A}(k, M) \to \pi_0 \hom_{L_{A^{-1}}\Ind(\Mod_A(\Perf^{gr}_k))}(k, \varphi(M))
	\end{equation*}
	is an isomorphism for $ M =k(0)=: k $ the trivial $ A $-module. 

	The left-hand side of the above gives the Tate cohomology of $ A $ by \cite[Lemma 6.1.2]{MR4390795}.

	By Observation \ref{obs:modules_unit_invertible} and Proposition \ref{prop:finconnhopfimpliesfrobenius}, the $ A $-torsion objects and $ A^\vee \simeq A \otimes k(d) $-torsion objects agree as subcategories in $ \Ind(\Mod_A(\Perf^{gr}_k)) $, hence so do the associated localization functors $ L_{A^{-1}} \simeq L_{(A^\vee)^{-1}} $.
	By \cite[\S3.2]{MNN}, the $ (A^\vee)^{-1} $-localization of $ k $ is computed by the cofiber of a map
	\begin{equation}\label{eq:hopfalg_transfer}
		\colim_{\Delta^\op } \left| A^{\otimes \bullet +1} \right| \to k . 
	\end{equation} 
	We use that $ \hom_{\Ind(\Mod_A(\Perf^{gr}_k))}(\mathbbm{1}, -) =: \hom_A(\mathbbm{1},-) $ commutes with arbitrary colimits to rewrite 
	\begin{align*}
		\colim_{\Delta^\op} \pi_0\hom_A\left(\mathbbm{1}, A^{\otimes \bullet +1}  \right) 
		&\simeq \colim_{\Delta^\op_{\leq 1}} \pi_0\hom_A\left(\mathbbm{1}, A^{\otimes \bullet +1}  \right) \simeq \pi_0(k \otimes_A k) (d)
	\end{align*}
	where $ d $ is the Gorenstein parameter of $ A $ and we abuse notation by writing $ \pi_0(k \otimes_A k) $ to refer to its underlying graded $ k $-module. 
	It suffices to notice that the map (\ref{eq:hopfalg_transfer}) induces the norm map (\ref{cons:tate}) upon taking $ \Hom_A(\mathbbm{1}, -) $. 
\end{proof}

\begin{cor} \label{cor:stmod_graded_modules}
	Let $ A $ be a finite-dimensional connected graded cocommutative Hopf algebra over $ k $. 
	There is an equivalence of categories $ \StMod_A^{\mathrm{big}} \simeq \Mod_{k^{tA}}^\gr $. 
\end{cor}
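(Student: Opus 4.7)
The plan is to invoke Proposition \ref{prop:stmod_LA1} and then rewrite the resulting localization as modules over $k^{tA}$ via the general theory of smashing localizations. By that proposition, it suffices to produce an equivalence
\[
	L_{A^{-1}}\Ind\left(\Mod_A\left(\Perf^{gr}_k\right)\right) \simeq \Mod_{k^{tA}}^\gr.
\]
Set $\cat := \Ind(\Mod_A(\Perf_k^\gr))$, a presentable symmetric monoidal stable $\infty$-category with unit $\mathbbm{1} = k(0)$.

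First I would verify that $L_{A^{-1}}$ is a \emph{smashing} localization of $\cat$. Because $A$ is perfect (hence dualizable) over $k$, the $A$-torsion subcategory $\cat_{A-\tors}$ is a localizing tensor-ideal in $\cat$; equivalently, the inclusion of $A^{-1}$-local objects preserves all colimits, and $L_{A^{-1}}$ inherits a canonical symmetric monoidal structure from $\cat$. By the standard theory of smashing localizations in a presentable symmetric monoidal stable $\infty$-category (as explained in \cite[\S 3.2]{MNN}), the subcategory of local objects is equivalent to modules over the localized unit:
\[
	\cat[A^{-1}] \simeq \Mod_{L_{A^{-1}}\mathbbm{1}}(\cat).
\]

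Second, I would identify the localized unit with the Tate construction $k^{tA}$ of Construction \ref{cons:tate}. This computation is already implicit in the final paragraph of the proof of Proposition \ref{prop:stmod_LA1}: applying $\hom_A(\mathbbm{1}, -)$ to the cofiber sequence $\colim_{\Delta^\op}|A^{\otimes \bullet +1}| \to k \to L_{A^{-1}}k$ reproduces (up to a shift by the Gorenstein parameter) the norm map of Construction \ref{cons:tate}, whose cofiber is by definition $k^{tA}$. Since $L_{A^{-1}}$ is symmetric monoidal, $k^{tA}$ inherits an $\mathbb{E}_\infty$-algebra structure.

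The main obstacle is the final identification $\Mod_{k^{tA}}(\cat) \simeq \Mod_{k^{tA}}^\gr$. The key observation is that $k^{tA}$ is $A^{-1}$-local, so its $A$-action factors canonically through the counit $A \to k$; consequently, restriction of scalars along the $\mathbb{E}_\infty$-map $k \to k^{tA}$ (viewed as living in graded $k$-modules) produces an $A^{-1}$-local object of $\cat$, providing an inverse to the forgetful functor $\Mod_{k^{tA}}(\cat) \to \Mod_{k^{tA}}^\gr$. Once this is checked, composition with the smashing-localization equivalence yields the desired identification $\StMod_A^{\mathrm{big}} \simeq \Mod_{k^{tA}}^\gr$.
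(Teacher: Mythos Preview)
Your route via smashing localizations is genuinely different from the paper's. The paper simply applies the graded Schwede--Shipley theorem (Proposition \ref{prop:gradedschwedeshipley}) to $\StMod_A^{\mathrm{big}}$: the unit $k(0)$ is a compact graded-generator, and the proof of Proposition \ref{prop:stmod_LA1} already computed its graded endomorphism ring to be $k^{tA}$. That is the entire argument. Your approach instead rewrites the localized category as $\Mod_{L_{A^{-1}}\mathbbm{1}}(\cat)$ and then tries to pass from modules in $\cat$ to modules in $\Mod_k^\gr$.

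The first two steps of your argument are fine: $L_{A^{-1}}$ is indeed smashing (the $A$-torsion subcategory is generated by the compact object $A$), and the identification $\hom_A(k, L_{A^{-1}}k) \simeq k^{tA}$ is exactly what the proof of Proposition \ref{prop:stmod_LA1} establishes. But the final step has a real gap. You assert that ``$k^{tA}$ is $A^{-1}$-local, so its $A$-action factors canonically through the counit.'' This is circular: $k^{tA}$ is defined in Construction \ref{cons:tate} as an object of $\Mod_k^\gr$, so to speak of it being $A^{-1}$-local you must first give it an $A$-module structure. Conversely, $L_{A^{-1}}k$ is certainly $A^{-1}$-local in $\cat$, but being $A^{-1}$-local does \emph{not} imply that the $A$-action is trivial (i.e., pulled back along $\varepsilon$). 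What you have actually shown is that the \emph{$A$-fixed points} $\hom_A(k, L_{A^{-1}}k)$ agree with $k^{tA}$; you have not identified the underlying $A$-module of $L_{A^{-1}}k$ with $\varepsilon^* k^{tA}$, and there is no reason to expect this in general.

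To close the gap you would need to show that the symmetric monoidal functor $\varepsilon^*: \Mod_k^\gr \to \cat$ becomes an equivalence after $A^{-1}$-localizing the target, or equivalently that every $A^{-1}$-local object lies in the essential image of $\varepsilon^*$. Establishing this is essentially equivalent in difficulty to the graded Schwede--Shipley argument the paper uses, so your detour through smashing localizations does not actually buy anything here.
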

\begin{proof}
	Follows from applying Proposition \ref{prop:gradedschwedeshipley} and observing that $ k(0) $ is a graded-generator of $ \StMod_A^{\mathrm{big}} $. 
	Finally, the proof of Proposition \ref{prop:stmod_LA1} shows that graded endomorphisms of $ k(0) $ in $ L_{A^{-1}}\Ind\left(\Mod_A\left(\Perf^{gr}_k\right)\right) $ agree with the Tate construction $ k^{tA} $ of Construction \ref{cons:tate}. 
\end{proof}
\begin{cons} [Twist dynamical system on $ \StMod_A $]	\label{cons:twist_dynamicalsystem}
	Let $ A $ be a finite-dimensional, graded connected Hopf algebra over $ k $. 
	The twist functor $ (1) $ of Recollection \ref{rec:graded_vs}\ref{recitem:twist_functor_vs} lifts to an autoequivalence $ \twist_A: \Ind \left( \Mod_{A}\left(\Perf^{gr}_k\right)\right) \to \Ind \left( \Mod_{A}\left(\Perf^{gr}_k\right)\right) $. 
	Because $ \twist $ preserves the subcategory $ \Mod_{A}\left(\Perf^{gr}_k\right) $ and the property of being a stable equivalence, the twist by 1 functor $ \twist_A $ descends to a well-defined endofunctor $ \twist_A: \StMod_A \to \StMod_A $. 
\end{cons}
The construction of $ \StMod_A $ allows for explicit identification of the loop functor \cite[\S2.2]{MR4390795}. 
\begin{obs} [Loops in $ \StMod_A $] \label{obs:stmod_loops}
	Let $ A $ be a $ k $-algebra and let $ M $ be a finitely-generated discrete $ A $-module. 
	Suppose given a surjection $ f \colon P \surj M $ where $ P $ is a projective graded $ A $-module. 
	Such an $ f $ always exists by Propositions \ref{prop:frob_algebras_cats} and \ref{prop:finconnhopfimpliesfrobenius}. 
	The short exact sequence of $ A $-modules $ \ker(f) \to P \to M $ induces an exact sequence in $ \Mod_A\left(\Perf_k\right) $ and in $ \StMod_A $. 
	However, by definition of the stable module category, $ P \simeq 0 $ in $ \StMod_A $. 
	Thus $ \ker(f) \simeq \loops M $. 
\end{obs}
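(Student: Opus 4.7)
The plan is to exploit the Frobenius property of $A$ (Proposition \ref{prop:finconnhopfimpliesfrobenius} and Proposition \ref{prop:frob_algebras_cats}) to produce a concrete model for the loop functor on $\StMod_A$, using the fact that projective and injective $A$-modules agree and both become zero objects after passing to $\StMod_A$.

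First I would verify existence of the surjection $f \colon P \twoheadrightarrow M$ from a projective $A$-module. Since $A$ is Frobenius, the abelian category $\Mod_A^\heartsuit$ has enough projectives; in particular any finitely generated $M$ admits a finitely generated projective cover $P \twoheadrightarrow M$. Next I would check that the short exact sequence of discrete $A$-modules
\begin{equation*}
	\ker(f) \longhookrightarrow P \longtwoheadrightarrow M
\end{equation*}
descends to a cofiber sequence in $\StMod_A$. The cleanest route is via Proposition \ref{prop:stmod_LA1}: the sequence is already a short exact sequence of objects in $\Mod_A(\Perf_k^\gr)$, hence a cofiber sequence in $\Ind(\Mod_A(\Perf_k^\gr))$, and $L_{A^{-1}}$-localization is exact.

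Next I would argue that $P \simeq 0$ in $\StMod_A$. On the level of homotopy categories this is immediate from Hovey's identification $\hom_{\ho \StMod_A^\mathrm{big}}(V,W) = \hom_{\Mod_A^\heartsuit}(V,W)/\!\sim$, since $\id_P$ factors through a projective, namely $P$ itself, so $\id_P \sim 0$. Alternatively one can invoke \cite[\S3.2]{MNN} and the fact that any projective $A$-module is a retract of a free module $A^{\oplus n}$, which is $A$-torsion and therefore $L_{A^{-1}}$-local-equivalent to zero.

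Finally, given the cofiber sequence $\ker(f) \to P \to M$ in the stable $\infty$-category $\StMod_A$ with $P \simeq 0$, rotating once yields an equivalence $\ker(f) \simeq \mathrm{fib}(P \to M) \simeq \mathrm{fib}(0 \to M) \simeq \loops M$. The only subtle point is being careful that the functor $\Mod_A^\heartsuit \to \StMod_A$ takes classical short exact sequences to cofiber sequences in the $\infty$-categorical sense; beyond that the argument is essentially formal once Proposition \ref{prop:stmod_LA1} and the Frobenius property are in hand.
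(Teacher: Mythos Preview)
Your proposal is correct and follows exactly the reasoning sketched in the observation itself: the paper does not supply a separate proof but rather embeds the argument in the statement (existence of $P$ via the Frobenius property, passage of the short exact sequence to a cofiber sequence in $\StMod_A$, vanishing of $P$ there, and the resulting identification $\ker(f)\simeq\loops M$). Your write-up simply fleshes out each of these steps with explicit references to Proposition \ref{prop:stmod_LA1} and Hovey's description of $\ho\StMod_A^{\mathrm{big}}$, which is entirely in line with the paper's intent.
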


\paragraph{A compact generator.} 
Here we identify a convenient `test object' in $ \StMod_A $ (see introduction to \S\ref{subsection:entropy_defn}). 

\begin{prop}\label{prop:stmod_generators}
	If $ A $ is a nontrivial finite-dimensional connected graded Frobenius algebra over a field $ k $, then $ \StMod_A^\omega $ has a generator (Recollection \ref{recollection:thick_subcategory}) given by a finite sum of twists of the ground field. 
\end{prop}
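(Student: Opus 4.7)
The plan is to exhibit $G := \bigoplus_{n=0}^{d-1} k(n)$ as a compact generator of $\StMod_A^\omega$, where $d$ is the Gorenstein parameter of $A$ (Definition \ref{defn:gorensteinalg}); nontriviality of $A$ gives $d \geq 1$, so $G$ is a nonzero finite direct sum of twists of the ground field. The argument splits cleanly into a reduction step and a propagation step, with the propagation fed by two dual ``boundary'' cofiber sequences.

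For the reduction, I would first observe that every compact object of $\StMod_A^\omega$ lies in the thick closure of the images of finitely generated discrete graded $A$-modules, since any object of $\Mod_A(\Perf_k^{\gr})$ is built as an iterated extension of its discrete cohomology modules. Given such a discrete module $M$, the decreasing ``truncate below $j$'' filtration $F^j M := \bigoplus_{i \geq j} M_i$ consists of $A$-submodules (because $A$ is concentrated in non-negative degrees), is finite (because $A$ and $M$ are finite-dimensional), and has successive quotients $F^j M / F^{j+1} M$ killed by the augmentation ideal, hence isomorphic as $A$-modules to $k^{\dim_k M_j}(j)$. This witnesses $M$ as an iterated extension of twists of $k$, so the task reduces to showing $k(n) \in \langle G \rangle$ for every $n \in \Z$.

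The key step is then to produce two relations that let me transport membership in $\langle G \rangle$ across the integers. Applying the same degree filtration to $A$ itself yields cofiber sequences $F^{j+1}A \to F^jA \to k^{\dim_k A_j}(j)$ in $\Mod_A(\Perf_k^{\gr})$, and since $A \simeq 0$ in $\StMod_A$ (Observation \ref{obs:stmod_loops}), unwinding these from the bottom up shows $\loops k(0) \in \langle k(1), k(2), \ldots, k(d) \rangle$, and therefore $k(n) \in \langle k(n+1), \ldots, k(n+d) \rangle$ for every $n$. Dually, the Frobenius hypothesis (Proposition \ref{prop:frob_algebras_cats}) together with connectivity forces $\dim_k A_d = \dim_k A_0 = 1$, so the socle inclusion $k(d) \hookrightarrow A$ has cokernel $A/k(d)$ filtered with subquotients $k^{\dim_k A_j}(j)$ for $j = 0, \ldots, d-1$; using $A \simeq 0$ once more gives $k(d) \in \langle k(0), \ldots, k(d-1) \rangle$, and hence $k(n) \in \langle k(n-d), \ldots, k(n-1) \rangle$ for every $n$.

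A forward (resp. backward) induction on $n \geq d$ (resp. $n < 0$), starting from the tautological $\{k(0), \ldots, k(d-1)\} \subseteq \langle G \rangle$ and iterating the two boundary relations, then places every $k(n)$ in $\langle G \rangle$ and completes the argument. The delicate point I expect to need care is verifying that the socle $F^d A$ really is a single copy of $k(d)$ rather than some larger $k^m(d)$: this is exactly where the Frobenius / Poincar\'e duality $A \simeq A^\vee(d)$ enters crucially, because without one-dimensionality of $A_d$ the innermost step of the filtration of $A$ would produce an a priori bigger sum from which $k(d)$ would need to be extracted as a retract.
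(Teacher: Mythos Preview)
Your proposal is correct and follows essentially the same approach as the paper: reduce to twists of $k$ via the degree filtration on discrete modules, then filter $A$ itself by degree and use $A \simeq 0$ in $\StMod_A$ to produce the two propagation relations $k(n) \in \langle k(n+1),\ldots,k(n+d)\rangle$ and $k(n) \in \langle k(n-d),\ldots,k(n-1)\rangle$, finishing by two-sided induction. Your treatment is actually a bit more explicit than the paper's about where the Frobenius hypothesis enters (the one-dimensionality of $A_d$), and your observation that the retract-closure of thick subcategories would rescue the argument even without that is correct.
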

\begin{proof}
	We begin by observing that any discrete finite $ A $-module $ M $ can be written as an iterated extension of $ k(n) $ for $ n \in \Z $. 
	Let $ w = \min \{n \mid M_n \neq 0 \}$. 
	Since $ A $ is connected, the quotient of graded $ k $-vector spaces $ M \to k(w)^{\oplus \dim_k M_w} $ is canonically a morphism of $ A $-modules. 
	Then the exact sequence  
	\begin{equation}\label{eq:homogenization}
		\fib(p) \to M \xrightarrow{p} k(w)^{\oplus \dim_k M_w} 
	\end{equation}
	induces a cofiber sequence in $ \Mod_A\left( \Perf_k \right) $. 
	Replacing $ M $ by $ \fib(p) $ and iterating this procedure shows that any discrete finite $ A $-module is in the thick subcategory $ \langle k(n) \rangle_{n \in \Z} $. .
	The observation follows from observing that $ \Mod_A\left( \Perf_k \right)^\omega $ is generated under shifts and extensions by discrete $ A $-modules. 
	Since localization preserves compact generators, we conclude that 
	\begin{equation*}
		 \StMod_A^\omega = \left\langle k(\ell) \right\rangle_{\ell \in \Z} .
	\end{equation*} 
	Now take $ M = A, \fib(p), \ldots $ in the sequence (\ref{eq:homogenization}) and apply the previous procedure to produce a diagram
	\begin{equation*}
	\begin{tikzcd}
		k(d) = \fib(p_d) \ar[r] & \fib(p_{d-1}) \ar[d,"p_d"] \ar[r] &\cdots \ar[r] &\fib(p_0) \ar[r] \ar[d,"p_1"] & A \ar[d,"p_0"] \\
		& \bigoplus k(d-1) && \bigoplus k(1) & k(0)
	\end{tikzcd}
	\end{equation*} 
	where each pair of composable morphisms consisting of a horizontal arrow followed by a vertical arrow is a short exact sequence in $ \Mod_A^\heartsuit $. 
	These induce exact sequences in $ \Mod_A\left( \Perf_k \right)^\omega $.  
	Then by Recollection \ref{recollection:thick_subcategory},
	\begin{equation*}
		k(d) \in \langle \fib(p_{d-1}), k(d-1) \rangle \subseteq \langle \fib(p_{d-2}),k(d-2), k(d-1) \rangle \cdots \subseteq \langle k(0), \ldots, k(d-1) \rangle .
	\end{equation*} 
	Modifying the above argument slightly for $ A(-1) $, we see that $ k(-1) \in \langle k(0), \ldots, k(w-2) \rangle $, and similarly for negative twists of $ k $. 
\end{proof}
Next we show that an assumption for linear-algebraic bound on categorical polynomial entropy (Proposition \ref{prop:poincarepolybound}) holds for our categories of interest. 
\begin{prop}\label{prop:stmodproper}
	Let $ A $ be a finite-dimensional connected graded cocommutative Hopf algebra over a field $ k $. 
	Then $ \StMod_A $ is proper, i.e. all $ \StExt_A (-, -) $ are perfect complexes over $ k $. 
\end{prop}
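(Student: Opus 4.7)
The plan is to combine the identification of a convenient compact generator with the description of the mapping spectra via Tate cohomology, then verify perfection weight-by-weight using connectedness of $A$.

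By Proposition \ref{prop:stmod_generators} and closing under finite direct sums, $\StMod_A$ admits a single compact generator of the form $G = \bigoplus_{i \in S} k(i)$ for some finite $S \subseteq \Z$. Properness over $k$ reduces to checking that $\hom_{\StMod_A}(k(i), k(j))$ is a perfect $k$-module spectrum for each $i, j \in S$; using the twist autoequivalence of Construction \ref{cons:twist_dynamicalsystem}, it suffices to check this for $\hom_{\StMod_A}(k, k(\ell))$ as $\ell$ ranges over the finite set of differences $j - i$.

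Next, by (the last step of the proof of) Corollary \ref{cor:stmod_graded_modules}, the graded mapping spectrum $\hom_{\StMod_A}(k, k)$ is identified with the Tate construction $k^{tA}$, so $\hom_{\StMod_A}(k, k(\ell))$ is its weight-$\ell$ piece as an ungraded $k$-module spectrum. Construction \ref{cons:tate} presents $k^{tA}$ as $\cofib(\Nm \colon k \otimes_A k(d) \to \hom_A(k, k))$, where $d$ is the Gorenstein parameter, so it suffices to bound the weight-$\ell$ pieces of $\hom_A(k, k)$ and $k \otimes_A k(d)$ separately and then take the cofiber. Finite-dimensionality of $\Ext^{s,t}_A(k, k)$ and $\Tor^A_{s,t}(k, k)$ in each bidegree is immediate from finite-dimensionality of $A$ together with the (co)bar descriptions (Proposition \ref{prop:cobar_ext}). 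For boundedness: since $A$ is connected, $\bar{A}$ sits in weights $\geq 1$, so $\coBar_A(k;k)_s$ vanishes in weight $t$ whenever $t < s$, giving $\Ext^{s,t}_A(k, k) = 0$ unless $0 \leq s \leq t$, and dually $\Tor^A_{s,t}(k, k) = 0$ unless $0 \leq s \leq t$. Thus in each fixed weight both $\hom_A(k, k)$ and $k \otimes_A k(d)$ are supported in finitely many homological degrees with finite-dimensional homotopy, and the cofiber inherits these properties via the associated long exact sequence.

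The main obstacle is really just bookkeeping: correctly translating between the convention $\pi_{s,t}\hom_A(k,k) = \Ext^{-s,-t}_A(k,k)$ (and its $\Tor$ analogue) and tracking the weight shift by the Gorenstein parameter $d$ through the norm map. Once those conventions are in place, the connectedness-driven vanishing of the cobar complex in low weight supplies the degreewise bounds essentially for free, and the properness claim follows.
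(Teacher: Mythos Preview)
Your proof is correct and follows the same skeleton as the paper's: reduce to the generator $\bigoplus k(i)$, identify the mapping spectrum with $k^{tA}$, split via the norm cofiber sequence into $\hom_A(k,k)$ and $k\otimes_A k(d)$, and check weight-by-weight finiteness. The genuine difference is in how you establish that $\bigoplus_s \Ext^{s,t}_A(k,k)$ is finite-dimensional for fixed $t$. The paper appeals to Wilkerson's finite generation theorem (Theorem~\ref{thm:ext_fingen}) together with Observation~\ref{obs:homogenous_generators_bidegree} on the bidegrees of generators; you instead observe directly that connectedness of $A$ forces $I(A)^{\otimes s}$ into weights $\geq s$, so the bar complex already vanishes in the relevant range. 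Your route is strictly more elementary---Wilkerson's theorem is the main result of \cite{MR597872} and is considerably harder than the one-line weight estimate you give---so this is a real simplification. The paper's route, on the other hand, makes the later use of finite generation (in Proposition~\ref{prop:krulldimlowerbound}) feel more of a piece with the properness argument.

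One small notational wobble: per Proposition~\ref{prop:cobar_ext} it is $\coBar_{A^\vee}$, not $\coBar_A$, that computes $\Ext_A^{*,*}(k,k)$, and the weight bookkeeping there depends on one's convention for the grading of $A^\vee$. The cleanest way to phrase your argument is via the bar resolution of $k$ as an $A$-module: its $s$-th term is $A \otimes I(A)^{\otimes s}$, and since $I(A)$ sits in weights $\geq 1$ this immediately gives $\Tor^A_{s,t}(k,k)=0$ unless $0\leq s\leq t$; the $\Ext$ bound then follows by $k$-linear duality. This sidesteps the cobar/dual conventions entirely and makes the inequality $0\leq s\leq t$ transparent.
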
 
\begin{proof}
	Because twists of $ k(0) $ (and shifts thereof) generate $ \StMod_A $, it suffices to show that for each $ 0 \leq i, j <w $, $ \StExt_A(k(i), k(j)) $ is a bounded complex of finite dimensional $ k $-vector spaces. 
	Equivalently, for a fixed twist $ t = j-i $ the graded $ k $-vector space $ \bigoplus_s \StExt^{s, t}_A(k, k) $ is finite-dimensional.  
	By Construction \ref{cons:tate}, there is an exact sequence 
	\begin{equation*}
		k \otimes_A k(d) \to \hom_A(k, k) \to k^{tA}
	\end{equation*}
	of graded $ k $-module spectra.
	The result follows from considering the resulting long exact sequence of graded homotopy groups and observing that
	\begin{itemize} 
		\item For each $ t \in \Z $, $ \bigoplus_s \Ext^{s, t}_A(k, k) $ is finite-dimensional: 
		This follows from Theorem \ref{thm:ext_fingen} and Observation \ref{obs:homogenous_generators_bidegree}.  
		\item For each $ t \in \Z $, $ \bigoplus_s \pi_s (k \otimes_A k)_t $ is finite-dimensional:
		Since $ k $ is a field, duality interacts nicely with taking homotopy groups and we can understand the first term as follows: 
		\begin{align*}
			\pi_{s,t} (k \otimes_A k) &\simeq \pi_{-s,-t}\hom_k(k \otimes_A k, k) \\
			&=\simeq \pi_{-s,-t}\hom_A(k, k) = H^{s}(A; k)_t
		\end{align*}
		hence the result follows from the previous case. \qedhere 
	\end{itemize}
\end{proof}
\begin{rmk}
	The preceding results are where working with graded modules and the assumption that $ A $ is connected become essential. 
\end{rmk}

\section{Entropy of the twist functor}\label{section:twist_entropy_results}
In this section, we arrive at a proof of the main theorem. 
We state general results for controlling complexity in graded stable $ \infty $-categories in \S\ref{subsection:general_bounds}, then apply them to the twist dynamical system on stable module categories in subsequent sections. 
We show that the categorical entropy of the twist functor $ \twist $ on $ \StMod_A $ vanishes. 
We show that the categorical polynomial entropy of the twist functor $ \twist $ is bounded below by one less than the Krull dimension of the cohomology of $ A $ using a linear-algebraic bound of Fan-Fu-Ouchi (Proposition \ref{prop:poincarepolybound}). 
Since we are not able to show that $ \StMod_A $ is smooth (Definition \ref{def:smoothpropcat}) for generic $ A $, providing sharp upper bounds for the categorical polynomial entropy of $ \twist $ is a more delicate matter. 
We show that a careful application of the procedure employed in the proof of Proposition \ref{thm:twist_expentropy_iszero} can lead to upper bounds for the categorical polynomial entropy of $ \twist $. 

\subsection{Generalities.}\label{subsection:general_bounds} 
We start by introducing a general-purpose strategy for computing the complexity of the twist functor on locally graded stable $ \infty $-categories (Definition \ref{defn:loc_gr_pres_cats}). 
Informally a locally graded stable $ \infty $-category $ \cat $ is a stable $ \infty $-category $ \cat $ equipped with an automorphism $ (1): \cat \to \cat $. 
The reader is invited to take $ \cat = \StMod_A $ or $ \cat = \Mod_A\left( \Perf_k\right) $ and $ (1) = \twist $ in the following to fix ideas. 
\begin{defn}\label{defn:ladder_pyramid}
	Let $ \cat $ be a graded stable $ \infty $-category, and let $ M, N \in \cat $. 
	Say that $ M $ \emph{is a staircase for} $ N $ if there is a resolution (Definition \ref{defn:resolution})
	\begin{equation}\label{eq:ladder_sequence}
		N(r)[s] \longrightarrow M(r-t_m)[s_m] \longrightarrow \cdots \longrightarrow M(r-t_1)[s_1] \longrightarrow N
	\end{equation}
	for some integers $ m, r, t_i \in \Z_{>0} $ and $ s_i, s \in \Z $ such that $ t_i \leq r $. 
	We will refer to $ m $ as the \emph{height} of the staircase. 

	We say that an $ \ell $-tuple $ (N_1, \ldots, N_\ell) $ of objects in $ \cat $ is a \emph{pyramid} if each $ N_i $ is a staircase for $ N_{i+1} $. 
	We will refer to $ \ell $ as the \emph{dimension} of the pyramid. 

	Let $ \cat $ be a locally graded stable $ \infty $-category, and let $ X \in \cat $ be an object. 
	The object $ X $ is \emph{periodic} if there exist integers $ a, b $, $ a \neq 0 $ such that $ X(a)[b] \simeq X $. 
\end{defn}
\begin{obs}
	Let $ \cat = \Mod_A\left(\Perf_k^\gr\right) $ for some connected graded cocommutative Hopf algebra $ A $ over $ k $. 
	If $ A $ is a staircase for $ M $ in $ \Mod_A\left(\Perf_k^\gr\right) $, then $ M $ is periodic in $ \StMod_A $. 

	Given a staircase for $ N $ of height $ r $, we can build a staircase of height $ kr $ for arbitrary $ k \in \Z_{> 0} $ by splicing twists of the given staircase. 
	We illustrate the case $ k= 2 $. 
	\begin{equation*}
	\begin{tikzcd}[column sep=small]
		N(2r)[2s] \ar[r] & M(2r-t_m)[s+s_m] \ar[r]& \cdots \ar[r] & M(2r-t_1)[s+s_1] \ar[lld] &\\
		 & M(r-t_m)[s_m] \ar[r] & \cdots \ar[r] & M(r-t_1)[s_1] \ar[r]& N 
	\end{tikzcd}
	\end{equation*}
	where the top row is $ -(r)[s] $ applied to the first $ m+1 $ terms of (\ref{eq:ladder_sequence}), the bottom row is the last $ m+1 $ terms of (\ref{eq:ladder_sequence}), and the diagonal arrow is defined to be the composite 
	\begin{equation*} 
		{M(2r-t_1)[s+s_1] \to N(r)[s] \to M(r-t_m)[s_m]} .
	\end{equation*} 
\end{obs}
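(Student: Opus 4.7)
The observation comprises two separate claims, which I will address in turn: first, that $A$ being a staircase for $M$ implies $M$ is periodic in $\StMod_A$; second, that a staircase can be spliced with twists of itself to produce a longer staircase.

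For the periodicity claim, my plan is to unwind the definition of a staircase with $N = M$, producing a resolution
$$X_m \to X_{m-1} \to \cdots \to X_0 \to X_{-1}$$
in $\Mod_A(\Perf_k^\gr)$ with $X_m = M(r)[s]$, $X_i = A(r - t_{i+1})[s_{i+1}]$ for $0 \leq i \leq m - 1$, and $X_{-1} = M$. Upon passing to $\StMod_A$, the intermediate terms $X_0, \ldots, X_{m-1}$ all become zero since twists and shifts of $A$ are projective. The iterated cofibers $Y_i$ of Definition \ref{defn:resolution} then satisfy $Y_m = M(r)[s]$ and $Y_{i-1} \simeq \Sigma Y_i$ in $\StMod_A$ (as $\cofib(Y_i \to 0) \simeq \Sigma Y_i$), so by induction $Y_0 \simeq M(r)[s + m]$. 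The length-zero condition of the resolution identifies $Y_0$ with $X_{-1} = M$, giving $M \simeq M(r)[s + m]$; since $r > 0$, this is a nontrivial periodicity relation.

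For the splicing claim, the plan is to verify directly that the displayed diagram defines a resolution of $N$ in the sense of Definition \ref{defn:resolution}. I would first check that consecutive composites of the spliced sequence are nullhomotopic: internal composites in either the top or bottom row are null since the original resolution is a chain complex, while the two composites involving the diagonal arrow factor through $N(r)[s]$ and thus reduce to consecutive composites in the original resolution (respectively its twist by $(r)[s]$), which are null. Second, I would verify the iterated cofiber condition: the first $m$ cofiber steps applied to the top half of the splice reduce to $N(r)[s]$ by the length-zero condition for the twisted copy of the original resolution, after which the remaining sequence is precisely the original resolution of $N$, and a further $m$ cofiber steps produce $Y_0 \simeq N$ by the length-zero condition applied to it. The constraint that each twist $2r - t_j$ and $r - t_j$ is at most $2r$ is immediate, and the general case $k \geq 2$ follows by iterating the $k = 2$ splice.

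The main technical obstacle is keeping the nullhomotopy data in the splice coherent, since Definition \ref{defn:resolution} depends on specific chosen homotopies (used to define the classifying morphisms) rather than mere existence. This is bookkeeping rather than conceptual, made tractable by the fact that the diagonal arrow is constructed as an explicit composite through $N(r)[s]$, so the required nullhomotopies can be inherited componentwise from the original resolution and its twist.
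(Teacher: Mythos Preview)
Your proposal is correct and follows the same approach as the paper. The observation in the paper is stated with its justification embedded (the displayed splicing diagram together with the description of its arrows), and you have simply fleshed out the verification that the splice is again a resolution and supplied the short argument for periodicity that the paper leaves implicit.
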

\begin{prop}\label{prop:complexitybootstrap}
	Let $ \cat $ be a locally graded stable $ \infty $-category, and let $ G \in \cat $ be a compact generator. 
	Suppose given a pyramid $ (N_1, \ldots, N_{\ell}) $ in $ \cat $ such that $ N_1 $ is periodic, i.e. $ N_1(a)[b] \simeq N_1 $ where $ a, b $ are integers, $ a \neq 0 $. 
	Then there exists a constant $ C \gg 0 $ such that for $ n \gg 0 $, 
	\begin{equation*} 
		\delta_0(G, N_\ell(n)) < C \cdot n^{\ell-1} \qquad n \gg 0.
	\end{equation*} 
\end{prop}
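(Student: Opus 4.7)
The plan is to induct on the dimension $\ell$ of the pyramid: each staircase reduces an estimate for $\delta(G, N_i(n))$ to an estimate for $\delta(G, N_{i-1}(n - t))$ at a strictly smaller twist, and the induction terminates at the periodic base $N_1$, where complexity is uniformly bounded.

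For the base case $\ell = 1$, the pyramid is a single object $N_1$ with $N_1(a)[b] \simeq N_1$, $a \neq 0$; without loss of generality $a > 0$. Iterating yields $N_1(ka) \simeq N_1[-kb]$ for every $k \in \Z$. Writing an arbitrary $n$ as $n = ka + c$ with $0 \leq c < a$, we have $N_1(n) \simeq N_1(c)[-kb]$, and shift-insensitivity of complexity (Observation \ref{obs:deltazeroshiftinsensitive}) gives $\delta(G, N_1(n)) = \delta(G, N_1(c))$. Since $c$ ranges over the finite set $\{0, \ldots, a-1\}$, we conclude $\delta(G, N_1(n)) \leq C_1 := \max_{0 \leq c < a} \delta(G, N_1(c))$, which is $C_1 \cdot n^{0}$, as required.

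For the inductive step, assume the claim for pyramids of dimension $\ell$, and suppose $(N_1, \ldots, N_{\ell+1})$ is a pyramid of dimension $\ell+1$ with $N_1$ periodic. The truncation $(N_1, \ldots, N_\ell)$ is a pyramid of dimension $\ell$ satisfying the same hypothesis, so there exist $C_\ell, n_0 > 0$ with $\delta(G, N_\ell(n)) \leq C_\ell n^{\ell-1}$ for $n \geq n_0$. The staircase exhibiting $N_\ell$ as a staircase for $N_{\ell+1}$ is a length-$m$ resolution
\[
N_{\ell+1}(r)[s] \to N_\ell(r - t_m)[s_m] \to \cdots \to N_\ell(r - t_1)[s_1] \to N_{\ell+1},
\]
with $1 \leq t_u \leq r$. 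Because the twist $-(n-r)$ is an exact autoequivalence of $\cat$ and therefore preserves cofiber sequences and nullhomotopies, applying it produces a length-$m$ resolution of $N_{\ell+1}(n-r)$ with top term $N_{\ell+1}(n)[s]$ and middle terms given by shifts of $N_\ell(n - t_u)$ for $u = 1, \ldots, m$. Lemma \ref{lemma:resolution_subadditive} together with shift-insensitivity of complexity yields
\[
\delta(G, N_{\ell+1}(n)) \leq \sum_{u=1}^m \delta(G, N_\ell(n - t_u)).
\]
For $n \geq n_0 + r$, each $n - t_u \geq n_0$, so each summand is bounded by $C_\ell (n - t_u)^{\ell-1} \leq C_\ell n^{\ell-1}$, whence $\delta(G, N_{\ell+1}(n)) \leq m C_\ell n^{\ell-1} \leq C_{\ell+1} n^{\ell}$ for $n$ sufficiently large. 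This closes the induction.

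The only substantive verification is that the twist of a resolution is again a resolution, which is routine since Definition \ref{defn:resolution} is phrased purely in terms of cofiber sequences and nullhomotopies, both of which are preserved by exact autoequivalences of stable $\infty$-categories. As a side observation, iterating the staircase reduction $\ell - 1$ times all the way down to the periodic base — rather than invoking a polynomial inductive hypothesis — in fact yields the stronger estimate $\delta(G, N_\ell(n)) = O(1)$, but the polynomial bound as stated is already sufficient for the upper bounds on $h_\pol(\twist)$ extracted in \S\ref{section:twist_entropy_results}.
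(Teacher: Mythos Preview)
Your base case and overall inductive framework are fine, but the inductive step contains a genuine error: the displayed inequality
\[
\delta(G, N_{\ell+1}(n)) \leq \sum_{u=1}^m \delta(G, N_\ell(n - t_u))
\]
does not follow from Lemma~\ref{lemma:resolution_subadditive}. In a length-$m$ resolution $X_m \to X_{m-1} \to \cdots \to X_0 \to X_{-1}$, subadditivity of complexity across the constituent cofiber sequences bounds $\delta(Y, X_m)$ by the sum of $\delta(Y, X_i)$ over \emph{all} other terms, including $X_{-1}$. (Consider $m=1$: from a cofiber sequence $A \to B \to C$ one only gets $\delta(Y,A) \leq \delta(Y,B) + \delta(Y,C)$, not $\delta(Y,A) \leq \delta(Y,B)$.) Your own side observation is a clean witness to the problem: if the bound you wrote were valid, iterating it would indeed give $\delta(G, N_\ell(n)) = O(1)$ and hence $h_\pol(\twist) = 0$ for every finite connected graded cocommutative Hopf algebra, contradicting Proposition~\ref{prop:krulldimlowerbound} whenever $\dim H^*(A;k) \geq 2$. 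The upper-range summation in the statement of Lemma~\ref{lemma:resolution_subadditive} as printed is off by one, and you have been misled by it; the paper's own application of the lemma in the proof of this proposition makes the intended form clear.

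The correct inequality is
\[
\delta(G, N_{\ell+1}(n)) \;\leq\; \sum_{u=1}^m \delta(G, N_\ell(n - t_u)) \;+\; \delta(G, N_{\ell+1}(n-r)),
\]
and the missing ingredient is to \emph{iterate} this recursion in $n$: apply it again to $\delta(G, N_{\ell+1}(n-r))$, then to $\delta(G, N_{\ell+1}(n-2r))$, and so on, until the argument drops below $r$. This produces roughly $n/r$ copies of the sum $\sum_u \delta(G, N_\ell(\,\cdot\,))$, plus a bounded remainder $\max_{0 \leq i < r} \delta(G, N_{\ell+1}(i))$. By the inductive hypothesis each summand is $O(n^{\ell-1})$, so the total is $O(n) \cdot O(n^{\ell-1}) = O(n^{\ell})$, which is exactly the bound claimed. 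This iteration is precisely the content of the paper's proof and of Lemma~\ref{lemma:polynomialsums}.
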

\begin{proof}
	We prove this by induction on the dimension $ \ell $ of our pyramid. 

	Base case: Let $ \ell = 1 $. 
	By assumption, there is an equivalence $ N_1(a)[b] \simeq N_1 $. 
	Combining this with Observation \ref{obs:deltazeroshiftinsensitive}, we have 
	\begin{align*} 
		\delta_0(G, N_1(n)) = \delta_0\left(G, N_1\left(n - \floor*{\frac{n}{a}}a\right)\right) \leq \max_{0 \leq t < a} \delta_0(G, N_1(t)) 
	\end{align*} 
	which proves the claim. 

	Inductive step: We assume the statement for $ \ell-1 \geq 1 $.  
	We apply Lemma \ref{lemma:resolution_subadditive} to our assumption of the existence of a resolution (\ref{eq:ladder_sequence}) to  
	\begin{align}
		\delta_0(G, N_\ell(n)) &\leq \sum_{j=1}^{m} \delta_0(G, N_{\ell-1}(n- t_j)) + \delta_0(G, N_\ell(n-r)) \\
		& \leq \sum_{j=1}^{m} \sum_{k_j=1}^{\floor*{\sfrac{n}{t_j}}} \delta_0(G, N_{\ell-1}(n-k_jt_j)) + \max_{0 \leq i < r} \delta_0(G, N_\ell(i))  \label{eq:complexitybootstrap_indstep}
	\end{align}
	The inductive assumption implies that $ \delta_0(G, N_{\ell-1}(n)) \leq C \cdot n^{\ell-2} $ for $ n \gg 0$. 
	It follows from Lemma \ref{lemma:polynomialsums} that there exists a constant $ C' \gg 0 $ such that the first sum of (\ref{eq:complexitybootstrap_indstep}) is $ < C'\cdot n^{\ell-1} $ for $ n \gg 0 $ as desired.
\end{proof}

\begin{lemma}\label{lemma:polynomialsums} 
	Let $ P(x) $ be a degree $ \ell $ polynomial with nonnegative real-valued coefficients, and fix a positive integer $ d \in \Z_{> 0} $. 
	Then for any $ n \in \Z_{>0 }$, we have an inequality  
	\begin{equation}\label{equation:polynomial_descending_sum}
		P(n) + P(n-d) + P(n-2d) + \cdots + P\left( n-d\floor*{\frac{n}{d}} \right) < C \cdot n^{\ell+1} \qquad \text{for} \qquad n \gg 0 .
	\end{equation}
\end{lemma}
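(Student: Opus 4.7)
The plan is to establish the bound by decoupling two elementary facts: the number of summands is linear in $n$, and each individual summand is at most a polynomial of degree $\ell$ in $n$.

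First, I would observe that the sum in (\ref{equation:polynomial_descending_sum}) has exactly $\lfloor n/d \rfloor + 1$ terms, and for every index $k$ with $0 \leq k \leq \lfloor n/d \rfloor$, the argument $n - kd$ lies in the interval $[0, n]$. Since $P$ has nonnegative coefficients, $P$ is monotonically nondecreasing on $[0, \infty)$, so each summand satisfies $P(n-kd) \leq P(n)$.

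Next, since $P$ has degree $\ell$, there exists a constant $C_1 > 0$ such that $P(n) \leq C_1 n^\ell$ for all sufficiently large $n$. Combining with the bound on the number of summands, we obtain
\begin{equation*}
    \sum_{k=0}^{\lfloor n/d\rfloor} P(n-kd) \leq \left( \frac{n}{d} + 1 \right) \cdot C_1 n^\ell
\end{equation*}
for $n \gg 0$. Choosing $C > C_1/d + C_1 n^{-1}$ for $n$ large (say, any constant $C$ strictly greater than $C_1/d$) yields the desired inequality $C \cdot n^{\ell+1}$.

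There is no real obstacle here; the argument is essentially a one-line estimate. The only point that warrants care is ensuring that the summation index stops at $\lfloor n/d \rfloor$ so that all arguments stay nonnegative, which is precisely what lets us invoke monotonicity of $P$ on $[0, \infty)$ from the nonnegativity of its coefficients.
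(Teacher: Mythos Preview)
Your proof is correct and actually takes a cleaner route than the paper's. The paper first reduces to the monomial case $P(x)=x^e$, then observes that the sum with step $d$ is bounded by the sum with step $1$, and finally invokes Faulhaber's formula $\sum_{k=1}^n k^e = \frac{n^{e+1}}{e+1} + O(n^e)$ to conclude. Your argument bypasses all of this by simply bounding each summand by the largest one, $P(n)$, using monotonicity of $P$ on $[0,\infty)$ (which follows from nonnegativity of the coefficients), and then multiplying by the number of summands $\lfloor n/d\rfloor + 1 \leq n/d + 1$. What the paper's approach buys is a sharper asymptotic for the sum itself, but since the lemma only asks for an upper bound of the form $C\,n^{\ell+1}$, your elementary estimate is entirely sufficient and avoids the external reference.
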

\begin{proof}
	Observe that the statement for $ P(x) = x^e $, $ 0 \leq e \leq \ell $ implies the statement for general $ P $, so we may assume without loss of generality that $ P(x) = x^\ell $ is a monomial. 
	Write $ S_P(n, d) $ for the sum in (\ref{equation:polynomial_descending_sum}).  
	By our assumption on $ P $ we have $ S_P(n, d) \leq S_P(n, 1) $, so it suffices to prove the case $ d = 1 $. 
	This is a consequence of Faulhaber's formula \cite{faulhaber1631academia}:
	\begin{equation*}
		\sum_{k=1}^n k^d = \frac{n^{d+1}}{d+1} + O(n^d) . \qedhere
	\end{equation*} 
\end{proof}
We can give an explicit condition for when a staircase in a pyramid does not contribute to the categorical polynomial entropy. 

\begin{obs}\label{obs:thicksubcat_complexity}
	Let $ \cat $ be a stable $ \infty $-category, and let $ M, N \in \cat $. 
	Suppose $ N $ is in the thick subcategory generated by $ M $. 
	Then $ \delta_0(M, N) <\infty $.  
\end{obs}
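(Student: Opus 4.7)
The plan is to verify this directly from the definition of $\delta$ and the characterization of the thick subcategory $\langle M \rangle$ generated by $M$ (Recollection \ref{recollection:thick_subcategory}) as the smallest stable subcategory of $\cat$ containing $M$ and closed under finite (co)limits and retracts. Concretely, define
\begin{equation*}
	\mathcal{S} := \{ N \in \cat \mid \delta(M, N) < \infty \} \subseteq \cat
\end{equation*}
and show that $\mathcal{S}$ is a thick subcategory of $\cat$ containing $M$; it then contains all of $\langle M \rangle$, which is exactly what is claimed.

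To carry this out, I would proceed with the following checks, which are all routine from the properties of $\delta$ recorded earlier. First, $M \in \mathcal{S}$, since the filtration $0 \to M$ witnesses $\delta(M, M) \leq 1$. Next, $\mathcal{S}$ is closed under (de)suspension: a filtration of $N_\ell$ by shifts of $M$ exhibiting $N$ as a retract can be (de)suspended to a filtration of $\Sigma^{\pm 1} N_\ell$ by shifts of $M$ exhibiting $\Sigma^{\pm 1} N$ as a retract, so $\delta(M, \Sigma^{\pm 1} N) \leq \delta(M, N)$ (indeed, equality holds by Observation \ref{obs:deltazeroshiftinsensitive}). Then $\mathcal{S}$ is closed under cofiber sequences: given a cofiber sequence $X \to Y \to Z$ in $\cat$ with $X, Z \in \mathcal{S}$, Lemma \ref{lemma:complexityprops}(\ref{lemmaitem:complexity_subadditive}) gives
\begin{equation*}
	\delta(M, Y) \leq \delta(M, X) + \delta(M, Z) < \infty,
\end{equation*}
so $Y \in \mathcal{S}$. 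Finally, $\mathcal{S}$ is closed under retracts essentially by fiat: if $N$ is a retract of $N'$ and $\delta(M, N') \leq \ell$ is witnessed by a filtration $0 = Y_0 \to \cdots \to Y_\ell$ with $N'$ a retract of $Y_\ell$, then $N$ is also a retract of $Y_\ell$ (compose the retractions), so the same filtration witnesses $\delta(M, N) \leq \ell$.

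I do not anticipate any obstacle here: the statement is essentially a tautology given that the definition of $\delta$ is tailored to match the closure properties defining thickness. The one subtle point worth pausing on is the retract clause, which is handled by the fact that $\delta$ is defined with ``$N$ is a retract of $Y_\ell$'' built into the filtration condition rather than requiring $N \simeq Y_\ell$; this is precisely what makes closure under retracts automatic and ensures that $\mathcal{S}$ is genuinely thick rather than merely a stable subcategory.
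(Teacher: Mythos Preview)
Your proof is correct and complete. The paper states this as an observation without proof, treating it as immediate from the definitions; your argument spells out precisely why, by showing that the collection of objects with finite complexity relative to $M$ is itself a thick subcategory containing $M$.
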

Observation \ref{obs:thicksubcat_complexity} leads to the following refinement of Proposition \ref{prop:complexitybootstrap}.
\begin{prop}\label{prop:complexitybootstrap_refined}
	Let $ \cat $ be a locally graded stable $ \infty $-category, and let $ G \in \cat $ be a generator. 
	Suppose given a pyramid $ (N_1, \ldots, N_{\ell}) $ in $ \cat $ such that $ N_1 $ is periodic, i.e. $ N_1(a)[b] \simeq N_1 $ where $ a, b $ are integers, $ a \neq 0 $. 
	Assume that there exist integers $ b_i $, $ 1 \leq b_1 < \cdots b_h < \ell $, such that $ N_{b_i +1} $ is in the thick subcategory generated by $ \bigoplus_{t \in T} N_{b_i}(t) $.
	Then there exists a constant $ C \gg 0 $ such that for $ n \gg 0 $, 
	\begin{equation*} 
		\delta_0(G, N_\ell(n)) < C \cdot n^{\ell-1-h} \qquad n \gg 0.
	\end{equation*} 
\end{prop}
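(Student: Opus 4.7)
The plan is to mirror the proof of Proposition \ref{prop:complexitybootstrap}: induct on the dimension $\ell$ of the pyramid, but this time track the ``shortcut'' indices $b_1 < \cdots < b_h$ at which a thick-subcategory relation is available in place of (or in addition to) the staircase structure. The key observation is that at a shortcut index $b$, the assumption $N_{b+1} \in \langle N_b(t) \rangle_{t \in T}$ combined with Lemma \ref{lemma:complexityprops}(\ref{lemmaitem:complexity_triangleineq}) and (\ref{lemmaitem:complexity_subadditive}) yields a bound of the form
\[
\delta_0(G, N_{b+1}(n)) \leq D \cdot \sum_{t \in T} \delta_0(G, N_b(n+t))
\]
for some constant $D$ (depending on $N_{b+1}$ and $N_b$ but not on $n$) and a finite shift set $T$. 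In contrast, the staircase recursion used in Proposition \ref{prop:complexitybootstrap} produces an iterated descending sum which, by Lemma \ref{lemma:polynomialsums}, introduces an additional factor of $n$. So heuristically each shortcut step should preserve the polynomial order while each ordinary staircase step increases it by one.

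To make this precise I would strengthen the inductive hypothesis: for any pyramid $(N_1, \ldots, N_\ell)$ with $N_1$ periodic and any subset $B \subseteq \{1, \ldots, \ell-1\}$ specifying which consecutive pairs satisfy the thick-subcategory hypothesis, there exists $C > 0$ such that $\delta_0(G, N_\ell(n)) < C \cdot n^{\ell - 1 - |B|}$ for $n \gg 0$. The base case $\ell = 1$ is identical to the original: periodicity together with Observation \ref{obs:deltazeroshiftinsensitive} keeps $\delta_0(G, N_1(n))$ bounded. For the inductive step, split on whether $\ell - 1 \in B$. If yes, apply the shortcut inequality above together with the inductive hypothesis for the pyramid $(N_1, \ldots, N_{\ell-1})$ with shortcut set $B \setminus \{\ell - 1\}$; the finite sum over $t \in T$ preserves the polynomial order $n^{\ell - 2 - (|B|-1)} = n^{\ell - 1 - |B|}$. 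If no, apply Lemma \ref{lemma:resolution_subadditive} to the staircase presentation of $N_\ell$ exactly as in Proposition \ref{prop:complexitybootstrap}, invoke the inductive hypothesis for $N_{\ell-1}$ with shortcut set $B$, and close out with Lemma \ref{lemma:polynomialsums} to absorb the descending sum into $n^{\ell - 1 - |B|}$.

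The main obstacle I anticipate is bookkeeping rather than substance: one must verify that the constants $D$ and the finite shift sets $T$ produced at each shortcut step remain uniform in $n$, so that they do not conspire with the staircase recursions at higher levels to inflate the polynomial order. A minor subtlety is that the hypothesis is phrased in terms of membership in $\langle \bigoplus_{t \in T} N_b(t) \rangle$ rather than in $\langle N_b(t) \rangle_{t \in T}$, but a finite direct sum is a finite iterated cofiber, so Lemma \ref{lemma:complexityprops}(\ref{lemmaitem:complexity_subadditive}) still delivers the required subadditive bound. Once the strengthened induction is set up correctly, taking $|B| = h$ produces exactly the claimed estimate.
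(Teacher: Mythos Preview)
Your proposal is correct and follows essentially the same approach as the paper: modify the induction of Proposition \ref{prop:complexitybootstrap} so that at each shortcut index $b_i$ the triangle inequality of Lemma \ref{lemma:complexityprops}(\ref{lemmaitem:complexity_triangleineq}) together with Observation \ref{obs:thicksubcat_complexity} replaces the staircase recursion, yielding a bound whose right-hand side is a \emph{fixed} finite sum over $T$ rather than a descending sum of length $\sim n$, hence preserving polynomial order instead of raising it. The paper's argument is terser---it only spells out the modified step at $\ell = b_1+1$ and leaves the remaining indices implicit---whereas your formulation, tracking the shortcut set $B$ through the induction and splitting on whether $\ell-1\in B$, makes the bookkeeping fully explicit; but the content is the same.
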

\begin{proof}
	The result follows from a modification of the inductive step of Proposition \ref{prop:complexitybootstrap} when $ \ell = b_i + 1 $. 
	We demonstrate the case $ \ell = b_1 +1 $; the cases $ \ell =  b_i + 1 $ are proved similarly. 
	By our assumption, Observation \ref{obs:thicksubcat_complexity}, and (\ref{lemmaitem:complexity_triangleineq}) and (\ref{lemmaitem:complexity_subadditive}) of Lemma \ref{lemma:complexityprops},    
	\begin{align}\label{eq:complexitybootstrap_refined_indstep}
		\delta(G, N_\ell(n)) &\leq \delta(G, \bigoplus_{t \in T}N_{b_1}(n+t)) \cdot \delta(\bigoplus_{t \in T}N_{b_1}(n+t), N_{b_1 +1}) \leq C' \sum_{t\in T} \delta(G, N_{b_1}(n+t)) 
	\end{align}
	The inductive assumption implies that $ \delta(G, N_{b_1}(n)) \leq C \cdot n^{b_1-1} $ for $ n \gg 0$. 
	Since the sum on the right hand side of (\ref{eq:complexitybootstrap_refined_indstep}) is finite and independent of $ n $, there exists a constant $ C'' \gg 0 $ such that $ \delta(G, N_{b_1 +1}(n)) < C''\cdot n^{b_1-1} $ for $ n \gg 0 $ as desired.
\end{proof}
We identify special case in which the assumption of Proposition \ref{prop:complexitybootstrap_refined} holds.
\begin{prop}\label{prop:nilextensions}
	Let $ \cat $ a locally graded stable $ \infty $-category. 
	Let $ M $ be a staircase for $ N $ (Definition \ref{eq:ladder_sequence}). 
	Suppose that the map classifying the resolution (\ref{eq:ladder_sequence}) is nilpotent. 
	Then $ M $ is in the thick subcategory generated by $ \bigoplus_{j \in S} N(j) $ for some finite subset $ S \subset \Z $.  
\end{prop}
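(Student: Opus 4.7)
The plan is to exploit the nilpotency of the classifying map $\alpha\colon N \to N(r)[s+m]$ by splicing the given staircase into a longer resolution whose classifying map is null-homotopic, and then unwinding the resulting Postnikov structure to extract the desired thick-subcategory containment.

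First, let $k \in \Z_{>0}$ be such that $\alpha^k = 0$. Using the concatenation procedure sketched after Definition \ref{defn:ladder_pyramid}, I would form a length-$km$ resolution
\[
N(kr)[ks] \longrightarrow \cdots \longrightarrow N
\]
by splicing $k$ copies of the given staircase, each twisted and shifted so that the top of one copy meets the bottom of the next. All $km$ middle terms of this resolution are shifts and twists of $M$, with twist indices drawn from the finite set $\{jr - t_i : 0 \leq j \leq k-1,\; 1 \leq i \leq m\}$; the top is $N(kr)[ks]$ and the bottom is $N$. Tracking the connecting maps across the splicing, one checks that the classifying map of the spliced resolution is a fixed suspension of $\alpha^k$, and hence vanishes by hypothesis.

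Second, I would extract consequences of this vanishing via Postnikov-style lifts. Recall from Definition \ref{defn:resolution} that the classifying map of a length-$L$ resolution $X_L \to \cdots \to X_{-1}$ factors as the composite $X_{-1} \to \Sigma Z_1 \to \Sigma^2 Z_2 \to \cdots \to \Sigma^L X_L$ of the connecting maps through the associated tower. A null-homotopy of this composite allows one to successively lift the boundary maps through the cofiber sequences $Z_{i+1} \to X_i \to Z_i$ via iterated applications of the octahedral axiom, producing additional cofiber structure that absorbs the top term $X_L = N(kr)[ks]$ into the preceding layers. The net outcome is a presentation of $N$ as an iterated cofiber of the $km$ middle terms (shifts and twists of $M$) alone, so that $N$ lies in the thick subcategory generated by $\bigoplus_{j \in T} M(j)$ for some finite $T \subset \Z$.

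Finally, I would combine this identification with the rotated cofiber sequences of the \emph{original} staircase to conclude the desired containment $M \in \langle N(j) : j \in S\rangle$ for some finite $S$. Each cofiber sequence $Z_{i+1} \to M(r-t_{i+1})[s_{i+1}] \to Z_i$ of the original staircase expresses a twist of $M$ as an extension involving $N$-twists (specifically $N$ and $N(r)$) and the remaining $M$-twists in the finite range $\{0, 1, \ldots, r-1\}$. Substituting the identification $N \in \langle M(j) : j \in T\rangle$ from Step~2 (suitably twisted) and iterating through the finitely many $M$-twists occurring in the staircase, one obtains a terminating recursion that exhibits $M$ as an iterated extension of finitely many twists of $N$. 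The main obstacle I expect is Step~2: one must execute the iterated Postnikov lifts coherently within the $\infty$-categorical formalism of Definition \ref{defn:resolution}, and verify that these lifts genuinely eliminate the top term; Step~3 is essentially a finite combinatorial argument once the relevant twist set has been identified, aided by Lemma \ref{lemma:resolution_subadditive} for tracking complexities along the way.
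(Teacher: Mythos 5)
Your Steps 1 and 2 are correct and are essentially the same argument as the paper's proof, just packaged differently: the paper interleaves the induction on the power $b$ of $\varphi$ with the induction on the position $c$ in the resolution (the ``spiral descending induction''), while you splice first and then apply the octahedral axiom to the full composite. Both establish that, when $\varphi^k \simeq 0$, the cofiber $\cofib(\varphi^k) \simeq \Sigma N \oplus N(kr)[k(s+m)]$ lies in the thick subcategory generated by the middle terms $\{\Sigma^i X_{i-1}\}$, which are all twists and shifts of $M$, and hence that $N \in \langle M(j) : j \in T \rangle$ for a finite set $T$. (One small imprecision: you write that $N$ is ``an iterated cofiber of the middle terms alone,'' when it is only a \emph{retract} of one; but thick subcategories are closed under retracts, so the conclusion stands.)

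The genuine gap is Step 3. You try to pass from $N \in \langle M(j) \rangle_{j\in T}$ (which is what Steps 1--2 actually prove) to $M \in \langle N(j) \rangle_{j\in S}$ in order to match the literal wording of the statement, but this direction does not follow. Thick-subcategory containment is not symmetric, and the recursion you outline does not terminate: each cofiber sequence $Z_{i+1} \to X_i \to Z_i$ expresses a twist of $M$ in terms of $Z$'s that themselves involve \emph{other} twists of $M$ (not just twists of $N$), and substituting the Step~2 identification $N \in \langle M(j)\rangle$ only reintroduces $M$-twists rather than eliminating them. In fact the conclusion as printed appears to have $M$ and $N$ transposed: the paper's own proof shows that twists of $N$ (namely $\cofib(\varphi^e)$ and its retract $\Sigma N$) lie in $\langle M(j) \rangle$, and both Proposition \ref{prop:complexitybootstrap_refined} (which requires $N_{b_i+1} \in \langle N_{b_i}(t)\rangle$ when $N_{b_i}$ is a staircase for $N_{b_i+1}$) and Proposition \ref{prop:nilextensions_cohomology} use it in that form. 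So your diagnosis is inverted: Step~2 is the part that works, and Step~3 is the part that cannot.
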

\begin{rmk}
	This proposition is similar in spirit to the ideas contained in \cite[\S4]{MNN} (in particular Proposition 4.7 of \emph{loc. cit.}), though our result does not require $ M $ to be an algebra object (see Observation \ref{obs:choiceoftower}).
\end{rmk}
\begin{proof}
	[Proof of Proposition \ref{prop:nilextensions}]
	The resolution   
	\begin{equation}
	\begin{tikzcd}
		N(r)[s] \ar[r,"g_m"] & M(r-t_m)[s_m] \ar[r,"{g_{m-1}}"] & \cdots \ar[r,"g_1"] & M(r-t_1)[s_1] \ar[r,"g_0"] & N
	\end{tikzcd}
	\end{equation}
	is classified by a map $ \varphi \colon N \to N(r)[s+m] $ which is defined to be the composite
	\begin{equation*}
	\begin{tikzcd}
		N \ar[r,"f_0"] & \Sigma \fib(g_{0}) \simeq \Sigma \cofib(g_{1}) \ar[r,"f_{1}"] & \Sigma Z_{1} \ar[r] & \cdots \ar[r,"f_{m}"] & \Sigma^m N(r)[s] .
	\end{tikzcd}
	\end{equation*}

	Suppose $ \varphi^e $ is nullhomotopic for some $ e \in \Z_{>0} $. 
	For $ 0 < b \leq e $ and $ 0 < c < m $, we contemplate the following commutative diagram in $ \cat $ in which all rows and columns are cofiber sequences
	\begin{equation}\label{eq:nilpotent_spiral_sequence}
	\begin{tikzcd}[ampersand replacement=\&]
		N \ar[d,"f_{c}\cdots f_0\circ\varphi^{b}"'] \ar[r, equals] \& N \ar[d, "f_{c+1}\cdots f_0\circ\varphi^b"] \ar[r] \& 0 \ar[d] \\
		Z_c (br-T_{c+1})[b(s+m)+c] \ar[r,"f_{c+1}"] \ar[d] \& Z_{c+1} (br-T_{c})[b(s+m)+c+1] \ar[r] \ar[d] \& \cofib(f_{c+1}) (br-T_c)[b(s+m)+c+1] \ar[d, equals] \\
		\cofib(f_{c}\cdots f_0\circ\varphi^{b}) \ar[r] \& \cofib(f_{c+1}\cdots f_0\circ\varphi^b) \ar[r] \& \cofib(f_{c+1}) (br-T_c)[b(s+m)]
	\end{tikzcd} 
	\end{equation}
	where we write $ T_c = \sum_{i=c}^m t_i $. 

	We observe that
	\begin{itemize}
		\item For each $ 0<c< m $, $ \cofib(f_c) $ is a twist of $ M $ by definition of $ f $.
		\item The bottom row in the diagram (\ref{eq:nilpotent_spiral_sequence}) gives 
		\begin{equation*}
			\cofib(f_{c+1}\cdots f_m\circ\varphi^{b}) \in \left\langle \cofib(f_{c}\cdots f_m\circ\varphi^b), \cofib(f_c) (br)\right\rangle .
		\end{equation*}
		\item For each $ 0< b \leq e $, we have $ \cofib(\varphi^b) \in \left\langle \cofib(f_m)(B), \cofib(f_{m-1} \cdots f_0\varphi^{b-1})  \right\rangle $ for some $ B \gg 0$.
		\item Taking $ b = e $, we have $  N[1] \oplus N[e(s+m)] \simeq \cofib(\varphi^e) $.
	\end{itemize}	
	The result follows from a `spiral' descending induction.	
\end{proof}
\begin{rmk}
	The crucial difference between the proof of Proposition \ref{prop:complexitybootstrap_refined} and the proof of Proposition \ref{prop:complexitybootstrap} is: The length of the sum in the inequality (\ref{eq:complexitybootstrap_indstep}) is approximately linear in $ n $, but in (\ref{eq:complexitybootstrap_refined_indstep}), the length of the sum does not depend on $ n $. 
	Consequently, the asymptotic growth rate of $ \delta(G, M(n)) $ is controlled by the asymptotic growth rate of $ \delta(G,N(n)) $ times a \emph{constant} factor instead of a factor linear in $ n $. 
\end{rmk}

\subsection{Categorical entropy of \textsf{tw}.}\label{subsection:catenttwistbounds}
In this section, we apply the general purpose tools of \S\ref{subsection:general_bounds} to show that $ h_{\mathrm{cat}}(\twist) $ vanishes. 
First, we introduce a notion which will allow us to construct pyramids in $ \StMod_A $.
\begin{defn}\label{defn:hopfalg_tower}
Let $ A $ be a graded Hopf algebra over a field $ k $. 
A \emph{tower under $ A $} is a commutative diagram (\ref{eq:hopfalg_tower}) of Hopf algebras 
	\begin{equation}\label{eq:hopfalg_tower}
	\begin{tikzcd}
		M_0 \ar[r] & A = A_0 \ar[d] \\
		M_{1} \ar[r] & A_{1} \ar[d, phantom, "\vdots"] \\
		M_{\ell} \ar[r] & A_\ell \ar[d] \\
		& A_{\ell+1} 
	\end{tikzcd}
	\end{equation}
	satisfying the conditions 
	\begin{enumerate}
		\item All composites consisting of a right arrow followed by a down arrow are extensions of Hopf algebras (Definition \ref{defn:hopf_alg_extension}).
		We refer to each such pair of composites as a \emph{storey}, and say that the pyramid (\ref{eq:hopfalg_tower}) has $ \ell + 1 $ storeys. 
		\item All $ M_i $ are monogenic. 
		\item The final term $ A_{\ell+1} = k $ is the trivial Hopf algebra.  
	\end{enumerate} 
\end{defn}
\begin{theorem}\label{thm:twist_expentropy_iszero}
	Let $ A $ be a nontrivial connected, graded, cocommutative Hopf algebra which is finite-dimensional over a field $ k $. 
	Let $ \twist: \StMod_A \to \StMod_A $ denote the twist functor of Construction \ref{cons:twist_dynamicalsystem}. 
	Then $ h_{\mathrm{cat}}(\twist) = 0 $.
\end{theorem}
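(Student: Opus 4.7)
The plan is to build a pyramid in $\StMod_A$ whose base is periodic and invoke Proposition \ref{prop:complexitybootstrap} to deduce polynomial growth of $\delta(G, \twist^n G)$. By Proposition \ref{prop:stmod_generators}, I may take $G = \bigoplus_j k(a_j)$ to be a finite sum of twists of the trivial module, so that $\twist^n G = \bigoplus_j k(n+a_j)$ and Lemma \ref{lemma:complexityprops} reduces matters to showing that $\delta(G, k(n))$ grows at most polynomially in $n$.

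Iterated application of Proposition \ref{prop:existence_monogen_subhopf} produces a tower under $A$ (Definition \ref{defn:hopfalg_tower}) with storeys $M_i \hookrightarrow A_i \twoheadrightarrow A_{i+1}$ for $i = 0, \ldots, \ell$, where each $M_i$ is monogenic (Examples \ref{ex:monogenichopfalgs}) and $A_{\ell+1} = k$. The key claim is that $(A_1, A_2, \ldots, A_{\ell+1})$ forms a pyramid in $\StMod_A$ of dimension $\ell + 1$ with periodic base. The staircase structure (Definition \ref{defn:ladder_pyramid}) is built storey-by-storey: the extension condition makes $A_i$ free over $M_i$, so tensoring up the periodic minimal resolution of $k$ over $M_i$ along $M_i \hookrightarrow A_i$ produces a resolution of $A_{i+1} \simeq A_i \otimes_{M_i} k$ by twists of $A_i$ in $\Mod_A^\heartsuit$. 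Truncating at length $1$ in the exterior case, or length $2$ in the height-$p$ case, and identifying the left-boundary term with a twist of $A_{i+1}$ via the kernel $y_i A_i \simeq \fib(A_i \twoheadrightarrow A_{i+1})$ yields the desired staircase for $A_{i+1}$ with $A_i$ as the repeated middle object. The periodicity of the base $A_1$ follows from applying the same construction to the bottom storey $M_0 \hookrightarrow A \twoheadrightarrow A_1$: since $A$ is projective, it vanishes in $\StMod_A$, so the cofiber sequence collapses to an equivalence $A_1 \simeq A_1(r_0)[s_0]$ for parameters $r_0, s_0$ determined by the period of $M_0$'s minimal resolution of $k$.

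Given this pyramid, Proposition \ref{prop:complexitybootstrap} produces a constant $C$ such that $\delta(G, k(n)) \leq C n^\ell$ for $n \gg 0$, whence
\[
h_{\mathrm{cat}}(\twist) = \lim_{n \to \infty} \frac{\log \delta(G, \twist^n G)}{n} \leq \lim_{n \to \infty} \frac{\log C + \ell \log n + O(1)}{n} = 0,
\]
and the reverse inequality $h_{\mathrm{cat}}(\twist) \geq 0$ is immediate.

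The main technical obstacle is the height-$p$ case of the staircase construction. There the periodic resolution of $k$ over $M_i$ alternates the maps $\cdot y_i$ and $\cdot y_i^{p-1}$, and one must verify that the cofiber of $\cdot y_i^{p-1} \colon A_i(p e_i) \to A_i(e_i)$ in $\Mod_A^\heartsuit$ is canonically $A_i$-linearly isomorphic to $y_i A_i \simeq \fib(A_i \twoheadrightarrow A_{i+1})$, via the multiplication map $A_i(e_i)/y_i^{p-1}A_i \xrightarrow{\cdot y_i} y_i A_i$, using the relation $y_i^p = 0$ in the monogenic Hopf algebra. Confirming this identification makes the length-$2$ truncation a genuine resolution in the sense of Definition \ref{defn:resolution}, which then repackages into a staircase.
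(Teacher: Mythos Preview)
Your proposal is correct and follows essentially the same approach as the paper: build a tower under $A$ via iterated application of Proposition~\ref{prop:existence_monogen_subhopf}, convert it to a pyramid $(A_1,\ldots,A_{\ell+1})$ in $\StMod_A$ by tensoring up monogenic resolutions of $k$, observe that $A_1$ is periodic because $A_0 = A$ vanishes in $\StMod_A$, and invoke Proposition~\ref{prop:complexitybootstrap}. The one cosmetic difference is that the paper (Lemma~\ref{lemma:periodic_monogenic_modules}) handles all monogenic $M_i$ uniformly with the single length-$2$ resolution $k(he_i) \to M_i(e_i) \xrightarrow{\cdot y_i} M_i \to k$, rather than splitting into an exterior case and a height-$p$ case; this spares you the separate verification you flag as the ``main technical obstacle,'' since the kernel and cokernel of $\cdot y_i$ on $M_i$ are visibly $k(he_i)$ and $k$ regardless of $h$.
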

\begin{proof} 
	By Proposition \ref{prop:stmod_generators}, we can take our generator of $ \StMod_A $ to be $ G = \oplus_{i=0}^{d-1} k(i) $ where $ d $ is the Gorenstein parameter of $ A $. 
	Notice that
	\begin{align*}
		h_{\mathrm{cat}}(\twist) &= \lim_{n \to \infty} \frac{\log \delta_0(G, G(n))}{n} \\
		&\leq \lim_{n \to \infty} \frac{\log \sum_{j=0}^{d-1} \delta_0(G, k(n+j))}{n} \qquad \text{by subadditivity (Lemma \ref{lemma:complexityprops})}  
	\end{align*}
	Thus by Proposition \ref{prop:complexitybootstrap}, it suffices to exhibit a pyramid $ (A_1,\ldots, A_{\ell +1}) $ in $ \StMod_A $ such that $ {A_{\ell +1} = k(0)} $. 
	This follows from Lemma \ref{lemma:towertopyramid}. 
\end{proof}
\begin{lemma}\label{lemma:towertopyramid}
	Let $ A $ be a connected, graded, cocommutative Hopf algebra which is finite-dimensional over a field $ k $. 
	Suppose given a tower under $ A $ (Definition \ref{defn:hopfalg_tower}) with $ \ell+1 $ storeys.  
	Then there is a pyramid in $ \StMod_A $ of dimension $ \ell +1 $. 
\end{lemma}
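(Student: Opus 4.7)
The plan is to exhibit, for each of the $\ell+1$ Hopf algebra extensions $M_i \to A_i \to A_{i+1}$ appearing in the tower, a staircase of $A_i$ for $A_{i+1}$ in $\StMod_A$, and then to assemble these relations into a pyramid. Using the $\ell$ extensions indexed by $i = 1, \dots, \ell$ gives a pyramid $(A_1, \dots, A_{\ell+1})$ of dimension $\ell+1$; the bottom extension $M_0 \to A_0 \to A_1$ is not needed to feed into the pyramid (it only connects $A_0 = A$ to a generator, which is already available via Proposition \ref{prop:stmod_generators}).

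For a single storey, I would proceed as follows. Because $M_i$ is monogenic of one of the two forms in Examples \ref{ex:monogenichopfalgs}, there is a very short periodic minimal resolution of the trivial $M_i$-module $k$: in the exterior case $M_i = \bigwedge_k[x]$ with $|x| = n$, this is the length $1$ cofiber sequence $k(n) \to M_i \to k$ arising from the short exact sequence $(x) \hookrightarrow M_i \twoheadrightarrow k$; in the truncated polynomial case $M_i = k[x]/(x^p)$ with $|x| = n$, it is the length $2$ resolution $k(pn) \to M_i(n) \xrightarrow{\cdot x} M_i \to k$, whose middle cofiber recovers the augmentation ideal $(x) \subset M_i$. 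I would then apply the induction functor $A_i \otimes_{M_i} -$, which is exact because the defining splitting $A_i \simeq M_i \otimes_k A_{i+1}$ of Definition \ref{defn:hopf_alg_extension}, combined with the antipode coming from cocommutativity, presents $A_i$ as free as a right $M_i$-module. Using Remark \ref{rmk:hopf_alg_quotient_in_extension} that $A_i \otimes_{M_i} k \simeq A_{i+1}$, induction produces resolutions
\begin{equation*}
A_{i+1}(n) \to A_i \to A_{i+1} \qquad \text{or} \qquad A_{i+1}(pn) \to A_i(n) \xrightarrow{\cdot x} A_i \to A_{i+1}
\end{equation*}
in $\Mod_{A_i}$. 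Restricting along the quotient $A \twoheadrightarrow A_i$ (which is exact) and passing to $\StMod_A$ presents this as the desired staircase with $r = n$ or $r = pn$, all internal shifts $s_j = 0$, and internal twist exponents $t_j \in \{0, (p-1)n, pn\}$ all bounded by $r$ as required by Definition \ref{defn:ladder_pyramid}.

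The main obstacle is securing the exactness of the induction $A_i \otimes_{M_i} -$, i.e.\ the flatness of $A_i$ as a right $M_i$-module; for a central Hopf algebra extension this is immediate, and in general for cocommutative Hopf algebra extensions it follows from the defining left-module splitting together with the antipode. If one wishes to sidestep this, the resolution of $A_{i+1}$ by $A_i$-modules can instead be written down directly: the canonical surjection $A_i \twoheadrightarrow A_{i+1}$ has kernel $A_i \cdot I(M_i) \cong I(M_i) \otimes_k A_{i+1}$, and the explicit structure of the monogenic $M_i$ then immediately yields the $1$- or $2$-term cofiber sequence above. The remaining verifications---that the twists $r, t_j$ are positive (forced by the connectedness of $A$, which makes $|x| = n > 0$) and that $t_j \leq r$---are straightforward from the explicit form of the resolutions.
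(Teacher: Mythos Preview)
Your proof is correct and follows the same approach as the paper: produce a short periodic resolution of $k$ over each monogenic $M_i$ (the paper packages this uniformly as Lemma \ref{lemma:periodic_monogenic_modules}, rather than splitting into the exterior and truncated-polynomial cases), apply induction $-\otimes_{M_i} A_i$ using $k \otimes_{M_i} A_i \simeq A_{i+1}$, and restrict along $A \to A_i$ to obtain the staircase $A_{i+1}(m_i) \to A_i(n_i) \to A_i \to A_{i+1}$ in $\StMod_A$. One small correction to your aside about the bottom storey: in $\StMod_A$ the object $A_0 = A$ is \emph{zero}, not a generator, so the reference to Proposition \ref{prop:stmod_generators} is misplaced; what the storey $i=0$ actually provides (and what the paper's proof records) is the periodicity $A_1(m_0) \simeq A_1[2]$, which is precisely the hypothesis on $N_1$ consumed by Proposition \ref{prop:complexitybootstrap} in the application.
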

\begin{proof}
	By Propositions \ref{prop:structure_hopfalg_charzero} and \ref{prop:existence_monogen_subhopf} (corresponding to the cases $ \mathrm{char}\; k = 0 $ and $ \mathrm{char}\; k >0 $, resp.), there exists a tower (Definition \ref{defn:hopfalg_tower}) under $ A $. 
	Moreover, all $ M_i $ are monogenic elementary of minimal height, i.e. of the form in Examples \ref{ex:monogenichopfalgs}. 
	Then by Lemma \ref{lemma:periodic_monogenic_modules}, for each $ 0 \leq i \leq \ell $ we have a resolution  
	\begin{equation*}
	\begin{tikzcd}[ampersand replacement=\&, row sep=tiny]
		k(m_i) \ar[r] \& M_i(n_i) \ar[r] \& M_i(0) \ar[r] \& k(0)
	\end{tikzcd}
	\end{equation*}
	in $ \Mod_{M_i}\left(\Perf_k\right) $ for $ n_i > 0 $. 
	Pushing forward to $ \Mod_{A_i}\left(\Perf_k\right) $, this induces a resolution in $ \Mod_{A_i}\left(\Perf_k\right) $
	\begin{equation} \label{eq:inductive_periodicity_sequences}
	\begin{tikzcd}[row sep=tiny, ampersand replacement=\&]
		k(m_i) \otimes_{M_i} A_i \ar[r] \ar[d, equals] \&  M_i(n_i) \otimes_{M_i} A_i \ar[r] \ar[d, equals] \& M_i \otimes_{M_i}A_i \ar[d, equals]\ar[r] \& k \otimes_{M_i} A_i \ar[d, equals] \\
		A_{i+1}(m_i) \ar[r] \& A_i(n_i) \ar[r] \& A_i \ar[r] \& A_{i+1} 
	\end{tikzcd}
	\end{equation}
	where we have used the identification of Remark \ref{rmk:hopf_alg_quotient_in_extension}. 
	Since the forgetful functors are exact, we regard (\ref{eq:inductive_periodicity_sequences}) as a resolution in $ \Mod_{A}\left(\Perf_k\right) $. 
	Thus $ (A_1, \ldots, A_{\ell+1}) $ forms a pyramid in $ \StMod_A $ such that $ A_{\ell+1} = k(0) $ and $ A_1 $ is periodic. 
\end{proof}

\begin{lemma}\label{lemma:periodic_monogenic_modules}
	Let $ M $ be a finite graded monogenic Hopf algebra over a field $ k $ of arbitrary characteristic. 
	Then there is a resolution of the following form in $ \Mod_M \left( \Perf_k \right) $
	\begin{align}
	\begin{tikzcd}[ampersand replacement=\&]
		k(m) \ar[r] \& M(n) \ar[r] \& M(0) \ar[r] \& k(0)
	\end{tikzcd}\label{eq:proto_inductive_periodicity_seq2}
	\end{align}
	for some positive integers $ m>n> 0 $.
\end{lemma}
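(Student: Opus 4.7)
The plan is to produce the required resolution directly from the algebra structure of $M$. Since $M$ is a finite-dimensional connected graded Hopf algebra generated as an algebra by a single element, one has an isomorphism $M \cong k[x]/(x^h)$ of graded $k$-algebras, where $n := |x| > 0$ (by connectedness and nontriviality) and $h \geq 2$ is the height of $x$. Compatibility of this algebra with a primitive coproduct on $x$ forces $h$ to be a power of $\mathrm{char}\, k$ when the characteristic is positive, but no specific feature of that constraint enters the proof. In particular $M$ is graded-commutative, so left and right multiplication by $x$ agree and define an $M$-linear endomorphism.

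The candidate resolution is
\begin{equation*}
	k(hn) \xrightarrow{\iota} M(n) \xrightarrow{\cdot x} M(0) \xrightarrow{\varepsilon} k(0),
\end{equation*}
where $\varepsilon$ is the counit, $\cdot x$ denotes multiplication by $x$ (a degree-preserving $M$-linear map after the grading shift by $n$), and $\iota$ picks out the one-dimensional $k$-subspace of $M(n)$ spanned by $x^{h-1}$, which sits in internal degree $hn$. Setting $m := hn$, one has $m > n > 0$ since $h \geq 2$ and $n \geq 1$, which matches the statement.

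What remains is to verify that this sequence constitutes a resolution in the sense of Definition \ref{defn:resolution}. Composability to zero is automatic: the image of $\cdot x$ is contained in the augmentation ideal $(x) = \ker \varepsilon$, and $\iota$ lands in $\ker(\cdot x)$ because $x \cdot x^{h-1} = x^h = 0$ in $M$. To see that the induced map $\cofib(\iota) \to M$ identifies with the inclusion $\fib(\varepsilon) = (x) \hookrightarrow M$, I would compare dimensions in each internal degree: both sides are $(h-1)$-dimensional with a single $k$-summand in each of the degrees $n, 2n, \ldots, (h-1)n$, and the map $x^i \mapsto x^{i+1}$ is manifestly bijective. No serious obstacle arises; the argument is a truncation of the classical $2$-periodic free resolution of $k$ over $k[x]/(x^h)$, with the trailing kernel replaced by its single surviving twist of $k(0)$.
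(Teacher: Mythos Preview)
Your proof is correct and follows essentially the same approach as the paper: both arguments take the generator $x$ of degree $n$ and height $h$, observe that multiplication by $x$ on $M$ has kernel $k(nh)$ and cokernel $k(0)$, and read off the desired length-$2$ resolution with $m = nh$. Your version is slightly more explicit in verifying the resolution axioms of Definition~\ref{defn:resolution}, whereas the paper compresses this into the remark that (co)fibers of surjections (injections) in $\Mod_M^\heartsuit$ agree with (co)kernels.
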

\begin{proof}
	When a morphism $ \psi: P \to Q $ of $ M $-modules in $ \Mod_M^{\heartsuit} $ is surjective (resp. injective), then we have equivalences
	\begin{align*} 
		\fib( \psi) &\simeq \ker(\psi) \\
		\left(\cofib (\psi) \right.&\simeq \left.\coker(\psi) \right),
	\end{align*}
	where the fiber (resp. cofiber) are computed in the derived category $ \Mod_M $ and the kernel (resp. cokernel) are computed in the abelian 1-category $ \Mod_M^\heartsuit $. 
	Write $ x \in M $ for a generator of $ M $ and let $ n = |x| $ and $ h =$ the height of $ x $. 
	The result follows from noting that the kernel (resp. cokernel) of multiplication $ \cdot x: M(n) \to M $ by $ x $ is $ k(nh) $ (resp. $ k(0) $). 
\end{proof}

\subsection{Polynomial entropy of \textsf{tw}.} \label{subsection:polyenttwistbounds}
We will need slightly different techniques to establish upper and lower bounds on $ h_{pol}(\twist) $. 
\begin{prop}\label{prop:upperboundres_length}
	Let $ A $ be a connected, graded, cocommutative Hopf algebra which is finite-dimensional over a field $ k $. 
	Let a pyramid $ (N_1, \ldots, N_{\ell}) $ in $ \StMod_A $ such that $ N_1 $ is periodic and $ N_{\ell} = k(0) $ be given. 
	Then 
	\begin{equation*} 
		h_{pol}(\twist) \leq \ell-1 . 
	\end{equation*}  
\end{prop}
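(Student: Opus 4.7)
\begin{proofsketch}
The plan is to combine Theorem \ref{thm:twist_expentropy_iszero} with the polynomial complexity bound of Proposition \ref{prop:complexitybootstrap} applied to the given pyramid.

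First, since Theorem \ref{thm:twist_expentropy_iszero} gives $h_{\mathrm{cat}}(\twist) = 0$, the definition of categorical polynomial entropy simplifies to
\begin{equation*}
    h_{\mathrm{pol}}(\twist) = \limsup_{n \to \infty} \frac{\log \delta(G, \twist^n(G))}{\log n}
\end{equation*}
for any compact generator $G$ of $\StMod_A$. By Proposition \ref{prop:stmod_generators}, we may choose $G = \bigoplus_{i=0}^{d-1} k(i)$, where $d$ is the Gorenstein parameter of $A$. Since $\twist^n(G) = \bigoplus_{i=0}^{d-1} k(n+i)$, the subadditivity of complexity (Lemma \ref{lemma:complexityprops}(\ref{lemmaitem:complexity_subadditive})) yields
\begin{equation*}
    \delta(G, \twist^n(G)) \leq \sum_{i=0}^{d-1} \delta(G, k(n+i)) .
\end{equation*}

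Next, I would feed the given pyramid $(N_1, \ldots, N_\ell)$ directly into Proposition \ref{prop:complexitybootstrap}: the hypothesis that $N_1$ is periodic is exactly what that proposition requires, so there is a constant $C > 0$ with $\delta_0(G, N_\ell(n)) < C n^{\ell-1}$ for $n \gg 0$. Using that $N_\ell = k(0)$, this reads $\delta(G, k(n)) < C n^{\ell-1}$, and by the shift-insensitivity of $\delta$ (Observation \ref{obs:deltazeroshiftinsensitive}) together with a mild adjustment of the constant, the same bound (up to a constant) applies to each summand $\delta(G, k(n+i))$ for $0 \leq i < d$. Combining with the subadditivity estimate above gives $\delta(G, \twist^n(G)) \leq d C' n^{\ell-1}$ for $n \gg 0$.

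Taking logarithms and dividing by $\log n$, the $\limsup$ defining $h_{\mathrm{pol}}(\twist)$ is bounded above by $\ell - 1$, as desired. The only mildly delicate point is the shift by $i$ in the summands $k(n+i)$; this is harmless because Proposition \ref{prop:complexitybootstrap} is invoked with arbitrary (large) argument, so the asymptotic bound applies uniformly in the finite range $0 \leq i < d$. There is no substantial obstacle: the work has already been done in establishing Proposition \ref{prop:complexitybootstrap}, and the present statement is its immediate consequence once the generator is chosen and subadditivity is applied.
\end{proofsketch}
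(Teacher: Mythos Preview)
Your proof is correct and follows essentially the same approach as the paper: choose the generator $G = \bigoplus_{i=0}^{d-1} k(i)$ from Proposition \ref{prop:stmod_generators}, use subadditivity to reduce to bounding $\delta(G, k(n+j))$, and invoke Proposition \ref{prop:complexitybootstrap} on the given pyramid to obtain the $Cn^{\ell-1}$ bound. The paper's proof is terser but structurally identical; your extra remarks about the harmless shift by $i$ and the use of Theorem \ref{thm:twist_expentropy_iszero} to drop the $h_{\mathrm{cat}}$ term make explicit what the paper leaves implicit.
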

\begin{proof}
	We compute
	\begin{align*}
		h_{\pol}(\twist) &= \limsup_{n \to \infty} \frac{\log \delta_0(G, G(n))}{\log n} \\
		&\leq \limsup_{n \to \infty} \frac{\log \sum_{j=0}^d \delta_0(G, k(n+j))}{\log n} \qquad &\text{by subadditivity (Lemma \ref{lemma:complexityprops})}	\\
		&\leq \limsup_{n \to \infty} \frac{\log C n^{\ell-1}}{\log n} \qquad \qquad &\text{ by Proposition \ref{prop:complexitybootstrap}} \\
		&= \ell-1 . & & \qedhere
	\end{align*}
\end{proof}
\begin{cor}
	Let $ A $ be a connected, graded, cocommutative Hopf algebra which is finite-dimensional over a field $ k $. 
	Suppose given a tower under $ A $ (Definition \ref{defn:hopfalg_tower}) with $ \ell+1 $ storeys.  
	Then the categorical polynomial entropy of $ \twist: \StMod_A \to \StMod_A $ satisfies
	\begin{equation*}
		h_\pol(\twist) \leq \ell .
	\end{equation*}
\end{cor}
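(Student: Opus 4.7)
The plan is to observe that this corollary is an almost immediate consequence of combining the two preceding results of this subsection. Specifically, Lemma~\ref{lemma:towertopyramid} converts the input (a tower under $A$ with $\ell+1$ storeys) into the hypothesis needed by Proposition~\ref{prop:upperboundres_length} (a pyramid in $\StMod_A$ of a given dimension), and the latter yields the desired bound on $h_\pol(\twist)$.

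Concretely, I would proceed in two steps. First, apply Lemma~\ref{lemma:towertopyramid} to the given tower under $A$: this produces a pyramid $(A_1, \ldots, A_{\ell+1})$ in $\StMod_A$ of dimension $\ell + 1$, with the property that $A_{\ell+1} = k(0)$ and that $A_1$ is periodic (the periodicity of $A_1$ comes from the construction in the proof of Lemma~\ref{lemma:towertopyramid}, using that $M_0$ is monogenic together with Lemma~\ref{lemma:periodic_monogenic_modules}, which exhibits $A_1$ as periodic via a length-two splice of the four-term resolution). Second, feed this pyramid into Proposition~\ref{prop:upperboundres_length}, whose conclusion states that a pyramid of dimension $\ell+1$ ending in $k(0)$ with periodic first entry yields the bound $h_\pol(\twist) \leq (\ell+1) - 1 = \ell$.

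There is essentially no obstacle beyond bookkeeping: the only thing to verify carefully is that the indexing conventions of Definition~\ref{defn:hopfalg_tower} (``$\ell+1$ storeys'') and of Definition~\ref{defn:ladder_pyramid} (``dimension $\ell$'' for an $\ell$-tuple) are aligned correctly in passing through Lemma~\ref{lemma:towertopyramid} and Proposition~\ref{prop:upperboundres_length}, so that one arrives at the stated upper bound $\ell$ rather than an off-by-one variant. All of the analytic and asymptotic content is already packaged in Proposition~\ref{prop:complexitybootstrap} (used inside Proposition~\ref{prop:upperboundres_length}), so no new estimates are required at this stage; the corollary is purely a repackaging of those tools in terms of the combinatorial data of a tower under $A$.
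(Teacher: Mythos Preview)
Your proposal is correct and matches the paper's intended approach exactly: the corollary is stated without proof in the paper, as it is an immediate consequence of combining Lemma~\ref{lemma:towertopyramid} (tower with $\ell+1$ storeys $\Rightarrow$ pyramid of dimension $\ell+1$ with $A_{\ell+1}=k(0)$ and $A_1$ periodic) with Proposition~\ref{prop:upperboundres_length} (pyramid of dimension $\ell+1$ $\Rightarrow$ $h_\pol(\twist)\leq \ell$). Your bookkeeping of the indices is correct; the only minor imprecision is the phrase ``length-two splice'' for the periodicity of $A_1$---in fact, periodicity follows because in the resolution $A_1(m_0)\to A(n_0)\to A\to A_1$ the middle two terms vanish in $\StMod_A$---but this does not affect the argument.
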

\begin{obs}\label{obs:choiceoftower}
	The \emph{existence} of a tower such as in diagram (\ref{eq:hopfalg_tower}) gives us \emph{a} bound for the categorical polynomial entropy and allows us to show that the categorical entropy vanishes identically. 
	However, notice that the proofs of Proposition \ref{prop:upperboundres_length} and Theorem \ref{thm:twist_expentropy_iszero} do not require each `stage' of the tower to be a central extension, or that they be extensions \emph{of Hopf algebras}. 
	In fact, it suffices to find $ A $-modules $ M_i $ satisfying relations such as those in diagram (\ref{eq:inductive_periodicity_sequences}). 
	Thus we should not expect a bound obtained in this way to be optimal.  
\end{obs}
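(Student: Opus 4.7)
The claim is essentially one of careful bookkeeping: the Hopf-theoretic hypotheses on the tower of diagram (\ref{eq:hopfalg_tower}) enter only as a device for producing a pyramid of dimension $\ell+1$ in $\StMod_A$ whose top is $k(0)$ and whose bottom is periodic, and everything downstream relies only on the pyramid. The plan is to trace through the proofs of Theorem \ref{thm:twist_expentropy_iszero} and Proposition \ref{prop:upperboundres_length} and isolate exactly which properties of (\ref{eq:hopfalg_tower}) are actually used.

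First I would note that both Theorem \ref{thm:twist_expentropy_iszero} and Proposition \ref{prop:upperboundres_length} are phrased directly in terms of a pyramid in $\StMod_A$ with $N_\ell = k(0)$ and $N_1$ periodic; they are completely insensitive to how that pyramid was produced. The appeal to the tower enters only through Lemma \ref{lemma:towertopyramid}, whose proof uses the Hopf-algebra extension data at exactly two points: via Remark \ref{rmk:hopf_alg_quotient_in_extension} to identify $k \otimes_{M_i} A_i \simeq A_{i+1}$, and via Lemma \ref{lemma:periodic_monogenic_modules} to produce a length-two periodic resolution of $k$ over each monogenic $M_i$. Neither centrality of the extensions nor the requirement that the $M_i$ themselves carry Hopf structure plays any role in the subsequent manipulations; the resolutions (\ref{eq:inductive_periodicity_sequences}) enter only as cofiber sequences in $\Mod_A(\Perf_k)$ witnessing the staircase condition of Definition \ref{defn:ladder_pyramid}.

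Consequently, given any finite sequence of $A$-modules $M_1, \ldots, M_\ell$ equipped with resolutions of the shape (\ref{eq:ladder_sequence}), with $M_1$ periodic in $\StMod_A$ and $M_\ell = k(0)$, Proposition \ref{prop:complexitybootstrap} applies verbatim and yields both $h_\pol(\twist) \leq \ell - 1$ and $h_{\mathrm{cat}}(\twist) = 0$. The observation thus reduces to the meta-remark that pyramids, not towers, are the relevant input, and the non-optimality claim follows because towers parametrize only a strict subclass of pyramids. Indeed, Proposition \ref{prop:complexitybootstrap_refined} together with Proposition \ref{prop:nilextensions} already exhibits refinements that can shorten the effective pyramid dimension whenever a staircase arises from a nilpotent classifying map, and such phenomena do not require the intermediate objects to carry any Hopf-algebra structure at all.

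The anticipated obstacle is not mathematical but expository: the work amounts to cataloguing which clauses in the proofs of Lemma \ref{lemma:towertopyramid} and Proposition \ref{prop:upperboundres_length} use the Hopf structure versus those which use only the underlying $A$-module data. I would expect this to amount to a short pointer back to Definition \ref{defn:ladder_pyramid} together with a sentence indicating where in Lemma \ref{lemma:towertopyramid} the Hopf hypotheses could be dropped, rather than a formal proof.
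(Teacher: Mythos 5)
Your proposal is correct and matches the paper's (implicit) reasoning: the observation carries no formal proof in the paper, and your trace through Lemma \ref{lemma:towertopyramid}, Theorem \ref{thm:twist_expentropy_iszero}, and Proposition \ref{prop:upperboundres_length} correctly isolates that both results are proved for pyramids, with the Hopf-extension hypotheses entering only in Lemma \ref{lemma:towertopyramid}'s conversion of a tower into a pyramid (via Remark \ref{rmk:hopf_alg_quotient_in_extension} and Lemma \ref{lemma:periodic_monogenic_modules}). The paper substantiates the non-optimality claim concretely in \S\ref{subsection:noncomm_polentropy_computed}, where Example \ref{ex:SteenrodA1_polentropy_computed} and Example \ref{ex:noncommex} build strictly shorter pyramids out of left ideals and similar $A$-modules carrying no Hopf structure, exactly the phenomenon your reading of Definition \ref{defn:ladder_pyramid} and Proposition \ref{prop:complexitybootstrap} predicts.
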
 
Our proof of the lower bound is similar in spirit to that of \cite[Proposition 6.4]{MR4233273}. 
\begin{prop}\label{prop:krulldimlowerbound}
	Let $ A $ be a nontrivial connected, graded, (co)commutative Hopf algebra which is finite-dimensional over a field $ k $. 
	Let $ \twist: \StMod_A \to \StMod_A $ denote the twist functor. 
	Then the categorical polynomial entropy of the twist functor admits the lower bound 
	\begin{equation*}
		h_{\pol}(\twist) \geq \dim H^*(A; k) - 1. 
	\end{equation*}
\end{prop}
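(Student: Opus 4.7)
The plan is to apply the linear-algebraic lower bound of Proposition~\ref{prop:poincarepolybound}, then estimate the relevant Ext dimensions via Wilkerson's finite-generation theorem (Theorem~\ref{thm:ext_fingen}) and the Hilbert--Serre theorem; this is philosophically parallel to the proof of Proposition~\ref{prop:entropy_twistcohproj_inspiration}(\ref{propitem:inspiration_hpol}) by Fan--Fu--Ouchi. Throughout, write $d$ for the Krull dimension of $R := H^*(A; k)$.

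First, I would activate Proposition~\ref{prop:poincarepolybound} at $t = 0$ using properness of $\StMod_A$ (Proposition~\ref{prop:stmodproper}). Combined with $h_\mathrm{cat}(\twist) = 0$ from Theorem~\ref{thm:twist_expentropy_iszero}, this gives
\[
h_\pol(\twist) \;\geq\; \limsup_{N \to \infty} \frac{\log \sum_\ell \dim_k \StExt^{-\ell}(G, \twist^N G)}{\log N}
\]
for any compact generator $G$. Taking $G = \bigoplus_{i=0}^{w-1} k(i)$ from Proposition~\ref{prop:stmod_generators} and expanding, the numerator is bounded below by a constant multiple of $\sum_\ell \dim_k \StExt^{-\ell}(k, k(N))$, since only a bounded range of internal weight shifts $N + j - i$ with $0 \leq i, j < w$ appears. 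It thus suffices to bound $\sum_\ell \dim_k \StExt^{-\ell}(k, k(N))$ below by a polynomial of degree $d-1$ in $N$.

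Next, I would translate this into ordinary Hopf algebra cohomology using the Tate cofiber sequence of Construction~\ref{cons:tate}. Its long exact sequence on bigraded homotopy groups, combined with the vanishing of $k \otimes_A k$ in negative spectral degree, yields an isomorphism $\StExt^{s, t}(k, k) \cong H^{s, t}(A; k)$ whenever $s > 0$. Together with the enrichment convention of Recollection~\ref{rec:graded_vs}\ref{recitem:graded_enrichment} and the vanishing $H^{0, N}(A; k) = 0$ for $N > 0$ (which holds since $A$ is connected), this gives
\[
\sum_\ell \dim_k \StExt^{-\ell}(k, k(N)) \;\geq\; \sum_{s > 0} \dim_k H^{s, N}(A; k) \;=\; \dim_k R_N
\]
for $N > 0$, where $R_N := \bigoplus_s H^{s, N}(A; k)$.

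Finally, the estimate $\dim_k R_N \geq c \cdot N^{d-1}$ for infinitely many $N$ is a standard Hilbert--Serre consequence. By Proposition~\ref{prop:hopf_coh_gradedcomm}, $R$ is graded-commutative in the internal grading, and by Theorem~\ref{thm:ext_fingen} with Observation~\ref{obs:homogenous_generators_bidegree} it is finitely generated by elements of strictly positive internal weight. Hence $R = \bigoplus_N R_N$ is a finitely generated $\Z_{\geq 0}$-graded-commutative $k$-algebra with $R_0 = k$ and each $R_N$ finite-dimensional, of Krull dimension $d$. The Hilbert--Serre theorem (applied directly, or after modding out the nilpotent odd-weight elements when $\mathrm{char}\, k \neq 2$, which does not change the Krull dimension) forces $\sum_N (\dim_k R_N)\, q^N$ to be a rational function with a pole of order exactly $d$ at $q = 1$, yielding the desired polynomial growth and hence $h_\pol(\twist) \geq d - 1$. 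The step requiring the most care is bookkeeping internal weights and cohomological degrees when aligning $\StExt^{-\ell}(k, k(N))$ with the appropriate bi-graded piece of $H^{*,*}(A; k)$; once this is settled, both the linear-algebraic bound and the Hilbert--Serre estimate plug in essentially mechanically.
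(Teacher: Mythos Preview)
Your proposal is correct and follows essentially the same approach as the paper: apply Proposition~\ref{prop:poincarepolybound} using properness (Proposition~\ref{prop:stmodproper}) and $h_\mathrm{cat}(\twist)=0$, reduce to the diagonal summand $\StExt^{*,*}(k,k)$, pass to ordinary $\Ext$ via the norm cofiber sequence, and invoke Wilkerson's finite generation. The only genuine difference is the endgame for the Hilbert-function estimate: you appeal directly to Hilbert--Serre (pole of order $d$ at $q=1$), whereas the paper performs a graded Noether normalization over a polynomial subring $k[x_1,\dots,x_d]$, reduces via Lemma~\ref{lemma:finite_module_poincare}, and then counts partitions with fixed parts via Lemma~\ref{lemma:gradedpartitions}. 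These are two packagings of the same classical fact; your route is slightly more black-boxed, while the paper's route is a bit more explicit about why the growth is polynomial of degree $d-1$ along an arithmetic progression (which is what the $\limsup$ actually needs when the generator weights are not coprime).
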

\begin{proof}
By Proposition \ref{prop:poincarepolybound}, \ref{prop:stmodproper} and Theorem \ref{thm:twist_expentropy_iszero}, we have an inequality
\begin{equation}\label{eq:hpol_twist_ext}
	h_{\pol}(\twist) \geq \limsup_{n  \to \infty} \frac{1}{\log N} \left( \log \sum_{\ell} \dim_k \Ext^\ell_{\StMod_A}(G, G(n)) \right)
\end{equation}
for $ G $ some generator of $ \StMod_A $. 
Let $ G = \bigoplus_{i=0}^d k(i) $ be the generator of Proposition \ref{prop:stmod_generators}. 
Then each term in (\ref{eq:hpol_twist_ext}) splits as 
\begin{equation*}
	\StExt^\ell(G, G(n)) = \StExt^{\ell,-n}(G, G) = \bigoplus_{i, j = 0}^d \StExt_A^{\ell, i-j-n}(k, k) .
\end{equation*} 
The limit on the right hand side of (\ref{eq:hpol_twist_ext}) is bounded from below by  
\begin{equation*} 
	\limsup_{n \to \infty} \frac{\log C(n)}{\log n} \qquad \text{where} \qquad C(n) := \sum_{\ell} \dim_k \StExt^{\ell,-n}(k, k) .
\end{equation*} 

The proof of Proposition \ref{prop:stmodproper} in particular implies that $ \StExt^{s,t}_A = \Ext^{s, t}_A $ for $ s<0$ and $ t<0 $.  

By Theorem \ref{thm:ext_fingen}, $ H^*(A;k) $ is a finitely-generated graded $ k $-algebra. 
Write $ d $ for the Krull dimension of $ H^*(A;k) $. 
By graded Noether normalization \cite[Theorem 1.5.17]{BH97} and Observation \ref{obs:homogenous_generators_bidegree}, $ H^*(A;k) $ is a finite module over $ k[x_1, \ldots, x_d] $ where the $ x_i $ are homogeneous elements of bidegree $ |x_i| = (\ell_i, m_i) $ where $ \ell_i, m_i \in \Z_{>0} $.   
Write $ P_n(t)$ (resp. $ Q_n(t) $) for the Poincar\'e series of $ H^*(A; k)_n $ and (resp. $ k[x_1, \ldots, x_d]_n $). 
We observe that that $ C(n) = P_n(0) $.  
We compute
\begin{align*}
	h_\pol(\twist) \geq \limsup_n \frac{\log P_n(0)}{\log n}  = \limsup_n \frac{\left(\log f(0)+ \log Q_n(0) \right)}{\log n}  \qquad \text{by Lemma \ref{lemma:finite_module_poincare}.} 
\end{align*}
Then $ Q_n(0) $ is the number of partitions of $ n $ into parts belonging to $ \{\ell_1, \ldots, \ell_d \}$. 
By Lemma \ref{lemma:gradedpartitions}, we have $ \limsup_n \frac{1}{\log n} \left(\log Q_n(0) \right) \geq d-1 $. 
\end{proof}
\begin{lemma}\label{lemma:finite_module_poincare}
	Let $ R = k[x_1, \ldots, x_m] $ be a bigraded polynomial ring on homogeneous generators, and let $ M $ be a finite bigraded module over $ R $. 
	Write $ P(t) := \sum_{i, j \in \Z} \dim_k R_{i,j}t^i $ and $ P_j(t) := \sum_{i \in \Z} \dim_k R_{i,j}t^i $. 
	Write $ Q(t), Q_j(t) $ for the analogous power series corresponding to $ M $. 
	Then there exists a \emph{polynomial} $ f \in \Z[t] $ such that $ Q(t) = f \cdot P(t) $ and $ Q_j(t) = f \cdot P_j(t) $. 
\end{lemma}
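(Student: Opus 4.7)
The plan is to construct $f$ from a finite bigraded free resolution of $M$ over $R$. First I would apply Hilbert's syzygy theorem in the bigraded setting: since $R$ is a Noetherian bigraded polynomial ring and $M$ is finitely generated, $M$ admits a finite resolution $0 \to F_n \to \cdots \to F_0 \to M \to 0$ in which each $F_i$ decomposes as a finite direct sum of bigraded shifts $R(a_k^{(i)}, b_k^{(i)})$ of $R$.

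Next, I would record two shift formulas for Poincar\'e series: the total series satisfies $P_{R(a,b)}(t) = t^a P(t)$ (the $j$-shift disappears upon summing over $j$), while the slice series satisfies $P_{R(a,b), j}(t) = t^a P_{j-b}(t)$. Combined with additivity of Poincar\'e series along short exact sequences of bigraded modules, an induction on resolution length produces
\[
Q(t) = \sum_i (-1)^i \sum_k t^{a_k^{(i)}} \cdot P(t) =: f(t) P(t), \qquad f(t) := \sum_{i,k} (-1)^i t^{a_k^{(i)}} \in \Z[t].
\]
This takes care of the total identity.

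For the slicewise identity $Q_j(t) = f(t) P_j(t)$ with the same $f$, the same slice-by-slice additivity instead yields
\[
Q_j(t) = \sum_{i,k} (-1)^i t^{a_k^{(i)}} P_{j - b_k^{(i)}}(t),
\]
a $\Z[t]$-linear combination of Poincar\'e series of \emph{neighboring} slices rather than an evident multiple of $P_j(t)$. The hard part, which I expect to be the main obstacle, is to see that this sum collapses to $f(t) P_j(t)$. My plan is to select a minimal free resolution so that the bidegrees $(a_k^{(i)}, b_k^{(i)})$ are canonically determined by $\Tor^R_*(M, k)$, and then exploit the rigid bigraded structure of $R$ (each generator $x_i$ is homogeneous of bidegree $(\ell_i, m_i)$) to pair up contributions from distinct slices and force the required slicewise cancellations. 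In particular, the cancellations in the total identity must be refined into slice-by-slice cancellations of $j$-shifted Poincar\'e series; this refinement is where I would expect to need the bigraded combinatorics of $R$ most directly.
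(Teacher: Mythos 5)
Your derivation of the total identity $Q(t)=f(t)P(t)$ from a finite bigraded free resolution is correct, and it amounts to the same computation the paper invokes: the cited argument of Quillen inducts on $m$ by peeling off one variable $x_m$ at a time via the four-term sequence $0\to K\to M(-|x_m|)\xrightarrow{\,x_m\,}M\to C\to 0$, which produces the same alternating-sum expression for $f$ that you obtain from a free resolution. So the first half of your proposal matches the paper.

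Your worry about the slicewise identity is, however, more than an unfinished step --- as written, $Q_j(t)=f(t)P_j(t)$ with the \emph{same} $f$ for every $j$ is false, and no bigraded combinatorics will recover it. Take $R=k[x]$ with $|x|=(1,1)$ and $M=k=R/(x)$. Then $P(t)=1/(1-t)$ and $Q(t)=1$, so $f(t)=1-t$; but $P_0(t)=1$ and $Q_0(t)=1$, hence $f(t)P_0(t)=1-t\neq Q_0(t)$. Passing to a minimal resolution does not help: for the free module $M=R\oplus R(-1,-1)$ one finds $Q_j(t)=2t^j$ for $j\ge 1$ while $f(t)P_j(t)=(1+t)t^j$. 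Your formula $Q_j(t)=\sum_{i,k}(-1)^{i}t^{a_k^{(i)}}P_{j-b_k^{(i)}}(t)$ genuinely mixes slices of different internal degree, and since $P_{j-b}$ is an honestly different polynomial from $P_j$ (not a monomial multiple of it), there is nothing to collapse. The same entanglement appears if one tries to run the paper's induction at the slice level, since multiplication by $x_m$ shifts internal degree by $m_m$ and gives $Q_{M,j}-t^{\ell_m}Q_{M,j-m_m}=Q_{C,j}-t^{\ell_m}Q_{K,j-m_m}$. What Proposition~\ref{prop:krulldimlowerbound} actually requires is only an asymptotic comparison of $\dim_k M_{*,j}$ with $\dim_k R_{*,j}$ as $j\to\infty$, and that is better extracted from the two-variable rational form of the Hilbert series (with denominator $\prod_i(1-t^{\ell_i}s^{m_i})$) than from the slicewise equality stated in this lemma.
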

\begin{proof}
	Induct on $ m $ as in the proof of \cite[Lemma 2.6]{MR298694}. 
\end{proof}
\begin{lemma}\label{lemma:gradedpartitions}
	\cite[Thm 15.2]{Nathanson}
	Let $ W $ be a nonempty set of positive integers such that $ gcd(W) = 1 $ and $ W $ has cardinality $ k $. 
	Then if $ p_W(n) $ is the number of partitions of $ n $ into parts belonging to $ W $, then
	\begin{equation*}
		p_W(n) = \left(\prod_{a \in W} a \right)^{-1} \frac{n^{k-1}}{(k-1)!} + O(n^{k-2}). 
	\end{equation*}
\end{lemma}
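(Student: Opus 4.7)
The plan is to reduce the asymptotic count to a generating-function computation and then read off the leading term via partial fractions. Let $f_W(x) = \sum_{n \geq 0} p_W(n)\, x^n$, which factors as
\begin{equation*}
f_W(x) = \prod_{a \in W} \frac{1}{1-x^a}
\end{equation*}
since choosing the exponent of each $x^a$ in the product corresponds exactly to choosing the multiplicity of the part $a$ in a partition of $n$.

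The first step is to locate and classify the poles of $f_W(x)$. The poles lie at roots of unity: a primitive $d$-th root of unity $\zeta$ is a pole of order equal to $\#\{a \in W : d \mid a\}$. In particular, $x=1$ is a pole of order exactly $k = |W|$, because every factor contributes a simple pole there. The hypothesis $\gcd(W) = 1$ is used precisely here: for any $d > 1$, at least one $a \in W$ is not divisible by $d$, so the pole at any primitive $d$-th root of unity has order strictly less than $k$. This guarantees that $x = 1$ is the \emph{unique} pole of maximal order.

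Next, I would perform the partial fraction expansion
\begin{equation*}
f_W(x) = \frac{c_k}{(1-x)^k} + \sum_{j=1}^{k-1} \frac{c_j}{(1-x)^j} + R(x),
\end{equation*}
where $R(x)$ is a sum of rational terms whose poles lie at roots of unity other than $1$, each of order $\leq k-1$. The leading coefficient is computed by
\begin{equation*}
c_k = \lim_{x \to 1}(1-x)^k f_W(x) = \prod_{a \in W} \lim_{x \to 1}\frac{1-x}{1-x^a} = \prod_{a \in W} \frac{1}{a}.
\end{equation*}
Extracting the $n$-th Taylor coefficient, the term $c_k/(1-x)^k$ contributes $c_k \binom{n+k-1}{k-1} = \bigl(\prod_{a \in W} a\bigr)^{-1} \tfrac{n^{k-1}}{(k-1)!} + O(n^{k-2})$. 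Every other term in the partial fraction expansion is a rational function whose poles have order at most $k-1$, so its Taylor coefficients grow at most like $n^{k-2}$ (poles at roots of unity of modulus $1$ contribute oscillatory factors of modulus at most $1$ times polynomials of degree $\leq k-2$).

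The main obstacle, and the only delicate part, is the bookkeeping showing that the non-principal part of the partial fraction expansion is genuinely $O(n^{k-2})$; this is where $\gcd(W) = 1$ is essential, since without it one would have another pole of order $k$ at some $d$-th root of unity and the leading asymptotic would acquire an oscillatory correction of the same order. Combining the contributions yields the claimed formula. Alternatively, one may simply cite \cite[Thm 15.2]{Nathanson}, which is how this lemma is stated in the paper.
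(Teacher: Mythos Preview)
Your proof is correct. The paper does not prove this lemma at all; it simply records the statement and cites \cite[Thm 15.2]{Nathanson} as its source, so there is nothing to compare against beyond the citation itself (which you already note). The generating-function and partial-fraction argument you give is the standard proof of this asymptotic and is essentially what appears in Nathanson's book; the identification of $x=1$ as the unique pole of maximal order via $\gcd(W)=1$, the computation $c_k = \prod_{a\in W} a^{-1}$, and the coefficient extraction are all carried out correctly.
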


\begin{warning}\label{warning:genhypothesisfails}
	The reader might be tempted to relate $ \pi_{*, *}k^{tA} $-modules to coherent sheaves on weighted projective space of dimension one less than the Krull dimension of $ \pi_{*, *}k^{tA} $ by analogy with Proposition \ref{prop:stmod_Dbcoh}, appeal to \cite[Prop. 3.2]{CK08} which shows that the bounded derived category of said weighted projective space is smooth in the sense of Definition \ref{def:smoothpropcat}, and deduce equality in Proposition \ref{prop:krulldimlowerbound}. 
	However, in general the functor which takes homotopy groups 
	\begin{equation*}
		\pi_{*,*}: \Mod_{C^*(A; k)} \simeq \Mod_{A} \to \Mod_{H^*(A; k)}  
	\end{equation*}
	is \emph{not} an equivalence of categories: To assert that this functor is an equivalence categories is a version of Freyd's generating hypothesis, which is known not to hold without restrictive conditions on $ A $ (cf. \cites[Remark 2.22]{BHLSZ21}[Theorem 1.1]{BCC07}[Theorem A]{MR2277690}). 
\end{warning}
Collectively, the results of this section give both an upper bound and a lower bound for $ h_\pol (\twist) $. 
\begin{theorem}\label{thm:catpol_bounds}
	Let $ A $ be a nontrivial connected, graded, (co)commutative Hopf algebra which is finite-dimensional over a field $ k $. 
	Let $ \twist: \StMod_A \to \StMod_A $ denote the twist functor. 
	Then the categorical polynomial entropy of the twist functor admits the lower bound 
	\begin{equation*}
		h_{\pol}(\twist) \geq \dim H^*(A; k) - 1. 
	\end{equation*}
	Suppose given a tower under $ A $ (Definition \ref{defn:hopfalg_tower}) with $ \ell+1 $ storeys.  
	Then the categorical polynomial entropy of $ \twist: \StMod_A \to \StMod_A $ satisfies
	\begin{equation*}
		h_\pol(\twist) \leq \ell .
	\end{equation*}
\end{theorem}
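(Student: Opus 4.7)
The plan is to assemble this theorem directly from the two main propositions of the preceding subsection, since each bound has essentially been proven already; only the packaging remains. The lower bound is exactly the content of Proposition \ref{prop:krulldimlowerbound}, so there is nothing new to do for that half. The upper bound is obtained by combining Lemma \ref{lemma:towertopyramid}, which converts a tower under $A$ with $\ell+1$ storeys into a pyramid of dimension $\ell+1$ in $\StMod_A$ whose apex is $k(0)$ and whose base $A_1$ is periodic, with Proposition \ref{prop:upperboundres_length}, which turns such a pyramid into the bound $h_\pol(\twist) \leq (\ell+1)-1 = \ell$.

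More concretely, first I would invoke the lower bound $h_\pol(\twist) \geq \dim H^*(A;k) - 1$ verbatim from Proposition \ref{prop:krulldimlowerbound}; no modification is needed since the hypotheses are identical. Then, given a tower under $A$ with $\ell+1$ storeys, I would apply Lemma \ref{lemma:towertopyramid} to produce a pyramid $(A_1, \ldots, A_{\ell+1})$ in $\StMod_A$ with $A_{\ell+1} = k(0)$ and $A_1$ periodic (periodicity comes because $A_1 = k \otimes_{M_0} A$ sits in the resolution (\ref{eq:inductive_periodicity_sequences}) whose top stage is built from a monogenic $M_0$, giving a two-periodic-up-to-twist resolution of $k$ over $M_0$ and hence periodicity of $A_1$ in $\StMod_A$). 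Feeding this pyramid into Proposition \ref{prop:upperboundres_length} yields $h_\pol(\twist) \leq \ell$, completing the proof.

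There is no real obstacle here: both halves are already established in the preceding subsection, and the theorem is simply their conjunction restated for convenient reference. The only minor care needed is to confirm that the hypotheses match on the nose—in particular, that the finiteness, connectedness, and cocommutativity assumptions of the theorem are those required by both Propositions \ref{prop:krulldimlowerbound} and \ref{prop:upperboundres_length}, and that the tower hypothesis in the upper-bound half is precisely what Lemma \ref{lemma:towertopyramid} consumes. All of this is immediate from the definitions, so the proof can be written in a couple of lines citing the relevant earlier results.
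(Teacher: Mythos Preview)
Your proposal is correct and matches the paper's approach exactly: the paper presents Theorem \ref{thm:catpol_bounds} without a separate proof, prefacing it with ``Collectively, the results of this section give both an upper bound and a lower bound for $h_\pol(\twist)$,'' since the lower bound is Proposition \ref{prop:krulldimlowerbound} and the upper bound is the Corollary to Proposition \ref{prop:upperboundres_length} (which itself combines Lemma \ref{lemma:towertopyramid} with Proposition \ref{prop:upperboundres_length}). Your identification of the ingredients and their roles is spot on.
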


\subsection{Refining the upper bound.}
We give an explicit conditions for when a storey of the tower (\ref{eq:hopfalg_tower}) does not contribute to the categorical polynomial entropy. 

\begin{prop}\label{prop:upperboundres_length_refined}
	Let $ A $ be a nontrivial connected, graded, cocommutative Hopf algebra which is finite-dimensional over a field $ k $. 
	Let a tower under $ A $ with $ \ell +1 $ storeys as in diagram (\ref{eq:hopfalg_tower}) be given.   
	Assume that there exist integers $ b_i $, $ 1 \leq b_1 < \cdots b_h < \ell $, such that $ A_{b_i +1} $ is in the thick subcategory generated by $ \bigoplus_{t \in T} A_{b_i}(t) $.
	\begin{equation*} 
		h_\pol(\twist) \leq \ell-h.
	\end{equation*} 
\end{prop}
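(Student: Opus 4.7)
The plan is to combine the tower-to-pyramid translation in Lemma \ref{lemma:towertopyramid} with the refined complexity bound in Proposition \ref{prop:complexitybootstrap_refined}, following the same scheme used to deduce Proposition \ref{prop:upperboundres_length} from Proposition \ref{prop:complexitybootstrap}. Concretely, I would start from the given tower under $A$ and produce a pyramid $(A_1,\ldots,A_{\ell+1})$ in $\StMod_A$ of dimension $\ell+1$ exactly as in the proof of Lemma \ref{lemma:towertopyramid}: each storey gives rise to a resolution of the form (\ref{eq:inductive_periodicity_sequences}) showing that $A_i$ is a staircase for $A_{i+1}$, and the final term is $A_{\ell+1}=k(0)$ while $A_1$ is periodic because the innermost Hopf subalgebra $M_0$ is finite monogenic (Lemma \ref{lemma:periodic_monogenic_modules}).

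Next, I would verify that the hypothesis on the tower transfers verbatim to the pyramid. The tower hypothesis is that, for each of the $h$ indices $b_i$, the Hopf algebra $A_{b_i+1}$ sits in the thick subcategory generated by twists of $A_{b_i}$; since restriction and forgetful functors between $\Mod_{A_j}(\Perf_k)$ and $\StMod_A$ are exact, this same inclusion of thick subcategories holds for the associated pyramid objects. This is exactly the hypothesis required by Proposition \ref{prop:complexitybootstrap_refined}.

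With the compact generator $G=\bigoplus_{i=0}^{d-1}k(i)$ of Proposition \ref{prop:stmod_generators}, Proposition \ref{prop:complexitybootstrap_refined} then yields a constant $C$ with
\begin{equation*}
\delta_0(G,A_{\ell+1}(n))=\delta_0(G,k(n))<C\cdot n^{(\ell+1)-1-h}=C\cdot n^{\ell-h}\qquad\text{for }n\gg0.
\end{equation*}
Finally, as in the proof of Proposition \ref{prop:upperboundres_length}, subadditivity of complexity (Lemma \ref{lemma:complexityprops}) gives
\begin{equation*}
\delta_0(G,G(n))\leq\sum_{j=0}^{d-1}\delta_0(G,k(n+j)),
\end{equation*}
and feeding this into the definition of categorical polynomial entropy yields
\begin{equation*}
h_\pol(\twist)\leq\limsup_{n\to\infty}\frac{\log(d\cdot C\cdot(n+d)^{\ell-h})}{\log n}=\ell-h,
\end{equation*}
as desired.

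I expect no serious obstacle here; the content of the statement is really already packaged in Proposition \ref{prop:complexitybootstrap_refined}, and the only point worth checking carefully is that the thick-subcategory hypothesis on the tower and on the induced pyramid agree, which is immediate from the exactness of the relevant base-change functors. The conceptually interesting step was the combinatorial shortening of the iterated sums in the proof of Proposition \ref{prop:complexitybootstrap_refined}, which is where the $h$ comes in; this proposition merely harvests that refinement.
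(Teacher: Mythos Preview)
Your proposal is correct and matches the paper's approach exactly: the paper's proof is a one-liner citing Proposition \ref{prop:complexitybootstrap_refined} together with the tower-to-pyramid construction of Lemma \ref{lemma:towertopyramid}, and you have simply unpacked those two ingredients (plus the subadditivity/generator argument from Proposition \ref{prop:upperboundres_length}) in detail.
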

\begin{proof} 
	Follows from Proposition \ref{prop:complexitybootstrap_refined} and the construction of a pyramid in Lemma \ref{lemma:towertopyramid}.
\end{proof}
More generally, we have the
\begin{prop}
	Let $ A $ be a nontrivial connected, graded, cocommutative Hopf algebra which is finite-dimensional over a field $ k $. 
	Suppose given a pyramid $ (N_1, \ldots, N_{\ell}) $ in $ \StMod_A $ such that $ N_1 $ is periodic, i.e. $ N_1(a)[b] \simeq N_1 $ where $ a, b $ are integers, $ a \neq 0 $. 
	Assume that there exist integers $ b_i $, $ 1 \leq b_1 < \cdots b_h < \ell $, such that $ N_{b_i +1} $ is in the thick subcategory generated by $ \bigoplus_{t \in T} N_{b_i}(t) $.
	Then for $ n \gg 0 $, 
	\begin{equation*} 
		h_\pol(\twist) \leq \ell-1- h. 
	\end{equation*} 
\end{prop}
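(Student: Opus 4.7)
The plan is to combine Proposition \ref{prop:complexitybootstrap_refined} with the subadditivity argument already used in the proof of Proposition \ref{prop:upperboundres_length}. As in the earlier pyramid-based upper bounds (cf.\ Proposition \ref{prop:upperboundres_length}), I read the statement as carrying the tacit hypothesis $N_\ell = k(0)$---the setting in which the machinery of \S\ref{subsection:general_bounds} was designed to be applied. Under that convention I would take $G = \bigoplus_{i=0}^{d-1} k(i)$ as the generator of $\StMod_A$ supplied by Proposition \ref{prop:stmod_generators}, where $d$ is the Gorenstein parameter of $A$.

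First, I would feed the pyramid $(N_1,\ldots,N_\ell)$ together with the $h$ thick-subcategory hypotheses at levels $b_1 < \cdots < b_h < \ell$ directly into Proposition \ref{prop:complexitybootstrap_refined}. That proposition immediately produces a constant $C > 0$ such that
\[
\delta(G, N_\ell(n)) \;<\; C\, n^{\ell - 1 - h} \qquad \text{for } n \gg 0,
\]
which, under the convention $N_\ell = k(0)$, reads $\delta(G, k(n)) < C\, n^{\ell - 1 - h}$.

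Next I would exploit the decomposition $G(n) = \bigoplus_{j=0}^{d-1} k(n+j)$ and iterate Lemma \ref{lemma:complexityprops}(\ref{lemmaitem:complexity_subadditive}) to pass from an objectwise bound on twists of $k$ to a bound on twists of the generator itself:
\[
\delta(G, G(n)) \;\leq\; \sum_{j=0}^{d-1} \delta\bigl(G, k(n+j)\bigr) \;\leq\; d\, C'\, n^{\ell - 1 - h},
\]
for some $C' > 0$ and all $n \gg 0$. Substituting this estimate into the $\limsup$ defining $h_\pol(\twist)$ (Definition \ref{defn:catentropy}) and using $h_{\mathrm{cat}}(\twist) = 0$ from Theorem \ref{thm:twist_expentropy_iszero} gives $h_\pol(\twist) \leq \ell - 1 - h$, as required.

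The substantive work has already been done in Proposition \ref{prop:complexitybootstrap_refined}, which in turn rests on the spiral descending induction of Proposition \ref{prop:nilextensions}. The present proposition amounts to packaging that objectwise complexity bound into an entropy bound via subadditivity, exactly as in the proof of Proposition \ref{prop:upperboundres_length}, so I do not foresee any genuinely new technical obstacle; the only subtle point is the implicit convention on $N_\ell$, which I would flag explicitly in the write-up for consistency with the other pyramid-based results in \S\ref{subsection:polyenttwistbounds}.
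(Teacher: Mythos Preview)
Your proposal is correct and follows essentially the same route as the paper, which simply invokes Proposition \ref{prop:complexitybootstrap_refined} and leaves the passage to $h_\pol(\twist)$ implicit; you have filled in those details exactly as in Proposition \ref{prop:upperboundres_length}, and your flag that $N_\ell = k(0)$ is tacitly required is well taken. One minor inaccuracy: Proposition \ref{prop:complexitybootstrap_refined} does not rest on Proposition \ref{prop:nilextensions}---the latter is an independent criterion for verifying the thick-subcategory hypothesis, not an ingredient in the complexity bound itself.
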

\begin{proof}
	This follows from a special case of Proposition \ref{prop:complexitybootstrap_refined}. 
\end{proof}
The next proposition provides a criterion under which the assumption of Proposition \ref{prop:upperboundres_length_refined} holds. 
While we do not verify the assumptions of Propositions \ref{prop:nilextensions_cohomology} for the examples in \S\ref{section:examples}, we find this an instructive result concerning the relationship between non-nilpotent self-maps of nonzero internal degree (corresponding to polynomial generators of $ \Ext_A(N,N) $) and optimality of inductive bounds obtained in Propositions \ref{prop:complexitybootstrap} and \ref{prop:upperboundres_length}.   
\begin{prop}\label{prop:nilextensions_cohomology}
	Let $ A $ be a connected, graded, cocommutative Hopf algebra which is finite-dimensional over a field $ k $. 
	Let a tower under $ A $ (Definition \ref{defn:hopfalg_tower}) be given.  
	Suppose that the image of the map (\ref{eq:proto_inductive_periodicity_seq2})
	\begin{align*}
		&\Ext_{M_i}(k, k) \to \Ext_{A_i}(A_{i-1}, A_{i-1}) \\
		&(k \to \Sigma^{2}k) \mapsto (A_{i-1} \to \Sigma^{2} A_{i-1})
	\end{align*}
	classifying the resolution (\ref{eq:inductive_periodicity_sequences}) is nilpotent. 
	Then $ A_{i} $ is in the thick subcategory generated by $ \bigoplus_{j \in S} A_{i-1}(j) $ for some finite subset $ S \subset \Z $.  
\end{prop}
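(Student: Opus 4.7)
The plan is to reduce the proposition to Proposition \ref{prop:nilextensions} by identifying the class classifying the staircase supplied by Lemma \ref{lemma:towertopyramid} with the image of a class $\alpha \in \Ext_{M_i}(k,k)$ under the map in the hypothesis, and then invoking the nilpotence assumption.

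First, I fix attention on the storey $M_i \to A_i \to A_{i-1}$ of the given tower, so that $A_{i-1} \simeq A_i \otimes_{M_i} k$ by Remark \ref{rmk:hopf_alg_quotient_in_extension}. Lemma \ref{lemma:periodic_monogenic_modules} supplies the resolution (\ref{eq:proto_inductive_periodicity_seq2})
\begin{equation*}
k(m) \longrightarrow M_i(n) \longrightarrow M_i \longrightarrow k
\end{equation*}
in $\Mod_{M_i}(\Perf_k)$, which per Definition \ref{defn:resolution}(3) is classified by a homotopy class $\alpha \in \Ext_{M_i}(k,k)$.

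Next I apply the exact symmetric monoidal induction functor $A_i \otimes_{M_i} -$ to this resolution. Under the identification $A_i \otimes_{M_i} k \simeq A_{i-1}$ this produces the staircase (\ref{eq:inductive_periodicity_sequences})
\begin{equation*}
A_{i-1}(m) \longrightarrow A_i(n) \longrightarrow A_i \longrightarrow A_{i-1}
\end{equation*}
in $\Mod_{A_i}(\Perf_k)$, exhibiting $A_i$ as a staircase for $A_{i-1}$ in the sense of Definition \ref{defn:ladder_pyramid}. Because induction preserves cofiber sequences, it intertwines the recursive assignment of boundary maps $Z_{j+1} \to X_j \to Z_j$ appearing in Definition \ref{defn:resolution}(3); tracing through the iterated composite defining the classifying map then shows that the class classifying the induced staircase is exactly the image of $\alpha$ under the map $\Ext_{M_i}(k,k) \to \Ext_{A_i}(A_{i-1}, A_{i-1})$ of the hypothesis.

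By assumption this image is nilpotent, so Proposition \ref{prop:nilextensions}, applied with $N = A_{i-1}$ and $M = A_i$, implies that $A_i$ lies in the thick subcategory of $\Mod_A(\Perf_k)$ (and hence of $\StMod_A$) generated by finitely many twists of $A_{i-1}$. The main obstacle is the naturality check in the second step: I must verify that $A_i \otimes_{M_i} -$, which a priori only preserves cofiber sequences, is truly compatible with the iterated boundary-map composite defining the classifying map in Definition \ref{defn:resolution}(3). This amounts to a diagram chase using that each boundary map is a shift of the canonical map $Z_j \to \cofib(g_j)$ together with exactness of induction; the remaining steps — producing $\alpha$, identifying it as the classifying class, and invoking Proposition \ref{prop:nilextensions} — are essentially bookkeeping.
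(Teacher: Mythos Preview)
Your proposal is correct and follows the same route as the paper: the paper's proof is the single line ``Follows from Proposition \ref{prop:nilextensions},'' and you have simply unpacked that reduction by identifying the classifying class of the staircase (\ref{eq:inductive_periodicity_sequences}) as the image of the class $\alpha$ under induction. Your naturality check is more careful than necessary, since the hypothesis already asserts that the relevant element of $\Ext_{A_i}(A_{i-1},A_{i-1})$ is both the image of the induction map and the classifying class; but spelling it out does no harm. (There is an index shift between the tower convention $M_i \to A_i \to A_{i+1}$ of Definition~\ref{defn:hopfalg_tower} and the $A_{i-1}$ appearing in the statement; you have followed the statement's indexing, which is the right call.)
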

\begin{proof}
	Follows from Proposition \ref{prop:nilextensions}.  
\end{proof}

\section{Examples \& Computations}\label{section:examples}
\inLongVersion{\subsection{Via the results of \S\ref{section:twist_entropy_results}.}}{\subsection{Commutative Hopf algebras.}}

The simplest cases occur when our Hopf algebra is bicommutative. 
The next example is an alternative perspective on Proposition \ref{prop:entropy_twistcohproj_inspiration}. 
\begin{ex}
	[Exterior Hopf algebras] \label{ex:exteriorhopfalg_hpol_computed}
	Let $ A = \bigwedge_k[e_1, \ldots, e_d] $ be the exterior Hopf algebra where each $ e_i $ is primitive. 
	We can write down a tower (\ref{eq:hopfalg_tower}) for $ A $ with $ M_i = \bigwedge_k[e_{i+1}] $ and $ A_i = \bigwedge_k[e_{i+1}, \ldots, e_d] $ terminating at $ A_{d} $. 
	Then Proposition \ref{prop:upperboundres_length} implies that the shift functor on the stable module category of $ A $ has categorical polynomial entropy $ h_{\pol}(\twist) \leq d-1 $. 
	On the other hand, the cohomology ring of $ A $ is $ \pi_{*,*}\Ext_A(k, k) = k[x_i] $ where $ |x_i| = (1, |e_i|+1) $ by Example \ref{ex:exteriorhopf_cohomology_computed}. 
	Thus by Proposition \ref{prop:krulldimlowerbound}, $ h_{\pol}(\twist) \geq \dim H^*(A; k) - 1 = d-1 $. 
	Taken together, these imply
	\begin{equation*}
		h_\pol(\twist) = d-1 .
	\end{equation*}
\end{ex}
Notice that the tower of Example \ref{ex:exteriorhopfalg_hpol_computed} arises from a tensor product decomposition of $ A $. 
The following theorem (due to Hopf  \cite{MR4784} in characteristic zero and Borel \cite[Théorème 6.1]{MR51508} in positive characteristic) allows us to generalize the Example \ref{ex:exteriorhopfalg_hpol_computed} to arbitary finite graded primitively-generated bicommutative Hopf algebras over a field.  
\begin{theorem}\label{thm:borel_hopfalg_structure}
	Let $ A $ be a bicommutative graded finite-type Hopf algebra over a perfect field $ k $. 
	Then there exist Hopf algebras $ M_i $ where each $ M_i $ are monogenic such that $ A = \bigotimes_{i=1}^n M_i $ as Hopf algebras.  
\end{theorem}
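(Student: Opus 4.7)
The plan is to prove this classical Borel/Hopf structure theorem by induction on the total dimension (in each degree), producing one monogenic tensor factor at a time. By the finite-type hypothesis I can work degree by degree; since $A$ is connected (after splitting off $A_0 = k$, or reducing the statement to the connected case, which the intended applications allow), let $n > 0$ be minimal with $A_n \neq 0$, and choose $0 \neq x \in A_n$. Because nothing of positive degree below $n$ exists, the reduced coproduct $\overline{\Delta}(x) = \Delta(x) - x \otimes 1 - 1 \otimes x$ lies in $A_{>0} \otimes A_{>0}$ and vanishes for degree reasons, so $x$ is primitive.

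Let $M$ be the sub-Hopf algebra of $A$ generated by $x$. By commutativity it is a polynomial or truncated polynomial ring on $x$ in even degree, or an exterior algebra in odd degree (when $\mathrm{char}\, k \neq 2$); in characteristic $p>0$ its height is some power of $p$ because $M$ is a commutative connected Hopf algebra and the primitives $\{x, x^p, x^{p^2}, \ldots\}$ force the minimal relation to be $x^{p^r} = 0$ if any exists. So $M$ is one of the monogenic Hopf algebras allowed. The key step is now to split off $M$: I want to produce a Hopf algebra retraction $\pi \colon A \to M$ of the inclusion $M \hookrightarrow A$, so that the composite
\begin{equation*}
	A \xrightarrow{\Delta} A \otimes A \xrightarrow{\pi \otimes q} M \otimes (A /\!/ M)
\end{equation*}
is a Hopf algebra isomorphism, where $A/\!/M := k \otimes_M A$ is the quotient Hopf algebra appearing in Remark \ref{rmk:hopf_alg_quotient_in_extension}. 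Dually one can work with the quotient $A^\vee \twoheadrightarrow M^\vee$ and construct a Hopf algebra section by lifting the generator dual to $x$; this is where perfection of $k$ matters, since in characteristic $p$ one must extract $p^i$-th roots of the coefficients appearing when extending a chosen lift across higher degrees to preserve primitivity modulo decomposables.

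Concretely, I would construct $\pi$ degree-by-degree: set $\pi(x) = x$, and having defined $\pi$ on $A_{<N}$ as a Hopf algebra map to $M$, extend to $A_N$ by choosing lifts that are primitive modulo the image of lower degrees and killing everything not needed. The obstruction to making such a choice compatible with both the algebra and coalgebra structures is an extension class whose vanishing requires being able to solve equations of the form $\xi^p = \alpha$ in $M_N$; this is where the perfect field hypothesis is used. Once $\pi$ is constructed, $A \simeq M \otimes (A /\!/ M)$ as Hopf algebras, and $A /\!/ M$ is again a bicommutative finite-type Hopf algebra of strictly smaller dimension in some degree, so the inductive hypothesis applies and yields the desired decomposition $A \simeq \bigotimes_{i=1}^n M_i$.

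The main obstacle is the construction of the retraction $\pi$ (equivalently, the primitive lift in the dual), since this is where the commutativity, cocommutativity, and perfection of $k$ all interact; away from this point the argument is a straightforward induction once the minimal-degree primitive is identified.
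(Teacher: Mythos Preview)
The paper does not supply its own proof of this theorem: it is stated as a classical result and attributed to Hopf (characteristic zero) and Borel \cite[Théorème 6.1]{MR51508} (positive characteristic), with no argument given. So there is nothing in the paper to compare your attempt against.

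As for the sketch itself: the overall strategy---locate a primitive of minimal positive degree, let $M$ be the monogenic sub-Hopf algebra it generates, split $M$ off, and induct on what remains---is the standard one. But you correctly flag, and do not resolve, the only genuinely hard step: constructing the Hopf algebra retraction $\pi\colon A \to M$. Your degree-by-degree description (``choosing lifts that are primitive modulo the image of lower degrees and killing everything not needed'') is too vague to count as a proof; the obstruction is not merely solving $\xi^p = \alpha$ in $k$ but ensuring the resulting map is simultaneously compatible with multiplication and comultiplication. It is also worth noting that Borel's original theorem asserts only an isomorphism \emph{of algebras}; the splitting as \emph{Hopf algebras} (which is what the paper states) genuinely uses bicommutativity and is closer to the Milnor--Moore or Schoeller structure theory for abelian Hopf algebras. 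If you wanted to turn your sketch into a proof of the statement as written, you would either need to invoke that stronger structure theory or carry out the retraction construction in full, which is more delicate than your outline suggests.
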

\begin{cor}
	Let $ A $ be a bicommutative connected graded finite Hopf algebra over a perfect field $ k $ of arbitrary characteristic. 
	Let $ A = \bigotimes_{i=1}^n M_i $ be a decomposition of $ A $ into monogenic Hopf algebras $ M_i $. 
	Then the categorical polynomial entropy of the twist functor $ \twist:\StMod_A \to \StMod_A $ is given by
	\begin{equation*} 
		 h_\pol(\twist) = n - 1 . 
	\end{equation*}
\end{cor}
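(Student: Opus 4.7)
The plan is to sandwich $h_\pol(\twist)$ between $n-1$ and $n-1$ by invoking the two parts of Theorem \ref{thm:catpol_bounds}, mirroring the argument of Example \ref{ex:exteriorhopfalg_hpol_computed}. The tensor product decomposition $A = \bigotimes_{i=1}^n M_i$ does all the heavy lifting on both sides.

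For the upper bound, I would construct an explicit tower under $A$ (in the sense of Definition \ref{defn:hopfalg_tower}) by setting $A_i := \bigotimes_{j > i} M_j$ for $0 \leq i \leq n$, with $A_0 = A$ and $A_n = k$, and taking the $i$th storey to be the extension $M_i \to A_{i-1} \to A_i$. Since $A$ is bicommutative, each of these is a (central) Hopf algebra extension, all the $M_i$ are monogenic by hypothesis, and the tower terminates at the trivial Hopf algebra $A_n = k$, so it has exactly $n$ storeys. Writing $n = \ell + 1$, Theorem \ref{thm:catpol_bounds} immediately gives $h_\pol(\twist) \leq \ell = n - 1$.

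For the lower bound I would invoke Theorem \ref{thm:catpol_bounds} again, reducing the problem to showing $\dim H^*(A; k) = n$. Because $A \simeq \bigotimes_i M_i$ as Hopf algebras, iterating the Künneth theorem for Hopf algebra cohomology (Observation \ref{obs:Kunneththm}) yields a graded-algebra isomorphism
\begin{equation*}
	H^*(A; k) \simeq \bigotimes_{i=1}^n H^*(M_i; k).
\end{equation*}
It then suffices to show that each $H^*(M_i;k)$ has Krull dimension $1$. When $M_i = \bigwedge_k[x]$ this follows from Example \ref{ex:exteriorhopf_cohomology_computed} (with $n=1$), giving $H^*(M_i;k) \simeq k[y]$. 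When $M_i = k[x]/(x^p)$ with $\mathrm{char}\, k = p$ odd and $x$ primitive of even degree, a minimal periodic resolution of $k$ by free $M_i$-modules shows $H^*(M_i;k) \simeq k[y]\otimes \bigwedge_k[z]$, which still has Krull dimension $1$. In both cases $\dim H^*(M_i;k) = 1$, so the Künneth decomposition gives $\dim H^*(A;k) = n$, and Theorem \ref{thm:catpol_bounds} provides $h_\pol(\twist) \geq n - 1$.

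The two inequalities combine to the desired equality. The only step that requires any genuine work is the cohomology calculation for the truncated polynomial pieces $k[x]/(x^p)$, but this is a classical periodic resolution computation entirely analogous to Example \ref{ex:exteriorhopf_cohomology_computed}, so I do not anticipate a real obstacle; all the conceptual content is already packaged in Theorem \ref{thm:catpol_bounds} and the Künneth formula.
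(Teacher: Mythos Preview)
Your proposal is correct and follows the same two-pronged strategy as the paper: build a tower from the tensor factorization for the upper bound, and compute the Krull dimension of cohomology via K\"unneth for the lower bound. The only minor lacuna is that the monogenic factors $M_i$ furnished by Borel's theorem need not have \emph{minimal} height---in characteristic $p$ one can encounter $k[x]/(x^{p^e})$ with $e > 1$, not only the cases in Examples~\ref{ex:monogenichopfalgs}. The paper handles this uniformly by writing down the periodic resolution
\[
	\cdots \to M_i(h|x_i|) \xrightarrow{\cdot x_i^{h-1}} M_i(|x_i|) \xrightarrow{\cdot x_i} M_i \to k
\]
for arbitrary height $h$, obtaining $H^*(M_i;k) \simeq k[y] \otimes_k \bigwedge_k(z)$ of Krull dimension $1$; your argument extends to this case with no new ideas.
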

\begin{proof}
	Since $ A $ is finite, each $ M_i $ is finite, so we can take generators $ x_i \in M_i $ of finite height $ h_i >0 $ and positive graded degree $ |x_i| > 0 $. 
	Then 
	\begin{equation*} 
		\cdots \to M_i(h|x_i|) \xrightarrow{\cdot x_i^{h-1}} M_i(|x_i|) \xrightarrow{\cdot x_i} M_i \longrightarrow k 
	\end{equation*} 
	is a minimal resolution of $ k $ by projective $ M_i $-modules. 
	It follows that $ H^*(M_i; k) \simeq k[y] \otimes_k \bigwedge_k(z) $ where $ |y| = (2, h|x_i|) $ and $ |z| = (1, |x_i|) $ by Proposition \ref{prop:minres_ext}.  
	Then by Observation \ref{obs:Kunneththm}, $ \mathrm{Krull\; dim}\; H^*(A; k) = n $. 
	The inequality $ h_\pol(\twist) \geq n -1 $ follows Proposition \ref{prop:krulldimlowerbound}.   
	The inequality $ h_\pol(\twist) \leq n -1 $ follows from combining Lemma \ref{lemma:periodic_monogenic_modules} and Proposition \ref{prop:upperboundres_length} applied to a tower (\ref{eq:hopfalg_tower}) for $ A $ constructed analogously to that of Example \ref{ex:exteriorhopfalg_hpol_computed}. 
\end{proof}

\subsection{Noncommutative Hopf algebras.}\label{subsection:noncomm_polentropy_computed} 
Typical Hopf algebras in homotopy theory arising as algebras of cohomology operations (see Introduction to \S\ref{section:hopfalgebras}) are cocommutative but not bicommutative. 
We consider two examples of non-commutative Hopf algebras in this section. 
A central theme of this section is the importance of choosing an appropriate pyramid (Definition \ref{defn:ladder_pyramid}). 
In the following examples, we show that an obvious choice for a pyramid does not lead to an optimal bound for $ h_\pol(\twist) $.

We begin with a small example of a cocommutative but noncommutative Hopf algebra native to stable homotopy theory. 
\begin{ex}\label{ex:SteenrodA1_polentropy_computed}
Consider the finite subalgebra $ \mathcal{A}_1 $ of the mod 2 Steenrod algebra of Example \ref{ex:mod2steenrod}. 
An arguably natural choice of pyramid for $ \StMod_{\mathcal{A}_1} $ can be had by applying the strategy of Proposition \ref{prop:upperboundres_length} to the extension in Example \ref{ex:SteenrodA1_extension}. 
We obtain a tower with $ M_0 = \bigwedge_{\F_2}[Q_1] $, $ M_1 = \bigwedge_{\F_2}[\Sq^1] $, $ M_2 = \bigwedge_{\F_2}[\Sq^2] $. 
Lemma \ref{lemma:towertopyramid} furnishes a pyramid in $ \StMod_{\mathcal{A}_1} $ of dimension $ 3 $. 
Then Proposition \ref{prop:upperboundres_length} implies that the categorical polynomial entropy of the twist functor $ \twist: \StMod_{\mathcal{A}_1} \to \StMod_{\mathcal{A}_1} $ is bounded from above by $ h_{\pol}(\twist) \leq 2 $. 

However, Proposition \ref{prop:krulldimlowerbound} and Example \ref{ex:SteenrodA1_cohom_Krulldim} imply that $ h_{\pol}(\twist) \geq 1 $. 
This discrepancy between the lower and upper bounds for $ h_{\pol}(\twist) $ can be resolved by identifying an explicit pyramid in $ \StMod_{\mathcal{A}_1} $. 
\end{ex}
\begin{prop}
	Let $ \mathcal{A}_1 $ be the finite subalgebra of the mod 2 Steenrod algebra of Example \ref{ex:mod2steenrod}. 
	Then the categorical polynomial entropy of the twist functor $ \twist: \StMod_{\mathcal{A}_1} \to \StMod_{\mathcal{A}_1} $ is bounded from above by $ h_{\pol}(\twist) \leq 1 $. 
\end{prop}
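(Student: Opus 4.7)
By Example~\ref{ex:SteenrodA1_polentropy_computed} the three-storey tower of Example~\ref{ex:SteenrodA1_extension} already yields $h_\pol(\twist) \leq 2$, so it remains to sharpen the bound by exhibiting a two-dimensional pyramid in $\StMod_{\mathcal{A}_1}$ and invoking Proposition~\ref{prop:upperboundres_length}. Recall (see e.g.~\cite{MR597872} and Example~\ref{ex:SteenrodA1_cohom_Krulldim}) that $H^*(\mathcal{A}_1;\F_2)$ has Krull dimension $2$ with a homogeneous system of parameters $\{h_0, w\}$, where $h_0 \in H^{1,1}(\mathcal{A}_1;\F_2)$ is the Bockstein and $w \in H^{4,12}(\mathcal{A}_1;\F_2)$ is a higher periodic class. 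The plan is to take the periodic member of the pyramid to be a Carlson-style fiber of $w$.

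Concretely, I set $L_w := \fib\bigl(w : \F_2 \to \Sigma^4 \F_2(12)\bigr)$ in $\StMod_{\mathcal{A}_1}$, using the identification underlying Proposition~\ref{prop:stmodproper} to regard $w$ as a morphism in $\StMod_{\mathcal{A}_1}$. Rotating the defining fiber sequence produces a cofiber sequence $\F_2(12)[3] \to L_w \to \F_2$, which is a length-$1$ resolution (Definition~\ref{defn:resolution}) exhibiting $L_w$ as a staircase for $\F_2$ in the sense of Definition~\ref{defn:ladder_pyramid}: indeed $r = 12$, $t_1 = 12$, $s = 3$, $s_1 = 0$ satisfy $t_1 \leq r$. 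It follows that $(L_w, \F_2)$ is a two-dimensional pyramid, and thus Proposition~\ref{prop:upperboundres_length} delivers $h_\pol(\twist) \leq 1$ as soon as $L_w$ is shown to be periodic.

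To establish periodicity of $L_w$, my claim is that multiplication by $h_0$ induces an equivalence $L_w \xrightarrow{\sim} \Sigma L_w(1)$ in $\StMod_{\mathcal{A}_1}$. Tensoring the Carlson cofiber sequence $L_{h_0} \to \F_2 \xrightarrow{h_0} \Sigma \F_2(1)$ on the right with $L_w$, using the symmetric monoidal structure of Construction~\ref{cons:tensor_Hopfmod}, reduces this to showing the vanishing $L_{h_0} \otimes L_w \simeq 0$ in $\StMod_{\mathcal{A}_1}$. Heuristically this is the statement that the support variety $V(h_0) \cap V(w) \subset \mathrm{Proj}\,H^*(\mathcal{A}_1;\F_2)$ is empty, $\{h_0,w\}$ being a system of parameters.

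Making this vanishing rigorous is the main obstacle: the cleanest proof invokes the Benson-Carlson-Rickard thick subcategory theorem adapted to finite Hopf algebras by Friedlander and Pevtsova, which lies outside the toolkit assembled in the paper. Failing that, I would compute $\StExt^{*,*}_{\mathcal{A}_1}(L_{h_0} \otimes L_w, \F_2)$ directly by combining the two Carlson long exact sequences for $L_w$ and $L_{h_0}$, show that the iterated multiplication by $h_0$ and by $w$ forces it to be bounded and hence to vanish above some cohomological degree, and then invoke compact generation of $\StMod_{\mathcal{A}_1}$ by twists of $\F_2$ (Proposition~\ref{prop:stmod_generators}) to conclude that $L_{h_0} \otimes L_w \simeq 0$.
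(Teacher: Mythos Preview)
Your Carlson-module strategy is genuinely different from the paper's and conceptually clean, but the gap you flag is real and your fallback sketch does not close it. The paper instead constructs an explicit two-dimensional pyramid by hand (Lemma~\ref{lemma:good_bootstrap_SteenrodA1}): take $B_0$ to be the left ideal of $\mathcal{A}_1$ generated by $\Sq^1$. Since $(\Sq^1)^2=0$, right multiplication by $\Sq^1$ induces an isomorphism $\mathcal{A}_1/B_0 \simeq B_0(-1)$ of left $\mathcal{A}_1$-modules, and combined with the short exact sequence $B_0 \hookrightarrow \mathcal{A}_1 \twoheadrightarrow \mathcal{A}_1/B_0$ this makes $B_0$ periodic in $\StMod_{\mathcal{A}_1}$. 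A direct Adem-relation computation of the first three syzygies of $\F_2$ then yields the length-two resolution $\F_2(12) \to B_0(6) \to B_0(-1)[-2] \to \F_2[-2]$, exhibiting $B_0$ as a staircase for $\F_2$. Proposition~\ref{prop:upperboundres_length} applies directly. Nothing beyond the Adem relations is used.

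By contrast, your periodicity claim $L_{h_0}\otimes L_w\simeq 0$ really does require the tensor-product theorem for support varieties (or something of equivalent strength). The proposed direct computation does not work as written. First, ``bounded and hence vanishes above some cohomological degree'' is not sufficient: Tate cohomology extends in both directions, and compact generation (Proposition~\ref{prop:stmod_generators}) only lets you conclude $M\simeq 0$ from vanishing of $\StExt^{s,t}(M,\F_2)$ for \emph{all} $(s,t)$, not just large $s$. Second, Proposition~\ref{prop:stmodproper} already gives finite-dimensionality of $\bigoplus_s\StExt^{s,t}$ for every compact object and every fixed $t$, so boundedness in that sense cannot be the distinguishing feature. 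What you would actually have to establish is that the iterated cofiber of $(h_0,w)$ on $\widehat{H}^{*,*}(\mathcal{A}_1;\F_2)$ vanishes, and proving that directly is essentially the support-variety statement in disguise. So your route is valid once one imports Benson--Carlson--Rickard or Friedlander--Pevtsova machinery, whereas the paper's route is elementary but bespoke to $\mathcal{A}_1$.
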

\begin{proof}
	Follows from Proposition \ref{prop:upperboundres_length} and observing that Lemma \ref{lemma:good_bootstrap_SteenrodA1} furnishes a pyramid of dimension 2. 
\end{proof}
The preceding example is topological in nature: To see this let $ BO $ be the classifying space of the infinite orthogonal group over $ \R $. 
This space represents the cohomology theory which assigns to a compact space $ X $ the group completion of the topological monoid of $ \R $ vector bundles on $ X $. 
Then a result of Stong \cite{MR151963} shows that the cohomology of $ BO $ is isomorphic to the quotient Hopf algebra $ H^*(BO; \F_2) \simeq \mathcal{A} \otimes_{\mathcal{A}_1} \F_2 $ as modules over the mod 2 Steenrod algebra. 

\begin{lemma}\label{lemma:good_bootstrap_SteenrodA1}
	Let $ \mathcal{A}_1 $ be the sub-Hopf algebra of the mod 2 Steenrod algebra generated by $ \Sq^1 $ and $ \Sq^2 $. 
	Let $ B_0 $ be the left ideal of $ \mathcal{A}_1 $ generated by $ \Sq^1 $. 
	Write $ p:B_0 \to \F_2 $ for the projection map which mods out by $ \Sq^2 $. 
	Let $ B_1 $ be the quotient $ \mathcal{A}_1/(\Sq^1, \Sq^1 \Sq^2) $. 
	Then the following hold in $ \StMod_{\mathcal{A}_1} $:
	\begin{itemize}
		\item There is an exact sequence
		\begin{align*}
		\begin{tikzcd}[ampersand replacement=\&,row sep=tiny]
			\F_2(12) \ar[r] \& B_0(6) \ar[r] \& B_0(-1)[-2] \ar[r,"p"] \& \F_2[-2]
		\end{tikzcd}
		\end{align*}
		\item The module $ B_0 $ is periodic: $ B_0(-1)[1] \simeq B $.
	\end{itemize}
\end{lemma}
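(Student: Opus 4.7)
My plan proceeds in two stages, one for each assertion.

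For the periodicity $B_0(-1)[1] \simeq B_0$, I would construct an explicit projective cover. Define the surjection $\pi \colon \mathcal{A}_1(-1) \twoheadrightarrow B_0$ of $\mathcal{A}_1$-modules by $1 \mapsto \Sq^1$; this is well-defined since $B_0 = \mathcal{A}_1\Sq^1$ is cyclic with generator $\Sq^1$ in weight 1. The kernel is, by definition, the right annihilator of $\Sq^1$ in $\mathcal{A}_1$, computed directly on the $\F_2$-basis $\{1, \Sq^1, \Sq^2, \Sq^3, \Sq^2\Sq^1, \Sq^3\Sq^1, \Sq^2\Sq^1\Sq^2, \Sq^2\Sq^1\Sq^2\Sq^1\}$ of $\mathcal{A}_1$ using the Adem relations (\ref{eq:ademrelations}): one finds $\text{Ann}^{r}(\Sq^1) = \F_2\langle \Sq^1, \Sq^2\Sq^1, \Sq^3\Sq^1, \Sq^2\Sq^1\Sq^2\Sq^1 \rangle$, which coincides with $B_0$ as a subspace of $\mathcal{A}_1$. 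Accounting for the weight twist in the source, the resulting short exact sequence $0 \to B_0(-1) \to \mathcal{A}_1(-1) \to B_0 \to 0$ in $\Mod_{\mathcal{A}_1}^{\heartsuit}$ gives a cofiber sequence in $\StMod_{\mathcal{A}_1}$; as $\mathcal{A}_1(-1)$ is projective it vanishes there, yielding $\loops B_0 \simeq B_0(-1)$, equivalently $B_0(-1)[1]\simeq B_0$.

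For the four-term exact sequence, I would treat it as a length-$2$ resolution in $\StMod_{\mathcal{A}_1}$ whose outer maps are determined by bidegree considerations and whose middle map is identified via the periodicity of $B_0$ from the previous step. The leftmost map $\F_2(12) \hookrightarrow B_0(6)$ is the $(6)$-twisted socle embedding sending the generator to $\Sq^2\Sq^1\Sq^2\Sq^1 \in B_0(6)_{12}$; this is the unique (up to scalar) weight-preserving module map, since the socle of $B_0$ is one-dimensional and spanned by $\Sq^2\Sq^1\Sq^2\Sq^1$ in weight $6$. The rightmost map $B_0(-1)[-2] \to \F_2[-2]$ is the $[-2]$-suspension of the $(-1)$-twisted projection $p(-1)\colon B_0(-1)\to\F_2$. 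For the middle map, iterating the periodicity identifies $B_0(6) \simeq B_0[6]$ and $B_0(-1)[-2]\simeq B_0[-3]$ in $\StMod_{\mathcal{A}_1}$, so that the middle arrow corresponds to a class in $\StExt^{9}_{\mathcal{A}_1}(B_0, B_0)$ of appropriate internal weight; I would construct this class by applying $\hom(B_0,-)$ to the periodicity short exact sequence and extracting the relevant generator from the long exact sequence.

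To verify exactness I would identify the classifying morphism of the proposed resolution in $\StExt^{2}_{\mathcal{A}_1}(\F_2[-2], \F_2(12)) \cong H^{4,12}(\mathcal{A}_1; \F_2)$. The Cartan--Eilenberg spectral sequence of the central extension from Example \ref{ex:SteenrodA1_extension}, run with Steenrod operations via Proposition \ref{prop:steenrod_hopf_coh} and the Kudo transgression theorem (Proposition \ref{prop:CE_sseq_coh}(\ref{propitem:CEsseq_kudo})), has $E_2$-page $\F_2[h_0, h_1]\otimes\F_2[\tilde v_1]$ with $|h_0|=(1,1)$, $|h_1|=(1,2)$, $|\tilde v_1|=(1,3)$; the transgression $d_2(\tilde v_1) = h_0 h_1$ kills the odd powers of $\tilde v_1$, and one checks (using $\Sq^i(\tilde v_1)=0$ for $i\geq 2$ in $H^*(E(Q_1))$) that the sequence degenerates at $E_3$, giving $H^{4,12}(\mathcal{A}_1;\F_2) = \F_2\langle \tilde v_1^4\rangle$. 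I would then confirm that the constructed resolution realizes the non-zero class $\tilde v_1^4 = (\tilde v_1^2)^2$.

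The main obstacle is the identification in $\StMod_{\mathcal{A}_1}$ of $\cofib(\F_2(12) \to B_0(6))$ with $\fib(B_0(-1)[-2]\to\F_2[-2])$: at the level of $\Mod^{\heartsuit}_{\mathcal{A}_1}$ these are non-isomorphic $3$-dimensional modules (the former concentrated in weights $7,9,10$, the latter in weights $2,3,5$ and shifted down two homologically), so the equivalence is genuinely a $\StMod$-level statement. Resolving this requires combining Schanuel's lemma with repeated application of the periodicity sequence from part (b) to change projective representatives of both sides until they become weight-aligned; an auxiliary module one may profitably insert in this comparison is $B_1 = \mathcal{A}_1/(\Sq^1, \Sq^1\Sq^2)$, which fits into the short exact sequence $\F_2(-2) \to B_1 \to \F_2$ classifying $h_1 \in H^{1,2}$ and mediates between $\F_2$ and $B_0$ via the module map $B_0 \to B_1(1)$ sending $\Sq^1 \mapsto 1$.
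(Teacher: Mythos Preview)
Your periodicity argument is correct and is essentially what the paper has in mind when it says the second statement is ``straightforward'': build the projective cover $\mathcal{A}_1(-1)\twoheadrightarrow B_0$ by $1\mapsto\Sq^1$, compute the kernel on a basis, and observe it is $B_0(-1)$.

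For the four-term sequence, however, your route diverges sharply from the paper's and runs into the very obstacle you flag. The paper does \emph{not} pass through $\StExt$ computations or the Cartan--Eilenberg spectral sequence at all; it simply builds the resolution by explicit syzygy computation in $\Mod^\heartsuit_{\mathcal{A}_1}$. Starting from $p_0=p(-1)\colon B_0(-1)\to\F_2$, it covers $\ker(p_0)$ by a free module $p_1\colon\mathcal{A}_1(2)\to\ker(p_0)$, $1\mapsto\Sq^2\Sq^1$, and computes $\ker(p_1)$ on the basis as the left ideal generated by $\Sq^2$; then covers that by $p_2\colon\mathcal{A}_1(4)\to\ker(p_1)$, $1\mapsto\Sq^2$, and computes $\ker(p_2)$ as the left ideal generated by $\Sq^1\Sq^2$; finally $p_3\colon B_0(6)\to\ker(p_2)$, $\Sq^1\mapsto\Sq^1\Sq^2$, is surjective with one-dimensional kernel $\F_2(12)$. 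Each free-cover step replaces a kernel by its loop object in $\StMod_{\mathcal{A}_1}$ via Observation~\ref{obs:stmod_loops}, and splicing gives the sequence directly. The identification $\cofib(\F_2(12)\to B_0(6))\simeq\fib(p)[-2]$ that you call the ``main obstacle'' is never confronted abstractly: it falls out because both sides are computed concretely as the same explicit left ideal of $\mathcal{A}_1$ up to the intervening loop identifications.

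Your proposed workaround---computing $H^{4,12}(\mathcal{A}_1;\F_2)$ and then matching the classifying map---would still leave you needing to show that the specific extension with $B_0$'s in the middle realizes $\tilde v_1^4$ rather than zero, which amounts to the same unresolved identification. The Schanuel-plus-periodicity sketch you offer is not wrong in spirit, but it is vague precisely where the content lies, and carrying it out would in practice reproduce the paper's explicit syzygy calculation anyway. I would recommend abandoning the cohomological detour here and doing the three-step kernel computation by hand; it is short and leaves nothing to resolve.
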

\begin{proof}
	We construct the first exact sequence; the existence of the second exact sequence is straightforward. 

	As a submodule of $ \mathcal{A}_1 $, $ B_0 $ is generated as a $ \F_2 $-vector space by the elements
	\begin{equation*} 
		\Sq^1, \Sq^2\Sq^1, \Sq^1\Sq^2\Sq^1, \Sq^2\Sq^1\Sq^2\Sq^1 .
	\end{equation*} 
	The map $ p_0 := p: B_0(-1) \to k(0) $ is given by projection onto $ \Sq^1 $. 
	Consider the $ \mathcal{A}_1 $-linear morphism $ p_1: \mathcal{A}_1(2) \to \ker(p_0) $ which sends $ 1 \mapsto \Sq^2\Sq^1 $. 
	By explicit calculation using the Adem relations, we see that
	\begin{itemize}
		\item $ p_1 $ surjects onto the kernel of $ p_0 $ 
		\item The kernel of $ p_1 $ is the left ideal of $ \mathcal{A}_1 $ generated by $ \Sq^2 $. 
		Explicitly, $ \ker(p_1) $ is generated as a $ k $-vector space by $ \Sq^2, \Sq^1\Sq^2, \left(\Sq^2\right)^2, \Sq^2\Sq^1\Sq^2, \left(\Sq^2\right)^3= \left(\Sq^2\Sq^1\right)^2 = \left(\Sq^1\Sq^2\right)^2 $. 
	\end{itemize}
	By Observation \ref{obs:stmod_loops}, $ \ker(p_1) \simeq \loops \ker(p_0) $ in $ \StMod_A $. 

	Now consider the $ \mathcal{A}_1 $-linear morphism $ p_2 \colon \mathcal{A}_1(4) \to \ker(p_1) $ which sends $ 1 \mapsto \Sq^2 $. 
	By explicit calculation using the Adem relations (\ref{eq:ademrelations}), we see that
	\begin{itemize}
		\item $ p_2 $ surjects onto the kernel of $ p_1 $ 
		\item The kernel of $ p_2 $ is the left ideal of $ \mathcal{A}_1 $ generated by $ \Sq^1 \Sq^2 $. 
		Explicitly, $ \ker(p_2) $ is generated as a $ k $-vector space by $ \Sq^1\Sq^2, \Sq^2\Sq^1\Sq^2, \left(\Sq^2\right)^3= \left(\Sq^2\Sq^1\right)^2 = \left(\Sq^1\Sq^2\right)^2 $. 
	\end{itemize}
	By Observation \ref{obs:stmod_loops}, $ \ker(p_2) \simeq \loops \ker(p_1) $. 

	Finally, there is a surjective $ \mathcal{A}_1 $-linear map $ p_3: B_0(6) \to \ker(p_2) $ which takes $ \Sq^1 \mapsto \Sq^1\Sq^2 $. 
	The result follows from observing $ \ker(p_3) \simeq \F_2(12) $ and splicing two exact sequences together. 
\end{proof}

\begin{ex}\label{ex:noncommex}
	Define $ M $ to be the dual of the commutative coassociative $ 8 $-dimensional Hopf algebra $ M^\vee = \bigwedge_{\F_2}(e_1, e_2, e_3) $ where $ e_i $ lives in grading degree $ -i $. 
	Define the comultiplication such that $ e_1 $ and $ e_2 $ are primitive and
	\begin{align*}
		\Delta(e_3) = 1 \otimes e_3 + e_1 \otimes e_2 + e_3 \otimes 1 .
	\end{align*}
	Passing to the dual, this says that $ M $ is cocommutative and, as an associative algebra, is generated by elements $ x_1, x_2, x_3 $ where $ x_i $ lives in grading degree $ i $. 
	The $ x_i $ satisfy the relations
	\begin{itemize}
		\item $ x_i^2 = 0 $ for $ i = 1, 2, 3$. 
		\item The element $ x_3 $ commutes with both $ x_1 $ and $ x_2 $.
		\item There is a nontrivial commutation relation $ x_1 x_2 = x_2 x_1 + x_3 $. 
	\end{itemize}
	Observe that taking $ M_0 = \bigwedge_{\F_2}(e_3) $ and $ M_i = \bigwedge_{\F_2}(e_i) $ for $ i = 1, 2 $ produces a tower (\ref{eq:hopfalg_tower}) for $ M $, giving the naïve bounds  
	\begin{align*}
		h_{cat}(\twist) &= 0 \\
		h_\pol(\twist) &\leq 2 .
	\end{align*} 
	On the other hand, Propositions \ref{prop:krulldimlowerbound} and \ref{prop:noncommex_cohkrulldim_computed} imply that $ h_\pol(\twist) \geq 1 $. 
	We resolve this discrepancy by exhibiting a pyramid in $ \StMod_M $ as follows. 
	Let $ X,Y $ be the \emph{left} ideals generated by $ x_1, x_2 $ respectively. 
	Consider the exact sequences 
	\begin{align}
	\begin{tikzcd}[ampersand replacement=\&]
		X(1) \ar[r] \& M \ar[r] \& X 
	\end{tikzcd} \label{eq:noncommex_Xper}
	\\ 
	\begin{tikzcd}[ampersand replacement=\&] \label{eq:noncommex_Yper}
		Y(2) \ar[r] \& M \ar[r] \& Y
	\end{tikzcd}
	\\ 
	\begin{tikzcd}[ampersand replacement=\&]  \label{eq:noncommex_ladder}
		k(6) \ar[r] \& M \ar[r,"{\cdot x_1 \oplus \cdot x_2}"] \& X(-1) \oplus Y(-2) \ar[r] \& k(0)
	\end{tikzcd}
	\end{align}
	in $ \Mod_M $. 
	In particular, the non-commutativity of $ M $ is what makes the third sequence above exact in grading degree 3: the center morphism in this degree is given by
	\begin{align*}
		M_3 &\to X_4 \oplus Y_5 \\
		x_3 &\mapsto (x_3 x_1,x_3 x_2) \\
		x_1x_2 &\mapsto (x_1x_2 \cdot x_1,x_1x_2 \cdot x_2) = ( x_1x_3, 0).
	\end{align*}
	We observe that the sequences (\ref{eq:noncommex_Xper}) and (\ref{eq:noncommex_Yper}) imply that $ X(-1) \oplus Y(-2) $ is periodic, while the sequence (\ref{eq:noncommex_ladder}) passes to an exact sequence
	\begin{equation*}
	\begin{tikzcd}[ampersand replacement=\&]
		k(6)[-1] \ar[r] \& X(-1) \oplus Y(-2) \ar[r] \& k(0)
	\end{tikzcd}
	\end{equation*}
	in $ \StMod_M $.
	Therefore, $ (X(-1) \oplus Y(-2), k(0)) $ is a pyramid for $ M $. 

	An application of the arguments of Theorem \ref{thm:twist_expentropy_iszero} and Proposition \ref{prop:upperboundres_length} imply that the twist functor $ \twist: \StMod_M \to \StMod_M $ satisfies $ h_\pol(\twist \leq 1 $. 
	Taken together, we have shown that 
	\begin{align*}
		h_\pol(\twist) = 1 .
	\end{align*} 
\end{ex}
\begin{rmk}
	One should contrast the preceding example with the special case of Example \ref{ex:exteriorhopfalg_hpol_computed} where $ d = 3 $ and $ e_i = i $. 
\end{rmk}
\begin{prop}\label{prop:noncommex_cohkrulldim_computed}
	Let $ M $ be the connected graded cocommutative Hopf algebra over $ \F_2 $ of Example \ref{ex:noncommex}. 
	Then the cohomology of $ M $ has Krull dimension
	\begin{equation*}
		\dim H^*(M; \F_2) = 2 .
	\end{equation*}
\end{prop}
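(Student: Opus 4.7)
Plan: I will compute the cohomology of $M$ via the Cartan--Eilenberg spectral sequence (Proposition \ref{prop:CE_sseq_coh}) associated to a central Hopf algebra extension of $M$, and deduce the Krull dimension from the resulting associated graded.

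First I construct the extension. The element $x_3 = x_1 x_2 + x_2 x_1$ is primitive and central in $M$, and the ideal it generates is a Hopf ideal. It yields a central extension
\[
B := \bigwedge\nolimits_{\F_2}(x_3) \longrightarrow M \longrightarrow C := \bigwedge\nolimits_{\F_2}(x_1, x_2) ,
\]
which is an extension in the sense of Definition \ref{defn:hopf_alg_extension} by the dimension count $2 \cdot 4 = 8 = \dim_{\F_2} M$ together with freeness of $M$ over the sub-Hopf algebra $B$. Example \ref{ex:exteriorhopf_cohomology_computed} gives $\Ext_B^{*,*}(\F_2, \F_2) = \F_2[y_3]$ with $|y_3| = (1, 3)$ and $\Ext_C^{*,*}(\F_2, \F_2) = \F_2[y_1, y_2]$ with $|y_i| = (1, i)$, so Proposition \ref{prop:CE_sseq_coh} furnishes a multiplicative spectral sequence with $E_2 = \F_2[y_1, y_2, y_3]$ converging to $H^*(M; \F_2)$ and whose differentials preserve internal degree.

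The key calculation is the transgression of $y_3$. Dually to the commutator $[x_1, x_2] = x_3$, the coproduct on $M^\vee = \bigwedge(e_1, e_2, e_3)$ satisfies $\Delta(e_3) = 1 \otimes e_3 + e_1 \otimes e_2 + e_3 \otimes 1$, so in the cobar complex of Construction \ref{cons:cbar_aug_filtration} the cochain $[e_3]$ has coboundary $[e_1 \mid e_2]$. Interpreting this in the bicomplex filtration defining the spectral sequence identifies the transgression of $y_3$ as $y_1 y_2$. Relating the Hopf extension cocycle to the differential precisely is the principal technical obstacle.

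Once the transgression is established, the Leibniz rule $d_2(y_3^n) = n \cdot y_3^{n-1} y_1 y_2$ shows that the $y_3^{2k}$ lie in $\ker d_2$ and that $\mathrm{im}\, d_2$ contains $(y_1 y_2) \cdot \F_2[y_3^2, y_1, y_2]$, yielding
\[
E_3 \;\cong\; \F_2[y_1, y_2, y_3^2]/(y_1 y_2).
\]
To verify collapse, $y_1$ and $y_2$ are permanent cycles by bidegree constraints on any potential target; for $y_3^2$, Proposition \ref{prop:steenrod_hopf_coh}(5) gives $y_3^2 = \Sq^1(y_3)$, and Kudo's transgression theorem (Proposition \ref{prop:CE_sseq_coh}\ref{propitem:CEsseq_kudo}) propagates to $d_3(y_3^2) = \Sq^1(y_1 y_2)$. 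By the Cartan formula together with the vanishing $\Sq^0(y_i) = 0$ coming from Proposition \ref{prop:steenrod_hopf_coh}(4) and $[e_i]^2 = 0$ in the cobar, this Steenrod operation vanishes, so $y_3^2$ is also permanent. All higher $d_r$ on $y_3^{2k}$ vanish by degree constraints on the target. Therefore $E_\infty \cong \F_2[y_1, y_2, y_3^2]/(y_1 y_2)$, whose Krull dimension equals $\dim \F_2[y_3^2] + \dim \F_2[y_1, y_2]/(y_1 y_2) = 1 + 1 = 2$. Since $H^*(M; \F_2)$ is a finitely-generated $\F_2$-algebra (Theorem \ref{thm:ext_fingen}) whose associated graded is $E_\infty$, we conclude $\dim H^*(M; \F_2) = 2$.
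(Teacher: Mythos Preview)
Your proof is correct and follows essentially the same route as the paper: the same central extension $\bigwedge(x_3) \to M \to \bigwedge(x_1,x_2)$, the same Cartan--Eilenberg spectral sequence, the same identification of $d_2(y_3)=y_1y_2$ via the cobar differential, and the same Kudo--Cartan argument to kill $d_3(y_3^2)$ and collapse at $E_3$. The only differences are cosmetic (your $y_i$ are the paper's $\alpha_i$) and that you make explicit the passage from the Krull dimension of the associated graded to that of $H^*(M;\F_2)$, which the paper leaves implicit.
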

\begin{proof}
	Consider the central extension of Hopf algebras
	\begin{equation*}
	\begin{tikzcd}
		\bigwedge_{\F_2}[e_3] \ar[r] &  M \ar[r] & \bigwedge_{\F_2}[e_1, e_2]. 
	\end{tikzcd}
	\end{equation*}
	By Proposition \ref{prop:CE_sseq_coh}, there is a spectral sequence of trigraded algebras with $ E_2 $-page 
	\begin{align*}
		E_2^{p,q} = \Ext_{\bigwedge_{\F_2}[e_1, e_2]}^{^p,*}(\F_2, \F_2) \otimes \Ext_{\bigwedge_{\F_2}[e_3]}^{q,*}(\F_2, \F_2) \simeq \F_2[\alpha_1, \alpha_2, \alpha_3] \\
		|\alpha_1| = (1, 0, 1) \qquad |\alpha_2| = (1, 0, 2) \qquad |\alpha_3| = (0, 1, 3)
	\end{align*} 
	converging to $ \Ext_M^{s,t}(\F_2, \F_2) $. 
	Since $ d^2(e_3) = e_1 \otimes e_2 $ in the cobar complex of $ M $ (Construction \ref{cons:cbar_aug_filtration}), $ \alpha_3 $ cannot survive the spectral sequence. 
	By degree considerations,  
	\begin{align*}
		d_2 \colon & \alpha_3 \mapsto e_1 |e_2 = \alpha_1 \cdot \alpha_2 . 
	\end{align*}
	Since the differential is a derivation (Proposition \ref{prop:CE_sseq_coh}(\ref{propitem:CE_sseq_existence})), we have
	\begin{equation*}
		E_3 = \F_2[\alpha_1, \alpha_2]/(\alpha_1\alpha_2) \otimes_{\F_2} \F_2[(\alpha_3)^2]
	\end{equation*}
	By the Kudo transgression theorem (Proposition \ref{prop:CE_sseq_coh}(\ref{propitem:CEsseq_kudo})) and Cartan formula for Steenrod operations (Proposition \ref{prop:steenrod_hopf_coh}), we have
	\begin{align*}
		d_3 \colon \alpha_3^2 = \Sq^1\alpha_3 \mapsto \Sq^1( \alpha_1 \alpha_2) = \Sq^0( \alpha_1) \Sq^1(\alpha_2) + \Sq^1( \alpha_1) \Sq^0(\alpha_2)
	\end{align*}
	where the right-hand side vanishes because $ \Sq^0(\alpha_i) = e_i^2 = 0 $ for $ i = 1 ,2 $. 
	Then the Kudo transgression theorem implies that the rest of the differentials vanish, so the spectral sequence collapses and $ E_3 = E_\infty $. 
	The resulting ring has Krull dimension 2. 
\end{proof}

\appendix
\section{Graded Schwede-Shipley}
The goal of this section is to prove a graded enhancement of \cite[Theorem 3.1.1]{MR1928647}. 
Recall our notation $ \Z^\delta $ for the discrete category (no nonidentity morphisms) with objects given by the integers. 

\subsection{Generalities on presentable \texorpdfstring{$ \infty $}{∞}-categories.} 
Write $ \Pr^L $ for the $ \infty $-category of presentable $ \infty $-categories with colimit-preserving functors. 
We refer readers to \cite[Definition 5.5.0.1]{LurHTT} for the precise definition of a presentable $ \infty $-category; the main point is that a presentable category $ \cat $, while necessarily being large, is generated under colimits by a small collection of objects.  
\begin{recollection}
	[Lurie tensor product] The category $ \Pr^L $ admits a symmetric monoidal structure \cite[Proposition 4.8.1.15]{HA}. 
	Given $ \cat, \mathcal{D} \in \Pr^L $, their tensor product $ \cat \otimes \mathcal{D} $ is initial among presentable $ \infty $-categories receiving a functor from $ \cat \times \mathcal{D}$ which preserves small colimits separately in each variable. 
\end{recollection}
\begin{defn} \cite[\S2.2]{Lurie-Rot}
	Let $ \Spectra^\gr := \Fun(\Z^\delta, \Spectra) $ be the category of \emph{graded spectra}. 

	Let $ \Spc_*^\gr := \Fun(\Z^\delta,\Spc_*) $ be the category of \emph{graded pointed spaces}. 


\end{defn}

\begin{obs}
	\label{obs:graded_spectra_dayconv_pres}
	By \cite[Proposition 5.5.3.6]{LurHTT}, $ \Spectra^\gr $ and $ \Spc^\gr $ are presentable. 
	The categories $ \Spectra^\gr $ and $ \Spc^\gr $ are $\E_1 $-algebra objects in $ \Pr^L $ via Day convolution \cite[Example 2.2.6.17]{HA}. 
	Moreover, the levelwise suspension spectrum functor $ \Sigma^\infty_+: \Spc^\gr \to \Spectra^\gr $ exhibits $ \Spectra^\gr $ as an algebra over $ \Spc^\gr $. 
\end{obs}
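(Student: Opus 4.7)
The statement has three parts, and each reduces to a direct citation of general facts about functor categories and Day convolution, combined with the observation that $\Z^\delta$ carries a small symmetric monoidal structure given by addition.

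First I would establish presentability. Since $\Spc$ and $\Spectra$ are presentable and $\Z^\delta$ is a (small) $\infty$-category, \cite[Proposition 5.5.3.6]{LurHTT} applies verbatim: the functor categories $\Spc^\gr = \Fun(\Z^\delta,\Spc)$ and $\Spectra^\gr = \Fun(\Z^\delta,\Spectra)$ are presentable. There is nothing to check beyond invoking this result.

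Next I would equip both with $\E_1$-algebra (in fact $\E_\infty$-algebra) structures via Day convolution. Addition makes $\Z^\delta$ into a symmetric monoidal $\infty$-category, and $\Spc$, $\Spectra$ are themselves symmetric monoidal and presentable, with tensor product preserving small colimits in each variable. Under these hypotheses \cite[Example 2.2.6.17]{HA} (or more generally \cite[Proposition 4.8.1.10]{HA}) produces a Day convolution symmetric monoidal structure on the functor categories, with tensor product preserving colimits separately in each variable. By the universal property of $\Pr^L$ \cite[Proposition 4.8.1.15]{HA}, this exhibits $\Spc^\gr$ and $\Spectra^\gr$ as $\E_\infty$-algebras in $\Pr^L$, hence in particular $\E_1$-algebras.

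Finally, for the module structure, I would appeal to functoriality of Day convolution. The suspension spectrum functor $\Sigma^\infty_+ \colon \Spc \to \Spectra$ is a symmetric monoidal left adjoint, so postcomposition induces a colimit-preserving functor $\Sigma^\infty_+ \colon \Spc^\gr \to \Spectra^\gr$. Applying Day convolution functorially in the target (with $\Z^\delta$ fixed) promotes this to a symmetric monoidal colimit-preserving functor, i.e.\ a morphism of $\E_\infty$-algebras in $\Pr^L$. Any morphism of commutative algebra objects in a symmetric monoidal $\infty$-category equips the target with the structure of a module (indeed an algebra) over the source, which gives the desired structure of $\Spectra^\gr$ as an algebra over $\Spc^\gr$ in $\Pr^L$.

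The only mild subtlety is book-keeping about which flavour of Day convolution result in \cite{HA} applies in the $\infty$-categorical setting when the indexing category is discrete; but because $\Z^\delta$ has no non-identity morphisms, all the coherence data reduces to the ordinary convolution formula $(F \star G)_n = \bigsqcup_{i+j=n} F_i \otimes G_j$, so no real obstacle arises.
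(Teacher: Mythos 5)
Your proposal is correct and follows the same route the paper implicitly takes: the Observation is stated without proof beyond the citations, and you have accurately unpacked \cite[Proposition 5.5.3.6]{LurHTT} for presentability, the Day convolution construction of \cite[Example 2.2.6.17]{HA} (together with the identification of presentably monoidal categories with algebras in $\Pr^L$ from \cite[Proposition 4.8.1.15]{HA}), and functoriality of Day convolution in the target to upgrade the levelwise $\Sigma^\infty_+$ to a morphism of algebra objects in $\Pr^L$, which in turn yields the module structure. No gaps.
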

The following definition is a ``large'' analogue of \cite[Notation 2.4.10]{Lurie-Rot}.
\begin{defn}\label{defn:loc_gr_pres_cats}
	Using Observation \ref{obs:graded_spectra_dayconv_pres}, we define the categories of \emph{locally graded presentable $ \infty $-categories} and of \emph{locally graded presentable stable $ \infty $-categories}: 
	\begin{align*}
		\mathrm{Pr}^{L,\gr} &:= \Mod_{\Spc^{\gr}}(\mathrm{Pr}^{L}) \\
		\mathrm{Pr}^{L,\mathrm{st},\gr} &:= \Mod_{\Spectra^{\gr}}(\mathrm{Pr}^{L,\mathrm{st}}). 
	\end{align*}
	In particular, we will refer to a morphism $ F: \cat \to \mathcal{D} $ in $ \mathrm{Pr}^{L,\gr} $ as a \emph{graded functor}. 
\end{defn}
Given a graded presentable $ \infty $-category $ \cat $, restricting the structure morphism $ \Spectra^\gr \otimes \cat \to \cat $ to the full subcategory $ (n): \Spectra \inj \Spectra^{\gr} $ on spectra concentrated in degree $ n $ furnishes a functor $ (n): \cat \to \cat $. 
In particular, $ (n) $ is an autoequivalence of $ \cat $ (cf. \cite[Definition 2.4.2]{Lurie-Rot}). 
\begin{ex}
	Let $ \cat $ be any presentable $ \infty $-category. 
	Then $ \cat^\gr:= \Fun_{\Cat_\infty}(\Z^\delta, \cat) $ is a graded presentable $ \infty $-category. 
	If $ \cat $ is furthermore stable, then $ \cat^\gr $ is a graded presentable stable $ \infty $-category.
\end{ex}
\begin{ex}\label{ex:gradedmod_gradedring}
	Let $ R \in \E_1\Alg(\Spectra^\gr) $. 
	Then the category of $ R $-modules in graded spectra $ \Mod_R(\Spectra^{\gr}) $ is a graded presentable $ \infty $-category. 
\end{ex}
The goal of this section is to show that a class of sufficiently nice graded categories are of the form of Example \ref{ex:gradedmod_gradedring}. 
\begin{obs}
	The category $ \mathrm{Pr}^{L,\gr} $ inherits a symmetric monoidal structure from $ \Pr^{L} $ by \cite[Theorem 4.5.2.1]{HA}. 
	The tensor product of two graded presentable stable $ \infty $-categories $ \cat,\mathcal{D} $ can be identified with their relative tensor product over $ \Spc^\gr $. 
\end{obs}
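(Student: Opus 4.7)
The statement to prove has two parts: first, that $\Pr^{L,\gr} = \Mod_{\Spc^\gr}(\Pr^L)$ inherits a symmetric monoidal structure by \cite[Theorem 4.5.2.1]{HA}; and second, that the resulting tensor product of $\cat, \mathcal{D}$ may be identified with the relative tensor product $\cat \otimes_{\Spc^\gr} \mathcal{D}$. The plan is to verify the hypothesis of \cite[Theorem 4.5.2.1]{HA}, after which both conclusions are automatic from its statement.

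First I would upgrade the $\E_1$-algebra structure on $\Spc^\gr$ inside $\Pr^L$ recorded in Observation \ref{obs:graded_spectra_dayconv_pres} to an $\E_\infty$-algebra structure. The key input is that $\Z^\delta$, viewed as a monoidal category under addition, is symmetric monoidal (because addition on $\Z$ is commutative), so the Day convolution formalism of \cite[\S2.2.6]{HA} produces a symmetric monoidal structure on $\Spc^\gr = \Fun(\Z^\delta, \Spc)$ that presents the Lurie tensor product on the target. By the universal property of Day convolution this lifts $\Spc^\gr$ to an $\E_\infty$-algebra object of $\Pr^L$ whose underlying $\E_1$-algebra is the one identified earlier.

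With this in hand, \cite[Theorem 4.5.2.1]{HA} applies to the presentable symmetric monoidal $\infty$-category $\Pr^L$ and the $\E_\infty$-algebra $\Spc^\gr$, producing a symmetric monoidal structure on $\Pr^{L,\gr} = \Mod_{\Spc^\gr}(\Pr^L)$ for which the forgetful functor to $\Pr^L$ is lax symmetric monoidal. The second assertion is then part of the output of the cited theorem: in $\Mod_A(\cat)$ for $A \in \mathrm{CAlg}(\cat)$, the symmetric monoidal product is by construction the relative tensor product $(-)\otimes_A(-)$, which in our setting is the relative tensor product $\cat \otimes_{\Spc^\gr}\mathcal{D}$ computed inside $\Pr^L$ as the geometric realization of the bar construction (cf. \cite[Construction 4.4.2.7]{HA}).

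The main potential obstacle is the first step — verifying that the Day convolution genuinely promotes to $\E_\infty$ rather than just $\E_1$. This is, however, standard once one notes that $\Z^\delta$ is symmetric monoidal and $\Spc$ is cartesian symmetric monoidal, so the whole argument reduces to a citation.
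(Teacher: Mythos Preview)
Your proposal is correct and follows precisely the route the paper intends: the observation in the paper carries no proof beyond the citation to \cite[Theorem 4.5.2.1]{HA}, and you have correctly unpacked what is needed to invoke that result. In particular, you have filled in the one step the paper leaves implicit---that the $\E_1$-algebra structure on $\Spc^\gr$ recorded in Observation~\ref{obs:graded_spectra_dayconv_pres} upgrades to $\E_\infty$ via Day convolution along the symmetric monoidal category $\Z^\delta$---which is exactly the hypothesis the cited theorem requires.
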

For an $ \infty $-category $ \cat $, being stable is a property, not additional structure. 
Reinterpreting stability as admitting a module structure over the $ \infty $-category of spectra, this is captured by the following definition from \cite[\S4.8.2]{HA}. 
\begin{defn}
	Let $ \cat $ be a symmetric monoidal $ \infty $-category, and let $ \mathbbm{1} $ denote the $ \otimes $-unit of $ \cat $. 
	A morphism $ e: \mathbbm{1} \to E $ is an \emph{idempotent object} of $ \cat $ if the composites
	\begin{equation*}
		\id_E \otimes e: E \to E \otimes E \qquad \qquad e \otimes \id_E : E \to E \otimes E
	\end{equation*}
	are equivalences in $ \cat $. 

	A pair $ (\mathcal{D}, X) $ where $ \mathcal{D} $ is a presentable $ \infty $-category and $ X \in \mathcal{D} $ is \emph{idempotent} if there exists a colimit-preserving functor $ F: \Spc \to \mathcal{D} $ with $ F(*) = X $ exhibiting $ \mathcal{D} $ as idempotent in $ \mathrm{Pr}^L $ \cite[discussion before Proposition 4.8.2.11]{HA}. 
\end{defn}
The following is a mild generalization of \cite[Proposition 4.8.2.18]{HA}. 
Write $ \sph^0 \in \Spectra $ for the sphere spectrum.
\begin{prop}\label{prop:gradedspectra_idempotent}
	The pair $ (\Spectra^\gr, \sph^0(0)) $ is idempotent in $ \mathrm{Pr}^{L,\gr}  $. 
\end{prop}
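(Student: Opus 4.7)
The plan is to reduce the graded statement to its ungraded counterpart \cite[Proposition 4.8.2.18]{HA} via base change. The symmetric monoidal colimit-preserving functor $\Spc \to \Spc^\gr$ sending a space $X$ to its image $X(0)$ concentrated in grading $0$ gives $\Spc^\gr$ the structure of an $\mathbb{E}_\infty$-algebra over $\Spc$ in $\Pr^L$; extending scalars yields a symmetric monoidal, colimit-preserving functor
\begin{equation*}
	B := - \otimes \Spc^\gr \colon \Pr^L \longrightarrow \Mod_{\Spc^\gr}(\Pr^L) = \Pr^{L,\gr}.
\end{equation*}

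First I would verify that $B$ carries the pair $(\Spectra, \sph^0)$ to $(\Spectra^\gr, \sph^0(0))$. The identification $\Spectra \otimes \Spc^\gr \simeq \Spectra^\gr$ is an instance of the tensor--cotensor equivalence $\cat \otimes \Fun(I, \Spc) \simeq \Fun(I, \cat)$ in $\Pr^L$ applied to the discrete category $I = \Z^\delta$, and the ungraded unit $\sph^0 \in \Spectra$ transports to $\sph^0 \otimes (*)(0) = \sph^0(0)$. Moreover the functor $\Sigma^\infty_+ \colon \Spc \to \Spectra$ witnessing ungraded idempotence base-changes to the colimit-preserving graded functor $\Spc^\gr \to \Spectra^\gr$ sending $(*)(0) \mapsto \sph^0(0)$, which is precisely the candidate map from the unit of $\Pr^{L,\gr}$ required by the definition.

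The second step is the general principle that symmetric monoidal colimit-preserving functors preserve idempotent objects: the idempotence of a morphism $e \colon \mathbbm{1} \to E$ is equivalent to $\id_E \otimes e \colon E \to E \otimes E$ (and symmetrically $e \otimes \id_E$) being an equivalence, and any symmetric monoidal functor transports such an equivalence to one in the target. Applying $B$ to the ungraded idempotence of $(\Spectra, \sph^0)$ then yields the desired idempotence of $(\Spectra^\gr, \sph^0(0))$ in $\Pr^{L,\gr}$.

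The main subtlety I foresee is confirming that the symmetric monoidal structure on $\Spectra^\gr$ inherited via $B$ coincides with the Day convolution structure of Observation~\ref{obs:graded_spectra_dayconv_pres} under which $\sph^0(0)$ is the tensor unit. Both arise as the unique $\Spc^\gr$-linear extension of the symmetric monoidal structure on $\Spectra$ along $\Spc \to \Spc^\gr$, so they must agree abstractly, but pinning down the coherence data requires care. Once this identification is in place the remainder of the argument is formal.
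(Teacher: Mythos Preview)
Your proposal is correct and follows essentially the same approach as the paper: the paper's proof applies a lemma (Lemma~\ref{lemma:idempotent_tensor}) asserting that $E \otimes A$ is idempotent in $\Mod_A(\cat)$ whenever $E$ is idempotent in $\cat$, which is exactly your ``symmetric monoidal base change preserves idempotents'' principle specialized to $\cat = \Pr^L$, $A = \Spc^\gr$, $E = \Spectra$, together with the same identification $\Spc^\gr \otimes \Spectra \simeq \Spectra^\gr$ via the universal property of presheaves on $\Z^\delta$. The paper does not explicitly address the subtlety you flag about matching the resulting monoidal structure with Day convolution, so your version is, if anything, slightly more careful on that point.
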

\begin{cor}
	The forgetful functor $ \mathrm{Pr}^{L,\stable \gr} \to  \mathrm{Pr}^{L,\gr} $ exhibits the former as a full subcategory of the latter.
\end{cor}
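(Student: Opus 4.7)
The plan is to bootstrap this statement directly from Proposition \ref{prop:gradedspectra_idempotent} via the general formalism of idempotent algebras in a symmetric monoidal $\infty$-category. Recall from \cite[Proposition 4.8.2.10]{HA} that if $(\mathcal{E}, e: \mathbbm{1} \to E)$ is an idempotent object in a presentable symmetric monoidal $\infty$-category $\mathcal{M}$, then the forgetful functor $\Mod_{E}(\mathcal{M}) \to \mathcal{M}$ is fully faithful, with essential image precisely those $X \in \mathcal{M}$ for which the unit map $X \to E \otimes X$ is an equivalence. Applying this result with $\mathcal{M} = \mathrm{Pr}^{L,\gr}$ and $E = \Spectra^{\gr}$, the content of Proposition \ref{prop:gradedspectra_idempotent} is exactly what is needed to conclude that $\Mod_{\Spectra^{\gr}}(\mathrm{Pr}^{L,\gr}) \hookrightarrow \mathrm{Pr}^{L,\gr}$ is fully faithful.

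It remains to identify the left-hand side with $\mathrm{Pr}^{L,\mathrm{st},\gr}$ as defined in Definition \ref{defn:loc_gr_pres_cats}. Unwinding, we have
\begin{equation*}
  \mathrm{Pr}^{L,\mathrm{st},\gr} = \Mod_{\Spectra^{\gr}}\bigl(\Mod_{\Spectra}(\Pr^{L})\bigr),
  \qquad
  \mathrm{Pr}^{L,\gr} = \Mod_{\Spc^{\gr}}(\Pr^{L}).
\end{equation*}
By \cite[Proposition 4.8.2.18]{HA}, $(\Spectra, \sph^0)$ is itself idempotent in $\Pr^{L}$; since $\Spectra^{\gr}$ is a $\Spectra$-algebra (via the stabilization map $\Spc^{\gr} \to \Spectra^{\gr}$ factoring through $\Spectra \otimes \Spc^{\gr}$), the $\Spectra^{\gr}$-action on any presentable category automatically factors through $\Spectra$, so $\Mod_{\Spectra^{\gr}}(\Pr^L) \simeq \Mod_{\Spectra^{\gr}}(\Pr^{L,\mathrm{st}})$. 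Similarly, because $\Spectra^{\gr}$ is itself a $\Spc^{\gr}$-algebra and the relative tensor product $\Spectra^{\gr} \otimes_{\Spc^{\gr}} \Spectra^{\gr} \simeq \Spectra^{\gr}$ by the idempotence of Proposition \ref{prop:gradedspectra_idempotent}, the base-change $\Mod_{\Spectra^{\gr}}(\Mod_{\Spc^{\gr}}(\Pr^L)) \simeq \Mod_{\Spectra^{\gr}}(\Pr^L)$ is an equivalence.

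Combining these identifications yields a chain of equivalences
\begin{equation*}
  \mathrm{Pr}^{L,\mathrm{st},\gr} \simeq \Mod_{\Spectra^{\gr}}(\Pr^{L,\mathrm{st}}) \simeq \Mod_{\Spectra^{\gr}}(\Pr^{L}) \simeq \Mod_{\Spectra^{\gr}}(\mathrm{Pr}^{L,\gr}),
\end{equation*}
and the forgetful functor to $\mathrm{Pr}^{L,\gr}$ is fully faithful by the preceding paragraph. The main subtlety—really the only delicate point—is verifying that the two different ways of building modules (first over $\Spectra$, then over $\Spectra^{\gr}$, versus directly over $\Spectra^{\gr}$ inside $\mathrm{Pr}^{L,\gr}$) produce the same category; but both reductions rest on the fact that an algebra $R$ which is an idempotent object absorbs further base-change, which is essentially formal once Proposition \ref{prop:gradedspectra_idempotent} is in hand.
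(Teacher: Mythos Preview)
Your proof is correct and follows essentially the same approach as the paper: both deduce the result from Proposition \ref{prop:gradedspectra_idempotent} together with the general fact that modules over an idempotent algebra form a full subcategory (you cite \cite[Proposition 4.8.2.10]{HA}, the paper cites \cite[Proposition 4.8.2.4(3)]{HA} and the definition of a localization). Your version is more explicit about the identification $\mathrm{Pr}^{L,\mathrm{st},\gr} \simeq \Mod_{\Spectra^{\gr}}(\mathrm{Pr}^{L,\gr})$, which the paper leaves to the reader.
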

\begin{proof}
	This follows from \cite[Proposition 4.8.2.4(3)]{HA} and the definition of a localization functor \cite[Definition 5.2.7.2]{LurHTT}. 
\end{proof}
\begin{proof} [Proof of Proposition \ref{prop:gradedspectra_idempotent}]
	This follows from taking $ \cat = \mathrm{Pr}^L $ and $ A = \Spc^\gr $ and $ E = \Spectra $ in Lemma \ref{lemma:idempotent_tensor}. 
	By \cite[Proposition 4.8.1.7]{HA},
	\begin{align*}
		\Spc^\gr \otimes \Spectra \simeq \mathrm{R}\Fun(\Spc^{\gr,\op},\cat) \simeq \Spectra^\gr
	\end{align*}
	where the second equivalence follows from noting that $ \Spc^\gr = \Fun(\Z^\delta, \cat) $ as a presheaf category is freely generated by $ \Z^\delta $ under homotopy colimits \cite[Theorem 5.1.5.6]{LurHTT}. 
\end{proof}
\begin{lemma}\label{lemma:idempotent_tensor}
	Let $ \cat $ be a symmetric monoidal $ \infty $-category, and let $ A $ be an $ \E_\infty $-algebra object in $ \cat $. 
	Let $ E \in \cat $, and suppose $ e: \mathbbm{1} \to E $ exhibits $ E $ as an \emph{idempotent object} in $ \cat $. 
	Then 
	\begin{equation*} 
		e \otimes \id_A: A \to E \otimes A 
	\end{equation*}
	exhibits $ E \otimes A $ as an idempotent object in $ \Mod_A(\cat) $. 
\end{lemma}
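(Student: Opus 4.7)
The plan is to unpack the definition of an idempotent object in $\Mod_A(\cat)$ and reduce the claim to the idempotence hypothesis on $E$ in $\cat$ via the fact that base change along $\mathbbm{1} \to A$ preserves equivalences. Since $A$ is $\E_\infty$, the category $\Mod_A(\cat)$ inherits a symmetric monoidal structure with $\otimes$-unit $A$ and tensor product given by the relative tensor product $-\otimes_A -$. So the claim is that the two composites
\begin{equation*}
	(e \otimes \id_A) \otimes_A \id_{E \otimes A}, \quad \id_{E \otimes A} \otimes_A (e \otimes \id_A) : E \otimes A \to (E \otimes A) \otimes_A (E \otimes A)
\end{equation*}
are equivalences in $\Mod_A(\cat)$.

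Next, I would identify the target of these two morphisms. The projection formula gives a canonical equivalence $(E \otimes A) \otimes_A (E \otimes A) \simeq (E \otimes E) \otimes A$ in $\Mod_A(\cat)$, obtained from the fact that the free module functor $- \otimes A: \cat \to \Mod_A(\cat)$ is symmetric monoidal. Under this identification, the two composites above are carried to
\begin{equation*}
	(e \otimes \id_E) \otimes \id_A, \quad (\id_E \otimes e) \otimes \id_A : E \otimes A \to (E \otimes E) \otimes A.
\end{equation*}

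The hypothesis that $e : \mathbbm{1} \to E$ exhibits $E$ as an idempotent object in $\cat$ is precisely that $e \otimes \id_E$ and $\id_E \otimes e$ are equivalences in $\cat$. Tensoring with $\id_A$ (as a morphism in $\cat$, then applying $- \otimes A$, which is a functor landing in $\Mod_A(\cat)$) preserves equivalences, so both composites above are equivalences. This verifies the idempotence condition for $e \otimes \id_A$.

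The only potentially delicate point is the identification of the relative tensor product $(E \otimes A) \otimes_A (E \otimes A) \simeq (E \otimes E) \otimes A$ together with the explicit identification of the two structure maps; everything else is formal. This can be made precise by invoking \cite[Theorem 4.5.2.1]{HA} (or \cite[Proposition 4.8.1.7]{HA}) to recognize $- \otimes A$ as the symmetric monoidal left adjoint from $\cat$ to $\Mod_A(\cat)$, whence it commutes with the tensor structure and sends idempotent objects to idempotent objects. In fact, once this is observed, the lemma becomes an instance of the general principle that any symmetric monoidal functor preserves idempotent objects, which is the cleanest way to phrase the proof.
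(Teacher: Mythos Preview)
Your proposal is correct and follows essentially the same approach as the paper: both reduce the idempotence condition in $\Mod_A(\cat)$ to the hypothesis in $\cat$ via the identification $(E \otimes A) \otimes_A (E \otimes A) \simeq (E \otimes E) \otimes A$. The only minor differences are that the paper invokes \cite[Remark~4.8.2.6]{HA} to check just one of the two maps rather than both, and your closing observation that symmetric monoidal functors preserve idempotent objects is a clean conceptual gloss not stated in the paper.
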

\begin{proof}
	By the characterization of idempotence given by Remark 4.8.2.6 of \cite{HA}, it suffices to show that the map 
	\begin{equation*} 
		(e \otimes \id_A) \otimes_A \id_{E \otimes A}: E \otimes A \to (E \otimes A) \otimes_A (E \otimes A)  
	\end{equation*}
	is an equivalence. 
	Rearranging, we find the previous is equivalent to $ \id_A:A = A $ tensored with $ e \otimes \id_E: E \to E \otimes E $, which is an equivalence by our assumption. 
\end{proof}

\subsection{The main theorem.}
The following is a mild generalization of \cite[Theorem 7.1.2.1]{HA}.
\begin{prop}\label{prop:gradedschwedeshipley}
	Let $ \cat $ be a graded stable $ \infty $-category. 
	Then $ \cat $ is equivalent to $ \LMod_R^\gr $ for some graded $ \E_1 $-ring $ R $ if and only if $ \cat $ is presentable and there exists a compact object $ G \in \cat $ which graded-generates $ \cat $ in the following sense: 
	If $ M \in \cat $ is an object having the property that $ \Ext^n_\cat(G, M) \simeq 0 $ for all $ n $ as a graded spectrum (Recollection \ref{rec:graded_vs}\ref{recitem:graded_enrichment}), then $ M \simeq 0 $. 
\end{prop}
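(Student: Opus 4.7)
The plan is to adapt the proof of \cite[Theorem 7.1.2.1]{HA} to the $\Spectra^\gr$-enriched setting. The main construction: for any $G \in \cat$, the $\Spectra^\gr$-module structure on $\cat$ produces a graded mapping spectrum functor $\hom^\gr_\cat(G, -)\colon \cat \to \Spectra^\gr$ whose $n$-th graded component is $\hom_\cat(G, Y(n))$. Composition endows $R := \hom^\gr_\cat(G, G)$ with the structure of an $\E_1$-algebra in $\Spectra^\gr$ (i.e., a graded $\E_1$-ring), and $\hom^\gr_\cat(G, -)$ canonically lifts to a functor $F\colon \cat \to \LMod_R^\gr$. The graded-generator hypothesis on $G$ is then equivalent to the assertion that $\hom^\gr_\cat(G, M) \simeq 0$ forces $M \simeq 0$; that is, $F$ is conservative.

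The ``only if'' direction is immediate: if $\cat \simeq \LMod_R^\gr$, then $R$ itself is a compact graded-generator, since $R$ is compact and $\hom^\gr_{\LMod_R^\gr}(R, M) \simeq M$ for every $M$.

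For the ``if'' direction, we show $F$ is an equivalence. Since $G$ is compact and each twist $G(n)$ is also compact (the twist is an autoequivalence), the graded mapping spectrum $\hom^\gr_\cat(G, -)$ preserves filtered colimits; exactness is automatic, so $F$ preserves all small colimits. By the adjoint functor theorem \cite[Corollary 5.5.2.9]{LurHTT}, $F$ admits a left adjoint $L\colon \LMod_R^\gr \to \cat$, determined by the formula $L(R(n)) = G(n)$. For each $n \in \Z$ the unit at $R(n)$ takes the form $R(n) \to FL(R(n)) = \hom^\gr_\cat(G, G(n)) \simeq R(n)$ and is an equivalence; since both $L$ and $F$ preserve colimits and $\LMod_R^\gr$ is generated under colimits by $\{R(n)\}_{n \in \Z}$, the unit is a natural equivalence. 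Hence $L$ is fully faithful. Combining this with the conservativity of $F$: the triangle identity implies that $F$ applied to the counit $\varepsilon_X\colon LF(X) \to X$ is the inverse of the unit on $F(X)$, hence an equivalence, and then conservativity forces $\varepsilon_X$ itself to be an equivalence for every $X$. Therefore $F$ is an equivalence.

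The main technical subtlety is verifying that the graded mapping spectrum inherits a well-defined $\E_1$-algebra structure in $\Spectra^\gr$ from the $\Spectra^\gr$-module structure on $\cat$, and that $\hom^\gr_\cat(G, -)$ lifts naturally to a functor valued in $\LMod_R^\gr$. This is formal from the symmetric monoidal structure on $\mathrm{Pr}^{L, \stable, \gr}$ and the standard enriched-category yoga (every object of a module category over a closed symmetric monoidal category has an endomorphism algebra); cf.\ the ungraded treatment in \cite[\S 4.8.5]{HA}.
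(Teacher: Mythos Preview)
Your proof is correct and morally the same as the paper's, but the packaging differs. The paper works from the other side of the adjunction: it considers the action functor $\Spectra^\gr \to \cat$, $X_\bullet \mapsto X_\bullet \otimes G$, and then invokes the abstract recognition principle \cite[Proposition 4.8.5.8]{HA} directly, verifying its three hypotheses (the right adjoint preserves geometric realizations, is conservative, and is $\Spectra^\gr$-linear). This has the advantage that the ``technical subtlety'' you flag---namely, producing the $\E_1$-structure on $R$ and lifting $\hom^\gr_\cat(G,-)$ to $\LMod_R^\gr$---is absorbed entirely into the cited machinery of \cite[\S4.8.5]{HA}, so no enriched-hom construction needs to be spelled out. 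Your approach instead builds the comparison functor $\cat \to \LMod_R^\gr$ by hand and runs the unit--counit argument explicitly; this is more transparent but requires you to justify that the adjunction is $\Spectra^\gr$-linear (so that $L(R(n)) = G(n)$), which you use without comment. Both routes are standard incarnations of Barr--Beck monadicity and neither is materially shorter.
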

\begin{proof}
	We begin with a preliminary observation: There is a commutative diagram \cite[Construction 4.8.3.4]{HA}
	\begin{equation*}
	\begin{tikzcd}[column sep=tiny]
		\Cat_\infty^\Alg(\kappa) \ar[rr,"\Theta"] \ar[rd,"U"'] & & \Cat^{\Mod}_\infty(\kappa)_{\mathfrak{M}/-} \ar[ld] \\
		& \Mon_{\E_1}^\kappa(\Cat_\infty)
	\end{tikzcd} .
	\end{equation*}
	We will regard $ \Spectra^\gr \in \Mon_{\E_1}^\kappa(\Cat_\infty) $.  
	By definition the fiber over $ \Spectra^\gr $ of $ U $ is equivalent to $ \Alg_{\E_1}(\Spectra^\gr) $.  
	Then it follows from Theorem 4.8.5.5 that $ \Theta $ restricts to an equivalence
	\begin{equation*}
		\Alg_{\E_1}(\Spectra^{\gr}) \to \Cat^{\Mod}_{\infty}(\kappa)_{\Spectra^{\gr}/-} .
	\end{equation*}
	Suppose $ \cat $ is locally graded presentable. 
	Then by Proposition \ref{prop:gradedspectra_idempotent}, we can regard $ \cat $ as a module over $ \Spectra^\gr $ in $ \Pr^{L,\gr} $. 
	There is a colimit-preserving graded functor
	\begin{align*} 
		F\colon &\Spectra^\gr \to \cat \\
		&X_\bullet \mapsto X_\bullet \otimes G .
	\end{align*} 
	By \cite[Corollary 5.5.2.9]{LurHTT}, $ F $ admits a graded right adjoint $ R $. 
	We will show that $ \left(\cat,  G\right) $ lies in the essential image of the fully faithful functor $ \Alg_{\E_1} \left(\Spectra^\gr \right) \to \Mod_{\Spectra^\gr}(\mathrm{Pr}^L)_{\Spectra^\gr/-} $ of \cite[Theorem 4.8.5.5]{HA}. 
	By \cite[Proposition 4.8.5.8]{HA}, it suffices to show the following:
	\begin{enumerate}[label=(\alph*)]
		\item \label{wtsitem:G_pres_geomreal} The functor $ R $ preserves geometric realizations of simplicial objects. 
		\item \label{wtsitem:G_cons} The functor $ R $ is conservative. 
		\item \label{wtsitem:G_tensor_commute} For every object $ M \in \cat $ and every spectrum $ Y \in \Spectra $, the canonical map $ {\theta_{Y,M}: Y \otimes G(M) \to G(Y \otimes M)} $ is an equivalence. 
	\end{enumerate}
	To prove \ref{wtsitem:G_pres_geomreal}, it suffices to show that $ R $ preserves all small colimits. 
	Since $ R $ is exact, it suffices to show that $ R $ preserves small filtered colimits, which follows from the fact that $ G $ is compact. 
	To prove \ref{wtsitem:G_cons}, suppose we are given a map $ \alpha: N \to N' $ such that $ R(\alpha) $ is an equivalence, and let $ N'':= \cofib (\alpha) $. 
	Then $ G(N'') \simeq 0 $, so $ \pi_n \hom\left(G, N'' \right) $ vanishes \emph{as a graded spectrum} (see Recollection \ref{rec:graded_vs}\ref{recitem:graded_enrichment}) for all $ n $. 
	Our assumption that $ \left\{ G\right\}_{\ell \in \Z} $ generate $ \cat $ implies that $ N'' \simeq 0 $, so $ \alpha $ is an equivalence. 

	To prove \ref{wtsitem:G_tensor_commute}, fix an $ M \in \cat $ and consider the collection $ \mathcal{E} $ of spectra $ Y $ such that $ \theta_{M, Y} $ is an equivalence. 
	Note that $ \mathcal{E} $ is stable/closed under suspension and desuspension and closed under colimits, so it suffices to show that $ \sph^0 \in \mathcal{E} $, which is immediate. 
\end{proof}

Next, we prove a monoidal enhancement of Proposition \ref{prop:gradedschwedeshipley}, or a graded enhancement of \cite[Proposition 7.1.2.6]{HA}. 
\begin{prop}
	Let $ k \in \Z_{>0} \cup \{\infty \} $. 
	The construction $ R \mapsto \LMod^\gr_R $ determines a fully faithful embedding
	\begin{equation*}
		\Alg_{\E_k}(\Spectra^\gr) \to \Alg_{\E_{k-1}}\left(\mathrm{Pr}^{L,\gr}\right)
	\end{equation*}
	from the category of graded $ \E_k $-algebras in spectra to the $ \infty $-category of graded $ \E_{k-1} $-monoidal presentable $ \infty $-categories. 
	An $ \E_k $-monoidal graded $ \infty $-category $ \cat^\otimes \to \E_k^\otimes $ belongs to the essential image if and only if
	\begin{enumerate}
		\item The $ \infty $-category is stable and presentable, and if $ k>1 $ the tensor product preserves small colimits separately in each variable. 
		\item The unit object $ \mathbbm{1} \in \cat $ is compact. 
		\item The object $ \mathbbm{1} $ graded-generates $ \cat $ in the following sense: 
		If $ M \in \cat $ is an object having the property that $ \Ext^n_\cat(\mathbbm{1}, M) \simeq 0 $ for all $ n $ as a graded spectrum (Recollection \ref{rec:graded_vs}\ref{recitem:graded_enrichment}), then $ M \simeq 0 $. 
	\end{enumerate}
\end{prop}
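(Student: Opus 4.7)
The plan is to mimic the proof of \cite[Proposition 7.1.2.6]{HA} with $\Spectra$ replaced by $\Spectra^\gr$ throughout, using Proposition \ref{prop:gradedschwedeshipley} as the ``underlying'' input. The backbone of the argument is Lurie's packaging of the left module construction as a symmetric monoidal functor combined with Dunn additivity; once this is set up in the graded setting, fully faithfulness and the characterization of the essential image reduce to the $\E_1$ case.

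First I would invoke \cite[Theorem 4.8.5.16]{HA} with $\cat^\otimes = \Spectra^\gr$ (which is presentably symmetric monoidal, so the theorem applies) to obtain a symmetric monoidal functor
\begin{equation*}
	\LMod^\gr_{(-)}: \Alg_{\E_1}(\Spectra^\gr) \to \Mod_{\Spectra^\gr}(\Pr^L) = \Pr^{L,\gr},
\end{equation*}
where the target carries its relative-tensor symmetric monoidal structure. Then I apply $\Alg_{\E_{k-1}}(-)$ to both sides and use Dunn additivity \cite[Theorem 5.1.2.2]{HA}, which identifies $\Alg_{\E_{k-1}}(\Alg_{\E_1}(\Spectra^\gr)) \simeq \Alg_{\E_k}(\Spectra^\gr)$, to arrive at the desired functor $\Alg_{\E_k}(\Spectra^\gr) \to \Alg_{\E_{k-1}}(\Pr^{L,\gr})$.

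For fully faithfulness I would follow \cite[Proposition 4.8.5.1]{HA}: since both sides are organized as $\Alg_{\E_{k-1}}$ of two $\E_1$-monoidal categories and the functor is $\E_1$-monoidal, fully faithfulness descends from the underlying statement on $\Alg_{\E_1}(\Spectra^\gr) \to \Pr^{L,\gr}$. That underlying statement is precisely Proposition \ref{prop:gradedschwedeshipley} together with its natural enhancement to mapping spaces (the identification $\Map_{\Alg_{\E_1}(\Spectra^\gr)}(R,R') \simeq \Map^{\lax}_{\Pr^{L,\gr}}(\LMod^\gr_R, \LMod^\gr_{R'})$ that comes out of the proof there).

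For the essential image, I restrict along the forgetful functor $\Alg_{\E_{k-1}}(\Pr^{L,\gr}) \to \Pr^{L,\gr}$ and apply Proposition \ref{prop:gradedschwedeshipley} to the underlying category, taking $G = \mathbbm{1}$; this accounts for conditions (2) and (3), while condition (1) records presentability of the underlying category plus the requirement (automatic for $k=1$, needed for $k>1$) that the tensor product preserves colimits in each variable--exactly the condition singling out $\Alg_{\E_{k-1}}(\Pr^{L,\gr})$ inside $\Alg_{\E_{k-1}}(\Cat_\infty^{\gr})$. The main obstacle I anticipate is a careful check that Dunn additivity and the monoidality of $\LMod^\gr_{(-)}$ interact correctly over the graded base, i.e., that the $\E_k$-structure on $R$ is recovered from the $\E_{k-1}$-monoidal structure on $\LMod_R^\gr$ together with the residual $\E_1$-structure encoded by the module structure; but this is formal given the already-established idempotence of $(\Spectra^\gr, \sph^0(0))$ in $\Pr^{L,\gr}$ (Proposition \ref{prop:gradedspectra_idempotent}).
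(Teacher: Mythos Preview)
Your approach is essentially the same as the paper's. The paper's proof is terser---it invokes \cite[Corollary 5.1.2.6]{HA} together with Proposition \ref{prop:gradedspectra_idempotent} for full faithfulness, and points back to the proof of Proposition \ref{prop:gradedschwedeshipley} for the essential image---but the content is identical: Dunn additivity reduces the $\E_k$ statement to the $\E_1$ case, idempotence of $(\Spectra^\gr,\sph^0(0))$ in $\Pr^{L,\gr}$ handles the passage to the graded setting, and the graded Schwede--Shipley characterization supplies both full faithfulness at the bottom and the description of the essential image.
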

\begin{proof}
	Full faithfulness follows from Corollary 5.1.2.6 of \cite{HA} and Proposition \ref{prop:gradedspectra_idempotent}.
	One arrives at the description of the essential image in the same manner as in the proof of Proposition \ref{prop:gradedschwedeshipley}. 
\end{proof}

\addcontentsline{toc}{section}{\protect\numberline{\thesection}References}

\printbibliography

\end{document}